%% file: Main.tex
\documentclass[11pt,reqno]{amsart}
\usepackage{a4wide}

\input{preamble}

\begin{document}
	\title[Cohomogeneity one actions on symmetric spaces of mixed type]{Cohomogeneity one actions on symmetric spaces \\ of mixed type}
	
	\author{Tom\'as Otero}
	\address{Mathematisches Institut, Universität Münster, Einsteinstr. 62, 48149 Münster, Germany}
	\email{tomas.otero@uni-muenster.de}
	
	\author{Ivan Solonenko}
	\address{Institut für Geometrie und Topologie, Universität Stuttgart, Stuttgart, Germany}
	\email{ivan.solonenko@mathematik.uni-stuttgart.de}
	
	\author{Hiroshi Tamaru}
	\address{Department of Mathematics, Osaka Metropolitan University, Sugimoto, Sumiyoshi-ku, Osaka 585-8585,
		Japan}
	\email{tamaru@omu.ac.jp}
	
	\input{0-Abstract} 
	
	\keywords{Isometric actions, cohomogeneity one actions, symmetric spaces}

	\subjclass[2020]{53C35, 57S20}
	
	\maketitle 
	
	\section{Introduction and main results}
	\input{1-Introduction}
    \section{Preliminaries}\label{section:structure}
	\input{2-Preliminaries}
	
	\section{Isometric actions on products}\label{section:products}
	\input{3-Isometric_actions_on_products}
	
	\section{Codimension-one homogeneous foliations on symmetric spaces}\label{section:HFSS}
	\input{4-C1_homogeneous_foliations}
	
	\section{Actions with singular orbits on symmetric spaces of nonpositive curvature}\label{section:singorb}
	\input{5-Actions_with_singular_orbits}

    \section{The non-simply connected case}\label{section:nonsimplyconnected}
    \input{6-The_non-simply_connected_case}

\input{bibliocustom}
\end{document}

%% file: Preamble.tex
\usepackage{amsmath,amsfonts,amssymb,amscd,amsbsy}
\usepackage{enumerate}
\usepackage{xcolor}
\usepackage{hyperref}
\usepackage{tikz}
\usepackage{array,multirow}
\usepackage{thmtools}
\usepackage{mathtools}
\usepackage{cite}
\usepackage{tikz-cd}
\usepackage{pict2e}
\usepackage{stackengine}

\makeatletter
\def\thmhead@plain#1#2#3{%
	\thmname{#1}\thmnumber{\@ifnotempty{#1}{ }\@upn{#2}}%
	\thmnote{ {\the\thm@notefont#3}}}
\let\thmhead\thmhead@plain
\makeatother

\newtheorem{theorem}{Theorem}[section]
\newtheorem{lemma}[theorem]{Lemma}
\newtheorem{proposition}[theorem]{Proposition}
\newtheorem{corollary}[theorem]{Corollary}
\newtheorem*{corollary*}{Corollary} 

\newtheorem{thmx}{Theorem}

\theoremstyle{definition}
\newtheorem{remark}[theorem]{Remark}
\newtheorem{example}[theorem]{Example}

\newtheorem*{ackn*}{Acknowledgments}

\theoremstyle{remark}

\newcommand{\R}{\ensuremath{\mathbb{R}}}
\newcommand{\Z}{\ensuremath{\mathbb{Z}}}

\newcommand{\g}[1]{\ensuremath{\mathfrak{#1}}}
\newcommand{\pr}{\operatorname{pr}}
\newcommand{\s}[1]{\ensuremath{\mathsf{#1}}}

\DeclareMathOperator{\Ad}{Ad}

\DeclareMathOperator{\ad}{ad}
\DeclareMathOperator{\Id}{Id}
\newcommand{\restr}[2]{{\left.\kern-\nulldelimiterspace #1 \vphantom{\big|} \right|_{#2}}} % restriction of a map symbol
\DeclareMathOperator{\Aut}{Aut}
\DeclareMathOperator{\tr}{tr}

\newcommand{\bilin}[2]{\langle \hspace{1pt} {#1} \hspace{0.5pt} , {#2} \hspace{1pt} \rangle \hspace{0.02em}}

\makeatletter
\DeclareRobustCommand{\loplus}{\mathbin{\mathpalette\dog@lsemi{+}}}
\DeclareRobustCommand{\roplus}{\mathbin{\mathpalette\dog@rsemi{+}}}

\newcommand{\dog@rsemi}[2]{\dog@semi{#1}{#2}{-90,90}}
\newcommand{\dog@lsemi}[2]{\dog@semi{#1}{#2}{270,90}}
\newcommand{\dog@semi}[3]{%
	\begingroup
	\sbox\z@{$\m@th#1#2$}%
	\setlength{\unitlength}{\dimexpr\ht\z@+\dp\z@\relax}%
	\makebox[\wd\z@]{\raisebox{-\dp\z@}{%
			\begin{picture}(1,1)
				\linethickness{\variable@rule{#1}}
				\roundcap
				\put(0.5,0.5){\makebox(0,0){\raisebox{\dp\z@}{$\m@th#1#2$}}}
				\put(0.5,0.5){\arc[#3]{0.5}}
			\end{picture}%
	}}%
	\endgroup
}
\newcommand{\variable@rule}[1]{%
	\fontdimen8  
	\ifx#1\displaystyle\textfont3\else
	\ifx#1\textstyle\textfont3\else
	\ifx#1\scriptstyle\scriptfont3\else
	\scriptscriptfont3\relax
	\fi\fi\fi
}
\makeatother

\newcommand\altxrightarrow[2][0pt]{\mathrel{\ensurestackMath{\stackengine%
  {\dimexpr#1-7.5pt}{\xrightarrow{\phantom{#2}}}{\scriptstyle\!#2\,}%
  {O}{c}{F}{F}{S}}}}
\newcommand{\isoto}{\altxrightarrow[1pt]{\sim}}

\DeclareFontFamily{U}{mathx}{}
\DeclareFontShape{U}{mathx}{m}{n}{<-> mathx10}{}
\DeclareSymbolFont{mathx}{U}{mathx}{m}{n}
\DeclareMathAccent{\widecheck}{0}{mathx}{"71}

%% file: 0-Abstract.tex
\begin{abstract}
        In this article, we study isometric cohomogeneity-one actions on symmetric spaces of mixed type, i.e., those whose universal cover splits as a nontrivial product of symmetric spaces of compact, noncompact, and Euclidean types. We provide a new family of ``diagonal'' cohomogeneity-one actions on symmetric spaces of the form $\R^n \times M_-$, where $M_-$ is of noncompact type. We show that, with the exception of this family, any cohomogeneity-one action on a symmetric space decomposes as a product of isometric actions on its compact, Euclidean, and noncompact factors. This fully reduces the classification problem for cohomogeneity-one actions to symmetric spaces of a single type.
\end{abstract}

%% file: 1-Introduction.tex
A smooth isometric action of a Lie group on a Riemannian manifold is of cohomogeneity one if its regular orbits are hypersurfaces. Cohomogeneity-one actions have long been a topic of great interest in Riemannian geometry. For instance, they can be used to construct Riemannian metrics with various special properties, such as Einstein metrics \cite{Boehm}, metrics with special holonomy \cite{BrySal,FH}, or metrics of positive curvature \cite{GWZ}: the usual procedure consists of modifying the given metric on a space $M$ along the orbits of a cohomogeneity-one action $\s{H} \curvearrowright M$ (e.g., by means of rescaling or a Cheeger deformation) to produce a new $\s{H}$-invariant metric with the desired properties. Cohomogeneity-one actions are also of significance from the viewpoint of submanifold geometry: their orbits form an isoparametric family, and the principal orbits are homogeneous hypersurfaces and thus have constant principal curvatures.

A natural question to ask in this regard is whether one can list all possible cohomogeneity-one actions on a given Riemannian manifold. By design, this kind of investigation makes particular sense for homogeneous spaces, since the existence of such actions calls for a large isometry group. To make this into a classification problem, one needs a suitable notion of equivalence. One such notion---natural from the standpoint of submanifold geometry---is that of \textit{orbit equivalence}: two actions are \textit{orbit-equivalent} if the corresponding orbit foliations of the ambient space are isometrically congruent.

Historically, the first spaces to receive a classification of cohomogeneity-one actions were the Euclidean and real hyperbolic spaces. In these manifolds, every isoparametric family is given by the orbits of a cohomogeneity-one action, so the classification follows from the classical works of Segre \cite{Segre} and Cartan \cite{Cartan} on isoparametric hypersurfaces. The same approach does not work for other spaces: for example, round spheres admit isoparametric hypersurfaces that are not homogeneous~\cite{FKM}. A classification of cohomogeneity-one actions on the spheres was achieved with different methods by Hsiang and Lawson in \cite{Hsiang-Lawson}.

Obtaining a complete classification for general symmetric spaces is still an ongoing effort, with many contributions over the last twenty five years.
For instance, a classification of cohomogeneity-one actions on irreducible symmetric spaces of compact type was achieved by Kollross \cite{Kollross:tams}. 
For reducible symmetric spaces of compact type, the problem is still open (see \cite{Kollross:reducible}). In the noncompact setting, this problem turned out to be more involved. In fact, it was not until very recently that a complete classification of cohomogeneity-one actions on symmetric spaces of noncompact type was achieved by the second author and Sanmart\'in-L\'opez in \cite{SLS:arXiv}. This result was a culmination of a long list of contributions by several authors \cite{BB:crelle,BDT:jdg,BD:tg,BT:jdg,BT:tohoku,BT:tams,BT:crelle,DDO:adv,DDR:crelle,Solonenko,Solonenko:reducible} (see \cite{DDO:reag} for a recent survey on the topic). It should be noticed that, unlike in the compact case, the classification for symmetric spaces of noncompact type was accomplished even in the case when the ambient space is reducible (see \cite{DDO:adv}).

So far, cohomogeneity-one actions on symmetric spaces have been studied on a type-by-type basis. Recall that the universal Riemannian covering space of a symmetric space splits as a Riemannian product $M_+ \times M_0 \times M_-$, where $M_0$ is a Euclidean space and $M_+$ and $M_-$ are symmetric spaces of compact and noncompact types, respectively. In this article, we consider symmetric spaces for which the above decomposition is nontrivial, which we call \textit{of mixed type}. We study cohomogeneity-one actions on such spaces via the behavior of the action on each individual factor.

Our main result asserts that with the only exception of a certain family of “diagonal” actions (which we parameterize explicitly), any other cohomogeneity-one action on a symmetric space of mixed type $M_+\times M_0 \times M_-$ decomposes: that is, it has the
same orbits as some product action $\s{H}_+ \times \s{H}_0 \times \s{H}_- \curvearrowright M_+\times M_0 \times M_-$ (see Section~\ref{section:products} for a more rigorous treatment of decomposability of actions in the general context of Riemannian products). This fully reduces the problem of classifying cohomogeneity-one actions on any symmetric space to that on its compact, Euclidean, and noncompact factors.
	
	\begin{thmx}\label{mainth}
		Let $M = M_+ \times M_0 \times M_-$ be a simply connected symmetric space, where $M_0 \cong \R^n$ for some $n \in \mathbb{N}$ and $M_+$ and $M_-$ are symmetric spaces of compact and noncompact type, respectively.
		Let $\s{H}$ be a connected Lie group acting on $M$ properly and isometrically with cohomogeneity one.
		Then, either the action of $\s{H}$ decomposes, or it is orbit-equivalent to the action of $I(M_+) \times \s{H}_{E,X}$, where $\s{H}_{E,X} \subseteq I^0(M_0) \times I^0(M_-)$ is a Lie subgroup inducing a homogeneous codimension-one foliation of diagonal type on $M_0 \times M_-$ as described below.
	\end{thmx}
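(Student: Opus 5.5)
We split into two cases according to whether the action of $\s{H}$ has a singular orbit or induces a regular (homogeneous codimension-one) foliation. Since $M$ is simply connected, the identity component $I^0(M)$ equals $I^0(M_+)\times I^0(M_0)\times I^0(M_-)$, so $\s{H}$ maps into this product and we may regard it as a closed subgroup there; write $\pr_+,\pr_0,\pr_-$ for the projections of its Lie algebra onto the three factors.

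\emph{Step 1: general facts on products.} From Section~\ref{section:products} we use that a cohomogeneity-one action on a Riemannian product $N_1\times N_2$ forces one factor action to be transitive, and the action on the other factor to be of cohomogeneity at most one up to the diagonal part. Concretely, the normal space of a principal orbit at a point is one-dimensional. Let $\xi=(\xi_+,\xi_0,\xi_-)$ be a unit normal. Components of $\xi$ that vanish correspond to factors on which the orbit projection is open. Hence at most a ``diagonal'' coupling among the factors in which $\xi$ has nonzero components can occur.

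\emph{Step 2: the compact factor splits off.} Consider the orbit of the projection $\pr_+(\s{H})$, a compact group, acting on $M_+$. We would show that if $\xi_+\neq 0$ at a generic point, then the whole action decomposes. Since $M_+$ is compact, a normal direction in it cannot be coupled with a noncompact direction: any coupling would give a Lie algebra homomorphism from a compact algebra image to the solvable or abelian parts responsible for foliations on $M_0\times M_-$. Such a homomorphism must be trivial on the relevant parts, because compact-type ideals admit no nontrivial homomorphisms onto abelian or solvable algebras, and translations cannot be compactly generated. The singular-orbit case is the one Section~\ref{section:singorb} treats. There, a singular orbit is a compact orbit or a totally geodesic-type one, and via the slice representation one argues that $\s{H}$ contains the product of its projections. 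We would therefore rely on the section on singular orbits to get decomposability whenever singular orbits exist; in the noncompact plus Euclidean case, singular orbits force splitting since there are no compact orbits in $M_0\times M_-$ without a fixed point structure.

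\emph{Step 3: foliations on $M_0\times M_-$.} If the action has no singular orbits and $\xi_+=0$, then $\s{H}$ acts transitively on $M_+$ (orbit-equivalently, like $I(M_+)$) and the problem reduces to homogeneous codimension-one foliations on $M_0\times M_-$. Section~\ref{section:HFSS} classifies these. Either they are products, namely a foliation of $M_-$ (horospherical or solvable-type) times $\R^n$, or $\R^{n-1}$-foliations times $M_-$. Otherwise they are the diagonal foliations $\s{H}_{E,X}$, built from a Euclidean direction $E$ coupled to a direction $X$ in the Iwasawa $\g{a}\oplus\g{n}$ part.

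The \textbf{main obstacle} is Step 2/3's coupling analysis. We must show that the only possible non-split Lie subalgebras are those whose normal vector mixes $M_0$ and $M_-$, and identify them exactly with $\s{H}_{E,X}$. I would attack this by taking the orbit through a point where the $M_-$ part is a solvable-group orbit (using conjugation into a Borel/Iwasawa subgroup via \cite{BT:crelle}-type arguments). I would then compute the Lie algebra condition that $\g{h}$ has codimension one in $\R^n\oplus\g{a}\oplus\g{n}$.
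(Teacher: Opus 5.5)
Your outline has the same architecture as the paper: split off $M_+$ via Theorem~\ref{th:main:decompcompactxhadamard}, then treat $M_0\times M_-$ separately in the foliation case and in the singular-orbit case. However, the steps that carry the weight are not proved, and the arguments you sketch for them would not work.

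\textbf{Step 2 (splitting off $M_+$).} You claim any coupling between $M_+$ and $N=M_0\times M_-$ is trivial because a compact algebra has no nontrivial homomorphism onto an abelian or solvable one. This fails: $\pr_+(\g{h})$ is only a compact subalgebra of $\g{g}_+$ and may have a center. For instance, on $\mathbb{S}^{2n-1}\times\R$ ($n\ge 2$), the subalgebra $\g{su}_n\oplus\R(J+E)$ projects onto $\g{u}_n$, which is transitive on $\mathbb{S}^{2n-1}$, and onto the translations of $\R$. Here $J$ is the complex structure of $\R^{2n}=\mathbb{C}^n$ and $E$ generates the translations, so the center of $\g{u}_n$ is coupled nontrivially with $\R E$. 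Such couplings are algebraically allowed; what makes them harmless is geometric, namely Lemma~\ref{proposition:reductioncompactxhadamard}:
\begin{enumerate}
\item by Montgomery's theorem, some compact subgroup of $\s{H}$ is already transitive on the simply connected compact $M_+$;
\item by Cartan's fixed point theorem, that subgroup fixes a point of the Hadamard manifold $N$;
\item hence, if both projection actions are transitive, $\s{H}$ is transitive on $M_+\times N$.
\end{enumerate}
You then still need Lemma~\ref{lemma:SplittingC1} to turn ``exactly one projection is non-transitive'' into decomposability. That lemma uses that a regular orbit is open in the product of its projected orbits, and handles the at most two singular orbits by connectedness. Your sketch uses neither the simple connectivity of $M_+$ nor the Hadamard property of $N$, which is exactly where the content lies.

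\textbf{Singular-orbit case.} Citing Theorem~\ref{th:singorbdecomp} as a black box would be fine, but the reasons you attach to it are false. Singular orbits need be neither compact nor totally geodesic: take $\R^n$ times a Berndt--Br\"uck action \cite{BB:crelle} on a complex hyperbolic space. Also, $\s{H}$ need not contain the product of its projections even when the action decomposes. What must actually be shown is that no cohomogeneity-one action whose projections to $M_0$ and $M_-$ are both transitive can have a singular orbit. The paper does this in two stages:
\begin{enumerate}
\item It adapts the parabolic analysis of \cite{BT:crelle} (Theorem~\ref{thm:parab}) to reduce to a canonical extension from some $M_0\times B_\Phi$ whose projection onto $I^0(B_\Phi)$ is surjective. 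The nilpotent-construction alternative is excluded precisely because the action does not decompose.
\item It then uses the structure of transitive groups of Euclidean isometries (Lemma~\ref{lemma:splitting1}) to build an infinite strictly decreasing chain of subgroups $\s{E}_i\subseteq\s{SO}_n\ltimes\R^n$, each transitive on $\R^n$, which is a contradiction.
\end{enumerate}

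\textbf{Foliation case.} What you call the main obstacle is more than a codimension-one subalgebra computation in $\g{r}^n\oplus\g{a}\oplus\g{n}$. Since $\s{H}$ need not be solvable or lie in $\R^n\times\s{A}\s{N}$, you must:
\begin{enumerate}
\item pass to a free solvable subgroup with the same orbits;
\item dispose of the compact parts $\g{t}_0\subseteq\g{so}_n$ and $\g{t}_-\subseteq Z_{\g{k}_-}(\g{a})$ of the Borel subalgebra containing it;
\item rule out the alternative $X\in\R H_\alpha\oplus\g{g}_\alpha$ of \cite[Prop.~5.4]{BT:jdg}, which on $M_-$ alone is what produces foliations of solvable type.
\end{enumerate}
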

	
    \begin{remark}
    Note that the theorem is formulated only for simply connected symmetric spaces. In Section \ref{section:nonsimplyconnected}, we explain how to dispense with this restriction and give a version of Theorem~\ref{mainth} for non-simply connected symmetric spaces (see Theorem \ref{th:nonsimplyconnected}).
    \end{remark}
    
	\begin{remark}
        By construction---which we lay out below---the action $\s{H}_{E,X} \curvearrowright M_0 \times M_-$ can only be defined when both $M_0$ and $M_-$ are nontrivial. If either of them is trivial, any cohomogeneity-one action on $M$ decomposes.
	\end{remark}
	
Let us introduce some context and notation and define the action of $\s{H}_{E,X}$ appearing in Theorem~\ref{mainth} (more details about the structure of symmetric spaces can be found in Section~\ref{section:structure}). Any Riemannian symmetric space can be written as a quotient $M = \s{G}/\s{K}$, where $\s{G} = I^0(M)$ and $\s{K}$ is the isotropy subgroup of $\s{G}$ at some fixed base point $o \in M$. The isometry Lie algebra $\g{g}$ admits a natural reductive decomposition $\g{g} = \g{k} \oplus \g{p}$ with $\g{k} = \mathrm{Lie}(K)$ and $\g{p} \cong T_oM$. If $M$ is simply connected, it splits into a Riemannian product $M = M_+ \times M_0 \times M_-$ as described above, and we have the corresponding splittings $\s{G} = \s{G}_+ \times \s{G}_0 \times \s{G}_-$, $\s{K} = \s{K}_+ \times \s{K}_0 \times \s{K}_-$, $\g{g} = \g{g}_+ \oplus \g{g}_0 \oplus \g{g}_-$, etc. Here $\g{g}_-$ is a noncompact real semisimple Lie algebra and $\g{g}_-=\g{k}_-\oplus\g{p}_-$ is a Cartan decomposition. We fix a restricted root space decomposition $\g{g}_-=\bigoplus\g{g}_\alpha$ of $\g{g}_-$ and consider the corresponding Iwasawa decomposition $\s{G}_- = \s{K}_-\s{A}\s{N}$. The solvable subgroup $\s{A} \ltimes \s{N}$ of $\s{G}_-$ acts on $M_-$ simply transitively and provides the so-called \emph{solvable model} of $M_-$. Note that the action of the translation group $\R^n \trianglelefteq I^0(M_0)$ on $M_0 \cong \R^n$ is also simply transitive. Let $\g{r}^n, \g{a},$ and $\g{n}$ stand for the Lie algebras of $\R^n, \s{A},$ and $\s{N}$, respectively, and pick any nonzero vectors $E \in \g{r}^n$ and $X \in \g{a}$. By using the properties of the root space decomposition, it can be easily checked that the subspace
	\[
		\mathfrak{h}_{E,X} = (\mathfrak{r}^{n} \ominus E) \oplus \mathbb{R}(E + X) \oplus (\mathfrak{a} \ominus X) \oplus \mathfrak{n},
	\]
is a codimension-one subalgebra of $\g{r}^n\oplus(\g{a}\loplus\g{n})$, where by $\g{v}\ominus Y$ we mean the orthogonal complement of a vector $Y$ in a subspace $\g{v}$ of $\g{r}^n\oplus(\g{a}\loplus\g{n})\cong T_o(M_0\times M_-)$ with respect to the inner product induced by the metric on $M_0 \times M_-$. The connected Lie subgroup $\s{H}_{E,X}$ of $I(M_0 \times M_-)$ with Lie algebra $\g{h}_{E,X}$ acts on $M_0 \times M_-$ with cohomogeneity one and without singular orbits. As we will show in Section \ref{section:HFSS}, the orbits of $\s{H}_{E,X}$ form a family of mutually isometrically congruent hypersurfaces, all of them of constant mean curvature $\frac{||E||}{||X|| \sqrt{||E||^2 + ||X||^2}} \sum_{\alpha \in \Sigma^+} \dim(\g{g}_\alpha) \alpha(X)$. The orbit foliation of $\s{H}_{E,X}$ will be called a \emph{homogeneous codimension-one foliation of diagonal type} on $M_0 \times M_-$. Figure~\ref{fig:hex} shows what such a foliation looks like in the case of $M_0 = \R$ and $M_- = \R H^2$.
	
    \begin{figure}[h!]\label{fig:hex}
		\includegraphics[width=0.3\textwidth]{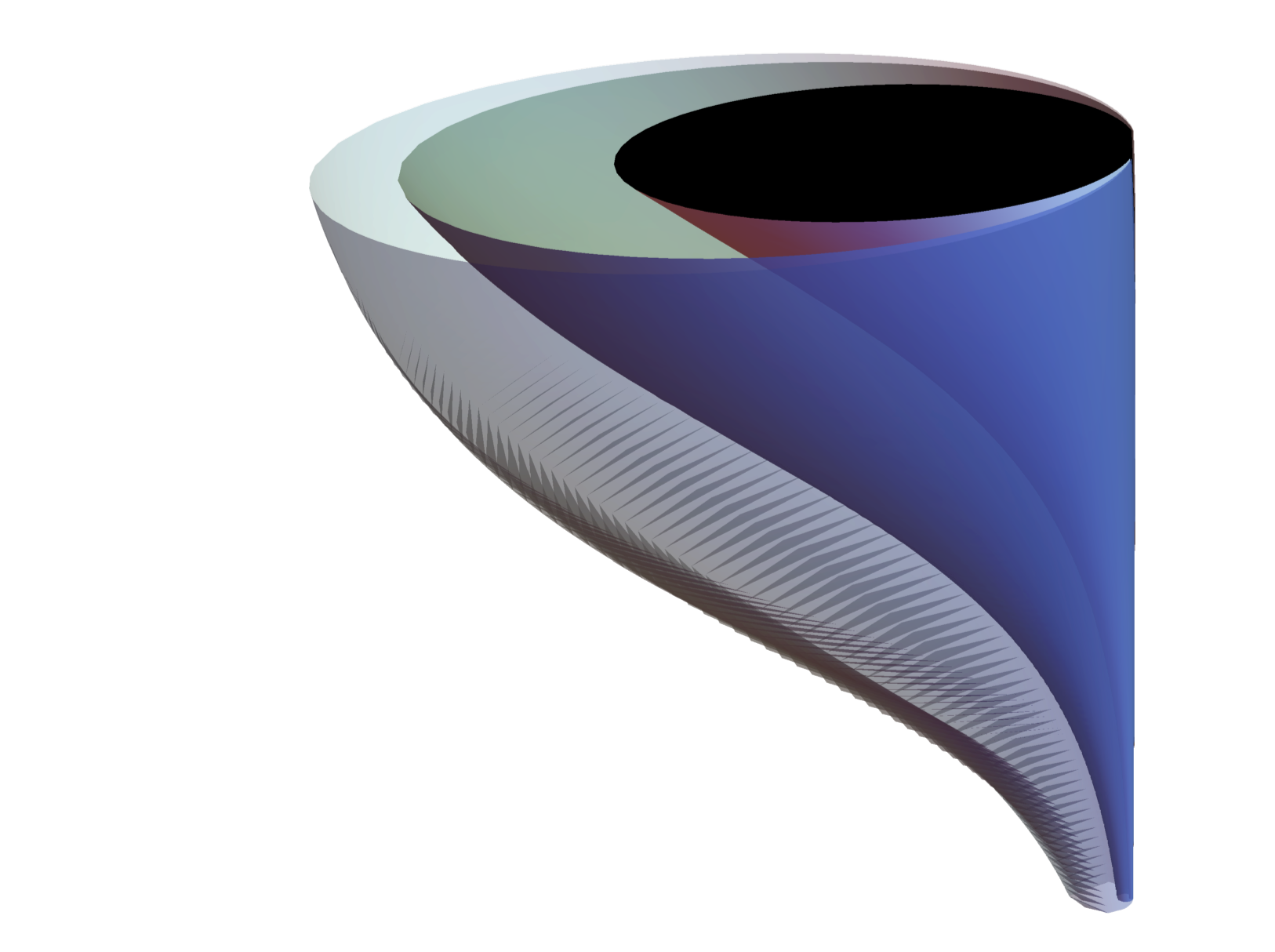}
        \caption{A homogeneous foliation of diagonal type on $\R \times \R H^2$.
        Here $\R H^2$ is represented by the Poincaré disk model in the horizontal plane.}
	\end{figure}
	
The road towards Theorem~\ref{mainth} consists of three main steps. First, we study whether a given isometric action on a Riemannian product decomposes based on its behavior on the factors. As an application, we will obtain the following:
	\begin{thmx}\label{th:main:decompcompactxhadamard}
		Let $M$ be a simply connected compact Riemannian manifold, $N$ a Hadamard manifold, and $\s{H}$ a connected Lie group acting on $M \times N$ properly and isometrically. If the action has cohomogeneity one, then it decomposes.
	\end{thmx}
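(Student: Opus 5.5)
We start from the de Rham decomposition of the isometry group. Since $M$ is compact and $N$ is Hadamard, neither factor can contain a Euclidean de Rham factor shared with the other in a way that mixes them. More precisely, $M$ simply connected and compact has no flat factor, so $I^0(M\times N)=I^0(M)\times I^0(N)$. Hence $\s{H}$ maps into $I^0(M)\times I^0(N)$, and we may replace $\s{H}$ by its closure there; this does not change the orbits, because the action is proper and the orbits are closed. Denote by $\s{H}_M$ and $\s{H}_N$ the projections of $\s{H}$ to the two factors. The orbit of $(p,q)$ is contained in $\s{H}_M\cdot p\times \s{H}_N\cdot q$, so the cohomogeneity of $\s{H}$ is at least the sum of the cohomogeneities of $\s{H}_M\curvearrowright M$ and $\s{H}_N\curvearrowright N$. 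Decomposability amounts to showing that the orbits of $\s{H}$ coincide with those of $\s{H}_M\times\s{H}_N$.

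We split into cases according to which factor carries the nontrivial orbit space.

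\emph{Case 1: $\s{H}_M$ is transitive on $M$ and $\s{H}_N$ acts with cohomogeneity one on $N$.}

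\emph{Case 2: $\s{H}_N$ is transitive on $N$.}

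\emph{Case 3: both projections are transitive.} In this case the $\s{H}$-orbits are hypersurfaces inside the homogeneous product.

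The general tool, presumably from Section~\ref{section:products}, is the following. Let $\s{H}_1=\s{H}\cap(I(M)\times\{e\})$ and $\s{H}_2=\s{H}\cap(\{e\}\times I(N))$. The action decomposes if the orbits of $\s{H}$ through a point already contain the full slice orbits. A dimension count then shows that the cohomogeneity-one condition forces the "graph-like" part of $\s{H}$, namely $\s{H}/(\s{H}_1\s{H}_2)$, to be at most one-dimensional, or abelian, and to act diagonally.

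Diagonal actions must then be excluded. Suppose $\s{H}$ contains a one-parameter subgroup of the form $t\mapsto(\phi_t,\psi_t)$ that is not absorbed by $\s{H}_1\times\s{H}_2$, and whose orbit is a diagonal hypersurface as in $\g{h}_{E,X}$. The rigidity we exploit is the following. A nontrivial homomorphic image of $\s{H}$ in $I(M)$ lies in a compact group, since $M$ is compact. Its image is therefore relatively compact, and the corresponding element of the Lie algebra generates a torus. In $N$, on the other hand, the relevant flow would have to be "translational". This is like the $E$-part in $\g{h}_{E,X}$, which was Euclidean, whereas here the compact side offers only elliptic behaviour.

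Concretely, I would argue that the orbit space $(M\times N)/\s{H}$ is a $1$-dimensional manifold. We lift $M$ to its universal cover, which is $M$ itself since $M$ is simply connected. A diagonal coupling would then produce a nontrivial homomorphism $\pi_1$ or a circle-to-line coupling. If the coupling acts on $M$ through a compact torus factor with orbits closing up, then properness forces the $N$-component to be compact as well. A compact subgroup of $I(N)$ fixes a point, by Cartan's fixed point theorem on Hadamard manifolds, and so its orbits split off. Simple connectivity of $M$ rules out the alternative, which is an $\R$-factor of $\s{H}$ acting on $M$ through a noncompact dense line in a torus with a transversal. That alternative would give a nontrivial map $M\to S^1$ or $\R$ from the orbit structure: a Riemannian submersion of $M$ onto a circle, contradicting $\pi_1(M)=0$ via the long exact sequence, or onto a line, contradicting compactness.

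The main obstacle is making this last step rigorous. The goal is to show that a nondecomposable cohomogeneity-one action induces a nonconstant $\s{H}$-invariant function on $M$ (from the projected orbit structure) that descends to a submersion onto $S^1$. I would first try to show that the regular $\s{H}$-orbits project onto $M$ with fibres that are orbits of $\s{H}_1$-type subgroups. This would yield a fibration $M\to S^1$, which simple connectivity forbids.
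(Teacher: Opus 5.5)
\textbf{There is a genuine gap here.} The decisive step is ruling out that both projection actions are transitive (your Case~3). You only sketch it, and the sketch rests on a false claim.

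You assert that if $\s{H}$ acts on $M$ through a compact torus with closed orbits, then properness forces the $N$-component to be compact. Consider $\R$ acting on $\mathbb{S}^1\times\R$ by $t\cdot(\theta,x)=(\theta+t,x+t)$. This action is proper, isometric and of cohomogeneity one. It acts on $\mathbb{S}^1$ through the compact group $\s{SO}_2$ with closed orbits, yet its $N$-component is a noncompact group of translations. Both projections are transitive, and the action does not decompose. The only hypothesis of the theorem it violates is $\pi_1(M)=0$, and your argument invokes simple connectivity only in the other (``dense line'') branch.

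Your fallback plan cannot work either. In Case~3 the projection action on $M$ is transitive, so every $\s{H}$-invariant function on $M$ is constant, and no submersion onto $\mathbb{S}^1$ can arise that way. The ``general tool'' is also unfounded. For the diagonal $\s{SO}_3$ acting on $\mathbb{S}^2\times\mathbb{S}^2$ (cohomogeneity one), $\s{H}_1$ and $\s{H}_2$ are trivial, and $\s{H}/(\s{H}_1\s{H}_2)\cong\s{SO}_3$ is neither one-dimensional nor abelian.

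\textbf{The missing ingredient is Montgomery's theorem.} A connected Lie group acting transitively on a compact simply connected manifold contains a compact subgroup $\s{K}$ that still acts transitively. The point is that $\s{K}$ is a compact subgroup of $\s{H}$ itself; that the image of $\s{H}$ in $I(M)$ is relatively compact does not help. So the projection of $\s{K}$ to $I(N)$ is compact, and by Cartan's fixed point theorem it fixes some $o_N\in N$. Given $(p,q)$, pick $h\in\s{H}$ with $h\cdot q=o_N$ and $k\in\s{K}$ with $k\cdot(h\cdot p)=o_M$. Then $kh\cdot(p,q)=(o_M,o_N)$, so $\s{H}$ is transitive on $M\times N$, a contradiction. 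This is the paper's Lemma~\ref{proposition:reductioncompactxhadamard}. In the example above, the maximal compact subgroup of $\R$ is trivial; this is exactly where $\pi_1(\mathbb{S}^1)\neq 0$ enters.

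\textbf{Cases~1 and~2 are also left unargued.} From the cohomogeneity inequality, exactly one projection is transitive and the other has cohomogeneity one. You still have to show that the $\s{H}$-orbits then coincide with those of $\s{H}_M\times\s{H}_N$. This is not automatic, since the projection actions need not be proper. The paper handles it in Lemma~\ref{lemma:SplittingC1}:
\begin{enumerate}
\item A regular $\s{H}$-orbit lies in the corresponding product orbit, which is a weakly embedded hypersurface, so the inclusion is smooth and, by dimension, open.
\item Hence $\s{H}$ acts on that product orbit with cohomogeneity zero, and thus transitively.
\item The at most two singular orbits are handled by connectedness of the product orbits.
\end{enumerate}
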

Consider now a simply connected symmetric space $M = M_+ \times M_0 \times M_-$. Since $M_+$ is compact and $M_0 \times M_-$ is Hadamard, Theorem~\ref{th:main:decompcompactxhadamard} implies that we can reduce the study of cohomogeneity-one actions on $M$ to that on $M_+$ and $M_0 \times M_-$. This motivates us to study cohomogeneity-one actions on $M_0 \times M_-$ and investigate which of them decompose. For our second step, we work under the assumption that such an action has no singular orbits and thus induces a homogeneous Riemannian foliation. We are able to show that if the action does not decompose, it must be orbit-equivalent to the action of $\s{H}_{E,X}$ for some choices of $E$ and $X$ as defined above. This allows us to give a complete classification of homogeneous codimension-one foliations on any (simply connected) symmetric space:

	\begin{thmx}\label{theorem:foliations}
		Let $M = M_+ \times M_0 \times M_-$ be a simply connected symmetric space, where  $M_+ = \s{G}_+/\s{K}_+$ is of compact type, $M_0 \cong \R^n$ for some $n \in \mathbb{N}$, and $M_- = \s{G}_-/\s{K}_-$ is of noncompact type. 
		Let $\s{H}$ be a connected Lie group acting on $M$ properly, isometrically, with cohomogeneity one and no singular orbits.
		Then, the action of $\s{H}$ is orbit-equivalent to one of the following:
		\begin{enumerate}[\normalfont (a)]
			\item The action of $\s{G}_+\times\R^{n-1}\times \s{G}_-$.
			\item The action of $\s{G}_+\times\mathbb{R}^n\times \s{H}_{X}$, 
			where $\s{H}_{X}$ induces a codimension-one foliation of horospherical type on $M_-$. 
			\item The action of $\s{G}_+\times\mathbb{R}^n\times \s{H}_{i}$, 
			where $\s{H}_{i}$ induces a codimension-one foliation of solvable type on $M_-$
			\item The action of $\s{G}_+\times \s{H}_{E,X}$, 
			where $\s{H}_{E,X}$ induces a codimension-one foliation of diagonal type on $M_0 \times M_-$. 
		\end{enumerate}
	\end{thmx}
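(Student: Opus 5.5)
We first reduce to the Hadamard factor. By Theorem~\ref{th:main:decompcompactxhadamard}, applied with $M_+$ compact simply connected and $M_0\times M_-$ Hadamard, the action of $\s{H}$ has the same orbits as a product action $\s{H}_1\times\s{H}_2\curvearrowright M_+\times(M_0\times M_-)$. Since the product action has cohomogeneity one, one factor acts transitively and the other with cohomogeneity one. Moreover, the product has no singular orbits exactly when the cohomogeneity-one factor has none.

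\emph{Case 1: $\s{H}_1$ is of cohomogeneity one on $M_+$.} Then $M_+$ is compact, so the orbit space of $\s{H}_1$ is a closed interval or a circle. Simple connectivity of $M_+$ excludes the circle (the quotient map would then have connected fibers and surject onto $\pi_1(S^1)$). So there are singular orbits, contradicting the hypothesis. Hence $\s{H}_1$ acts transitively on $M_+$, and we may replace it by $\s{G}_+=I^0(M_+)$.

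\emph{Case 2: $\s{H}_2$ acts on $M_0\times M_-$ with cohomogeneity one and no singular orbits.} This is the core, and the orbits form a homogeneous Riemannian foliation of a Hadamard manifold. The plan is as follows.

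\begin{enumerate}
\item \emph{Decomposition criterion.} Using the product results of Section~\ref{section:products}, if the action decomposes, then one of the projections acts transitively on its factor. There are two subcases. If $M_-$ is acted on transitively, the Euclidean part is a cohomogeneity-one foliation of $\R^n$ without singular orbits; by Segre/Cartan this is parallel hyperplanes, which gives (a). If $M_0$ is acted on transitively, we invoke the known classification (Berndt--Tamaru) of homogeneous codimension-one foliations on noncompact-type spaces: these are horospherical or solvable type, giving (b) and (c).
\item \emph{Reduction to a solvable model.} If the action does not decompose, we use that a foliation without singular orbits on a Hadamard space is orbit-equivalent to the action of a solvable subgroup. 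Concretely, I would conjugate $\s{H}_2$ into $\R^n\times(\s{A}\s{N})$ by a Berndt--Tamaru type argument: a maximal-dimensional orbit through $o$ together with a fixed point at infinity of the Busemann-type structure. Then $\g{h}$ becomes a codimension-one subalgebra of $\g{r}^n\oplus(\g{a}\loplus\g{n})$ whose projections to both factors are surjective.
\item \emph{Algebraic classification.} We classify codimension-one subalgebras $\g{h}$ with surjective projections. Write $\g{h}$ as the orthogonal complement of a unit normal $\xi=E'+Z$, with $E'\in\g{r}^n$ and $Z\in\g{a}\oplus\g{n}$.
\begin{itemize}
\item Suppose $Z$ has a component in $\g{n}$. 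Bracket relations force the $\g{n}$-component to lie in simple root spaces, as in the solvable-type analysis. Non-decomposability then forces $E'$ to interact with this component, and since $\g{r}^n$ is central this yields a contradiction, unless the action is orbit-equivalent to a decomposed one.
\item Otherwise $Z=-X\in\g{a}$. Then $\g{h}\supseteq\g{n}$, and a direct check shows that $\g{h}=\g{h}_{E,X}$ after a suitable rescaling with $E\propto E'$. This gives (d).
\end{itemize}
\end{enumerate}

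The main obstacle is step 2 together with the $\g{n}$-component subcase of step 3. We must show that a non-decomposable foliation is congruent to one given by a subalgebra of $\g{r}^n\oplus(\g{a}\loplus\g{n})$, and rule out mixing $\g{r}^n$ with root-space directions. Here I would use the fact that the Euclidean factor is central, so that $[\g{h},\g{h}]\subseteq\g{n}$. I would then compare the mean curvatures of the orbits, which must be constant along the foliation, to exclude the remaining cases.
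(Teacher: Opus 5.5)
\textbf{There is a genuine gap in step~2 of Case~2.} Your reduction to $M_0\times M_-$ and your treatment of the decomposable cases (a)--(c) agree with the paper, but step~2 fails as stated, and that is where the substance of the theorem lies.

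You cannot in general conjugate $\s{H}_2$ into $\R^n\times(\s{A}\ltimes\s{N})$, so $\g{h}$ need not become a subalgebra of $\g{r}^n\oplus(\g{a}\loplus\g{n})$. Take $M_0=\R^3$ and $M_-=\R H^2$, with $\g{a}=\R X$ and $\g{n}=\R Y$, and let $R\in\g{so}_3$ generate the rotations of the $e_1e_2$-plane. Then $\g{h}'=\R e_1\oplus\R e_2\oplus\R(e_3+X+R)\oplus\R Y$ is a subalgebra. Its connected subgroup is closed, acts freely, and has the same orbits as $\s{H}_{e_3,X}$, since $R$ normalizes $\g{h}_{e_3,X}$. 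However, $\ad(e_3+X+R)$ has eigenvalues $\pm i$ on $\R e_1\oplus\R e_2$. In contrast, $\ad(V)$ has only real eigenvalues for every $V\in\g{r}^3\oplus(\g{a}\loplus\g{n})$, also when restricted to any subalgebra. So $\g{h}'$ is not even isomorphic to a subalgebra of $\g{r}^3\oplus(\g{a}\loplus\g{n})$, let alone conjugate to one.

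What you actually get is weaker. A free solvable subgroup with the same orbits exists, and Borel subalgebra theory then places it in $(\g{t}_0\loplus\g{r}^n)\oplus(\g{t}_-\loplus(\g{a}\loplus\g{n}))$, with abelian $\g{t}_0\subseteq\g{so}_n$ and $\g{t}_-\subseteq Z_{\g{k}_-}(\g{a})$. These compact components can only be discarded at the level of orbits, not by conjugation.

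\textbf{The missing argument.} The paper proceeds in three steps.
\begin{enumerate}
\item Project $\g{h}$ along $\g{t}_0\oplus\g{t}_-$ onto the codimension-one subspace $\pi(\g{h})=(\g{r}^n\ominus E)\oplus\R(E+X)\oplus((\g{a}\loplus\g{n})\ominus X)$. This subspace is \emph{not} a priori a subalgebra.
\item Show that $X\in\g{a}$ and $\g{n}\subseteq\g{h}$. Berndt--Tamaru allow the solvable-type option $X\in\R H_\alpha\oplus\g{g}_\alpha$ for the $\g{g}_-$-part. The paper excludes it by an $\s{N}$-conjugation, followed by bracketing the lifts $H_\alpha+T_{H_\alpha}$ and $E+X+T_{E+X}$.
\item Show that $\g{t}_0$ and $\g{t}_-$ normalize $\g{h}_{E,X}$. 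Then $\g{t}_0\oplus\g{t}_-\oplus\g{h}_{E,X}$ is a subalgebra containing both $\g{h}$ and $\g{h}_{E,X}$ with the same orbit through $o$, so all three groups have the same orbits.
\end{enumerate}
In step~3, the statement for $\g{t}_-$ is immediate. For $\g{t}_0$ you must show that $\pr_{\g{so}_n}(\g{h})$ preserves $\g{r}^n\ominus E$. The paper first shows that the connected subgroup of $\s{SO}_n\ltimes\R^n$ with Lie algebra $\pr_{\g{so}_n\loplus\g{r}^n}(\pi^{-1}(\g{r}^n\ominus E))$ is closed and foliates $M_0$ by parallel hyperplanes. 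It then brackets with the lifts of $E+X$ and of $\g{a}\ominus X$.

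\textbf{On your step~3.} Once you genuinely are inside $\g{r}^n\oplus(\g{a}\loplus\g{n})$, step~3 is a one-liner. Since $E'\neq 0$, $\g{h}$ projects onto $\g{a}\loplus\g{n}$, and $\g{r}^n$ is central, so $[\g{h},\g{h}]=\g{n}$. This forces $Z\in\g{a}$. The simple-root analysis and the mean-curvature comparison you propose are unnecessary there. The real work is doing this modulo the compact components.
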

    
	\begin{remark}
        If any of the factors in the decomposition $M = M_+ \times M_0 \times M_-$ are trivial, it may not be possible to define some of the actions appearing above, and one should read Theorem~\ref{theorem:foliations} by removing such items from the list.
        For instance, if $M_0$ is nontrivial and $M_- = \{*\}$, then the action of $\s{H}$ is orbit-equivalent to the action described in (a).
		If $M_0 = \{*\}$ and $M_-$ is nontrivial, then the action of $\s{H}$ is orbit-equivalent to one of the actions described in (b) and (c).
		Finally, if both $M_0$ and $M_-$ are trivial, none of the actions in (a)--(d) can be defined and $M = M_+$ admits no homogeneous codimension-one foliations.
	\end{remark}
    
Note that the action in (a) comes from the action of $\R^{n-1}$ on $M_0 \cong \R^{n}$ by translations, whose orbits are parallel hyperplanes. This is the the only homogeneous codimension-one foliation on the Euclidean space up to congruence. Homogeneous codimension-one foliations on symmetric spaces of noncompact type have been studied extensively in \cite{BDT:jdg,BT:jdg,Solonenko:reducible} and correspond to two families of codimension-one subgroups of the solvable model of $M_-$. More precisely, the actions giving rise to so-called foliations of \emph{horospherical type} in (b) are given by connected Lie subgroups $\s{H}_X \subseteq \s{A} \ltimes \s{N}$ with Lie algebras of the form
	\[
	\g{h}_X = (\g{a} \ominus X) \loplus \g{n}
	\]
for some nonzero $X \in \g{a}$. The orbits of such an action are mutually congruent hypersurfaces. For certain choices of $X$, the orbits of $\s{H}_X$ are horospheres (see \cite[Rem.\ 5.4]{DVSLT}). The actions producing foliations of \emph{solvable type} in (c) are characterized by the fact that they have exactly one minimal orbit. They are given by connected Lie subgroups $\s{H}_i \subseteq \s{A} \ltimes \s{N}$ with Lie algebras
	\[
	\g{h}_{i} = \g{a} \loplus (\g{n} \ominus X)
	\]
for some  nonzero vector $X$ in a simple root space $\g{g}_{\alpha_i}$. Finally, the actions in (d) correspond to the new family of examples $\s{H}_{E,X}$ described above.

\begin{remark}\label{remark:hyperpolar}
    The actions of $\R^{n-1} \times \s{G}_-, \, \R^n \times \s{H}_X$, and $\s{H}_{E,X}$ on $M_0\times M_-$ can be viewed as part of a broader family of homogeneous foliations on $M_0\times M_-$:
    namely, if $\g{u}$ is any subspace of $\g{r}^n\oplus\g{a}$, then $\g{h}_\g{u}=\left((\g{r}^n\oplus\g{a})\ominus \g{u}\right)\loplus\g{n}$ is a subalgebra of $\g{r}^n\oplus(\g{a}\loplus\g{n})$.
    The action of the corresponding connected Lie subgroup $\s{H}_\g{u}$ of $\R^n\times (\s{A}\ltimes\s{N})$ on $M_0\times M_-$ gives rise to a (regular) homogeneous hyperpolar foliation with $\s{U}\cdot o$ as a section (\emph{hyperpolar} meaning that the action is polar and its sections are flat), where $\s{U}$ is the connected Lie subgroup of $\R^n \times \s{A}$ with Lie algebra $\g{u}$.
    Moreover, since $\g{h}_{\g{u}}$ is an ideal in $\g{r}^n\oplus(\g{a}\loplus\g{n})$, the orbits of $\s{H}_{\g{u}}$ are all mutually congruent (see \cite{KT:GeomDed}).
    The action of $\s{H}_{\g{u}}$ decomposes if and only if $\g{u} = (\g{u} \cap \g{r}^n) \oplus (\g{u} \cap \g{a})$. In this case, the orbits of $\s{H}_\g{u}$ are of the form $W \times F$, where $W$ is an affine subspace of $M_0 \cong \R^n$ parallel to $\R^n \ominus (\s{U} \cap \R^n)$, and $F$ is one of the leaves of the foliation $\mathcal{F}_{\Phi,V}$ described in \cite{BDT:jdg} with $\Phi = \emptyset$ and $V = \g{a}\ominus(\g{u}\cap\g{a})$.
\end{remark}
	
For our third and final step, we show that a cohomogeneity-one action on $M_0 \times M_-$ that has a singular orbit must necessarily decompose. This, together with Theorems~\ref{th:main:decompcompactxhadamard} and~\ref{theorem:foliations}, completes the proof of Theorem~\ref{mainth}.

\begin{thmx}\label{th:singorbdecomp}
	Let $M_0 \cong \R^n$ and let $M_- = \s{G}_-/\s{K}_-$ be a symmetric space of noncompact type.
	Let $\s{H}$ be a connected Lie group acting on $M_0 \times M_-$ properly and isometrically with cohomogeneity one.
	If $\s{H}$ has a singular orbit, then its action decomposes.
\end{thmx}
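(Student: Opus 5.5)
We plan to reduce to a fixed point problem on the singular orbit and use the structure of isometries of $M_0\times M_-$. Since $M_0\times M_-$ is simply connected with de Rham decomposition whose flat factor is $M_0$, we have $I^0(M_0\times M_-)=I^0(M_0)\times I^0(M_-)$. So we may regard $\s{H}$ as a closed subgroup of $\s{G}_0\times\s{G}_-$ with $\s{G}_0=\R^n\rtimes \mathrm{SO}(n)$. Let $\pi_0,\pi_-$ denote the two projections. Decomposability will follow from showing that the orbits of $\s{H}$ coincide with those of $\pi_0(\s{H})\times\pi_-(\s{H})$, with closures taken if needed. Here we use the criterion from Section~\ref{section:products}: an action decomposes if it is orbit-equivalent to a product action.

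\textbf{Step 1: structure of the singular orbit.} Let $S=\s{H}\cdot p$ be a singular orbit. For a cohomogeneity-one action on a Hadamard manifold, the orbit space is $[0,\infty)$, so there is exactly one singular orbit. All other orbits are tubes around $S$, and the slice representation of $\s{H}_p$ on $\nu_pS$ is transitive on the unit sphere, with $\dim\nu_pS\ge 2$. We would then show that $S$ is totally geodesic. The reason is that $S$ is the unique singular orbit, so it is invariant under the normalizer of $\s{H}$. In the Hadamard setting, the set of points at minimal distance from a principal orbit (the focal set) is convex, so $S$ is a closed convex, hence totally geodesic, submanifold. Totally geodesic submanifolds of a product are not in general products, so we record only $S$ and its normal space for now.

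\textbf{Step 2: splitting of the normal space.} This is the key step and the one I expect to be the main obstacle. The compact group $\s{H}_p$ acts on $T_p(M_0\times M_-)=T_pM_0\oplus T_pM_-$ preserving this splitting, because it lies in $\s{G}_0\times\s{G}_-$. It is transitive on the unit sphere of $\nu_pS$. An orthogonal representation that is transitive on the sphere of $\nu_pS$ and preserves the two factors forces $\nu_pS$ to lie entirely in $T_pM_0$ or entirely in $T_pM_-$. Indeed, if $v=v_0+v_-$ with both components nonzero, then $\|v_0\|$ would be constant on the unit sphere of $\nu_pS$. Connecting $v$ to a unit vector with different component norms, which exists since the sphere has dimension $\ge1$, unless $\nu_pS$ lies in a single factor, gives a contradiction. (One must check that sphere-transitivity really forces $\nu_pS$ itself to be one of these; equivalently, the projections of $\nu_pS$ to the factors are $\s{H}_p$-equivariant and sphere-transitivity makes them isometries up to scale, so each is either zero or injective. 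Constant ratio of norms then leads to a contradiction with the existence of $\s{H}_p$-invariant complements when both are nonzero and $\dim\ge2$.) Assume, say, $\nu_pS\subseteq T_pM_-$, so that $T_pM_0\subseteq T_pS$. The case $\nu_pS\subseteq T_pM_0$ is symmetric.

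\textbf{Step 3: conclusion.} Since $S$ is totally geodesic and $T_pM_0\subseteq T_pS$ at every point, $S=M_0\times S_-$ with $S_-\subseteq M_-$ totally geodesic. The principal orbits are the distance tubes $\{q:d(q,S)=r\}$. Because $\nu S$ is tangent to the $M_-$ factor, this is $M_0\times\{x\in M_-: d(x,S_-)=r\}$. Hence every orbit is of the form $M_0\times Q$, and the action is orbit-equivalent to $\s{G}_0\times\pi_-(\s{H})$, whose $\pi_-(\s{H})$-orbits on $M_-$ are the sets $Q$. Here the $\pi_-(\s{H})$-action is proper by closedness, as in the results of Section~\ref{section:products}. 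In the symmetric case we get $S=S_0\times M_-$ and orbits $\{\text{tubes around } S_0\}\times M_-$, orbit-equivalent to $\pi_0(\s{H})\times\s{G}_-$. Either way the action decomposes.
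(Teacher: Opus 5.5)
\textbf{The gap is in Step 2.} Sphere-transitivity of the slice representation only tells you that the $\s{H}_p$-invariant quadratic form $v \mapsto \|P_0 v\|^2$ on $\nu_pS$ (where $P_0$ is the orthogonal projection onto $T_pM_0$) equals $c\|v\|^2$ for some constant $c\in[0,1]$. Nothing forces $c\in\{0,1\}$. When $0<c<1$, $\nu_pS$ is the graph $\{v+Av : v\in W_0\}$ of an $\s{H}_p$-equivariant similarity $A\colon W_0\to W_-$ between invariant subspaces $W_0\subseteq T_pM_0$ and $W_-\subseteq T_pM_-$. This is perfectly consistent: let $\s{SO}_2$ act diagonally on $\R^2\oplus\R^2$ and take $\nu=\{(v,v)\}$. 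Every unit vector of $\nu$ then has the same component norms, so there is no vector to ``connect'' to. The invariant complement $\{(v,-v)\}$ also exists without producing any contradiction. Worse, in this diagonal situation $T_pS=(\nu_pS)^\perp$ projects onto both $T_pM_0$ and $T_pM_-$, so both projection actions of $\s{H}$ are transitive. By Lemma~\ref{lemma:SplittingC1} this is exactly the non-decomposable case the theorem has to exclude. So, granting Step 3, your Step 2 is equivalent to the theorem itself, and it cannot be decided from the isotropy representation at a single point.

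What is missing is global Lie-theoretic information about $\s{H}$. Already for $\R^2\times\R H^2$, excluding a diagonal normal space amounts to noticing that $\g{h}$ would be $3$-dimensional and map isomorphically onto both $\g{so}_2\loplus\g{r}^2$ and $\g{sl}_2(\R)$, which is impossible. The paper does this in general, assuming non-decomposability, in three steps:
\begin{enumerate}
\item It uses Mostow's dichotomy for maximal subgroups of $\s{G}_-$. Reductive ones cannot act transitively, so $\pr_{\s{G}_-}(\s{H})$ is either all of $\s{G}_-$ or lies in a maximal parabolic subgroup.
\item It reduces the parabolic case to a boundary component via the adapted canonical extension of Theorem~\ref{thm:parab}. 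There, non-decomposability is precisely what rules out the nilpotent construction.
\item It produces an impossible infinite descending chain of transitive subgroups $\s{E}_i\subseteq\s{SO}_n\ltimes\R^n$ via Lemma~\ref{lemma:splitting1}. The crux is that the Lie algebra of a transitive isometry group of $\R^n$ is a compact Lie algebra extended by an abelian translational ideal, so it cannot map onto a noncompact semisimple Lie algebra.
\end{enumerate}

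Separately, your Step 1 claim that the singular orbit is totally geodesic is false, and there is no convexity principle of the kind you invoke. Berndt and Br\"uck \cite{BB:crelle} give cohomogeneity-one actions on complex hyperbolic spaces with non-totally-geodesic singular orbits, and the nilpotent construction of \cite{BT:crelle} gives many more. That error is harmless, though. If $\nu_pS\subseteq T_pM_-$, then by homogeneity $T_qM_0\subseteq T_qS$ for all $q\in S$. The translation fields of $M_0$ are then tangent to the closed submanifold $S$ and their flows preserve it, so $S=M_0\times S_-$ follows without total geodesicity.
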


Theorem~\ref{mainth} can be interpreted as a classification result for homogeneous hypersurfaces, as these are precisely the regular orbits of isometric cohomogeneity-one actions. In fact, one can use it to derive explicit classifications of homogeneous hypersurfaces in a vast number of symmetric spaces. The only obstacle on the way to a complete classification for general symmetric spaces arises from the yet mysterious behavior of cohomogeneity-one actions on products of symmetric spaces of compact type (see~\cite{Kollross:reducible}). Homogeneous hypersurfaces provide the simplest example of hypersurfaces with constant principal curvatures.
Such hypersurfaces have recently been studied in \cite{ChavesSantos,MSSV} for the ambient spaces $\mathbb{S}^n \times \R$ and $\R \times \R H^n$ (which are symmetric spaces of mixed type). In turn, hypersurfaces with constant principal curvatures belong to the much larger class of hypersurfaces of constant mean curvature, whose study in products of space forms has become a lively area of research after the seminal work of Abresch and Rosenberg \cite{AR:Acta}. Homogeneous hypersurfaces have another remarkable geometric property: they are \emph{isoparametric} (that is, their locally defined nearby parallel hypersurfaces have constant mean curvature). The study and classification of isoparametric hypersurfaces is also a topic of great interest in Riemannian geometry. In recent years, there has been a particular surge of interest towards such hypersurfaces in products of space forms (see \cite{DVManzano,LimaPipoli,LimaPipoli2,SantosSantos,TXY:arxiv}). It should be noted that the only examples obtained in \cite{ChavesSantos,DVManzano,LimaPipoli,LimaPipoli2,MSSV,SantosSantos,TXY:arxiv} that do not split as products of the form $P_M \times N$ or $M \times P_N$ for some hypersurfaces $P_M \subseteq M$ and $P_N \subseteq N$ are precisely the orbits of $\s{H}_{E,X} \curvearrowright \R^n \times \R H^m$ for some $E$ and $X$.
	
\begin{example}[(\textit{Homogeneous hypersurfaces in products of space forms})]
	Consider a connected properly embedded homogeneous hypersurface $P \subseteq \mathbb{S}^n \times \R^m \times \R H^k$, where $n, k \ne 1$. A straightforward application of Theorem~\ref{mainth} yields that either $P$ splits as a product $P_{\mathbb{S}^n} \times \R^m \times \R H^k$, \, $\mathbb{S}^n \times P_{\R^m} \times \R H^k$, or $\mathbb{S}^n \times \R^m \times P_{\R H^k}$ (where $P_{\mathbb{S}^n}$, $P_{\R^m}$, $P_{\R H^k}$ are homogeneous hypersurfaces in $\mathbb{S}^n$, $\R^m$ and $\R H^k$, respectively), or $P$ is congruent to $\mathbb{S}^n \times P_{\Delta}$, where $P_\Delta$ is a leaf of a homogeneous codimension-one foliation of diagonal type on $\R^m \times \R H^k$.
		
	As a special case, if $P$ is a homogeneous hypersurface in $\mathbb{S}^n \times \R^m$ (resp., $\mathbb{S}^n \times \R H^k$), Theorem~\ref{mainth} implies that $P = P_{\mathbb{S}^n} \times \R^m$ or $P = \mathbb{S}^n \times P_{\R^m}$ (resp., $P = P_{\mathbb{S}^n} \times \R H^k$ or $P = \mathbb{S}^n \times P_{\R H^k}$), with $P_{\mathbb{S}^n}$, $P_{\R^m}$, and $P_{\R H^k}$ as above.
	Similarly, a homogeneous hypersurface $P \subseteq \R^m \times \R H^k$ either is a product $P_{\R^m} \times \R H^k$ or $\R^m \times P_{\R H^k}$, or is congruent to a leaf $P_{\Delta}$ of a homogeneous codimension-one foliation of diagonal type. It follows from \cite{Hsiang-Lawson} that $P_{\mathbb{S}^n}$ is congruent to a principal orbit of the action obtained by restricting the isotropy representation of a symmetric space of rank two to the unit sphere. By \cite{Segre}, $P_{\R^m}$ is a hyperplane, a sphere, or a (generalized) cylinder. Lastly, we know from \cite{Cartan} that $P_{\R H^k}$ is a horosphere, a totally geodesic $\R H^{k-1}$ or an equidistant hypersurface to it, a tube around a totally geodesic $\R H^l$, or a geodesic sphere.	
\end{example}

\begingroup
\linespread{1.05}\selectfont

Having dealt with Theorem \ref{mainth}, we turn our attention to cohomogeneity-one actions on non-simply connected symmetric spaces. For an isometric action of a connected Lie group on a Riemannian manifold $M$, we introduce a simple way to \textit{lift} the action to the universal Riemannian cover $\widetilde{M}$ of $M$. Conversely, we investigate when an action on $\widetilde{M}$ can \textit{descend} to one on $M$. We show that these two procedures are mutually inverse and respect the cohomogeneity. We also look at how they interact with the orbits and orbit equivalence and inspect when they preserve the properness of an action. This machinery allows us to reduce the study and classification of proper isometric actions on $M$ to those on $\widetilde{M}$, provided that we understand the group $\Aut(\pi)$ of deck transformations of $\pi \colon \widetilde{M} \to M$ and how it embeds into $I(\widetilde{M})$. In the special case when $M$ is a symmetric space, this group is well understood and looks particularly simple. This allows us to prove Theorem \ref{th:nonsimplyconnected}, which can be regarded as an analog of Theorem~\ref{mainth} for non-simply connected symmetric spaces.

One of the main technical difficulties we encounter throughout the article is having to deal with the properness assumption on an action. For example, investigating when the descent of a proper action is proper comprises a big part of Section \ref{section:nonsimplyconnected}. There are also several moments when it becomes unavoidable to consider actions that may fail to be proper. One such example is Lemma~\ref{lemma:SplittingC1}, another is our proof of Theorem \ref{th:singorbdecomp} in Subsection \ref{subsection:singorb:singorb}. In fact, our approach to Theorem \ref{th:singorbdecomp} requires us to consider arbitrary (not necessarily proper) transitive isometric actions on Euclidean spaces. Combining the ideas from ~\cite{Alekseevski,AW:homogeneous,GW:isomsolv} with Alekseevskii's description of flat metric Lie algebras \cite[Prop.\ 3.1]{Alekseevski}, we obtain a general classification of transitive groups of isometries of Euclidean spaces in Subsection \ref{subsection:singorb:transitivern}.

This paper is organized as follows. In Section~\ref{section:structure}, we give a brief exposition of some aspects of the theories of symmetric spaces, real semisimple Lie algebras, and isometric actions. In Section~\ref{section:products}, we develop some structural theory concerning isometric actions on Riemannian products. In particular, we prove some decomposability results that lead to Theorem~\ref{th:main:decompcompactxhadamard}. In Section~\ref{section:HFSS} we study cohomogeneity-one actions on products of symmetric spaces under the assumption that they do not have singular orbits, where we conclude by proving Theorem~\ref{theorem:foliations} and computing some geometric properties of the orbits of $\s{H}_{E,X}$. In Section~\ref{section:singorb} we prove Theorem~\ref{th:singorbdecomp}. Theorem~\ref{mainth} then follows as an immediate consequence of Theorems~\ref{th:main:decompcompactxhadamard},~\ref{theorem:foliations}, and~\ref{th:singorbdecomp}. Finally, in Section~\ref{section:nonsimplyconnected}, we explain how to relate isometric actions on a connected Riemannian manifold to those on its universal cover, which lets us generalize Theorem~\ref{mainth} to non-simply connected symmetric spaces.

\begin{ackn*}
	The authors would like to thank J. C. D\'iaz-Ramos, M. Dom\'inguez-V\'azquez, and J. M. Lorenzo-Naveiro for helpful comments and discussions.
    We also thank A. J. di Scala and C. Gorodski for pointing us towards the results of \cite{Alekseevski}.
	The first author acknowledges support from the Deutsche Forschungsgemeinschaft (DFG, German Research Foundation) Project ID~427320536--SFB 1442, and Germany's Excellence Strategy EXC 2044/2 - 390685587, Mathematics Münster: Dynamics-Geometry-Structure, and PID2022-138988NB-I00 funded by MICIU/AEI/10.13039/501100011033 and by ERDF, EU.
	The third author was supported by JSPS KAKENHI Grant Numbers 23K22395 and 24K21193.  
    This work was partially supported by the MEXT Promotion of Distinctive Joint Research Center Program (JPMXP0723833165)  
    and the Osaka Metropolitan University Strategic Research Promotion Project (Development of International Research Hubs).
\end{ackn*}

\par
\endgroup

%% file: 2-Preliminaries.tex
In this section, we introduce some basic concepts, terminology, and notation related to isometric actions, Riemannian symmetric spaces, and real semisimple Lie algebras, which will be used throughout the article. We also mention some aspects of the theory of parabolic subgroups of real semisimple Lie groups. For a more detailed exposition, we refer to \cite{Helgason} for the theory of symmetric spaces, \cite{Knapp} and \cite{OnishchikVinberg} for semisimple Lie algebras, and \cite{AleBet,Michor} for isometric actions (see also \cite{BCO:book}).

\flushbottom

\subsection{Isometric actions and their orbits}\label{subsection:prelim:isometric}
If $\s{H}$ is a Lie group acting smoothly on a smooth manifold $M$, then each orbit $\s{H} \cdot p \cong \s{H}/\s{H}_p$ is a injectively immersed submanifold of $M$. The \textit{cohomogeneity} of an action $\s{H} \curvearrowright M$ is the lowest codimension of its orbits. Orbits of the lowest codimension (as well as their points) are called \textit{regular} and the other ones are said to be \textit{singular}. The action is said to be \textit{proper} if the map $\s{H} \times M \to M \times M$ given by $(h,p) \mapsto (h \cdot p, p)$ is proper.  Proper actions enjoy a variety of favorable properties: for instance, their orbits are properly embedded, their isotropy subgroups are compact, and the orbit space $M/\s{H}$ is Hausdorff. If $M$ is a (complete and connected) Riemannian manifold, an action $\s{H} \curvearrowright M$ is called \textit{isometric} if it preserves the metric of $M$. In this case (after possibly quotienting $\s{H}$ by its ineffectiveness kernel) we can assume that $\s{H}$ is a Lie subgroup of the isometry group $I(M)$. Then, the action of $\s{H}$ on $M$ is proper if and only if $\s{H}$ is a closed subgroup of $I(M)$. If all orbits of $\s{H} \subseteq I(M)$ are closed, then $\s{H}$ has the same orbits as its closure $\overline{\s{H}}$ in $I(M)$. For these reasons, it is rather common to restrict oneself to closed subgroups of $I(M)$ when studying isometric actions. 

Throughout the article, we will occasionally have to work with actions that may fail to be proper. We will make use of the following two facts. First, given a smooth action $\s{H} \curvearrowright M$ of a connected Lie group $\s{H}$, the orbits of $\s{H}$ are not merely immersed but in fact weakly embedded, also known as initial submanifolds (see \cite[Th.\ 6.4]{Michor}): any smooth map $f \colon N \to M$ that takes values in an orbit $\s{H} \cdot p$ is actually smooth as a map to $\s{H} \cdot p$. Second, any isometric action of cohomogeneity zero on a connected Riemannian manifold must be transitive. Indeed, such an action must have an open orbit. Being a Riemannian homogeneous space, that orbit is complete and thus must coincide with the ambient manifold.

If $\s{H} \curvearrowright M$ is a proper isometric action with both $\s{H}$ and $M$ connected, the orbits are equidistant and hence one can define distance in the orbit space $M/\s{H}$, turning it into a metric space. If the action has cohomogeneity one and $M$ is complete, the orbit space is a 1-dimensional manifold with boundary and thus has to be homeomorphic to one of the following: the real line $\R$, the circle $\mathbb{S}^1$, the half-open interval $[0,1)$, or the closed interval $[0,1]$. In this case, either the orbits of $\s{H}$ form a regular codimension-one foliation, or else there exist at most two singular orbits, and the regular orbits are tubes around any of the singular ones. There are various relations between the geometry and topology of $M$ and $M/\s{H}$. For example, if $M$ is compact, then so is the orbit space, and hence the latter is homeomorphic to either $\mathbb{S}^1$ or $[0,1]$. If $M$ is simply connected, the orbit space cannot be $\mathbb{S}^1$, and the boundary points of $M/\s{H}$ correspond precisely to the singular orbits. 
If $M$ is a Hadamard manifold, then $M/\s{H}$ is either $\R$ or $[0,1)$. See \cite[Sect.~6.3]{AleBet} and \cite[Sect.~2]{BB:crelle} for further details.

\subsection{Symmetric spaces and their type.}\label{subsection:symmetricsptype}
Let $M = \s{G}/\s{K}$ be a symmetric space, where $\s{G} = I^0(M)$ is the identity component of the isometry group of $M$ and $\s{K}$ is the isotropy group of some fixed base point $o \in M$. As usual, we will use gothic letters to denote the corresponding Lie algebras. If $s_o \in I(M)$ is the geodesic symmetry at $o$, then $\theta = \Ad(s_o)$ is an involutive automorphism of $\g{g}$. The $(+1)$-eigenspace of $\theta$ coincides with the Lie algebra $\g{k}$ of $\s{K}$, whereas the $(-1)$-eigenspace $\g{p}$ is naturally identified with the tangent space $T_oM$, and we have a reductive decomposition $\g{g} = \g{k} \oplus \g{p}$. Let $\mathcal{B}$ be the Killing form of $\g{g}$. A symmetric space is said to be of \emph{compact}, \emph{Euclidean}, or \emph{noncompact} type if the restriction of $\mathcal{B}$ to $\g{p} \times \g{p}$ is negative definite, zero, or positive definite, respectively. If $M$ is of compact type, then $M$ is compact and has nonnegative curvature; if $M$ is of noncompact type, then $M$ is a Hadamard manifold, i.e., it is of nonpositive curvature and diffeomorphic to a Euclidean space; lastly, if $M$ is of Euclidean type, then it is flat and thus its universal cover is a Euclidean space.
 
Assume that $M$ is simply connected and let $M = M_0 \times M_1 \times \cdots \times M_k$ be its de Rham decomposition, where $M_0$ is the Euclidean factor. We can write $M_i = \s{G}_i/\s{K}_i$ for each $i = 0, \ldots, k$ just like we did for $M$, which leads to the splittings $\s{G} = \prod_i \s{G}_i$,  $\s{K} = \prod_i \s{K}_i$, and so on (cf.\ Lemma~\ref{lemma:solonenko}). Moreover, it follows from  Schur's lemma that for $i \neq 0$, $M_i$ must be of either compact or noncompact type. By grouping together the $M_i$'s of the same type, we get a Riemannian product decomposition $M = M_+ \times M_0 \times M_-$, where $M_+$ and $M_-$ are symmetric spaces of compact and noncompact types, respectively, and $M_0 \cong \R^n$ for some $n \in \mathbb{N}$.

Throughout Sections~\ref{section:HFSS} and~\ref{section:singorb}, it will come in very handy to fix an $\Ad_{\s{K}}$-invariant inner product on $\g{g}$ that makes $\g{k}$ and $\g{p}$ orthogonal. We do it as follows: first of all, proclaim $\g{g}_i$ and $\g{g}_j$ to be orthogonal for $i\neq j$. Just as we did before, fix an isometry $M_0\cong\R^n$. The (normal abelian) subgroup of $\s{G}_0$ consisting of translations can be naturally identified with $\R^n$ itself. We then have $\s{G}_0 \cong \s{SO}_n \ltimes \R^n$ and $\g{g}_0 \cong \g{so}_n \loplus \g{r}^n$, where $\g{r}^n = \g{p}_0$ is an abelian ideal. We write $\bilin{-}{-}_{M_0}$ for the scalar product on $\g{r}^n$ induced via $\g{r}^n \cong T_o \R^n \simeq \R^n$. For $X,Y\in \g{so}_n$ and $v,w\in\g{r}^n$, the formula $\bilin{X+v}{Y+w} = -\tr(XY) + \bilin{v}{w}_{M_0}$ defines an $\Ad_{\s{SO}_n}$-invariant inner product on $\g{g}_0$. Next, take any $1 \le i \le k$. The Killing form $\mathcal{B}_{\g{g}_i}$ is negative definite on $\g{k}_i$, and we also have $\g{k}_i \perp \g{p}_i$. If $M_i$ is of compact type, then $\mathcal{B}_{\g{g}_i}$ is also negative definite on $\g{p}_i$, and we take $\bilin{-}{-}$ to be $-\mathcal{B}_{\g{g}_i}$ on $\g{g}_i$. If $M_i$ is of noncompact type, then $\mathcal{B}_{\g{g}_i}$ is positive definite on $\g{p}_i$, and we take $\bilin{X}{Y} = -\mathcal{B}_{\g{g}_i} (X, \theta Y)$. This inner product satisfies $\langle \ad(X)Y, Z \rangle = -\langle Y, \ad(\theta X), Z \rangle$ for any $X, Y, Z \in \g{g}_i$ and, in particular, is $\ad_{\g{k}_i}$-invariant. Putting these together, we get an $\Ad_{\s{K}}$-invariant inner product $\bilin{-}{-}$ on the whole $\g{g}$.

In order to simplify some of the computations, unless mentioned otherwise, we will assume throughout the rest of the article that the metric on $M$ comes from the restriction of $\bilin{-}{-}$ to $\g{p}$. Although this might seem restrictive a priori, this assumptions turns out harmless when classifying isometric actions on $M$. Indeed, it is a consequence of Schur's lemma that any $\s{G}$-invariant Riemannian metric $g$ on $M$ can be written at the base point $o$ as
\[
g_o = \lambda_0 \restr{\bilin{-}{-}}{\g{p}_0 \times \g{p}_0} + \lambda_1 \restr{\bilin{-}{-}}{\g{p}_1 \times \g{p}_1} + \cdots + \lambda_k \restr{\bilin{-}{-}}{\g{p}_k \times \g{p}_k}
\]
for some constants $\lambda_i > 0$. Consequently, for any two $\s{G}$-invariant metrics $g$ and $g'$ on $M$, we have $I^0(M,g) = I^0(M,g')$ (see Lemma~\ref{lemma:solonenko}). This implies that an action of a connected Lie group on $M$ is isometric with respect to $g$ if and only if it is such with respect to $g'$ (see also \cite[Rem.\ 2.5]{DDO:adv}).
	
\begin{remark}
	Note that the subalgebras $\g{h}_X$, $\g{h}_i$, and $\g{h}_{E,X}$ in Theorems~\ref{mainth}~and~\ref{theorem:foliations} have been previously described in terms of the induced inner product on $\g{r}^n \oplus (\g{a} \loplus \g{n})$ under the identification $\g{r}^n \oplus (\g{a} \loplus \g{n}) \cong T_o(M_0 \times M_-)$. If we define these subalgebras in terms of our fixed inner product $\bilin{-}{-}$, the result might be different.
\end{remark}

\subsection{Structure of symmetric spaces of noncompact type}\label{subsection:structurenoncompact}
If $M_-=\s{G}_-/\s{K}_-$ is a symmetric space of noncompact type, then $\s{G}_-$ is a noncompact real semisimple Lie group and $\s{K}_-$ is a maximal compact subgroup of $\s{G}_-$. In this case, $\g{g}_- = \g{k}_- \oplus \g{p}_-$ is a Cartan decomposition and $\theta$ is the corresponding Cartan involution. Pick a maximal abelian subspace $\g{a}$ in $\g{p}_-$. For each $\lambda \in \g{a}^*$, define $\g{g}_\lambda = \{ X \in \g{g}_- \mid [H,X] = \lambda(H) X \; \text{for all} \; H \in \g{a} \}$. If $\lambda \ne 0$ and $\g{g}_{\lambda} \ne \{0\}$, $\lambda$ is called a (restricted) root of $\g{g}_-$ with respect to $\g{a}$, and $\g{g}_\lambda$ is the root space corresponding to $\lambda$. The set of roots of $\g{g}_-$ with respect to $\g{a}$ will be denoted by $\Sigma$. The subspace\footnote{In the presence of a Euclidean factor $M_0$, one has to be careful not to confuse this subspace with the isometry Lie algebra of $M_0$.} $\g{g}_0$ coincides with $Z_{\g{k}}(\g{a}) \oplus \g{a}$, where $Z_{\g{k}}(\g{a})$ is the centralizer of $\g{a}$ in $\g{k}$. The direct sum decomposition
\[
\g{g}_- = \g{g}_0 \oplus \left(\bigoplus_{\lambda \in \Sigma} \g{g}_{\lambda} \right)
\]
is called the restricted root space decomposition of $\g{g}_-$ with respect to $\g{a}$. Moreover, if $\lambda, \, \mu \in \Sigma \cup \{0\}$, we have $\theta \g{g}_\lambda = \g{g}_{-\lambda}$ and $[\g{g}_{\lambda},\g{g}_{\mu}] \subseteq \g{g}_{\lambda + \mu}$. The rank $r$ of $M_-$ coincides with $\dim(\g{a})$. The restriction of the Killing form $\mathcal{B}_{\g{g}_-}$ to $\g{a}$ is positive definite, so it gives rise to an isomorphism $\g{a} \isoto \g{a}^*$ and thus induces an inner product on $\g{a}^*$. The set $\Sigma$ constitutes a (possibly nonreduced) root system in $\g{a}^*$. Choose a system $\Sigma^+$ of positive roots in $\Sigma$ and let $\Lambda = \{ \alpha_1, \ldots, \alpha_r \} \subseteq \Sigma^+$ be the corresponding set of simple roots. The sum $\g{n} = \bigoplus_{\lambda \in \Sigma^+} \g{g}_\lambda$ is a nilpotent subalgebra of $\g{g}_-$. What is more, $\g{g}_-$ splits into a vector space direct sum $\g{g}_- = \g{k}_- \oplus \g{a} \oplus \g{n}$, known as an Iwasawa decomposition. If we write $\s{A}$ and $\s{N}$ for the connected Lie subgroups of $\s{G}_-$ with Lie algebras $\g{a}$ and $\g{n}$, respectively, then we get a global Iwasawa decomposition $\s{G}_- = \s{K}_-\s{A}\s{N}$ (meaning, the multiplication map yields a diffeomorphism $\s{K}_- \times \s{A} \times \s{N} \isoto \s{G}_-$). The subgroups $\s{A}$ and $\s{N}$ are closed and simply connected. Moreover, $\s{A}$ normalizes $\s{N}$, so the product $\s{A}\s{N} = \s{A} \ltimes \s{N}$ is a solvable Lie subgroup of $\s{G}_-$. The global Iwasawa decomposition implies that $\s{A}\ltimes\s{N}$ acts on $M_-$ simply transitively. The pullback of the metric of $M_-$ under the diffeomorphism $\s{A}\ltimes\s{N} \isoto M_-$ is left-invariant, which allows us to think of $M_-$ as a simply connected solvable Lie group endowed with a left-invariant metric. This is called the solvable model of $M_-$.

\subsection{Parabolic subalgebras of real semisimple Lie algebras}\label{subsection:prelim:parabolic}
Let $\g{g}_-$ be a noncompact real semisimple Lie algebra. A subalgebra $\g{q} \subseteq \g{g}_-$ is called \emph{parabolic} if it contains a subalgebra conjugate to $Z_{\g{k}}(\g{a}) \oplus \g{a} \oplus \g{n}$. Geometrically speaking, if $\g{g}_-$ is the isometry Lie algebra of a symmetric space of noncompact type $M_-$, then proper parabolic subalgebras of $\g{g}_-$ are the isotropy Lie algebras of points at the ideal boundary $M_-(\infty)$ under the action of $\s{G}_-$.
    
The conjugacy classes of parabolic subalgebras of $\g{g}_-$ can be parameterized by the subsets of the set of simple roots: given $\Phi \subseteq \Lambda$, we can consider the root subsystem of $\Sigma$ generated by $\Phi$, namely $\Sigma_\Phi = \Sigma \cap \operatorname{span}\{\Phi\}$. We choose $\Sigma_\Phi^+ = \Sigma^+ \cap \Sigma_\Phi$ as a system of positive roots for $\Sigma_\Phi$ and define the following subalgebras of $\g{g}_-$:
	\[
	\g{l}_{\Phi} = \g{g}_0 \oplus \Bigg( \bigoplus_{\lambda \in \Sigma_\Phi} \g{g}_\lambda \Bigg), \quad \g{n}_\Phi = \hspace{0.6em} \smashoperator{\bigoplus_{\lambda \in \Sigma^+ \setminus \Sigma_\Phi^+}} \; \g{g}_\lambda,    
	\]
the first of which is $\theta$-invariant and thus reductive, whereas the second one is nilpotent. It follows from the properties of root spaces that $\g{l}_{\Phi}$ normalizes $\g{n}_{\Phi}$ and therefore
	\begin{equation}\label{eq:Chevalley}
		\g{q}_\Phi = \g{l}_{\Phi} \loplus \g{n}_\Phi
	\end{equation}
is a subalgebra of $\g{g}_-$ containing $Z_{\g{k}}(\g{a}) \oplus \g{a} \oplus \g{n}$. It is called the parabolic subalgebra of $\g{g}_-$ associated with the subset $\Phi$, and decomposition~\eqref{eq:Chevalley} is known as its Chevalley decomposition. Any parabolic subalgebra of $\g{g}_-$ is conjugate to $\g{q}_\Phi$ for some choice of $\Phi \subseteq \Lambda$.
	
We define $\g{a}_\Phi = \bigcap_{\alpha \in \Phi} \ker(\alpha)$ and $\g{a}^\Phi = \g{a} \ominus \g{a}_\Phi$. The abelian subalgebra $\g{a}_\Phi$ normalizes $\g{n}_\Phi$ and its centralizer in $\g{g}_-$ is $\g{l}_\Phi$. The complement $\g{m}_\Phi = \g{l}_\Phi \ominus \g{a}_\Phi$ is also a $\theta$-invariant subalgebra, and we have a Lie algebra direct sum decomposition $\g{l}_\Phi = \g{m}_\Phi \oplus \g{a}_\Phi$. The induced decomposition 
	\[
	\g{q}_\Phi = (\g{m}_\Phi \oplus \g{a}_\Phi) \loplus \g{n}_\Phi
	\]
is called the Langlands decomposition of $\g{q}_\Phi$.
	
The subalgebra $\g{n}_\Phi$ admits a natural grading. Namely, consider $H^\Phi = \sum_{\alpha_i \in \Lambda \setminus \Phi} H^{i}$, where $\{H^i\}_{i=1}^r$ is the basis for $\g{a}$ dual to $\Lambda$, and write $m_\Phi = \delta(H^\Phi)$, where $\delta$ is the highest root in $\Sigma^+$. Note that $\lambda(H^\Phi) \in \{0, 1, \ldots, m_\Phi\}$ for all $\lambda \in \Sigma^+$, and $\lambda(H^\Phi) = 0$ if and only if $\lambda \in \Sigma_\Phi^+$. Thus, we have a splitting
    \[
    \g{n}_\Phi = \bigoplus_{\nu = 1}^{m_\Phi} \g{n}_\Phi^\nu, \quad \text{where} \quad \g{n}_\Phi^\nu = \hspace{0.4em} \smashoperator{\bigoplus_{\lambda(H^\Phi) = \nu}} \; \g{g}_\lambda.
    \]
Given $\lambda \in \Sigma$, we define $\g{k}_\lambda = \pr_{\g{k}_-}(\g{g}_{\lambda}) = \g{k}_- \cap (\g{g}_\lambda \oplus \g{g}_{-\lambda})$ and $ \g{p}_\lambda = \pr_{\g{p}_-}(\g{g}_{\lambda}) = {\g{p}_- \cap (\g{g}_\lambda \oplus \g{g}_{-\lambda})}$, where $\pr_{\g{k}_-}$ and $\pr_{\g{p}_-}$ are the orthogonal projections to $\g{k}_-$ and $\g{p}_-$, respectively. Then,
	\[
	\g{k}_\Phi = \g{q}_\Phi \cap \g{k} = \g{m}_\Phi \cap \g{k} = \g{k}_0 \oplus \biggl( \bigoplus_{\lambda \in \Sigma^+_\Phi} \g{k}_\lambda \biggr) \quad \text{and} \quad \g{b}_\Phi = \g{m}_\Phi \cap \g{p} = \g{a}^\Phi \oplus \biggl(\bigoplus_{\lambda \in \Sigma^+_\Phi} \g{p}_\lambda \biggr)
	\]
are a maximal compactly embedded subalgebra of $\g{q}_\Phi$ and a Lie triple system in $\g{p}_-$, respectively. The latter means that $\g{b}_\Phi$ corresponds to the tangent space at $o_{M_-}$ of a connected complete totally geodesic submanifold $B_\Phi$ of $M_-$, called a \emph{boundary component}. The submanifold $B_\Phi$ is itself a symmetric space of noncompact type whose rank is equal to the cardinality of $\Phi$. We write\footnote{The definition of $\g{g}_\Phi$ given here is different from the one used in~\cite{BDT:jdg,BT:crelle}, where $\g{g}_\Phi$ was defined to be the derived subalgebra of $\g{m}_\Phi$---which is in general larger and thus fails to be the isometry Lie algebra of $B_\Phi$ (see \cite[Rem.\ 2.4]{Solonenko}).} $\g{g}_\Phi$ for the subalgebra of $\g{g}_-$ generated by $\g{b}_\Phi$. This subalgebra is $\theta$-stable and semisimple and it has a natural Cartan decomposition $\g{g}_\Phi = [\g{b}_\Phi,\g{b}_\Phi] \oplus \g{b}_\Phi$. Note that $\g{a}^\Phi$ is a maximal abelian subspace of $\g{b}_\Phi$. The corresponding root system of $\g{g}_\Phi$ is given by $\Sigma_\Phi$ and it has $\Phi$ as a system of simple roots.
	
Write $\s{A}_\Phi$, $\s{N}_\Phi$, and $\s{G}_\Phi$ for the connected (closed) Lie subgroups of $\s{G}_-$ with Lie algebras $\g{a}_\Phi$, $\g{n}_\Phi$, and $\g{g}_\Phi$, respectively. The subgroup $\s{L}_\Phi = Z_{\s{G}_-}(\g{a}_\Phi)$ normalizes $\s{N}_\Phi$ and has $\g{l}_\Phi$ as its Lie algebra, and the semidirect product $\s{Q}_\Phi = \s{L}_\Phi \ltimes \s{N}_\Phi$ coincides with the normalizer $N_{\s{G}_-}(\g{q}_\Phi)$. We call $\s{Q}_\Phi$ the parabolic subgroup of $\s{G}_-$ associated with $\Phi$. We further define $\s{K}_\Phi = \s{L}_\Phi \cap \s{K}_-$ and $\s{M}_\Phi = \s{K}_\Phi \s{G}_\Phi$. These are closed subgroups of $\s{G}_-$ with Lie algebras $\g{k}_\Phi$ and $\g{m}_\Phi$, respectively. Note that $\s{M}_\Phi$ and $\s{L}_\Phi$ are (possibly disconnected) reductive Lie groups and $\s{K}_\Phi$ is a maximal compact subgroup in either of them (as well as in $\s{Q}_\Phi$). The group $\s{L}_\Phi$ splits as a direct product $\s{L}_\Phi = \s{M}_\Phi \times \s{A}_\Phi$. The decompositions 
	\[
	\s{Q}_\Phi = \s{L}_\Phi \ltimes \s{N}_\Phi \quad \text{and} \quad \s{Q}_\Phi = (\s{M}_\Phi \times \s{A}_\Phi) \ltimes \s{N}_\Phi
	\]
are called the Chevalley and Langlands decompositions of $\s{Q}_\Phi$, respectively.
	
The orbit of $\s{G}_\Phi$ (which coincides with that of $\s{M}_\Phi)$ through $o$ in $M_-$ is precisely the boundary component $B_\Phi$. Moreover, $(\s{G}_\Phi, \s{G}_\Phi \cap \s{K}_-) = (\s{G}_\Phi, \s{G}_\Phi \cap \s{K}_\Phi)$ is an almost effective Riemannian symmetric pair representing $B_\Phi$, meaning, the subgroup of elements of $\s{G}_\Phi$ that act on $B_\Phi$ trivially is finite. In particular, we have a finite covering $\s{G}_\Phi \rightarrow I^0(B_\Phi)$ and $\g{g}_\Phi$ is the isometry Lie algebra of $B_\Phi$. The Langlands decomposition of $\s{Q}_\Phi$ induces a diffeomorphism
	\[
	B_\Phi \times \s{A}_\Phi \times \s{N}_\Phi \isoto M_-,
	\]
known as the horospherical decomposition of $M_-$. This allows us to identify $T_{o_{M_-}}(M_-) \cong \g{b}_\Phi \oplus \g{a}_\Phi \oplus \g{n}_\Phi$.
	
	\begin{remark}
		Throughout the paper (and especially in Subsection~\ref{subsection:singorb:canonical}), we will be particularly interested in maximal proper parabolic subalgebras of $\g{g}_-$. Note that if $\Psi \subseteq \Phi \subseteq \Lambda$, then $\g{q}_\Psi \subseteq \g{q}_\Phi$.
		Thus, maximal proper parabolic subalgebras of $\g{g}_-$ correspond to subsets of simple roots of the form $\Lambda \setminus \{\alpha_j\}$ for some simple root $\alpha_j \in \Lambda$.
		For the sake of brevity, we will often drop the index $\Phi$ and substitute it by $j$ when dealing with maximal proper parabolic subalgebras (e.g., the parabolic subalgebra associated with $\Phi = \Lambda \setminus \{\alpha_j\}$ will be denoted by $\g{q}_j$, and so on).
	\end{remark}

%% file: 3-Isometric_actions_on_products.tex
Given an isometric action on a Riemannian product $M_1 \times \cdots \times M_n$, it makes sense to ask how much information about the action can be extracted from its behavior on each of the individual factors $M_i$. In this section, we introduce a notion of decomposability for isometric actions following the approach used by Kollross for products of symmetric spaces of compact type \cite{Kollross:reducible}. It should be noted that, when trying to carry out this kind of study for general Riemannian manifolds, one encounters some technical difficulties. Namely, a proper isometric action on a Riemannian product may not induce actions on its factors and, even if it does, these induced actions may fail to be proper. We deal with these obstacles and prove some decomposability results for cohomogeneity-one actions.

Given a collection of Lie groups $\s{H}_1, \ldots, \s{H}_k$ acting isometrically on (connected) Riemannian manifolds $M_1, \ldots, M_k$, one can define the \emph{product action} of $\s{H}_1 \times \cdots \times \s{H}_k$ on $M_1 \times \cdots \times M_k$ by the formula
\[
(h_1, \ldots,h_k) \cdot (p_1, \ldots, p_k) = (h_1 \cdot p_1, \ldots, h_k \cdot p_k),
\]
for $h_i \in \s{H}_i$ and  $p_i \in M_i$. Note that the orbits of this action are precisely the products of the orbits of the actions $\s{H}_i \curvearrowright M_i$. We will say that an isometric action of a Lie group $\s{H}$ on a product of Riemannian manifolds $M_1 \times \ldots \times M_k$ \emph{decomposes} if there exist isometric Lie group actions $\s{H}_i \curvearrowright M_i, \, i = 1, \ldots, k$, such that the action of $\s{H}$ has the same orbits as the product action of $\s{H}_1 \times \cdots \times \s{H}_k$ on $M_1 \times \cdots \times M_k$. (If $k = 1$, we say that every action decomposes.) It should be noted that even when the action of $\s{H}$ is proper, the actions $\s{H}_i \curvearrowright M_i$ do not have to be. However, in that case, since the orbits of $\s{H}$ are properly embedded, the same is true for the orbits of each of the actions $\s{H_i} \curvearrowright M_i$. As we mentioned in Subsection \ref{subsection:prelim:isometric}, for each $i$, the closure $\overline{\s{H}}_i$ in $I(M_i)$ must have then same orbits as $\s{H}_i$. Consequently, we can simply replace each $\s{H}_i$ with $\overline{\s{H}}_i$, whose action on $M_i$ is now proper.

\begin{remark}
	This notion of decomposability naturally depends on the expression of $M$ as a Riemannian product. In order to avoid this, in \cite{Kollross:reducible} an isometric action of a Lie group $\s{H}$ on a Riemannian manifold $M$ was defined to be \emph{decomposable} if there exists a decomposition of $M$ as a nontrivial Riemannian product $M = M_1 \times \cdots \times M_k$ such that the action of $\s{H}$ is orbit-equivalent to an action on $M_1 \times \cdots \times M_k$ that decomposes in the above sense.
    Although our notion is slightly more restrictive, it is tailored to the study of actions in relation to a fixed decomposition of $M$, such as the decomposition $M = M_+ \times M_0 \times M_-$ of a simply connected symmetric space.
\end{remark}

Let $M = M_1 \times \cdots \times M_k$ be a product of connected Riemannian manifolds and let $\s{H}$ be a Lie subgroup of $I(M_1) \times \cdots \times I(M_k) \subseteq I(M)$. Then, the action of $\s{H}$ on $M$ naturally induces an isometric action on each of the factors by taking
\[
(h_1, \ldots, h_k) \cdot p_i = h_i \cdot p_i
\]
for $(h_1, \ldots, h_k) \in \s{H}$ and $p_i \in M_i$. This is known as the \emph{projection action} of $\s{H}$ on $M_i$. The orbits of this action are precisely the projections of the orbits of $\s{H} \curvearrowright M$ in $M_i$. If we write $\pr_{I(M_i)} \colon I(M_1) \times \cdots \times I(M_k) \rightarrow I(M_i)$ for the projection onto $I(M_i)$, then the effectivization of the projection action of $\s{H}$ on $M_i$ is the action of $\pr_{I(M_i)}(\s{H})$ on $M_i$, so both actions have the same orbits. If the action of $\s{H}$ decomposes as $\prod_{i=1}^k \s{H}_i \curvearrowright \prod_{i=1}^k M_i$, the action $\s{H}_i \curvearrowright M_i$ must have the same orbits as the projection action of $\s{H}$ on $M_i$ (or, equivalently, as $\pr_{I(M_i)}(\s{H})$). Consequently, the action of $\s{H}$ decomposes if and only if it has the same orbits as the product of its projection actions on the factors $M_1, \ldots, M_k$.

\begin{remark}
	In general, the isometry group of a Riemannian product $M_1 \times \cdots \times M_k$ might be larger than $I(M_1)\times\dots\times I(M_n)$.
	For instance, for any positive integers $p$ and $q$, one has $I(\R^p) \times I(\R^q) = (\s{O}_p \times \s{O}_q) \ltimes \R^{p+q} \subsetneq \s{O}_{p+q} \ltimes \R^{p+q} = I(\R^p \times \R^q)$. Thus, it is not always possible to define the projection actions of an arbitrary Lie subgroup $\s{H}\subseteq I(M_1 \times \cdots \times M_k)$.
\end{remark}

If $M_1 \times \cdots \times M_k$ is a connected Riemannian product and $\s{H} \subseteq I(M_1) \times \cdots \times I(M_k)$ is a Lie subgroup, the orbit of $\s{H}$ through a point $(p_1, \ldots, p_k)$ is contained in the product of the orbits of the projection actions of $\s{H}$ on each $M_i$ through $p_i$:
$\s{H} \cdot (p_1, \ldots, p_k) \subseteq \s{H} \cdot p_1 \times \cdots \times \s{H} \cdot p_k$. Thus, if we denote the cohomogeneity by $\operatorname{cohom}(\s{H} \curvearrowright M)$, one has
\begin{equation}\label{codimension comparison}
	\operatorname{cohom}(\s{H} \curvearrowright M_1 \times \dots \times M_k) \ge \sum_{i=1}^k \operatorname{cohom}(\s{H} \curvearrowright M_i).
\end{equation}
As we observed above, if the action of $\s{H}$ decomposes, then it has the same orbits as the product of its projection actions. In that case, equality holds in \eqref{codimension comparison}. One could ask if the converse implication is also true: does equality in \eqref{codimension comparison} imply that the action decomposes? In an attempt to prove this, one might run into the following complication: even when the action of $\s{H}$ is proper, its projection actions do not have to be. Fortunately, this does not prove to be an obstacle---at least when the action has cohomogeneity one:

\begin{lemma}\label{lemma:SplittingC1}
	Let $M = M_1 \times \cdots \times M_k$ be a complete connected Riemannian product and $\s{H} \subseteq I(M_1) \times \dots \times I(M_k)$ a closed connected subgroup acting on $M$ with cohomogeneity one. Then, the action decomposes if and only if $\sum_{i=1}^{k} {\operatorname{cohom}(\s{H}\curvearrowright M_i)} = 1$.
\end{lemma}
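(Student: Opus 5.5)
The forward implication is the observation made just before the statement. If the action of $\s{H}$ decomposes, it has the same orbits as the product of its projection actions, so equality holds in \eqref{codimension comparison}, and the left-hand side equals $1$. For the converse, assume $\sum_i \operatorname{cohom}(\s{H}\curvearrowright M_i)=1$. Then exactly one factor, say $M_1$, has projection cohomogeneity one, and every other projection action has cohomogeneity zero. By the fact recalled in Subsection~\ref{subsection:prelim:isometric}, an isometric action of cohomogeneity zero on a connected complete manifold is transitive, and this holds even without properness. Hence $\s{H}$ acts transitively on $M_2,\dots,M_k$. The goal is to show that for every $q=(q_1,\dots,q_k)\in M$ we have
\[
\s{H}\cdot q \;=\; P_q := (\s{H}\cdot q_1)\times M_2\times\cdots\times M_k ,
\]
where $\s{H}\cdot q_1$ is the orbit of the projection action on $M_1$. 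This is exactly the statement that $\s{H}$ has the same orbits as the product action of $\pr_{I(M_1)}(\s{H})\times I(M_2)\times\cdots\times I(M_k)$.

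The key observation is that $P_q$ is a connected, injectively immersed submanifold of $M$ and is $\s{H}$-invariant, and that $P_r=P_q$ for every $r\in P_q$. By the weak embeddedness of orbits of connected groups (the result quoted from \cite{Michor}), the $\s{H}$-action restricts to a smooth action on $P_q$ with its intrinsic manifold structure. Each $\s{H}$-orbit contained in $P_q$ is then an injectively immersed submanifold of $P_q$, and it is open in $P_q$ as soon as its dimension equals $\dim P_q$. Now suppose $P_q$ contains a point $r$ that is regular for $\s{H}\curvearrowright M$. Then
\[
\dim M-1=\dim \s{H}\cdot r\le \dim P_q\le \dim M-1,
\]
where the last inequality holds because the projection cohomogeneity on $M_1$ is $1$. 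Hence every regular orbit in $P_q$ is open in $P_q$. Moreover, $P_r=P_q$ has the same dimension, so by the same argument every point of $P_q$ whose orbit has full dimension lies in an open orbit.

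It remains to exclude singular points inside such a $P_q$, and to treat the case where $P_q$ consists only of singular points. Here I use properness of $\s{H}\curvearrowright M$. There are at most two singular orbits, each of dimension at most $\dim M-2$, and they are closed. If $P_q$ contains a regular point, the regular orbits form a nonempty open subset of $P_q$. Its complement is contained in at most two immersed submanifolds of dimension at most $\dim P_q-1$. Since $P_q$ is connected, removing such a set leaves it connected; this is the delicate point, and I would argue it through the local tube structure of $M$ around a singular orbit. Consequently there is exactly one regular orbit in $P_q$, and it is open. Since this orbit is also closed in $M$, it must be all of $P_q$. In particular every $P_q$ meeting the regular set is a single regular orbit. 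If instead $P_q$ consists of singular points only, then $P_q$ is a union of at most two singular orbits, each of dimension at most $\dim P_q$. A Sard-type measure argument shows that one of them has full dimension in $P_q$, so it is open in $P_q$. Connectedness, together with the closedness of orbits of a proper action, then gives $P_q=\s{H}\cdot q$.

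I expect the main obstacle to be precisely this use of the intrinsic topology of $P_q$. Neither $P_q$ nor the projection orbit $\s{H}\cdot q_1$ need be embedded, because the projection action on $M_1$ may fail to be proper. So the openness, closedness and connectedness arguments must be run inside $P_q$ via weak embeddedness, rather than inside $M$. Of these, the tube-neighbourhood argument showing that removing the singular orbits does not disconnect $P_q$ is the least routine step.
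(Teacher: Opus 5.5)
Your strategy matches the paper's. You compare each $\s{H}$-orbit with the orbit $P_q$ of the product of the projection actions, use weak embeddedness to get a smooth $\s{H}$-action on $P_q$, get openness from the dimension count, and handle singular orbits via ``at most two'' plus connectedness of $P_q$.

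There is one flaw. The step you call delicate is not correctly justified. Inside $P_q$ the singular orbits have dimension $\le \dim M-2=\dim P_q-1$, so they have codimension only $\ge 1$. Removing a codimension-one subset can disconnect a manifold, so connectedness of the complement does not follow from dimension, and you give no tube argument.

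You do not need this step at all. Take any regular orbit $O\subseteq P_q$. It is open in $P_q$ by your dimension count. It is closed in $M$ because the action is proper, hence closed in $P_q$, since the inclusion is continuous for the intrinsic topology. Connectedness of $P_q$ then gives $O=P_q$ immediately, without knowing how many regular orbits $P_q$ contains.

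The Sard argument in the singular case is also superfluous. If $P_q$ contains no regular point, it is a disjoint union of at most two singular orbits, each closed in $P_q$, so connectedness forces a single orbit. This is exactly how the paper finishes.

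For the regular orbits the paper argues slightly differently. Once $\s{H}\cdot p$ is open in $P_p$, it observes that $\s{H}$ acts isometrically on $P_p$ (with the induced metric) with cohomogeneity zero, hence transitively. This uses the fact from Subsection~\ref{subsection:prelim:isometric} that an open orbit is a complete homogeneous Riemannian manifold and therefore fills the connected ambient space. That route uses properness only to bound the number of singular orbits and to know they are closed. Yours also uses closedness of the regular orbits. Both are valid once the unjustified connectedness claim is deleted.
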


\begin{proof}
	We only need to prove that equality in \eqref{codimension comparison} implies that the action of $\s{H}$ decomposes. The equality means that there is precisely one projection action that is of cohomogeneity one and the rest are transitive. Therefore, the product $\s{H} \times \cdots \times \s{H} \curvearrowright M_1 \times \cdots \times M_k$ of the projection actions is also of cohomogeneity one. Let $p = (p_1, \ldots, p_k)$ be a regular point of $M$ with respect to the action of $\s{H}$. It must then also be regular with respect to the product of the projection actions. In other words, both orbits $\s{H} \cdot p$ and $\s{H} \cdot p_1 \times \cdots \times \s{H} \cdot p_k$ are hypersurfaces. Since the former orbit is contained in the latter and the latter is  weakly embedded, the inclusion $\s{H} \cdot p \hookrightarrow \s{H} \cdot p_1 \times \cdots \times \s{H} \cdot p_k$ is smooth. By dimension reasons, this inclusion is open. This means that $\s{H}$ acts on the latter orbit isometrically and with cohomogeneity zero, and hence transitively. We have shown that every regular orbit of $\s{H}$ is also an orbit of $\s{H} \times \cdots \times \s{H}$. Since the action of $\s{H}$ on $M$ is proper and has cohomogeneity one, it can have at most two singular orbits. Each of those is contained in an orbit of $\s{H} \times \cdots \times \s{H}$ and the latter is connected. This implies that every singular orbit of $\s{H}$ is an orbit of $\s{H} \times \cdots \times \s{H}$ as well. Overall, the two actions share the same orbits and thus the action of $\s{H}$ decomposes.
\end{proof}

Next, we address the question of existence of projection actions. Let $M$ be a complete simply connected Riemannian manifold, and let $M = M_0 \times M_1 \times \cdots \times M_k$ be its de Rham decomposition. The following result---formulated in a slightly more general setting---asserts that the isometry group of $M$ behaves well with respect to the factors:
\begin{lemma}[{\cite[Prop.\ 2.1.60]{solonenko_thesis}}]\label{lemma:solonenko}
	Let $M = M_0 \times M_1 \times \cdots \times M_k$ be a Riemannian product, where $M_0$ is connected and flat and $M_i$'s are (non-flat) connected irreducible Riemannian manifolds for $1 \le i \le k$. Then, the group $I(M)$ decomposes as a semidirect product
	\[
	I(M) = [I(M_0) \times I(M_1) \times \cdots \times I(M_k)] \rtimes S_{M_1 \times \cdots \times M_k},
	\]
    where $S_{M_1 \times \cdots \times M_k}$ is the subgroup of the symmetric group $S_k$ consisting of elements permuting the mutually isometric factors $M_i, \, 1 \le i \le k$.
\end{lemma}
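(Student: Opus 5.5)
The plan is to show that the differential of any isometry $f \in I(M)$ permutes the parallel distributions coming from the product structure, and then to read off the decomposition from that. For $p = (p_0, \ldots, p_k) \in M$, write $T_pM = D_0(p) \oplus D_1(p) \oplus \cdots \oplus D_k(p)$ with $D_i(p) = T_{p_i}M_i$. Each $D_i$ is a parallel distribution whose leaves are the slices $\{p_0\} \times \cdots \times M_i \times \cdots \times \{p_k\}$.

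\emph{Step 1: the holonomy splitting of $T_pM$ is canonical.} The restricted holonomy group at $p$ is the product of the restricted holonomy groups of the factors, each acting on its own $D_i(p)$.
\begin{itemize}
\item Since $M_0$ is flat, $\mathrm{Hol}^0(M_0)$ is trivial.
\item For $i \ge 1$, $M_i$ is irreducible and non-flat. I expect this to give that $\mathrm{Hol}^0(M_i)$ acts irreducibly and nontrivially on $D_i(p)$. This is the first delicate point, because the $M_i$ need not be simply connected. To handle it, I would pass to local de Rham decompositions: if $\mathrm{Hol}^0$ of $M_i$ had a fixed vector or an invariant proper subspace, one would get a parallel distribution. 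Combined with irreducibility (taken in the global sense, holonomy-irreducible) and non-flatness, this should rule out both a nonzero fixed subspace and a reducible action.
\item Consequently $D_0(p)$ is exactly the fixed subspace of $\mathrm{Hol}^0_p$. Each $D_i(p)$ with $i \ge 1$ is an irreducible submodule on which only the $i$-th factor acts nontrivially. The decomposition of $D_0(p)^\perp$ into irreducible $\mathrm{Hol}^0_p$-submodules is therefore unique: any irreducible submodule is acted on nontrivially by some factor, and its projections force it to equal one $D_i(p)$.
\end{itemize}

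\emph{Step 2: $df$ permutes the distributions.} The differential $df_p$ conjugates $\mathrm{Hol}^0_p$ onto $\mathrm{Hol}^0_{f(p)}$. By Step 1 it therefore maps $D_0(p)$ onto $D_0(f(p))$, and for each $i \ge 1$ it maps $D_i(p)$ onto some $D_{\sigma_p(i)}(f(p))$. The permutation $\sigma_p$ depends continuously on $p$ through the dimensions and the continuity of $df$, and $M$ is connected, so $\sigma_p = \sigma$ is constant.

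\emph{Step 3: $f$ is a product map composed with a permutation.} Since $df$ maps $D_i$ to $D_{\sigma(i)}$, moving $p$ inside an $M_i$-slice changes only the $\sigma(i)$-th coordinate of $f(p)$. Hence the partial derivatives of the $j$-th component of $f$ in every direction other than $D_{\sigma^{-1}(j)}$ vanish (with $\sigma(0)=0$). By connectedness of the factors, $f_j$ depends only on $p_{\sigma^{-1}(j)}$. Each resulting map $M_{\sigma^{-1}(j)} \to M_j$ is a local isometry, and it is bijective because $f$ is. So each is an isometry, which shows that $\sigma$ only permutes mutually isometric factors.

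\emph{Step 4: the semidirect product.} The assignment $f \mapsto \sigma$ is a homomorphism $I(M) \to S_{M_1 \times \cdots \times M_k}$, and its kernel is exactly $I(M_0) \times I(M_1) \times \cdots \times I(M_k)$; this kernel is therefore normal. To split the homomorphism, I would choose one representative factor in each isometry class and fix isometries $\varphi_i \colon M_{\mathrm{rep}(i)} \to M_i$. Each permutation $\tau$ is then realized by the map that sends the $i$-th coordinate to the $\tau(i)$-th via $\varphi_{\tau(i)} \circ \varphi_i^{-1}$. Because these transition maps satisfy the cocycle identity, this defines a subgroup isomorphic to $S_{M_1 \times \cdots \times M_k}$ that meets the kernel trivially.

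I expect the main obstacle to be Step 1 in the non-simply connected setting: showing that non-flat irreducible factors contribute no holonomy-fixed vectors and a unique irreducible splitting. The rest is bookkeeping.
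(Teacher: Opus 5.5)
The paper gives no proof of this lemma; it is quoted from \cite[Prop.~2.1.60]{solonenko_thesis}, so there is no in-text argument to compare with. Your overall strategy is the standard one, and Steps 2--4 are fine. One small point in Step 2: local constancy of $\sigma_p$ should come from the fact that the finitely many sets $\{p : df_p(D_i(p)) = D_j(f(p))\}$ are closed and partition $M$. Dimensions cannot do this, since they do not distinguish equidimensional factors.

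The genuine gap is in Step 1, under the reading of irreducibility you explicitly adopt (holonomy-irreducible). A $\mathrm{Hol}^0$-invariant subspace of $T_{p_i}M_i$ gives a parallel distribution only locally, or on the universal cover. It descends to $M_i$ only if it is invariant under the full holonomy group. Your fixed-vector argument survives: $\mathrm{Hol}^0$ is normal in $\mathrm{Hol}$, so the $\mathrm{Hol}^0$-fixed subspace is $\mathrm{Hol}$-invariant, and it cannot be everything because $M_i$ is non-flat.

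Irreducibility of $\mathrm{Hol}^0(M_i)$, however, does not follow. The map $\sigma(x,y)=(y,-x)$ generates a free $\mathbb{Z}_4$-action on $S^2\times S^2$. The quotient $N$ is non-flat and holonomy-irreducible, since its holonomy contains an element exchanging the two tangent planes. Yet $\mathrm{Hol}^0(N)=SO(2)\times SO(2)$ is reducible. Now take $M=N\times S^2$. The $\mathrm{Hol}^0_p$-irreducible summands of $T_pM$ are three $2$-planes, and your Step 2 only shows that $df_p$ permutes them. Nothing in your argument prevents $df_p$ from sending $D_1(p)$ onto $E\oplus D_2(f(p))$, with $E$ one of the two planes in $D_1(f(p))$. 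So your uniqueness claim in Step 1, and hence Step 2, fails as written.

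The missing idea is to use the full holonomy group for the non-flat part.
\begin{itemize}
\item Keep $\mathrm{Hol}^0_p$ only to identify $D_0(p)$ as its fixed subspace. Do not use $\mathrm{Hol}_p$ there: for a flat $M_0$ such as a Klein bottle, $\mathrm{Hol}(M_0)$ acts nontrivially on $D_0$.
\item Decompose $D_0(p)^\perp$ under $\mathrm{Hol}_p(M)=\prod_i \mathrm{Hol}_{p_i}(M_i)$, which $df_p$ also conjugates onto $\mathrm{Hol}_{f(p)}(M)$.
\item Each $D_i(p)$ with $i\ge 1$ is $\mathrm{Hol}_p$-irreducible. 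The factor $\mathrm{Hol}_{p_i}(M_i)$ acts nontrivially on it and trivially on every $D_j(p)$ with $j\neq i$. Hence the $D_i(p)$ are pairwise non-isomorphic $\mathrm{Hol}_p$-modules.
\item Your projection argument then shows that $D_1(p),\dots,D_k(p)$ are the only irreducible $\mathrm{Hol}_p$-submodules of $D_0(p)^\perp$.
\end{itemize}
With this change, Steps 2--4 go through as you wrote them.

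Alternatively, you can read ``irreducible'' as locally irreducible. That is all the paper needs, since it applies the lemma only to de Rham factors of a simply connected complete manifold, where all notions of irreducibility coincide. Under that reading your Step 1 is immediate.
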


When applied to a complete simply connected Riemannian manifold, Lemma~\ref{lemma:solonenko} essentially reads as the uniqueness property of the de Rham decomposition. It follows immediately that, for $M = M_0 \times M_1 \times \cdots \times M_k$ as above, we have $I^0(M) = I^0(M_0) \times I^0(M_1) \times \cdots \times I^0(M_k)$. Thus, if $\s{H}$ is a connected Lie group acting on $M$ by isometries, we can always assume $\s{H} \subseteq I^0(M_0) \times I^0(M_1) \times \cdots \times I^0(M_k)$ without loss of generality, and thus the projection actions of $\s{H}$ on the factors are well-defined. Here is another application of Lemma~\ref{lemma:solonenko} repackaged in a slightly different way:
\begin{lemma}\label{lemma:projections}
	Let $M$ and $N$ be simply connected complete Riemannian manifolds and suppose that $N$ has no Euclidean de Rham factor.
	If $\s{H}$ is a connected Lie subgroup of $I(M \times N)$, then $\s{H} \subseteq I^0(M) \times I^0(N)$. In particular, the projection actions of $\s{H}$ on $M$ and $N$ are well-defined.
\end{lemma}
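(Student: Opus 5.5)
The plan is to apply Lemma~\ref{lemma:solonenko} to the full de Rham decomposition of $M \times N$ and then use connectedness of $\s{H}$. Write the de Rham decompositions $M = M_0 \times M_1 \times \cdots \times M_k$ and $N = N_1 \times \cdots \times N_l$. Here $M_0$ is the (possibly trivial) Euclidean factor of $M$, and the $M_i$, $N_j$ are non-flat irreducible factors; by hypothesis $N$ has no flat factor. Since $M$ and $N$ are simply connected and complete, so is $M \times N$. Its de Rham decomposition is therefore $M_0 \times M_1 \times \cdots \times M_k \times N_1 \times \cdots \times N_l$, with $M_0$ again the flat factor, by uniqueness of the de Rham decomposition.

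Applying Lemma~\ref{lemma:solonenko} to $M \times N$ with this decomposition gives
\[
I(M \times N) = \Bigl[I(M_0) \times \prod_{i=1}^k I(M_i) \times \prod_{j=1}^l I(N_j)\Bigr] \rtimes S,
\]
where $S$ is a finite group of permutations of mutually isometric irreducible non-flat factors. Because $S$ is finite and discrete, the bracketed normal subgroup is open and closed in $I(M\times N)$ and contains the identity component. Hence
\[
I^0(M \times N) = I^0(M_0) \times \prod_i I^0(M_i) \times \prod_j I^0(N_j).
\]

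Next, apply Lemma~\ref{lemma:solonenko} separately to $M$ and to $N$ in the same way. This gives $I^0(M) = I^0(M_0)\times\prod_i I^0(M_i)$ and $I^0(N) = \prod_j I^0(N_j)$. Regrouping, we get $I^0(M\times N) = I^0(M)\times I^0(N)$, and this identification is compatible with the product action on $M \times N$.

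Finally, $\s{H}$ is a connected Lie subgroup, so it lies in the identity component $I^0(M\times N)$, and hence in $I^0(M)\times I^0(N)$. The projection actions of $\s{H}$ on $M$ and $N$ are then well-defined via the two projections $I^0(M)\times I^0(N) \to I^0(M)$ and $I^0(M)\times I^0(N) \to I^0(N)$.

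The only delicate point is that $S$ could a priori swap a factor of $M$ with an isometric factor of $N$. This is harmless because we only need the identity component, and $S$ is discrete. The hypothesis that $N$ has no Euclidean factor is what matters: it guarantees that the flat factor of $M\times N$ is exactly $M_0$. Otherwise $I(\R^p\times\R^q)$ would mix the two Euclidean factors, which is the failure described in the remark following Lemma~\ref{lemma:SplittingC1}.
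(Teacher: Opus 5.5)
Your proposal is correct and follows essentially the same route as the paper: combine the de Rham decompositions of $M$ and $N$ into that of $M \times N$, apply Lemma~\ref{lemma:solonenko} to get $I^0(M\times N)$ as the product of the identity components of the factors, regroup into $I^0(M)\times I^0(N)$, and use connectedness of $\s{H}$. Your extra remark that the finite permutation group cannot affect the identity component makes explicit a step the paper leaves implicit.
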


\begin{proof}
	Let $M = M_0 \times M_1 \times \cdots \times M_k$ and $N = N_1 \times \cdots \times N_l$ be the de Rham decompositions of $M$ and $N$, respectively. Then, the de Rham decomposition of $M \times N$ can be written as
	\[
	M \times N = M_0 \times M_1 \times \cdots \times M_k \times N_1 \times \cdots \times N_l.
	\]
	We then have
    \[
    I^0(M \times N) = \prod_{i=0}^k I^0(M_i) \times \prod_{j=1}^l I^0(N_j) = I^0(M) \times I^0(N).
    \]
    In particular, any connected Lie subgroup of $I(M \times N)$ is contained in $I^0(M) \times I^0(N)$.
\end{proof}

An iterative application of Lemma \ref{lemma:projections} implies $I^0(M_1 \times \cdots \times M_k) = I^0(M_1) \times \cdots \times I^0(M_k)$ whenever $M_i$'s are simply connected and complete and at most one of them has a nontrivial Euclidean de Rham factor, so the projection actions  are well-defined for any connected Lie subgroup of $I(M_1 \times \cdots \times M_k)$ in that case as well.

We now turn our attention to Theorem \ref{th:main:decompcompactxhadamard} and consider the special situation when $M$ is a compact simply connected Riemannian manifold and $N$ is a Hadamard manifold.
In this case, $M$ does not have a Euclidean factor, so any isometric action $\s{H} \curvearrowright M \times N$ by a connected Lie group has well-defined projection actions.
The next result guarantees that only transitive actions on $M \times N$ can project to transitive actions on both $M$ and $N$.
\begin{lemma}\label{proposition:reductioncompactxhadamard}
	Let $M \times N$ be a simply connected Riemannian product, where $M$ is compact and $N$ Hadamard, and let $\s{H}$ be a connected Lie group acting on $M \times N$ by isometries. If both projection actions of $\s{H}$ on $M$ and $N$ are transitive, then so is the action of $\s{H}$ on $M \times N$.
\end{lemma}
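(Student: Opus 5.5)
The plan is to reduce the statement to one about the isotropy subgroup of the projection action on $N$, and then use the topology of $N$ together with a classical transitivity theorem for compact simply connected homogeneous spaces. Fix $p=(x,y)\in M\times N$. By Lemma~\ref{lemma:projections} we may regard $\s{H}\subseteq I^0(M)\times I^0(N)$, so both projection actions are defined. Let $\s{H}_y=\{h\in\s{H}: h\cdot y=y\}$ be the isotropy subgroup of the projection action on $N$; it is a closed subgroup of $\s{H}$.

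Since $\s{H}$ is transitive on $N$, we have
\[
\s{H}\cdot p\cap (M\times\{y\})=(\s{H}_y\cdot x)\times\{y\}.
\]
Hence $\s{H}\cdot p=M\times N$ as soon as $\s{H}_y$ acts transitively on $M$. So the whole task is to show that the restriction of the projection action on $M$ to $\s{H}_y$ is transitive.

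First I extract topological information about $\s{H}_y$. Transitivity on $N$ gives a diffeomorphism $\s{H}/\s{H}_y\cong N$, so there is a locally trivial fibration $\s{H}_y\to\s{H}\to N$. Because $N$ is Hadamard, and hence contractible, the long exact homotopy sequence shows two things: $\s{H}_y$ is connected (using $\pi_1(N)=0$ and $\s{H}$ connected), and the inclusion $\s{H}_y\hookrightarrow\s{H}$ is a weak homotopy equivalence. Now let $\s{C}$ be a maximal compact subgroup of $\s{H}_y$. It lies in a maximal compact subgroup $\s{C}'$ of $\s{H}$. Each connected Lie group deformation retracts onto its maximal compact subgroup (Cartan--Iwasawa--Malcev), so the inclusion $\s{C}\hookrightarrow\s{C}'$ is a homotopy equivalence between compact connected Lie groups. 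Comparing rational cohomology in top degree, or simply dimensions via Poincaré duality, I expect to conclude that $\dim\s{C}=\dim\s{C}'$. Together with connectedness this gives $\s{C}=\s{C}'$. Thus $\s{H}_y$ contains a maximal compact subgroup of $\s{H}$, and this is the step I expect to be the main technical point.

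Next I pass to the action on $M$. Let $\s{H}_M=\pr_{I(M)}(\s{H})$, a connected Lie group acting transitively on the compact simply connected manifold $M$. The image of a maximal compact subgroup under a surjective homomorphism of connected Lie groups is a maximal compact subgroup of the image, so $\pr_{I(M)}(\s{C})$ is a maximal compact subgroup of $\s{H}_M$. By Montgomery's theorem, a maximal compact subgroup of a connected Lie group acting transitively on a compact simply connected manifold is itself transitive. Therefore $\pr_{I(M)}(\s{C})$, and hence $\s{H}_y\supseteq\s{C}$, acts transitively on $M$, which finishes the argument.

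The delicate points are the following. First, the homotopy-equivalence step relating maximal compact subgroups of $\s{H}_y$ and $\s{H}$ needs $\s{H}_y$ to be a closed subgroup so that the fibration exists; this holds because $\s{H}_y$ is an isotropy subgroup of a smooth action. Second, we must quote Montgomery's theorem in a form that allows the possibly non-closed image group $\s{H}_M$, viewed as an abstract Lie group acting transitively. If that version is unavailable, I would instead replace $\s{H}_M$ by its closure in $I(M)$, which is compact, and argue directly.
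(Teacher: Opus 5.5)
Your proof is correct in outline but reaches the key step by a different route from the paper. Both arguments reduce to showing that an isotropy group $\s{H}_y$ of the projection action on $N$ is transitive on $M$. The paper does this in two lines. Montgomery's theorem gives a compact subgroup $\s{K}\subseteq\s{H}$ that is transitive on $M$, and Cartan's fixed point theorem gives a point $o_N$ with $\s{K}\subseteq\s{H}_{o_N}$. You instead show that $\s{H}_y$ contains a maximal compact subgroup of $\s{H}$, using the fibration $\s{H}_y\to\s{H}\to N$, the Iwasawa--Malcev homotopy equivalence, and a dimension count.

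That step is sound. Two-out-of-three shows $\s{C}\hookrightarrow\s{C}'$ is a homotopy equivalence, top $\Z/2$-homology gives $\dim\s{C}=\dim\s{C}'$, and connectedness of $\s{C}'$ then forces $\s{C}=\s{C}'$. It uses only that $N$ is contractible, not nonpositively curved, so it proves a slightly more general statement; the paper's route is much shorter.

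One claim in your last step is false as stated. The image of a maximal compact subgroup under a surjective homomorphism of connected Lie groups need not be maximal compact; the simplest counterexample is the covering $\R\to\mathbb{S}^1$. It also fails in a genuine instance of the lemma. Take $\s{H}=\R\times\s{SU}_2$ acting on $\mathbb{S}^3\times\R$ by $(t,g)\cdot(v,s)=(e^{it}gv,\,s+t)$. Its image in $I(\mathbb{S}^3)$ is $\s{U}_2$, but the maximal compact subgroup $\{0\}\times\s{SU}_2$ maps onto $\s{SU}_2$, which is not maximal compact in $\s{U}_2$.

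The detour through $\s{H}_M$ is unnecessary, though. The projection action gives $M\cong\s{H}/\s{H}_x$ with $\s{H}_x$ closed in $\s{H}$, so Montgomery's theorem applies to $\s{H}$ itself. Any compact subgroup lies in a maximal one, and all maximal compact subgroups are conjugate, so every maximal compact subgroup of $\s{H}$ is transitive on $M$. In particular $\s{C}'=\s{C}\subseteq\s{H}_y$ is transitive on $M$. With this replacement your second ``delicate point'' disappears and the proof is complete.
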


\begin{proof}
	Although $\s{H}$ does not have to be compact, a result of Montgomery \cite[Th.\ A]{Montgomery} ensures that there exists a compact subgroup $\s{K}$ of $\s{H}$ whose action on $M$ is still transitive. By Cartan's fixed point theorem, the projection action of $\s{K}$ on $N$ must have a fixed point, say $o_N$. Pick any point $o_M \in M$ and consider an arbitrary point $(p,q) \in M \times N$. Since the projection action of $\s{H}$ on $N$ is transitive, there exists an element $h \in \s{H}$ such that $h \cdot q = o_N$. Let $k\in \s{K}$ be such that $k \cdot (h \cdot p) = o_M$. We have
	\[
	(kh) \cdot (p,q) = (k \cdot (h \cdot p), k \cdot o_N) = (o_M, o_N).
	\]
	Thus, $\s{H}$ acts on $M \times N$ transitively.
\end{proof}

By combining the results of this section, we can now prove Theorem~\ref{th:main:decompcompactxhadamard}:

\begin{proof}[Proof of Theorem~\ref{th:main:decompcompactxhadamard}]
	Recall that we are given a product $M \times N$, where $M$ is simply connected and compact and $N$ is Hadamard. Suppose that $\s{H}$ is a connected Lie group acting on $M \times N$ properly, isometrically, and with cohomogeneity one. According to Lemma~\ref{lemma:projections}, we may assume that $\s{H}$ is a connected closed subgroup of $I^0(M) \times I^0(N)$.	Now, according to Lemma~\ref{proposition:reductioncompactxhadamard}, only one of the projection actions of $\s{H}$ on $M$ and $N$ can be transitive, and the other one has to be of cohomogeneity one. Lemma~\ref{lemma:SplittingC1} then implies that the action of $\s{H}$ decomposes, which completes the proof.
\end{proof}

As discussed in the introduction, this result brings us one step closer to Theorem~\ref{mainth}. Indeed, let $M = M_+ \times M_0 \times M_-$ be a simply connected symmetric space decomposed into three types as usual. Let $\s{H}$ be a closed connected subgroup of $I(M)$ acting on $M$ with cohomogeneity one. Since $M_+$ is compact and $N = M_0 \times M_-$ is Hadamard, Theorem~\ref{th:main:decompcompactxhadamard} ensures that the action of $\s{H}$ on $M$ decomposes with respect to the splitting $M = M_+ \times N$. Suppose now that $\s{L}$ is a connected Lie group acting on $M_0 \times M_-$ properly, isometrically, and with cohomogeneity one. Then, by Lemma~\ref{lemma:projections}, its projection actions on $M_0$ and $M_-$ are well-defined. If any of these projection actions is of cohomogeneity one, then the action of $\s{L}$ decomposes by virtue of Lemma \ref{lemma:SplittingC1}. Otherwise, $\s{L}$ has to act transitively on both $M_0$ and $M_-$, which leads us to:
\begin{corollary}\label{corollary:decompC1SymmetricSpaces}
	Let $M = M_+ \times M_0 \times M_-$ be a simply connected symmetric space and let $\s{H}$ be a closed connected subgroup of $I(M)$ acting on $M$ with cohomogeneity one. Then, either the action of $\s{H}$ decomposes, or it is orbit-equivalent to the action of $I(M_+) \times \s{H}_{\Delta}$, where $\s{H}_{\Delta} \subseteq I(M_0 \times M_-)$ is a closed connected subgroup  that acts on $M_0 \times M_-$ with cohomogeneity one and whose projection actions on $M_0$ and $M_-$ are transitive.
\end{corollary}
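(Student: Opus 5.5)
The plan is to reduce in two stages. The first stage splits off the compact factor using Theorem~\ref{th:main:decompcompactxhadamard}. The second stage analyses the noncompact-plus-Euclidean part $N = M_0 \times M_-$ using Lemmas~\ref{lemma:projections} and~\ref{lemma:SplittingC1}.

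First, $M_+$ is compact and simply connected, and $N$ is a Hadamard manifold. By Lemma~\ref{lemma:projections} (applied with $M_+$ as the factor without Euclidean part), $\s{H} \subseteq I^0(M_+) \times I^0(N)$. Since $\s{H}$ is closed and connected and acts with cohomogeneity one, Theorem~\ref{th:main:decompcompactxhadamard} shows that it has the same orbits as a product action $\s{H}_+ \times \s{H}_N$, where both factors are closed (we replace them by their closures, as explained in Section~\ref{section:products}). Exactly one factor action has cohomogeneity one and the other is transitive. Two cases follow.

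\emph{Case 1: $\s{H}_N$ is transitive on $N$.} Then the action decomposes with respect to the splitting $M_+ \times (M_0 \times M_-)$. The orbits are $P_+ \times N = P_+ \times M_0 \times M_-$, so it also decomposes with respect to the three-factor splitting, taking the transitive groups $I^0(M_0)$ and $I^0(M_-)$ on the last two factors.

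\emph{Case 2: $\s{H}_+$ is transitive on $M_+$ and $\s{H}_N$ has cohomogeneity one on $N$.} The orbits of $\s{H}$ are then $M_+ \times P$, with $P$ an orbit of $\s{H}_N$, so $\s{H}$ is orbit-equivalent to $I(M_+) \times \s{H}_N$. Now apply Lemma~\ref{lemma:projections} to $N = M_0 \times M_-$, where $M_-$ has no Euclidean factor. This gives well-defined projection actions of $\s{H}_N$ on $M_0$ and on $M_-$. If either projection action has cohomogeneity one, the cohomogeneity inequality \eqref{codimension comparison} forces the other to be transitive. Lemma~\ref{lemma:SplittingC1} then shows that $\s{H}_N$ decomposes as an action on $M_0 \times M_-$, and hence $\s{H}$ decomposes on $M_+ \times M_0 \times M_-$. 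Otherwise both projections are transitive, and we set $\s{H}_\Delta = \s{H}_N$.

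The only delicate points are closedness and the bookkeeping of orbits. In Case~2 the orbit equivalence with $I(M_+) \times \s{H}_\Delta$ needs $\s{H}_N$ closed, which we get by passing to the closure as in Section~\ref{section:products}. One should also check that $I(M_+)$ has the same orbits as the transitive group $\s{H}_+$; this is immediate, since both act transitively on $M_+$. I expect no genuine obstacle. The corollary is essentially a repackaging of Theorem~\ref{th:main:decompcompactxhadamard} together with Lemmas~\ref{lemma:projections} and~\ref{lemma:SplittingC1}, exactly as sketched in the paragraph preceding it.
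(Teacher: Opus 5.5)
Your proposal is correct and follows the paper's own argument: split off $M_+$ via Theorem~\ref{th:main:decompcompactxhadamard}, then use Lemma~\ref{lemma:projections} and Lemma~\ref{lemma:SplittingC1} on the $M_0 \times M_-$ part to see that the action decomposes unless both projection actions are transitive. Your extra care about replacing the factor groups by their closures is exactly the remark at the start of Section~\ref{section:products}, which the paper leaves implicit at this point.
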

In what follows, we will pay special attention to cohomogeneity-one actions on $M_0 \times M_-$ that do not decompose and thus induce transitive actions in each of the two factors.

%% file: 4-C1_homogeneous_foliations.tex
The aim of this section is to prove Theorem~\ref{theorem:foliations}, which describes all possible cohomogeneity-one actions on symmetric spaces without singular orbits.
We begin with two lemmas that will prove useful later on. The first of them follows directly from our discussion in Subsection~\ref{subsection:prelim:isometric} about the orbit spaces of cohomogeneity one actions, and it guarantees the non-existence of codimension-one homogeneous foliations on symmetric spaces of compact type:

\begin{lemma}\label{lemma:foliationscompact}
	Let $\s{H}$ be a connected Lie group acting properly, isometrically, and with cohomogeneity one on a simply connected compact Riemannian manifold $M$. Then, the action of $\s{H}$ has two singular orbits. 
\end{lemma}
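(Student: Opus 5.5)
The plan is to analyze the orbit space $M/\s{H}$ using the facts from Subsection~\ref{subsection:prelim:isometric}. Since the action is proper, isometric and of cohomogeneity one, and both $\s{H}$ and $M$ are connected, the orbits are equidistant. The quotient $M/\s{H}$ is then a metric space, and topologically a $1$-dimensional manifold with boundary. It is therefore homeomorphic to one of $\R$, $\mathbb{S}^1$, $[0,1)$ or $[0,1]$.

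First, I would use compactness of $M$. The projection $M \to M/\s{H}$ is continuous and surjective, so $M/\s{H}$ is compact. This rules out $\R$ and $[0,1)$.

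Next, I would use simple connectedness of $M$ to rule out $\mathbb{S}^1$. The cleanest route is to recall the fact already quoted in Subsection~\ref{subsection:prelim:isometric}: if $M$ is simply connected, the orbit space cannot be $\mathbb{S}^1$, and its boundary points correspond exactly to the singular orbits. If one wants a direct argument, the idea is as follows. When $M/\s{H} \cong \mathbb{S}^1$ there are no singular orbits, and the orbits form a Riemannian foliation, so $M \to \mathbb{S}^1$ is a submersion with compact fibers. Such a map is a fiber bundle by Ehresmann's theorem. Since the orbits are connected, the long exact sequence of homotopy groups gives a surjection $\pi_1(M) \to \pi_1(\mathbb{S}^1) = \Z$, which contradicts $\pi_1(M) = 0$.

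Hence $M/\s{H} \cong [0,1]$. This space has exactly two boundary points, and by the correspondence above they are exactly the two singular orbits, which proves the lemma.

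The only delicate point is the exclusion of $\mathbb{S}^1$, specifically the claim that the quotient map is a locally trivial fibration when there are no singular orbits. This follows from the slice theorem for proper actions, since all isotropy groups are then conjugate, or from Ehresmann's theorem using compactness of $M$. Either way it is standard, and the paper already treats it as known.
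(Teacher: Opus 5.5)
Your proposal is correct and follows the same route as the paper, which proves the lemma simply by appeal to the orbit-space discussion in Subsection~\ref{subsection:prelim:isometric}: compactness leaves $\mathbb{S}^1$ or $[0,1]$, simple connectedness excludes $\mathbb{S}^1$, and the two boundary points of $[0,1]$ are the singular orbits. Your optional fibration argument (all orbits principal, so $M \to \mathbb{S}^1$ is a bundle with connected fibers and $\pi_1(M)$ surjects onto $\Z$) is a valid justification of the step the paper takes as known.
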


The next lemma will be of help when dealing with actions on $M_0 \cong \R^n$ without singular orbits. We formulate it in a more general context of actions preserving a fixed totally geodesic submanifold.

\begin{lemma}\label{lem:preserving_t.g.}
    Let $M = \s{G}/\s{K}$ be a symmetric space, and let $S \subseteq M$ be a connected properly embedded totally geodesic submanifold passing through the base point $o$. Let $\s{H} \subseteq \s{G}$ be a Lie subgroup preserving $S$. Then, the projection $\pr_{\g{k}}(\g{h})$ preserves $T_o S \subseteq T_o M$.
\end{lemma}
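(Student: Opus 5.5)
The plan is to pass from the group to its Lie algebra and to use the fact that a totally geodesic submanifold through $o$ is rigidly controlled by its tangent space. Since $S$ is connected, complete (being properly embedded in the complete manifold $M$) and totally geodesic, we have $S = \exp_o(T_oS)$, and $\g{s} := T_oS \subseteq \g{p}$ is a Lie triple system. Moreover, an isometry $g \in \s{G}$ with $g(o) \in S$ preserves $S$ if and only if $dg_o(T_oS) = T_{g(o)}S$. The naive idea would be that $\s{H} \cap \s{K}$ preserves $T_oS$. However, $\pr_{\g{k}}(\g{h})$ is in general not contained in $\g{h}$, so we need to work with Killing fields rather than with isotropy.

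For $Z \in \g{h}$, let $Z^*$ be the corresponding Killing field on $M$. Since $\s{H}$ preserves $S$, the field $Z^*$ is tangent to $S$ along $S$. Write $Z = Z_{\g{k}} + Z_{\g{p}}$. At $o$ we have $Z^*_o = Z_{\g{p}}$ (under the identification $\g{p} \cong T_oM$), so $Z_{\g{p}} \in \g{s}$. Next, recall the standard formula for the covariant derivative of a Killing field at $o$: for $v \in \g{p}$,
\[
(\nabla_v Z^*)_o = [Z_{\g{k}}, v]
\]
up to sign convention, because the $\g{p}$-part contributes a transvection, whose covariant derivative vanishes at $o$. The main step is to show that $\nabla_v Z^*$ lies in $T_oS$ whenever $v \in T_oS$. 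This holds because $Z^*$ is tangent to $S$ and $S$ is totally geodesic, so the second fundamental form vanishes and the ambient derivative of a tangent field along a tangent direction equals the intrinsic derivative. It then follows that $[Z_{\g{k}}, \g{s}] \subseteq \g{s}$, that is, $\ad(\pr_{\g{k}} Z)$ preserves $T_oS$. This is exactly the claim, reading "preserves" infinitesimally: the natural action of $\g{k}$ on $\g{p} \cong T_oM$ is via $\ad$.

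An alternative, more algebraic route avoids the derivative formula. We use $[Z_{\g{p}}, \g{s}] \subseteq [\g{s}, \g{s}]$, together with the fact that $[\g{s},\g{s}]$ preserves $\g{s}$ because $\g{s}$ is a Lie triple system. If we can establish that $\ad(Z)$ maps the subalgebra $[\g{s},\g{s}] \oplus \g{s}$ (the Lie algebra of the group of transvections of $S$ plus its isotropy) into the stabilizer algebra of $S$, we can subtract off the transvection part. The cleanest way is as follows. The stabilizer $\s{G}_S = \{g \in \s{G} : gS = S\}$ is a closed subgroup containing $\s{H}$. Its Lie algebra $\g{g}_S$ is $\theta$-stable, since $s_o$ preserves $S$ ($o \in S$ and $S$ is totally geodesic). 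Hence $\pr_{\g{k}}(\g{g}_S) = \g{g}_S \cap \g{k}$, whose elements generate isometries that fix $o$ and preserve $S$, and therefore preserve $T_oS$. I would present this argument, since $\theta$-stability of $\g{g}_S$ is immediate from $s_o S = S$. The only mild obstacle is to make sure that $\g{g}_S$ really is the Lie algebra of a Lie group containing $\s{H}$; this follows since $S$ is closed, so $\s{G}_S$ is a closed subgroup.
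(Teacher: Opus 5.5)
Your stabilizer argument is correct and is essentially the paper's proof. You take $\s{G}_S$ closed and $\theta$-stable because $s_oS=S$, so that $\pr_{\g{k}}(\g{h}) \subseteq \pr_{\g{k}}(\g{g}_S) = \g{g}_S \cap \g{k}$, which preserves $T_oS$. Your first, Killing-field route is also valid, since $Z^*$ is tangent to $S$ and $S$ is totally geodesic, so $(\nabla_v Z^*)_o = \pm[Z_{\g{k}},v]$ stays in $T_oS$; it even dispenses with completeness of $S$ and with the stabilizer subgroup.
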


\begin{proof}
    Let $s_o$ be the geodesic symmetry of $M$ at $o$, and let $\Theta \in \operatorname{Aut}(\s{G})$ be the conjugation by $s_o$. Recall that $\s{K} \subseteq \s{G}^\Theta$ is an open subgroup. The subgroup $\s{F}$ of $\s{G}$ consisting of all elements preserving $S$ is a closed subgroup, and it is invariant under $\Theta$ because $S$ is invariant under $s_o$. Therefore, the Lie algebra $\g{f}$ splits as a vector space direct sum $\g{f} = (\g{f} \cap \g{k}) \oplus (\g{f} \cap \g{p})$, where $\g{f} \cap \g{k}$ is the Lie algebra of $\s{F} \cap \s{K}$ and thus can be described as the set of elements in $\g{k}$ preserving $T_o S$ under the isotropy representation. Since $\s{H} \subseteq \s{F}$, we have $\pr_\g{k}(\g{h}) \subseteq \g{f} \cap \g{k}$.
\end{proof}

\begin{remark}\label{lemma:foliationseuclidan}
    From the classification of cohomogeneity-one actions on Euclidean spaces, we know that a closed connected subgroup $\s{H} \subseteq \s{SO}_n \ltimes \R^n$ acting on $M_0$ with cohomogeneity one and without singular orbits must have parallel hyperplanes as its orbits. Therefore, we can apply Lemma \ref{lem:preserving_t.g.} to deduce that $\pr_{\g{so}_n}(\g{h})$ preserves $T_o(\s{H} \cdot o)$ and thus $\pr_{\s{SO}_n}(\s{H})$ preserves $\s{H} \cdot o$.
\end{remark}

\subsection{The proof of Theorem~\ref{theorem:foliations}} We now proceed to give a classification of codimension-one homogeneous foliations of symmetric spaces.
For convenience, we split the proof into a series of smaller results that we prove independently.

Let $M = M_+ \times M_0 \times M_-$ be a simply connected symmetric space, where $M_0 \cong \R^n$ and $M_+$ and $M_-$ are of compact and noncompact type, respectively, and let $\s{H}$ be a connected Lie group acting on $M$ properly, isometrically, with cohomogeneity one and no singular orbits. As mentioned in Section~\ref{section:products}, we may assume without loss of generality that $\s{H}$ is a closed connected subgroup of $I^0(M_+) \times I^0(M_0) \times I^0(M_-)$. Moreover, Theorem~\ref{th:main:decompcompactxhadamard} (or more precisely, Corollary~\ref{corollary:decompC1SymmetricSpaces}) guarantees that either the action of $\s{H}$ decomposes or it is orbit-equivalent to the action of $I(M_+) \times \s{H}_{\Delta}$, where $\s{H}_{\Delta}$ acts on $M_0 \times M_-$ with cohomogeneity one and its projection actions on $M_0$ and $M_-$ are both transitive. Note that, in case the action of $\s{H}$ decomposes, the fact that it has no singular orbits is equivalent to saying that its non-transitive projection action has no singular orbits.

First, suppose that the action of $\s{H}$ decomposes. Lemma \ref{lemma:foliationscompact} guarantees that $M_+$ does not admit cohomogeneity-one actions without singular orbits. Furthermore, the only codimension-one homogeneous foliations of $M_0 \cong \R^n$ are those given by parallel affine hyperplanes. This corresponds to item (a) in Theorem~\ref{theorem:foliations}. Next, assume that the projection action of $\s{H}$ on $M_-$ is non-transitive and thus induces a homogeneous codimension-one foliation. Such foliations have been extensively studied in \cite{BT:jdg}, \cite{BDT:jdg} and \cite{Solonenko:reducible}. Every cohomogeneity-one action on $M_-$ without singular orbits is orbit-equivalent to either the action of the connected Lie subgroup $\s{H}_X$ of $\s{G}_-$ with Lie algebra
\[
\g{h}_X = (\g{a} \ominus X) \loplus \g{n},
\]
for some unit vector $X \in \g{a}$, or else to the action of the connected Lie subgroup $\s{H}_i \subseteq \s{G}_-$ with Lie algebra
\[
\g{h}_i = \g{a} \loplus (\g{n} \ominus X)
\]
for a unit vector $X$ in a simple root space $\g{g}_{\alpha_i}$. These cases correspond to items (b) and (c) in Theorem~\ref{theorem:foliations}, respectively.

From now on, we take $\s{H} \subseteq I^0(M_0) \times I^0(M_-) = (\s{SO}_n \ltimes \R^n) \times \s{G}_-$ to be a closed connected subgroup whose action on $M_0 \times M_-$ has cohomogeneity one and no singular orbits and does not decompose. In particular, both $M_0$ and $M_-$ are nontrivial. Thanks to Lemma~\ref{lemma:SplittingC1}, the projection actions of $\s{H}$ on both $M_0$ and $M_-$ must be transitive.
As our first step, we show that we can replace $\s{H}$ with a possibly smaller subgroup that satisfies a number of extra properties.

\begin{lemma}\label{lemma:solvablesubgroupfoliations}
	There exists a closed connected solvable subgroup $\s{L}$ of $\s{H}$ acting on $M_0 \times M_-$ freely such that the orbits of $\s{L}$ and $\s{H}$ coincide. For any such $\s{L}$, we have $\g{l} \cap (\g{so}_n \oplus \g{k}_-) = \{0\}$. Moreover, there exists a choice of $\g{a} \subsetneq \g{p}$ and $\Sigma^+\subsetneq \Sigma$ such that $\g{l} \subseteq (\g{t}_0 \loplus \g{r}^n) \oplus (\g{t}_- \loplus (\g{a} \loplus \g{n}))$, where $\g{t}_0 = \pr_{\g{so}_n}(\g{l})$ and $\g{t}_- \subseteq Z_{\g{k}_-}(\g{a})$ are abelian subalgebras.
\end{lemma}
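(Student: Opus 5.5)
The plan is to start from the structure of $\s{H}$. The action is proper, of cohomogeneity one, and has no singular orbits, so all isotropy groups are compact. Moreover, $M_0\times M_-$ is Hadamard, so every orbit is a properly embedded hypersurface diffeomorphic to $\s{H}/\s{H}_p$. Since the orbit space is $\R$, all orbits are principal and share a common isotropy type. I would take a Levi--Iwasawa type decomposition of $\s{H}$: write $\s{H}=\s{K}_{\s{H}}\cdot\s{S}$, where $\s{K}_{\s{H}}$ is a maximal compact subgroup and $\s{S}$ is a closed connected solvable subgroup such that $\s{H}=\s{K}_{\s{H}}\s{S}$ as sets. Concretely, one can take $\s{S}=\s{R}\cdot\s{A}_{\s{h}}\s{N}_{\s{h}}$, with $\s{R}$ the solvable radical and $\s{A}_{\s{h}}\s{N}_{\s{h}}$ coming from an Iwasawa decomposition of a Levi factor, which absorbs the compact part of the radical appropriately. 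By Cartan's fixed point theorem, $\s{K}_{\s{H}}$ fixes a point $p$, so $\s{K}_{\s{H}}\subseteq \s{H}_p$. Hence $\s{S}\cdot p=\s{H}\cdot p$. The difficulty is that we need this for every orbit, and we need freeness. For this I would use the standard argument from \cite{BDT:jdg,Solonenko:reducible}: choose $\s{L}$ to be a closed connected solvable subgroup of $\s{S}$ acting simply transitively on one orbit $\s{H}\cdot p$. Such a subgroup exists because a Riemannian homogeneous space diffeomorphic to $\R^{m}$ whose connected isometry group contains a cocompact-isotropy transitive solvable subgroup admits a simply transitive solvable subgroup; this is the same modification trick of replacing $\s{S}$ by a "split" solvable group that kills its compact torus part. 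Then I would show that $\s{L}$ has the same orbits as $\s{H}$ everywhere. Since there are no singular orbits, $\s{H}$ acts transitively on its orbit space direction, so every orbit has the form $\s{H}\cdot\gamma(t)$ for a normal geodesic $\gamma$. Each such orbit contains $\s{L}\cdot\gamma(t)$, which is open in it by a dimension count, as long as $\s{L}$ still acts locally freely there. Properness and connectedness of the orbit then give equality. Freeness at all points follows because the isotropy $\s{L}_q$ is a compact subgroup of a simply connected solvable group, hence trivial.

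Next, $\g{l}\cap(\g{so}_n\oplus\g{k}_-)=\{0\}$. An element $Z$ in this intersection generates a one-parameter subgroup of $\s{L}$ whose closure lies in the compact group $\s{SO}_n\times\s{K}_-$, and this closure fixes the base point $o$. Freeness forces this subgroup to be trivial. Alternatively, $Z$ is an infinitesimal isometry vanishing at $o$ whose flow is an isometry subgroup of $\s{L}$ fixing $o$, so $Z=0$.

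For the final inclusion, $\s{L}$ is solvable and its projection to $\s{G}_-$ is a connected solvable subgroup. By the standard fact (Mostow), every connected solvable subgroup of $\s{G}_-$ is conjugate into a maximal solvable subgroup of the form $\s{T}\s{A}\s{N}$ with $\s{T}\subseteq Z_{\s{K}_-}(\g{a})$ a torus. We choose $\g{a}$ and $\Sigma^+$ so that this conjugation is already built in, which is allowed since it changes only the choice of base point and Weyl chamber. Similarly, the projection of $\s{L}$ to $\s{SO}_n\ltimes\R^n$ is solvable. Its image under $\pr_{\g{so}_n}$ is a solvable subalgebra $\g{t}_0$ of the compact algebra $\g{so}_n$, hence abelian. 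The main obstacle I expect is making the conjugation in $\s{G}_-$ compatible with keeping $\pr_{\g{k}_-}$ of the relevant part inside $Z_{\g{k}_-}(\g{a})$ and abelian. I would handle this by passing to the closure of the projection in $\s{G}_-$, which is still solvable, and applying Mostow's theorem to it. I would then check that the compact part of a maximal connected solvable subgroup of $\s{G}_-$ containing $\s{A}\s{N}$ is a torus in $Z_{\s{K}_-}(\g{a})$.
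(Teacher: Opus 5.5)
\textbf{There is a genuine gap in your last step (the Borel-subalgebra step).}

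Your first two assertions are acceptable in substance. For the existence of $\s{L}$ you sketch a self-contained argument, whereas the paper simply cites \cite[Th.\ 3.9]{LN:thesis}. The sketch is loose in two places:
\begin{itemize}
\item A general ``modification'' statement yields a simply transitive group of isometries of the orbit, not a subgroup of $\s{S}$. To stay inside $\s{S}$, take an ideal of $\g{s}$ complementary to the isotropy algebra and containing the nilradical, as in the proof of Lemma~\ref{lem:solvable_simply_transitive_subgroup}.
\item Freeness of $\s{L}$ along a normal geodesic $\gamma$ needs a reason. It holds because, with no singular orbits, $\s{H}_{\gamma(t)}=\s{H}_{\gamma(0)}$ for all $t$.
\end{itemize}

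The ``standard fact'' you invoke in the last step is false. It is not true that every connected solvable subgroup of $\s{G}_-$ is conjugate into some $\s{T}\s{A}\s{N}$ with $\s{T}\subseteq Z_{\s{K}_-}(\g{a})$. A real semisimple Lie algebra has in general several conjugacy classes of maximal solvable subalgebras. For example, for $M_-=\R H^2$ one has $\g{g}_-=\g{sl}_2(\R)$ and $Z_{\g{k}_-}(\g{a})=0$. The circle $\s{K}_-$ lies in no conjugate of $\s{A}\s{N}$, since the latter has no nontrivial compact subgroups.

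Freeness does not rescue the claim. Take the one-parameter subgroup generated by $E+Z$, with $0\neq E\in\g{r}^1$ and $0\neq Z\in\g{k}_-$. It acts freely on $\R\times\R H^2$, yet its projection to $\s{G}_-$ is $\s{K}_-$. Taking closures changes nothing here. Your closing sentence already presupposes that the maximal solvable subgroup contains a conjugate of $\s{A}\s{N}$, which is exactly what must be proved.

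The missing ingredient is the standing hypothesis that the action of $\s{H}$ does not decompose. By Lemma~\ref{lemma:SplittingC1}, its projection action on $M_-$ is then transitive. The paper argues as follows:
\begin{itemize}
\item Since $\s{L}$ has the same orbits as $\s{H}$, the projection of $\s{L}$ to $\s{G}_-$ is also transitive on $M_-$.
\item Hence so is the connected subgroup of any Borel subalgebra $\g{b}\supseteq\pr_{\g{g}_-}(\g{l})$.
\item By Mostow's description \cite{Mostow:AnnalsMaximal}, such a $\g{b}$ must be of the form $\g{t}_-\loplus(\g{a}\loplus\g{n})$ for some choice of $\g{a}$ and $\Sigma^+$, with $\g{t}_-$ maximal abelian in $Z_{\g{k}_-}(\g{a})$.
\end{itemize}

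A smaller point: no change of base point is needed, contrary to your remark. $\Ad(\s{A}\s{N})$ preserves $\g{t}_-\oplus\g{a}\oplus\g{n}$. So, using $\s{G}_-=\s{K}_-\s{A}\s{N}$, the conjugating element can be taken in $\s{K}_-$, which keeps the Cartan decomposition fixed.
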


\begin{proof}
	Since $M_0 \times M_-$ is a Hadamard manifold, there exists a closed connected solvable subgroup $\s{L}$ of $\s{H}$ that acts on $M_0 \times M_-$ freely and has the same orbits as $\s{H}$ (see \cite[Th.\ 3.9]{LN:thesis}). The second statement in the lemma follows immediately from the fact that the action of $\s{L}$ is free. Let $\pr_{\g{so}_n}$ and $\pr_{\g{g}_-}$ denote the projections of $ (\g{so}_n \loplus \g{r}^n) \oplus \g{g}_-$ onto $\g{so}_n$ and $\g{g}_-$, respectively. These are both Lie algebra homomorphisms because $\g{r}^n \oplus \g{g}_-$ and $\g{so}_n \oplus \g{r}^n$ are ideals. Consequently, $\g{t}_0 = \pr_{\g{so}_n}(\g{l})$ is a solvable subalgebra of $\g{so}_n$. It is also a compact Lie algebra because so is $\g{so}_n$. This implies that $\g{t}_0$ must be abelian. Similarly, $\pr_{\g{g}_-}(\g{l})$ is a solvable subalgebra of $\g{g}$, hence it is contained in some maximal solvable subalgebra $\g{b} \subsetneq \g{g}_-$, also known as a Borel subalgebra. Since the projection action of $\s{H}$ on $M_-$ is transitive, so is the projection action of $\s{L}$ on $M_-$, which implies that the connected Lie subgroup of $\s{G}_-$ corresponding to $\g{b}$ also acts on $M_-$ transitively. It follows from the theory of Borel subalgebras of real semisimple Lie algebras (see, e.g., \cite{Mostow:AnnalsMaximal}) that there must exist a choice of $\g{a}\subsetneq\g{p}$ and a system of positive roots $\Sigma^+\subsetneq \Sigma$ such that $\g{b} = \g{t}_- \loplus (\g{a} \loplus \g{n})$, where $\g{t}_-$ is a maximal abelian subalgebra of $Z_{\g{k}_-}({\g{a}})$. This concludes the proof.
\end{proof}

To simplify the notation, we assume from now on that $\s{H}$ itself is solvable and acts on $M_0 \times M_-$ freely. We fix $\g{a} \subsetneq \g{p}$ and $\Sigma^+ \subsetneq \Sigma$ so that $\g{h} \subseteq (\g{t}_0 \loplus \g{r}^n) \oplus (\g{t}_- \loplus (\g{a} \loplus \g{n}))$ for some abelian $\g{t}_0 \subseteq \g{so}_n$ and $\g{t}_- \subseteq Z_{\g{k}_-}(\g{a})$. Consider the projection
\[
(\g{t}_0 \loplus \g{r}^n) \oplus (\g{t}_- \loplus (\g{a} \loplus \g{n})) \to \g{r}^n \oplus (\g{a} \loplus \g{n})
\]
along $\g{t}_0 \oplus \g{t}_-$. It follows from Lemma \ref{lemma:solvablesubgroupfoliations} that the restriction of this projection to $\g{h}$, which we denote by $\pi$, yields a linear isomorphism between $\g{h}$ and $\pi(\g{h}) \cong T_o(\s{H} \cdot o)$.

As mentioned above, the projection actions of $\s{H}$ on $M_0$ and $M_-$ are both transitive, so we may decompose the tangent space to $\s{H} \cdot o$ at $o$ as
\[
\pi(\g{h}) = (\g{r}^n \ominus E) \oplus \R(E+X) \oplus ((\g{a} \loplus \g{n}) \ominus X)
\]
for some nonzero vectors $E \in \g{r}^n$ and $X \in \g{a} \loplus \g{n}$. We now define three subspaces of $\g{h}$ by pulling the summands in the above decomposition back to $\g{h}$. We denote them by $\g{h}_0 = \pi^{-1}(\g{r}^n \ominus E)$, \, $\g{h}_\mathrm{mid} = \pi^{-1}(\R(E+X))$, and $\g{h}_{-} = \pi^{-1}((\g{a} \loplus \g{n}) \ominus X)$. Given $V \in \pi(\g{h})$, we denote by $T_V$ the unique vector in $\g{t}_0 \oplus \g{t}_-$ such that $V + T_V \in \g{h}$. In other words, the map $V \mapsto V + T_V$ is precisely the inverse of $\pi \colon \g{h} \isoto \pi(\g{h})$.

\begin{lemma}
	$\g{h}_0$, $\g{h}_\mathrm{mid}$, and $\g{h}_-$ are subalgebras of $\g{h}$.
\end{lemma}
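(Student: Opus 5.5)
The plan is to show that each of the three subspaces is closed under the bracket of $\g{h}$. I will do this by computing the image under $\pi$ of a bracket and checking which summand of $\pi(\g{h})$ it falls into. The subspace $\g{h}_\mathrm{mid}$ is one-dimensional, since $\pi$ is a linear isomorphism onto $\pi(\g{h})$. It is therefore trivially a subalgebra, and only $\g{h}_0$ and $\g{h}_-$ need work.

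First I will record two intersection identities inside $\pi(\g{h})$:
\[
\pi(\g{h}) \cap \g{r}^n = \g{r}^n \ominus E, \qquad \pi(\g{h}) \cap (\g{a} \loplus \g{n}) = (\g{a} \loplus \g{n}) \ominus X.
\]
A general element of $\pi(\g{h})$ is $u + c(E+X) + w$ with $u \in \g{r}^n \ominus E$ and $w \in (\g{a}\loplus\g{n}) \ominus X$. Its $(\g{a}\loplus\g{n})$-component is $cX + w$, and this vanishes only if $c = 0$ and $w = 0$, because $X \ne 0$ and $w \perp X$. The same argument applied to the $\g{r}^n$-component gives the second identity. Since $\pi$ is injective on $\g{h}$, it then suffices to show the following: for $V, W \in \g{r}^n \ominus E$ (respectively in $(\g{a}\loplus\g{n})\ominus X$), the element $\pi([V+T_V, W+T_W])$ lies in $\g{r}^n$ (respectively in $\g{a}\loplus\g{n}$). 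The bracket already lies in $\g{h}$, so $\pi$ of it automatically lies in the corresponding intersection, and hence the bracket lies in $\g{h}_0$ (respectively $\g{h}_-$).

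The main computation is to expand the bracket. Write $T_V = T_V^0 + T_V^-$ with $T_V^0 \in \g{t}_0$ and $T_V^- \in \g{t}_-$. I will use the following facts: $\g{t}_0 \oplus \g{t}_-$ is abelian (Lemma~\ref{lemma:solvablesubgroupfoliations}); the summands $\g{so}_n \loplus \g{r}^n$ and $\g{g}_-$ commute; $\g{t}_0$ preserves $\g{r}^n$; and $\g{t}_- \subseteq Z_{\g{k}_-}(\g{a})$ kills $\g{a}$ and normalizes each root space, hence preserves $\g{n}$. With these, the bracket has no $\g{t}_0 \oplus \g{t}_-$ component, and
\[
\pi([V+T_V,W+T_W]) = \bigl([T_V^0,W_0]-[T_W^0,V_0]\bigr) + \bigl([V_-,W_-]+[T_V^-,W_-]-[T_W^-,V_-]\bigr).
\]
Here the first group lies in $\g{r}^n$, because $\g{r}^n$ is abelian and $\g{t}_0$-invariant. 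The second group lies in $\g{n}$, because $[\g{a}\loplus\g{n},\g{a}\loplus\g{n}] \subseteq \g{n}$ and $\g{t}_-$ preserves $\g{n}$ and annihilates $\g{a}$.

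For $V, W \in \g{r}^n \ominus E$, the components $V_-$ and $W_-$ vanish, so the second group is zero and the image lies in $\g{r}^n$. For $V, W \in (\g{a}\loplus\g{n}) \ominus X$, the components $V_0$ and $W_0$ vanish, so the first group is zero and the image lies in $\g{n} \subseteq \g{a} \loplus \g{n}$. By the intersection identities, both cases close up, and the lemma follows.

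The only delicate point I expect is bookkeeping: making sure the $\g{t}$-components $T_V^-$ or $T_V^0$, which may be nonzero even when $V$ lives purely in one factor, cannot produce a component in the other factor. This is handled by the commutation of the two factors and by $\g{t}_-$ centralizing $\g{a}$.
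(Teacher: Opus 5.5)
Your proposal is correct and follows the paper's proof. You expand the bracket using that $\g{t}_0\oplus\g{t}_-$ is abelian, normalizes $\g{r}^n$ and $\g{n}$, and centralizes $\g{a}$, so that it lands in $\g{h}\cap\g{r}^n$ (resp.\ $\g{h}\cap(\g{a}\loplus\g{n})$); you then use that these intersections lie in $\g{h}_0$ (resp.\ $\g{h}_-$), and $\g{h}_\mathrm{mid}$ is handled by one-dimensionality. The only difference is that you explicitly verify $\pi(\g{h})\cap\g{r}^n=\g{r}^n\ominus E$ and $\pi(\g{h})\cap(\g{a}\loplus\g{n})=(\g{a}\loplus\g{n})\ominus X$, which the paper calls straightforward.
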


\begin{proof}
	Let $V + T_V$ and $W + T_W$ be arbitrary vectors in $\g{h}_0$, where $V, W \in \g{r}^n \ominus E$. We compute
	\[
	[V+T_V, W+T_W] = [V,T_W] + [T_V,W] \in \g{h},
	\]
	which lies in $\g{r}^n$ because $\g{t}_0 \oplus \g{t}_-$ normalizes $\g{r}^n$. It is straightforward to see that $\g{r}^n \cap \g{h}$ is contained in $\g{h}_0$. This shows that $\g{h}_0$ is a subalgebra of $\g{h}$. Similarly, if $Y + T_Y$, $Z + T_Y$ are arbitrary vectors in $\g{h}_-$, one has
	\begin{equation}\label{h_-_subalgebra}
	[Y+T_Y, Z+T_Z] = [Y,T_Z] + [T_Y,Z] + [Y,Z] \in \g{h}.
	\end{equation}
	The first two summands in \eqref{h_-_subalgebra} are contained in $\g{a} \loplus \g{n}$ because $\g{t}_- \subseteq Z_{\g{k}_-}(\g{a})$ and the latter normalizes $\g{n}$, whereas the last summand lies in $\g{n}$. Overall, we see that $[Y+T_Y, Z+T_Z] \in (\g{a} \loplus \g{n}) \cap \g{h}$. Similarly to the above, we have $(\g{a} \loplus \g{n}) \cap \g{h} \subseteq \g{h}_-$, which implies that $\g{h}_-$ is also a subalgebra. Lastly, $\g{h}_\mathrm{mid}$ is one-dimensional, so it is an abelian subalgebra.
\end{proof}	

The next lemma puts a strong restriction on what $\g{h}$ can look like.

\begin{lemma}\label{lem:structure_of_h}
	In the above notation, we have $X \in \g{a}$ and $\g{n} \subseteq \g{h}$.
\end{lemma}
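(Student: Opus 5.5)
Since $\g{h}$ has codimension one in $\g{t}\loplus(\g{r}^n\oplus(\g{a}\loplus\g{n}))$ modulo the compact part, the plan is to compare $\g{h}$ with its image under brackets. The key fact is that $\pi$ carries brackets of $\g{h}$ into brackets of $\g{r}^n\oplus(\g{a}\loplus\g{n})$ plus terms coming from $\g{t}_0\oplus\g{t}_-$, which act by skew-symmetric derivations. Concretely, for $V+T_V, W+T_W\in\g{h}$ we have
\[
\pi[V+T_V,W+T_W] = [V,W] + [T_V,W] - [T_W,V].
\]
Here $\ad(\g{t}_-)$ preserves each root space $\g{g}_\lambda$ and kills $\g{a}$, and $\ad(\g{t}_0)$ acts on $\g{r}^n$ by skew maps.

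\textbf{Step 1: $\g{n}\cap\pi(\g{h})$ generates enough.} Write $X = X_{\g{a}} + X_{\g{n}}$. The subspace $\pi(\g{h}_-) = (\g{a}\loplus\g{n})\ominus X$ contains $\g{a}\ominus X_{\g{a}}$-type directions and all of $\g{n}\ominus X_{\g{n}}$. I bracket elements $H+T_H\in\g{h}_-$ having $H\in\g{a}$, taken generically, against elements $Z+T_Z$ with $Z\in\g{g}_\lambda\ominus X_{\g{n}}$. The bracket $\pi$-projects to $\lambda(H)Z + [T_H,Z]$ modulo $\g{a}$-valued terms, using that $[T_Z,H]=0$ since $\g{t}_-$ centralizes $\g{a}$. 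Because $\pi(\g{h})$ is closed under this operation, I then exploit that $\ad(H)+\ad(T_H)$ on $\g{g}_\lambda$ is $\lambda(H)\Id$ plus a skew operator, hence invertible whenever $\lambda(H)\ne0$. Iterating, in the spirit of a Vandermonde argument, shows that each root space component lies in $\pi(\g{h})$. The aim of this step is $\g{n}\subseteq\pi(\g{h})$ whenever $X_{\g{n}}$ can be moved out, which forces $X_{\g{n}}=0$, i.e. $X\in\g{a}$. This uses two inputs: $\pi(\g{h})$ is the full complement of the line $E$-direction plus $\R(E+X)$, and if $X_{\g{n}}\neq0$, bracketing $E+X+T$ with $\g{a}$-elements of $\g{h}_-$ produces a vector $\lambda(H)X_{\g{n}}$-ish plus a skew correction in $\g{n}$ with no $\g{r}^n$-component. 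That vector is then in $\pi(\g{h})\cap(\g{a}\loplus\g{n}) = \pi(\g{h}_-)\perp X$. Taking the inner product with $X_{\g{n}}$ kills the skew part (since $\langle[T,X_{\g{n}}],X_{\g{n}}\rangle=0$) and leaves $\sum\lambda(H)|X_\lambda|^2$. If $\g{a}\ominus X_{\g{a}}$ is large enough, choosing $H$ suitably makes this nonzero, which is a contradiction. The degenerate case $\dim\g{a}=1$ or $X_{\g{a}}$ spanning needs the element $E+X+T$ itself: there I bracket it with $Z\in\g{n}\ominus X_{\g{n}}$ and use the highest-root component of $X_{\g{n}}$ together with the grading by $H^\Lambda$.

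\textbf{Step 2: $\g{n}\subseteq\g{h}$.} Once $X\in\g{a}$, we have $\g{n}\subseteq\pi(\g{h})$, so for $Z\in\g{n}$ there is $Z+T_Z\in\g{h}$. I must show $T_Z=0$. Since $\g{n}=[\g{a},\g{n}]$ and $[\g{n},\g{n}]\subseteq\g{n}$, the derived algebra satisfies $[\g{h},\g{h}]\supseteq$ elements whose $\pi$-images span $\g{n}$. On the other hand, the $\g{t}$-components of brackets vanish, because $\g{t}_0\oplus\g{t}_-$ is abelian and commutes with the compact parts. Hence $[\g{h},\g{h}]\subseteq\g{r}^n\oplus(\g{a}\loplus\g{n})$, so the derived elements have zero $T$-part. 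By injectivity of $\pi$ on $\g{h}$, it follows that $T_Z=0$ for all $Z\in\g{n}$, i.e. $\g{n}\subseteq\g{h}$. Surjectivity onto $\g{n}$ of the $\pi$-image of $[\g{h},\g{h}]$ uses the invertibility of $\lambda(H)\Id+$skew from Step 1.

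The main obstacle is Step 1, specifically excluding $X_{\g{n}}\ne0$ in the low-rank situation where $\g{a}\ominus X_{\g{a}}$ offers no useful $H$. I would handle it by a filtration argument on the grading $\g{n}=\bigoplus_\nu\g{n}^\nu$, working from the top degree down.
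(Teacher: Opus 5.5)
Your proof has a genuine gap in Step 1, which is precisely the case the paper works hardest on, and a smaller one in Step 2.

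In Step 1, write $T_V=T_V^0+T_V^-$ with $T_V^0\in\g{t}_0$ and $T_V^-\in\g{t}_-$, and set $f(H)=\langle [H,X_{\g n}],X_{\g n}\rangle=\sum_{\lambda\in\Sigma^+}\lambda(H)\|X_\lambda\|^2$. For $H\in\g{a}\ominus X_{\g a}$ one gets $[H+T_H,E+X+T_{E+X}]=T_H^0E+[H,X_{\g n}]+[T_H^-,X_{\g n}]$. Contrary to what you say, this has an $\g{r}^n$-component. That is harmless, since it lies in $\g{r}^n\ominus E\subseteq\pi(\g h)$, so pairing the rest with $X$ does give $f(H)=0$. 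But this only shows $f|_{\g{a}\ominus X_{\g a}}=0$, i.e.\ $\sum_\lambda\|X_\lambda\|^2H_\lambda\in\R X_{\g a}$ (with $H_\lambda\in\g{a}$ dual to $\lambda$). That is a contradiction only when $X_{\g a}=0$.

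The bad case is not a low-rank phenomenon. For example, $X=aH_\alpha+X_\alpha$ with $a\neq0\neq X_\alpha\in\g{g}_\alpha$ passes your test in every rank. This is exactly the alternative left open by \cite[Prop.\ 5.4]{BT:jdg}. The paper removes it by conjugating with some $g\in\s{N}$ (via \cite[Prop.\ 5.11]{BDT:jdg}) so that $X\in\g{g}_\alpha$, and then bracketing the lift of $H_\alpha$ with $E+X+T_{E+X}$. For simple $\alpha$, $(\g{a}\loplus\g{n})\ominus X$ is itself a subalgebra, so nothing internal to $\g{h}_-$ excludes it. You must bracket $E+X+T_{E+X}$ against an element whose $\g{a}$-component is not orthogonal to $X_{\g a}$, whereas all the elements you use have $\g{a}$-component in $\g{a}\ominus X_{\g a}$. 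Your ``highest root / filtration'' remedy is not an argument.

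Within your own framework, the missing element is $P=\|X_{\g n}\|^2X_{\g a}-\|X_{\g a}\|^2X_{\g n}\in(\g{a}\loplus\g{n})\ominus X$. One computes
\[
[P+T_P,\,E+X+T_{E+X}]=T_P^0E+\|X\|^2[X_{\g a},X_{\g n}]+[T_P^-+\|X_{\g a}\|^2T_{E+X}^-,\,X_{\g n}],
\]
and pairing the $\g{n}$-part with $X$ gives $\|X\|^2f(X_{\g a})=0$. Hence $f\equiv0$ on $\g{a}$, contradicting $f(H_0)>0$ for $H_0$ in the open positive Weyl chamber. With this addition your route proves $X\in\g{a}$ directly, more elementarily than the paper and without \cite{BT:jdg,BDT:jdg}.

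In Step 2, only $\g{a}\ominus X$ lies in $\pi(\g h)$. Consider a root $\lambda$ vanishing on $\g{a}\ominus X$ (in rank one, every root). For $Y\in\g{g}_\lambda$, the only bracket reaching $\g{g}_\lambda$ from an $\g{a}$-direction is
\[
[E+X+T_{E+X},\,Y+T_Y]=-T_Y^0E+(\lambda(X)\Id+\ad T^-_{E+X})Y .
\]
Knowing that brackets have zero $\g{t}$-part does not remove the term $-T_Y^0E\in\g{r}^n$, so you do not obtain pure elements of $\g{n}$ in $\g{h}$; you need $T_Y^0E=0$. One more bracket with $Y+T_Y$ gives this. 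The $\g{r}^n$-component of the result is $(T_Y^0)^2E$. Once $X\in\g{a}$, $\pi(\g h)\cap(\g{r}^n\oplus\g{n})=(\g{r}^n\ominus E)\oplus\g{n}$, so this component is orthogonal to $E$, i.e.\ $\|T_Y^0E\|^2=0$. For roots not vanishing on $\g{a}\ominus X$, your bracket with $H+T_H$, together with invertibility of $\lambda(H)\Id$ plus a skew map, is fine and is essentially the paper's final step.
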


\begin{proof}
	Consider $\g{s} = \pr_{\g{g}_-}(\g{h}_-)$. This is a subalgebra of $\g{t}_- \oplus \g{a} \oplus \g{n}$ whose projection to $\g{a} \oplus \g{n}$ along $\g{t}_-$ is a linear isomorphism onto $(\g{a} \loplus \g{n}) \ominus X$. According to \cite[Prop.\ 5.4]{BT:jdg}, there are two options for $X$: either $X \in \g{a}$ or $X \in \R H_\alpha \oplus \g{g}_\alpha$ for some simple root $\alpha \in \Lambda$. We consider the latter case first and assume that $X \not\in \g{a}$, i.e., $X = a H_\alpha + X_\alpha$ for some $a \in \R$ and nonzero $X_\alpha \in \g{g}_\alpha$.
    
    We first prove that we may assume without loss of generality that $X \in \g{g}_\alpha$ or, in other words, $a = 0$. As was shown in \cite[Prop.\ 5.11]{BDT:jdg}, there exists $g \in \s{N}$ such that $\Ad(g) ((\g{a} \oplus \g{n}) \ominus X) = (\g{a} \oplus \g{n}) \ominus X_\alpha$. We consider the conjugate subgroup $\s{H}' = g \s{H} g^{-1}$ with Lie algebra $\g{h}' = \Ad_g(\g{h})$. Clearly, $\s{H}'$ is also a closed connected solvable subgroup of $(\s{SO}_n \ltimes \R^n) \times \s{G}_-$ that acts freely on $M_0 \times M_-$. Moreover, the actions of $\s{H}$ and $\s{H}'$ are orbit-equivalent via $\operatorname{Id}_{M_0}\times g$. It follows that both projection actions of $\s{H}'$ are transitive and the action of $\s{H}'$ does not decompose. Next, notice that $\Ad_g$ acts on $\g{so}_n \loplus \g{r}^n$ trivially and preserves the subalgebras $\g{n}, \, \g{a} \oplus \g{n},$ and $\g{t}_- \oplus \g{a} \oplus \g{n}$, even though it may not preserve $\g{a}$ and $\g{t}_-$. In particular, we see that $\g{h}'$ is also contained in $(\g{t}_0 \loplus \g{r}^n) \oplus (\g{t}_- \loplus (\g{a} \loplus \g{n}))$. The projection of $\g{h}'$ to $\g{r}^n \oplus \g{a} \oplus \g{n}$ along $\g{t}_0 \oplus \g{t}_-$, which we denote by $\pi'$, is an isomorphism onto $\pi'(\g{h}')$, and the latter can be written as
    \[
    \pi'(\g{h}') = (\g{r}^n \ominus E) \oplus \R(E+X') \oplus ((\g{a} \loplus \g{n}) \ominus X')
    \]
    for some nonzero $X' \in \g{a} \loplus \g{n}$. Observe that $\Ad_g(\g{h}_-)$ is a subalgebra of $\g{h}'$, and we have $\pr_{\g{g}_-}(\Ad_g(\g{h}_-)) = \Ad_g(\pr_{\g{g}_-}(\g{h}_-)) = \Ad_g(\g{s})$. Thanks to \cite[Prop.\ 5.11(i)]{BDT:jdg}, the projection of $\Ad_g(\g{s})$ in $\g{a} \oplus \g{n}$ along $\g{t}_-$, which coincides precisely with $\pi'(\Ad_g(\g{h}_-))=(\g{a} \loplus \g{n}) \ominus X'$, is contained in $\Ad(g) ((\g{a} \oplus \g{n}) \ominus X) = (\g{a} \oplus \g{n}) \ominus X_\alpha$. For dimension reasons, it must coincide with the whole $(\g{a} \oplus \g{n}) \ominus X_\alpha$, which means that $X'$ is proportional to $X_\alpha$.
    
    By replacing $\g{h}$ with $\g{h}'$, we may simply write $X \in \g{g}_\alpha$. This means that $\g{a} \subsetneq \pi(\g{h})$ and in particular $H_\alpha \in \pi(\g{h})$. We calculate:
    \begin{equation}\label{structure_of_h}
	[H_\alpha + T_{H_\alpha}, E + X + T_{E+X}] = \alpha(H_\alpha)X + [T_{H_\alpha},E] + [T_{H_\alpha},X] \in \g{h}.
	\end{equation}
    Due to the ad-invariance of the inner product, we have $[T_{H_\alpha},E] \in\g{r}^n\ominus E$ and $[T_{H_\alpha},X] \in\g{g}_\alpha\ominus X$. Since $\alpha(H_\alpha) \ne 0$, we must have $X\in\g{h}_{\mathrm{mid}}$, which is a contradiction. Therefore, we must have that $X \in \g{a}$.
	
    It remains to show that $\g{n} \subseteq \g{h}$. Let $\lambda \in \Sigma^+$ be any positive root and suppose that $\g{h} \cap \g{g}_\lambda \subsetneq \g{g}_\lambda$. Take a nonzero vector $Y_\lambda \in \g{g}_\lambda$ orthogonal to $\g{h} \cap \g{g}_\lambda$. Since $\g{n} \subseteq \pi(\g{h})$, we can consider the vector $Y_\lambda + T_{Y_\lambda} \in \g{h}$. Moreover, there exists $H \in \g{a} \ominus X$ such that $\lambda$ takes nonzero value at $X + H$. We have:
	\[
	[E + X + H + T_{E+X+H}, Y_\lambda + T_{Y_\lambda}] = [E, T_{Y_\lambda}] + \lambda(X+H) Y_\lambda + [T_{E+X+H}, Y_\lambda] \in \g{h}.
	\]
	Note that $[E, T_{Y_\lambda}] \in \g{r}^n \ominus E$ and $[T_{E+X+H}, Y_{\lambda}] \in \g{g}_\lambda \ominus Y_{\lambda}$. With respect to the decomposition $\g{h} = \g{h}_0 \oplus \g{h}_\mathrm{mid} \oplus \g{h}_-$, it is easy to see that $[E, T_{Y_\lambda}] \in \g{h}_0$ and $\lambda(X+H) Y_\lambda + [T_{E+X+H}, Y_\lambda] \in \g{h}_-$. Since $\lambda(X+H) \ne 0$, the latter vector lies outside of $\g{h} \cap \g{g}_\lambda$, which is a contradiction. We deduce that $\g{h}$ contains $\g{g}_\lambda$ and, as  $\lambda$ was arbitrary, $\g{n} \subseteq \g{h}$, which concludes the proof.
\end{proof}

Note that, in light of Lemma \ref{lem:structure_of_h}, the projection $\pi(\g{h})$ of $\g{h}$ in $\g{r}^n \oplus (\g{a} \loplus \g{n})$ is precisely the subalgebra $\g{h}_{E,X}$ described in Theorem~\ref{theorem:foliations}. The next theorem is the final step in our line of argument and it concludes the proof of Theorem~\ref{theorem:foliations}.

\begin{theorem}
    The connected Lie subgroup $\s{H}_{E,X}$ of $(\s{SO}_n \ltimes \R^n) \times \s{G}_-$ with Lie algebra $\g{h}_{E,X} = \pi(\g{h})$ has the same orbits in $M_0 \times M_-$ as $\s{H}$.
\end{theorem}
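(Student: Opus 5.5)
We need to show that $\s{H}$ and $\s{H}_{E,X}$ have the same orbits in $M_0\times M_-$. The idea is to build a larger connected group $\widehat{\s{H}}$ containing both, and show that each of $\s{H}$ and $\s{H}_{E,X}$ has open orbits inside the $\widehat{\s{H}}$-orbits.

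\emph{Step 1: the enveloping group.} Let $\g{t}=\g{t}_0\oplus\g{t}_-$. This is abelian, and it normalizes $\g{r}^n\oplus(\g{a}\loplus\g{n})$. I also claim that $\g{t}$ normalizes $\g{h}_{E,X}$:
\begin{itemize}
\item $\g{t}_-\subseteq Z_{\g{k}_-}(\g{a})$ centralizes $\g{a}$, normalizes each root space, and kills $\g{r}^n$.
\item $\g{t}_0$ kills $\g{g}_-$. It preserves $\R E$ and $\g{r}^n\ominus E$: I expect to show $[\g{t}_0,E]=0$ by computing $[E+X+T_{E+X},\,\cdot\,]$-type brackets as in the proof of Lemma~\ref{lem:structure_of_h}.
\end{itemize}
Granting this, $\widehat{\g{h}}=\g{t}\loplus\g{h}_{E,X}$ is a Lie algebra, and it contains $\g{h}$ because $\g{h}\subseteq\{V+T_V : V\in\pi(\g{h})\}$. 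Let $\widehat{\s{H}}$ be the corresponding connected subgroup.

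\emph{Step 2: infinitesimal comparison.} Both $\g{h}$ and $\g{h}_{E,X}$ have codimension one in $\widehat{\g{h}}$ modulo the isotropy directions. At any point $p=(q,r)$, I will compare tangent spaces to orbits, using $\g{h}\cdot p$ versus $\g{h}_{E,X}\cdot p$. By $\widehat{\s{H}}$-equivariance it is enough to treat $p=g\cdot o$, and then to transport the computation by $\Ad$. Since $\g{t}$ is compact and normalizes $\g{h}_{E,X}$ (and $\g{h}$ sits in $\g{t}\oplus\g{h}_{E,X}$ as a graph), $\Ad(\widehat{\s{H}})$ should preserve the images of both in $\widehat{\g{h}}/\g{t}$. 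Concretely, $\Ad$ by elements of $\s{H}_{E,X}$ changes $\g{h}$ only by elements of $\g{t}$ plus $\g{h}_{E,X}$, because $[\g{h}_{E,X},\widehat{\g{h}}]\subseteq\g{h}_{E,X}$. Hence at every point the tangent space of the $\s{H}$-orbit equals that of the $\s{H}_{E,X}$-orbit modulo the $\g{t}$-directions.

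\emph{Step 3: orbits coincide.} I would show that the $\g{t}$-directions at each point are already tangent to both orbits, and this is the main obstacle. I would try to prove that $\widehat{\s{H}}$ itself has cohomogeneity one; for this I would try to show $\widehat{\s{H}}\cdot o=\s{H}\cdot o$ by checking that $\g{t}\cdot o=0$ and that $\mathrm{span}\{[\g{t},\pi(\g{h})]\}\subseteq\pi(\g{h})$. Once every orbit of $\widehat{\s{H}}$ is a hypersurface, both $\s{H}$ and $\s{H}_{E,X}$ have cohomogeneity one. Each $\widehat{\s{H}}$-orbit is weakly embedded, so, as in Lemma~\ref{lemma:SplittingC1}, the $\s{H}$-orbit and the $\s{H}_{E,X}$-orbit inside it are open, and by Subsection~\ref{subsection:prelim:isometric} they are transitive on it. Therefore all three groups have the same orbits.

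For the properness needed in that argument, I can use that $\s{H}$ is closed and that its orbits are properly embedded.
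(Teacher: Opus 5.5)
Your architecture is the paper's: enlarge to $\widehat{\g{h}}=\g{t}_0\oplus\g{t}_-\oplus\g{h}_{E,X}$, note that $\g{h}\subseteq\widehat{\g{h}}$ and $\pi(\widehat{\g{h}})=\pi(\g{h})$, and conclude that $\s{H}$, $\s{H}_{E,X}$ and $\widehat{\s{H}}$ share their orbits. However, the one nontrivial step is never carried out. That step is to show $\g{t}_0=\pr_{\g{so}_n}(\g{h})$ annihilates $E$, so that $\widehat{\g{h}}$ is a subalgebra; you only write ``I expect to show'' and ``Granting this''. Step~3 then asks for the same inclusion $[\g{t},\pi(\g{h})]\subseteq\pi(\g{h})$ again, so the argument is circular at its core. (Step~2 can be dropped. $\g{h}$ and $\g{h}_{E,X}$ are both complements of $\g{t}$ in $\widehat{\g{h}}$, not of codimension one. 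Also $\g{t}$ is not $\Ad(\widehat{\s{H}})$-stable, so $\Ad$ does not act on $\widehat{\g{h}}/\g{t}$.)

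The mechanism you suggest also does not deliver the claim. Write $T^0_V$ for the $\g{so}_n$-component of $T_V$, and set $K:=\g{h}\cap\g{r}^n\subseteq\g{r}^n\ominus E$. Brackets with $E+X+T_{E+X}$ and with $H+T_H$, $H\in\g{a}\ominus X$, do give $T^0_{E+X}E=0$ and $T^0_HE=0$. Also $T_Y=0$ for $Y\in\g{n}$, since $\g{n}\subseteq\g{h}$. But for $V\in\g{r}^n\ominus E$, the bracket $[V+T_V,E+X+T_{E+X}]=T^0_VE-T^0_{E+X}V$ is only required to lie in $K$. That holds automatically if, say, $T^0_VE=T^0_{E+X}V$, so it cannot force $T^0_VE=0$. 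The analogy with Lemma~\ref{lem:structure_of_h} breaks down because the contradiction there came from the nonzero eigenvalues $\alpha(H_\alpha)$ and $\lambda(X+H)$ of $\ad(\g{a})$. The flat factor has no counterpart of these.

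So you still need $\pr_{\g{so}_n}(\g{h}_0)\,E=0$. The paper proves this globally, in four steps.
\begin{enumerate}[(i)]
\item $\s{H}_0$ is closed, being the identity component of $\s{H}\cap((\s{SO}_n\ltimes\R^n)\times\s{T}_-)$. This is where closedness of $\s{H}$ is really used.
\item Its image $\widecheck{\s{H}}_0\subseteq\s{SO}_n\ltimes\R^n$ is closed because $\s{T}_-$ is compact.
\item $\widecheck{\s{H}}_0$ acts on $M_0$ with discrete isotropy, because $\g{h}_0\oplus\g{h}_{\mathrm{mid}}$ acts transitively on $M_0$ and meets $\g{t}_0\oplus\g{t}_-\oplus\g{a}$ trivially.
\item Hence its orbits are parallel hyperplanes, and Lemma~\ref{lem:preserving_t.g.} applies.
\end{enumerate}
Alternatively, you can stay purely algebraic if you also bracket inside $\g{h}_0$. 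For $V,W\in\g{r}^n\ominus E$ one has $[V+T_V,W+T_W]=T^0_VW-T^0_WV\in K\subseteq\g{r}^n\ominus E$. This says exactly that $\phi(V):=T^0_VE$ is a symmetric endomorphism of $\g{r}^n\ominus E$. Since $\phi$ vanishes on $K$, its image is orthogonal to $K$. Let $A$ be the (skew) restriction of $T^0_{E+X}$ to $\g{r}^n\ominus E$. Pairing $\phi(V)-AV\in K$ with $\phi(V)$ gives $|\phi(V)|^2=\langle AV,\phi(V)\rangle$. Summing over an orthonormal basis yields $\tr(\phi^2)=\tr(\phi A)=0$, since $\phi$ is symmetric and $A$ is skew; hence $\phi=0$. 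With either ingredient, your Step~3 goes through, provided you add two points. First, $\widehat{\s{H}}$ has no open orbit, since it would then be transitive, contradicting that $\widehat{\s{H}}\cdot o=\s{H}\cdot o$ is a hypersurface. Second, $\s{H}_{E,X}$ acts freely, so all of its orbits are hypersurfaces. Properness plays no role in that final step; weak embeddedness suffices.
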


\begin{proof}
    Our goal is to construct a connected Lie subgroup $\widehat{\s{H}}$ of $(\s{SO}_n \ltimes \R^n) \times \s{G}_-$ containing both $\s{H}$ and $\s{H}_{E,X}$ that acts with cohomogeneity one and thus has the same orbits as either of these two subgroups. We are going to define $\widehat{\s{H}}$ as the connected Lie subgroup with Lie algebra $\widehat{\g{h}} = \g{t}_0 \oplus \g{t}_- \oplus \pi(\g{h})$, but first we need to prove that $\widehat{\g{h}}$ is indeed a subalgebra, i.e., that both $\g{t}_0$ and $\g{t}_-$ normalize $\pi(\g{h})$. Thanks to Lemma \ref{lem:structure_of_h}, we know that
    \[
    \pi(\g{h}) = \g{h}_{E,X} = (\g{r}^n \ominus E) \oplus \R(E+X) \oplus (\g{a} \ominus X) \oplus \g{n}.
    \]
    Since $\g{t}_-$ is contained in $Z_{\g{k}_-}(\g{a})$, it commutes with $\g{r}^n \oplus \g{a}$ and normalizes $\g{n}$, hence it must normalize $\pi(\g{h})$. 
    
    In view of the $\ad$-invariance of the inner product, in order to show that $\g{t}_0 = \pr_{\g{so}_n}(\g{h})$ normalizes $\pi(\g{h})$, it suffices to prove that it normalizes $\g{r}^n \ominus E$. We will do this separately for $\pr_{\g{so}_n}(\g{h}_0), \, \pr_{\g{so}_n}(\g{h}_\mathrm{mid})$, and $\pr_{\g{so}_n}(\g{h}_-)$. Consider the subalgebra $\widecheck{\g{h}}_0 = \pr_{\g{so}_n \loplus \g{r}^n}(\g{h}_0)$ and its corresponding connected Lie subgroup $\widecheck{\s{H}}_0 \subseteq \s{SO}_n \ltimes \R^n$. Write $\s{H}_0$ and $\s{T}_-$ for the connected Lie subgroups of $(\s{SO}_n \ltimes \R^n) \times \s{G}_-$ corresponding to $\g{h}_0$ and $\g{t}_-$, respectively. Note that the latter subgroup is compact. Moreover, we have $\g{h}_0 = \g{h} \cap (\g{so}_n \loplus \g{r}^n) \oplus \g{t}_-$ and hence $\s{H}_0$ is the identity component of $\s{H} \cap ((\s{SO}_n \ltimes \R^n) \times \s{T}_-)$. Since this intersection is a closed subgroup, so is $\s{H}_0$. Observe that $\widecheck{\s{H}}_0$ is the image of $\s{H}_0$ under the projection $(\s{SO}_n \ltimes \R^n) \times \s{T}_- \to \s{SO}_n \ltimes \R^n$. As $\s{T}_-$ is compact, this projection is a closed map. We deduce that $\widecheck{\s{H}}_0$ is a closed subgroup and thus its action on $M_0$ is proper. 
    
    We would now like to show that this action has no singular orbits. Observe that we can think of it as the effectivization of the projection action $\s{H}_0 \curvearrowright M_0$. Therefore, it suffices to show that the latter action has discrete isotropy at every point. To this end, we first notice that the vector space sum $\g{h}_0 \oplus \g{h}_\mathrm{mid}$ is easily seen to be a subalgebra of $\g{h}$. We write $\s{H}_{0, \mathrm{mid}} \subseteq \s{H}$ for the corresponding connected Lie subgroup. Since $\s{H}_0 \subseteq \s{H}_{0, \mathrm{mid}}$, it suffices to show that all isotropy subgroups of the projection action $\s{H}_{0, \mathrm{mid}} \curvearrowright M_0$ are discrete. Since the projection of $\g{h}_0 \oplus \g{h}_\mathrm{mid}$ to $\g{r}^n$ is surjective, the action $\s{H}_{0, \mathrm{mid}} \curvearrowright M_0$ has cohomogeneity 0 and is thus transitive. Consequently, we just need to show that its isotropy subgroup at $0$ is discrete. To see this, just notice that $\g{h}_0 \oplus \g{h}_\mathrm{mid} \subseteq (\g{t}_0 \loplus \g{r}^n) \oplus (\g{t}_- \oplus \g{a})$ has trivial intersection with $\g{t}_0 \oplus \g{t}_- \oplus \g{a}$ because its projection to $\g{r}^n$ is a linear isomorphism. Altogether, we conclude that $\widecheck{\s{H}}_0$ is a closed connected subgroup of $\s{SO}_n \ltimes \R^n$ that acts on $M_0$ without singular orbits. As $\pr_{\g{r}^n}(\widecheck{\g{h}}_0) = \pi(\g{h}_0) = \g{r}^n \ominus E$, this action has cohomogeneity one, so its orbits are a family of mutually parallel affine hyperplanes. This allows us to apply Remark \ref{lemma:foliationseuclidan}, which states that the projection $\pr_{\g{so}_n}(\widecheck{\g{h}}_0) = \pr_{\g{so}_n}(\g{h}_0)$ preserves $T_0(\widecheck{\s{H}}_0 \cdot 0)$, or in other words, normalizes $\g{r}^n \ominus E$.

    It remains to show that the same is true for $\pr_{\g{so}_n}(\g{h}_\mathrm{mid})$ and $\pr_{\g{so}_n}(\g{h}_-)$. Given $V \in \g{r}^n \ominus E$, a straightforward computation yields
	\[
	[E + X + T_{E+X}, V + T_V] = [E, T_V] + [T_{E+X}, V] \in \g{r}^n \cap \g{h} \subseteq \g{r}^n \ominus E.
	\]
	Since $[E,T_V]$ is orthogonal to $E$, so must be $[T_{E+X},V]$. Therefore, $\pr_{\g{so}_n}(\g{h}_\mathrm{mid}) = \R \hspace{1pt} T_{E+X}$ normalizes $\g{r}^n \ominus E$. Similarly, if $H \in \g{a} \ominus X$, one has
	\[
	[H + T_H, V + T_V] = [T_H, V] \in \g{r}^n \cap \g{h} \subseteq \g{r}^n \ominus E,
	\]
	so $\pr_{\g{so}_n}(\g{h}_-)$ normalizes $\g{r}^n \ominus E$. We conclude that $\g{t}_0 = \pr_{\g{so}_n}(\g{h})$ normalizes $\g{r}^n \ominus E$ and thus the whole $\pi(\g{h})$, so $\widehat{\g{h}} = \g{t}_0 \oplus \g{t}_- \oplus \pi(\g{h})$ is a subalgebra of $(\g{so}_n \loplus \g{r}^n) \oplus \g{g}_-$.

    Consider the connected Lie subgroup $\widehat{\s{H}}$ of $(\s{SO}_n \ltimes \R^n) \times \s{G}_-$ with Lie algebra $\widehat{\g{h}}$. It is clear from our construction that $\g{h} \subseteq \widehat{\g{h}}$, so the orbits of $\s{H}$ in $M_0 \times M_-$ are contained in those of $\widehat{\s{H}}$. Now, $T_o(\s{H} \cdot o) \cong \pi(\g{h}) = \pi(\widehat{\g{h}}) \cong T_o(\widehat{\s{H}} \cdot o)$, so $\s{H} \cdot o = \widehat{\s{H}} \cdot o$. This implies that $\s{H}$ and $\widehat{\s{H}}$ share the same orbits. Similarly, since $\g{h}_{E,X} = \pi(\g{h}) \subseteq \widehat{\g{h}}$, the orbits of $\s{H}_{E,X}$ are contained in the orbits of $\widehat{\s{H}}$. As $T_o(\s{H}_{E,X} \cdot o) \cong \g{h}_{E,X} = \pi(\widehat{\g{h}}) \cong T_o(\widehat{\s{H}} \cdot o)$, we deduce that $\s{H}_{E,X}$ has the same orbits as $\widehat{\s{H}}$ and thus as $\s{H}$. This completes the proof.
\end{proof}	 

\subsection{The geometry of the orbits of $\s{H}_{E,X}$}\label{section:HFSS:geometry_of_orbits}
Here we briefly outline the geometry of the orbits of the action of $\s{H}_{E,X}$ on $M_0 \times M_-$. To begin with, we show that we can reduce the investigation to a single orbit. Indeed, note that the derived subalgebra of $\g{r}^n \oplus (\g{a} \loplus \g{n})$ is precisely $\g{n}$. Since $\g{n} \subseteq \g{h}_{E,X}$, it follows that $\g{h}_{E,X}$ is an ideal in $\g{r}^n \oplus (\g{a} \loplus \g{n})$. Therefore, the orbits of $\s{H}_{E,X}$ are mutually congruent (see~\cite[Lem.\ 2.1]{KT:GeomDed}), so we can fully understand the geometry of the orbits of $\s{H}_{E,X}$ by only studying the geometry of the orbit through $o$. Note that, under the identification $M_0 \times M_- \cong \R^n \times (\s{A} \ltimes \s{N})$, the orbit $\s{H}_{E,X} \cdot o$ simply becomes $\s{H}_{E,X}$. The vector $\xi = \frac{ ||E||^2 X-||X||^2 E }{||E|| ||X|| \sqrt{||E||^2 + ||X||^2}}$, thought of as a left-invariant vector field on $\R^n \times (\s{A} \ltimes \s{N})$, is then a unit normal field to $\s{H}_{E,X}$. Denote by $\mathcal{A}_\xi$ the shape operator of $\s{H}_{E,X}$ with respect to $\xi$, and let $\nabla$ denote the Levi-Civita connection of $\R^n \times (\s{A} \ltimes \s{N})$. Note that $\ad_\xi$ is a self-adjoint endomorphism of $\g{r}^n \oplus (\g{a} \loplus \g{n})$. The Koszul formula, combined with the left-invariance of the metric on $\R^n \times (\s{A} \ltimes \s{N})$, yields
\[
2 \langle \mathcal{A}_\xi V, W \rangle = 2 \langle -\nabla_V \xi, W \rangle = - \langle [\xi, V], W \rangle + \langle [V, W], \xi \rangle + \langle [\xi, W], V \rangle = 2 \langle \ad_\xi V, W \rangle
\]
for every $V, W \in \g{h}_{E,X}$. Consequently, the shape operator $\mathcal{A}_\xi$ is simply given by $\restr{\ad_\xi}{\g{h}_{E,X}}$. It is straightforward to check that $\xi$ commutes with $V_0 = (\g{r}^n \ominus E) \oplus \R(E+X) \oplus (\g{a} \ominus X)$, which implies that $V_0$ is contained in the principal curvature space of $\s{H}_{E,X}$ with principal curvature $0$. Similarly, $\ad_\xi Y_\alpha = \frac{\alpha(X) ||E||}{||X|| \sqrt{||E||^2 + ||X||^2}} Y_{\alpha}$ for any positive root $\alpha \in \Sigma^+$ and $Y_\alpha \in \g{g}_\alpha$. Thus, for each such $\alpha$, $\g{g}_\alpha$ is contained in the principal curvature space with principal curvature $\frac{\alpha(X) ||E||}{||X||\sqrt{||E||^2 + ||X||^2}}$. We round up the above discussion with the following
\begin{proposition}\label{prop:shape orbits hex}
	The orbits of $\s{H}_{E,X}$ in $M_0 \times M_-$ are mutually congruent.
    The shape operator of $\s{H}_{E,X} \cdot o$ with respect to $\xi$ coincides with $\restr{\ad_\xi}{\g{h}_{E,X}}$. The mean curvature of $\s{H}_{E,X} \cdot o$ is given by
	\[
	\frac{||E||}{||X|| \sqrt{||E||^2 + ||X||^2}} \sum_{\alpha \in \Sigma^+} \dim(\g{g}_\alpha) \alpha(X).
	\]
	In particular, the orbits of $\s{H}_{E,X}$ are minimal if and only if $\sum_{\alpha \in \Sigma^+} \dim(\g{g}_\alpha) \alpha(X) = 0$.
\end{proposition}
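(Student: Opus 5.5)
Most of the work is already in the discussion preceding Proposition~\ref{prop:shape orbits hex}; I would assemble it as follows. Congruence of the orbits comes first. The derived subalgebra of $\g{r}^n \oplus (\g{a} \loplus \g{n})$ is $\g{n}$, because $\g{r}^n$ is central, $[\g{a},\g{a}]=0$ and $[\g{a}\loplus\g{n},\g{a}\loplus\g{n}]\subseteq\g{n}$. Since $\g{n}\subseteq\g{h}_{E,X}$, the subalgebra $\g{h}_{E,X}$ is an ideal. The group $\R^n\times(\s{A}\ltimes\s{N})$ acts simply transitively by isometries and normalizes $\s{H}_{E,X}$, so for any $g$ in it we have $g\cdot(\s{H}_{E,X}\cdot o)=\s{H}_{E,X}\cdot(g\cdot o)$. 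This is \cite[Lem.\ 2.1]{KT:GeomDed}, and it shows that every orbit is congruent to $\s{H}_{E,X}\cdot o$. From here on I work in the left-invariant model $\R^n\times(\s{A}\ltimes\s{N})$, in which this orbit is the subgroup $\s{H}_{E,X}$ itself.

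For the shape operator, I first check that $\xi$ is a unit normal. It lies in $\R E\oplus\R X$, and the only part of $\g{h}_{E,X}$ that is not orthogonal to that plane is spanned by $E+X$. We have $\langle \|E\|^2X-\|X\|^2E,\,E+X\rangle=\|E\|^2\|X\|^2-\|X\|^2\|E\|^2=0$, and the normalization is immediate. Next, $\ad_\xi$ is self-adjoint: $\xi\in\g{r}^n\oplus\g{a}$, the $\g{r}^n$-part is central, and $\ad_H$ for $H\in\g{a}$ acts on each $\g{g}_\alpha$ as the scalar $\alpha(H)$, with the root spaces mutually orthogonal. Since $\xi$ and the vectors of $\g{h}_{E,X}$ are left-invariant fields, all inner products between them are constant. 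The Koszul formula therefore reduces to
\[
2\langle\nabla_V\xi,W\rangle=\langle[V,\xi],W\rangle-\langle[\xi,W],V\rangle+\langle[V,W],\xi\rangle .
\]
The last term vanishes because $[V,W]\in\g{n}\perp\xi$. By self-adjointness, $\langle[\xi,W],V\rangle=\langle[\xi,V],W\rangle$. This gives $\nabla_V\xi=-\ad_\xi V$ tangentially, so $\mathcal{A}_\xi=\restr{\ad_\xi}{\g{h}_{E,X}}$. Checking the signs in this step carefully is the main point of care, although none of it is deep.

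For the mean curvature I compute the eigenvalues. The vector $\xi$ lies in the abelian subalgebra $\g{r}^n\oplus\g{a}$, so $\ad_\xi$ vanishes on $V_0=(\g{r}^n\ominus E)\oplus\R(E+X)\oplus(\g{a}\ominus X)$. On $\g{g}_\alpha$ only the $\g{a}$-component of $\xi$ acts, and it does so by the scalar $\alpha(X)\,\|E\|^2/(\|E\|\|X\|\sqrt{\|E\|^2+\|X\|^2})=\alpha(X)\|E\|/(\|X\|\sqrt{\|E\|^2+\|X\|^2})$. Taking the trace over $\g{h}_{E,X}=V_0\oplus\bigoplus_{\alpha\in\Sigma^+}\g{g}_\alpha$ gives the stated formula, with the normalization that the mean curvature is the trace of the shape operator. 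Because $E\neq0$, the prefactor is nonzero, so the orbit is minimal exactly when $\sum_{\alpha\in\Sigma^+}\dim(\g{g}_\alpha)\alpha(X)=0$. By the congruence established first, this criterion holds simultaneously for all orbits.
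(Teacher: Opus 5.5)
Your proposal is correct and follows essentially the same route as the paper: congruence via $\g{h}_{E,X}$ being an ideal and \cite[Lem.\ 2.1]{KT:GeomDed}, the shape operator via the Koszul formula for left-invariant fields combined with self-adjointness of $\ad_\xi$, and the mean curvature as the trace of $\ad_\xi$ on $V_0\oplus\bigoplus_{\alpha\in\Sigma^+}\g{g}_\alpha$. One small slip: the last Koszul term should be $-\langle[V,W],\xi\rangle$ (equivalently $+\langle[W,V],\xi\rangle$), not $+\langle[V,W],\xi\rangle$, but this term vanishes anyway, so the conclusion is unaffected.
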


\begin{remark}
	It should be noted that the geometry of the orbits of $\s{H}_{E,X}$ is analogous to that of the orbits of the action $\s{H}_X \curvearrowright M_-$ described in item (b) of Theorem~\ref{theorem:foliations}. Namely, \cite[Prop.\ 3.1]{BT:jdg} shows that the orbits of $\s{H}_X$ are mutually congruent and have mean curvature $\sum_{\alpha \in \Sigma^+} \dim(\g{g}_\alpha) \alpha(X)$. Note that if the rank of $M_-$ is $\geq 2$, we can always find a nonzero vector $X_0 \in \g{a}$ with $\sum_{\alpha \in \Sigma^+} \dim(\g{g}_\alpha) \alpha(X_0) = 0$. Recall that a foliation on a Riemannian manifold is said to be harmonic if all of its orbits are minimal. In the context of Theorem~\ref{theorem:foliations}, the actions given by $I(M_+) \times \R^{n-1} \times \s{G}_-, \, \s{G}_+ \times \R^n \times \s{H}_{X_0}$, and $\s{G}_+ \times \s{H}_{E,X_0}$ (where $X_0$ is as above) give rise to non-congruent harmonic foliations on $M_+ \times M_0 \times M_-$.
\end{remark}

%% file: 5-Actions_with_singular_orbits.tex
In this section, we prove Theorem~\ref{th:singorbdecomp}, which states that a cohomogeneity-one action $\s{H} \curvearrowright M_0\times M_-$ with a singular orbit must necessarily decompose. We are going to argue by contradiction. Recall that the assumption that the action of $\s{H}$ does not decompose implies that the projection actions of $\s{H}$ on both $M_0$ and $M_-$ are transitive. This naturally leads us to transitive isometric actions on Euclidean spaces, which we discuss in Subsection~\ref{subsection:singorb:transitivern}. For a symmetric space of noncompact type $M_- = \s{G}_-/\s{K}_-$, the maximal proper connected Lie subgroups of $\s{G}_-$ acting on $M_-$ transitively are precisely the (identity components of the) maximal proper parabolic subgroups. In Subsection~\ref{subsection:singorb:canonical}, we extend some results of \cite{BT:crelle} to spaces of the form $M_0 \times M_-$, showing that if $\pr_{\g{g}_-}(\g{h})$ is contained in some maximal parabolic subalgebra $\g{q}_j$ of $\g{g}_-$, then the action of $\s{H}$ can be obtained by extending some cohomogeneity-one action on a lower rank symmetric space. Finally, in Subsection~\ref{subsection:singorb:singorb}, we conclude with the proofs of Theorems \ref{th:singorbdecomp} and \ref{mainth}.

\subsection{Transitive actions on Euclidean spaces.}\label{subsection:singorb:transitivern}
We now take a look at transitive groups of isometries of Euclidean spaces: we recall some structural results and turn them into a complete and explicit description of such groups, which is going to be crucial for the proof of Theorem \ref{th:singorbdecomp}.  Such actions have been extensively studied in the literature in the much more general context of homogeneous Hadamard manifolds and Riemannian solvmanifolds (see \cite{Alekseevski,AW:homogeneous,GW:isomsolv}). For instance, Alekseevskii gave an explicit description of simply transitive groups of isometries of Euclidean spaces in  \cite[Prop.\ 3.1]{Alekseevski}. For any Hadamard manifold $N$ and a Lie subgroup $\s{H} \subseteq I(N)$ acting transitively on $N$, there exists a solvable Lie subgroup $\s{F} \subseteq \s{H}$ whose action on $N$ is simply transitive. In the special case of $N \cong \R^n$, one can actually take $\s{F}$ to be a normal subgroup of $\s{H}$ such that $\s{H} = \s{H}_p \cdot \s{F}$ for some point $p \in N$. Combining these results together, one can arrive at a description of all transitive groups of isometries of the Euclidean space. However, below we include a proof of the latter result concerning the existence of a simply transitive subgroup in the special case of Euclidean spaces in order to clarify some technical details: for instance, it is shown in \cite[Th.\ 1.1]{Alekseevski} that, for any homogeneous Riemannian manifold $N$ of nonpositive curvature, one can always find a ``normal transitive"\footnote{Note that the notion of ``normal transitive'' subgroup in~\cite{Alekseevski} is nonstandard: a ``normal transitive'' subgroup of $I^0(N)$ is not actually a normal subgroup unless the Levi factor of $I^0(N)$ is compact.} subgroup $\s{F}$ of $I^0(N)$. One could try to use the same approach to find a normal transitive subgroup of any transitive group of isometries $\s{H}$ of $N$, however, there is a caveat: the proof of \cite[Th.\ 1.1]{Alekseevski} uses the fact that the isotropy groups of points $p \in N$ are maximal compact subgroups, which may not be true for $\s{H} \curvearrowright N$ if the action of $\s{H}$ is non-proper (that is, if $\s{H}$ is not closed in  $I(N)$). This same closedness assumption seems to be implicitly made in the proof of~\cite[Prop.\ 2.5]{AW:homogeneous}. In the special case when $\s{H} \subseteq I^0(N)$ is solvable, one can apply \cite[Lem.\ 1.2]{GW:isomsolv}, whose proof avoids this technical problem.  By combining these approaches for the Euclidean space, we arrive at the following

\begin{lemma}\label{lem:solvable_simply_transitive_subgroup}
    Let $\s{H} \subseteq \s{SO}_n \ltimes \R^n = I^0(M_0)$ be a (not necessarily closed) connected Lie subgroup acting transitively on the Euclidean space $M_0$.
    Then, there exists a connected normal solvable Lie subgroup $\s{F}$ of $\s{H}$ that acts on $M_0$ simply transitively.
\end{lemma}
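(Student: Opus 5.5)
The plan is to use a Levi decomposition of $\g{h}$: push the semisimple part into a compact group with a fixed point, and cut the radical down to an ideal with trivial isotropy. Throughout, let $\varphi = \restr{\pr_{\g{so}_n}}{\g{h}} \colon \g{h} \to \g{so}_n$. This is a Lie algebra homomorphism, since $\g{r}^n$ is an ideal of $\g{so}_n \loplus \g{r}^n$. Its kernel is $\g{h} \cap \g{r}^n$, which is an abelian ideal of $\g{h}$.

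\emph{Step 1: the semisimple part has a fixed point.} Write $\g{h} = \g{s} \loplus \g{r}$ with $\g{r}$ the solvable radical and $\g{s}$ a Levi factor. The abelian ideal $\ker\varphi$ lies in $\g{r}$, so $\restr{\varphi}{\g{s}}$ is injective. Hence $\g{s}$ embeds in $\g{so}_n$ and is a compact semisimple Lie algebra. By Weyl's theorem, the connected subgroup $\s{S} \subseteq \s{H}$ with Lie algebra $\g{s}$ is compact, whether or not $\s{H}$ is closed. By Cartan's fixed point theorem, $\s{S}$ fixes some $p \in M_0$, and after conjugating by a translation we may take $p = 0$.

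Let $\s{R}$ be the connected normal subgroup of $\s{H}$ with Lie algebra $\g{r}$. Then $\s{H} = \s{R}\s{S}$, so $\s{R}\cdot 0 = \s{H}\cdot 0 = M_0$. Thus $\s{R}$ is transitive, and $\g{r} \cap \g{so}_n = \g{r}_0$ is its isotropy algebra at $0$. In particular $\dim\g{r} - \dim\g{r}_0 = n$.

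\emph{Step 2: build the ideal $\g{f}$.} The image $\varphi(\g{r})$ is a solvable ideal of the compact Lie algebra $\varphi(\g{h})$, so it is contained in the centre of $\varphi(\g{h})$; in particular it is abelian. Since $\varphi$ is the identity on $\g{r}_0$, we have $\g{r}_0 \subseteq \varphi(\g{r})$. Choose a linear complement $\g{t}'$ of $\g{r}_0$ in $\varphi(\g{r})$ and set
\[
\g{f} = \varphi^{-1}(\g{t}') \cap \g{r}.
\]

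\emph{Step 3: $\g{f}$ is an ideal with trivial isotropy and the right dimension.}
\begin{enumerate}[\normalfont (i)]
\item For $Y \in \g{h}$ and $Z \in \g{f}$ we get $\varphi([Y,Z]) = [\varphi(Y),\varphi(Z)] = 0$ by centrality. So $[Y,Z] \in \ker\varphi \cap \g{r} \subseteq \g{f}$, using $0 \in \g{t}'$. Hence $\g{f}$ is an ideal of $\g{h}$, and it is solvable because it lies in $\g{r}$.
\item We have $\g{f} \cap \g{r}_0 \subseteq \g{t}' \cap \g{r}_0 = 0$, again because $\varphi$ is the identity on $\g{r}_0$.
\item For $Z \in \g{r}$, decompose $\varphi(Z) = t' + z_0$ with $t' \in \g{t}'$ and $z_0 \in \g{r}_0$. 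Then $Z - z_0 \in \g{f}$, so $\g{r} = \g{f} \oplus \g{r}_0$ and $\dim\g{f} = n$.
\end{enumerate}

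\emph{Step 4: $\s{F}$ acts simply transitively.} Let $\s{F}$ be the connected subgroup with Lie algebra $\g{f}$; it is normal in $\s{H}$ because $\s{H}$ is connected. By (ii) and (iii), the orbit map $\s{F} \to M_0$ has injective differential at $e$ between spaces of equal dimension. So $\s{F}\cdot 0$ is open and the isotropy $\s{F}_0$ is discrete. By the cohomogeneity-zero argument of Subsection~\ref{subsection:prelim:isometric}, an open orbit of an isometric action on the connected complete $M_0$ is everything, so $\s{F}$ is transitive. The orbit map $\s{F} \to \s{F}/\s{F}_0 \cong M_0$ is then a covering with connected total space over the simply connected $M_0$, hence a diffeomorphism, and $\s{F}_0$ is trivial.

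The delicate point is Step 1. Because $\s{H}$ need not be closed, one cannot argue via maximal compact isotropy subgroups as in \cite{Alekseevski}. I will rely only on the injectivity of $\restr{\varphi}{\g{s}}$ and the compactness of any connected Lie group with compact semisimple Lie algebra. Everything after that is Lie-algebraic and insensitive to closedness.
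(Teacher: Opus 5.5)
Your proof is correct. Its skeleton matches the paper's: a Levi decomposition, compactness of the Levi subgroup because its Lie algebra embeds in $\g{so}_n$, a fixed point, transitivity of the radical, and then a complement of the isotropy algebra $\g{r}\cap\g{so}_n$ inside $\g{r}$ that is an ideal of $\g{h}$.

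The difference is in how that ideal complement is produced.

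\begin{itemize}
\item \textbf{The paper's route.} It takes a complement containing the nilradical $\g{nil(h)}$. This needs the general inclusion $[\g{rad(h)},\g{h}]\subseteq\g{nil(h)}$ from Knapp, plus an Engel-type eigenvalue argument showing that $\g{nil(h)}$ meets the isotropy algebra trivially.
\item \textbf{Your route.} You observe that your $\varphi(\g{r})$ is a solvable ideal of the compact algebra $\varphi(\g{h})$, hence central. This gives $[\g{h},\g{r}]\subseteq\ker\varphi=\g{h}\cap\g{r}^n$. That abelian ideal meets $\g{so}_n$ trivially for free, so any complement of $\g{r}_0$ in $\g{r}$ containing it is automatically an ideal.
\end{itemize}

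This is more elementary and self-contained. You also replace the paper's citation of Azencott--Wilson for simple transitivity with a direct covering argument.

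The paper's choice has one side benefit: it gives $\g{nil(h)}\subseteq\g{f}$, which is reused in the description of transitive groups right after the lemma (to see that $\g{h}\cap\g{so}_n$ preserves $\g{w}=\g{f}\cap\g{r}^n$). Your construction gives the stronger inclusion $[\g{h},\g{f}]\subseteq\g{h}\cap\g{r}^n=\g{f}\cap\g{r}^n$. This serves that later purpose at least as well: it shows directly that $\g{h}\cap\g{so}_n$ maps all of $\g{f}$ into $\g{w}$.
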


\begin{proof}
    Fix a Levi-Malcev decomposition $\g{h} = \g{l} \ltimes \g{rad(h)}$ and let $\s{L}$ and $\s{Rad(H)}$ be the corresponding connected Lie subgroups of $\s{H}$. Here $\s{Rad(H)}$ is a closed normal solvable Lie subgroup of $\s{H}$, $\s{L}$ is semisimple, and the intersection $\s{L} \cap \s{Rad(H)}$ is discrete. We claim that the Levi factor $\g{l}$ is compact. Indeed, assume that $\g{l}$ has a noncompact ideal $\g{l}_i$. The image of $\g{l}_i$ under the projection map $\pr_{\g{so}_n} \colon \g{so}_n \loplus \g{r}^n \to \g{so}_n$ (which is a homomorphism since $\g{r}^n$ is an ideal) is a noncompact semisimple subalgebra of $\g{so}_n$, so it must be trivial. In other words, $\g{l}_i$ must be contained in $\g{r}^n$ and is thus abelian, which is a contradiction. Therefore, $\g{l}$ is a compact semisimple Lie algebra and thus $\s{L}$ is a compact subgroup. By Cartan's fixed point theorem, $\s{L}$ must have a fixed point $p \in M_0$. We may assume, possibly after conjugating the Levi factor by an element of $\s{H}$, that $p = 0$ and thus $\s{L} \subseteq \s{SO}_n$. Moreover, $M_0 = \s{H} \cdot 0 = \s{Rad(H)} \cdot (\s{L} \cdot 0) = \s{Rad(H)} \cdot 0$, so $\s{Rad(H)}$ acts on $M_0$ transitively.

    Consider now the isotropy subgroup $\s{C} = \s{Rad(H)} \cap \s{SO}_n$ of $\s{Rad(H)}$ at $0$ and let $\s{Nil(H)}$ be the nilradical of $\s{H}$ (i.e., the maximal connected normal nilpotent Lie subgroup). Note that $\s{C}$ may fail to be a closed subgroup of $\s{SO}_n$ because we did not require $\s{H}$ to be closed either. We claim that the intersection $\s{C} \cap \s{Nil(H)}$ is trivial. Indeed, note that the Lie algebra of $\s{Nil(H)}$ is the nilradical $\g{nil(h)}$ of $\g{h}$. For every element $X \in \g{nil(h)}$, the operator $\ad_{\g{rad(h)}}(X)$ is nilpotent. By Engel's theorem, there exists a basis for $\g{rad(h)}$ such that $\ad_{\g{rad(h)}}(X)$ is strictly upper-triangular for every $X \in \g{nil(h)}$. In that same basis, $\Ad_{\s{Rad(H)}}(g)$ is upper-triangular with ones on the diagonal for every $g \in \s{Nil(H)}$. In particular, the only possible complex eigenvalue of $\Ad_{\s{Rad(H)}}(g)$ is 1. On the other hand, note that the $\Ad_{\s{SO}_n}$-invariant inner product on $\g{so}_n \ltimes \g{r}^n$ restricts to an $\Ad_{\s{C}}$-invariant inner product on $\s{Rad(H)}$, and thus $\Ad_{\s{Rad(H)}}(g)$ is complex diagonalizable for every $g \in \s{C}$. We conclude that $\Ad_{\s{Rad(H)}}(g)$ is the identity for every $\s{C} \cap \s{Nil(H)}$, hence $\s{C} \cap \s{Nil(H)} \subseteq \s{Z(Rad(H))}$. Since $\s{Rad(H)}$ acts on $M_0$ transitively, any element of $\s{Nil(H)} \cap \s{SO}_n$ has to fix every point of $M_0$ and thus be trivial. We conclude that $\s{Nil(H)} \cap \s{SO}_n = \{e\}$.

    Now, choose a subspace $\g{f}$ complementary to $\g{c}$ in $\g{rad(h)}$ such that $\g{nil(h)} \subseteq \g{f}$ (this is possible since $\g{c} \cap \g{nil(h)} = \{0\}$), and let $\s{F}$ the corresponding connected Lie subgroup of $\s{Rad(H)}$. Thanks to \cite[Prop.\ 1.41]{Knapp}, we have $[\g{f}, \g{h}] \subseteq [\g{rad(h)}, \g{h}] \subseteq \g{nil(h)} \subseteq \g{f}$, so $\g{f}$ is an ideal in $\g{h}$ and thus $\s{F}$ is a normal subgroup of $\s{H}$. In particular, we have $\g{rad(h)} = \g{c} \loplus \g{f}$ and thus $\s{Rad(H)} = \s{C} \cdot \s{F}$. By construction, $\s{C}$ fixes the origin, and so $M_0 = \s{Rad(H)} \cdot 0 = \s{F} \cdot (\s{C} \cdot 0) = \s{F} \cdot 0$. We deduce that $\s{F}$ is a normal solvable connected Lie subgroup of $\s{H}$ that acts on $M_0$ transitively. Since $\g{f} \cap \g{so}_n = \{0\}$, we actually have that $\s{F}$ is simply connected and closed in $I(M_0)$ and its action on $M_0$ is simply transitive (see \cite[Lem.\ 2.4.]{AW:homogeneous}).
\end{proof}

\begin{remark}\label{remark:isotropynormalizesscrewm}
    It follows from the proof that $\s{H}$ splits as a semidirect product $\s{H} = (\s{H} \cap \s{SO}_n) \ltimes \s{F}$.
    Thus, we can recover any connected Lie subgroup of $I(M_0)$ acting on $M_0$ transitively from its simply transitive part $\s{F}$ by taking the semidirect product of $\s{F}$ with some Lie subgroup of the normalizer $N_{\s{SO}_n}(\s{F})$.
\end{remark}

In \cite{Alekseevski}, Alekseevskii gives the following explicit description of simply transitive actions on Euclidean spaces:

\begin{lemma}[{\cite[Prop.\ 3.1]{Alekseevski}}]\label{lemma:simplytransitivern}
    Let $\g{r}^n = \g{v} \oplus \g{w}$ be an orthogonal decomposition of $\g{r}^n$, and let $\varphi\colon\g{v}\to\g{so}(\g{w})\subseteq\g{so}_n$ be an injective Lie algebra homomorphism. Consider the subalgebra $\g{f} = \g{v}^{\varphi} \loplus \g{w}$ of $\g{so}_n \ltimes \g{r^n}$, where $\g{v}^{\varphi} = \{v + \varphi(v) \mid v \in \g{v} \}$. Then the connected Lie subgroup $\s{F}$ of $\s{SO}_n \ltimes \R^n$ with Lie algebra $\g{f}$ acts on $M_0$ simply transitively. Conversely, a connected Lie subgroup $\s{F} \subseteq \s{SO}_n \ltimes \R^n$ acting on $M_0$ simply transitively has Lie algebra of the form $\g{f} = \g{v}^{\varphi} \loplus \g{w}$ for some injective homomorphism $\varphi \colon \g{v} \to \g{so}(\g{w})$, where $\g{w} = \g{f} \cap \g{r}^n$ and $\g{v} = \g{r}^n \ominus \g{w}$.
\end{lemma}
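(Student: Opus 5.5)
The statement is Lemma~\ref{lemma:simplytransitivern}, which has two directions. The forward direction is a construction check. The converse needs more work: we must extract the abelian ideal $\g{w}$ and the homomorphism $\varphi$ from an arbitrary simply transitive $\s{F}$.

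\emph{Forward direction.} First I check that $\g{f} = \g{v}^{\varphi} \loplus \g{w}$ is a subalgebra.
\begin{itemize}
\item For $v, v' \in \g{v}$, the vectors $v, v'$ commute with each other. Since $\varphi(v) \in \g{so}(\g{w})$ kills $\g{v}$, the vectors $v, v'$ also commute with $\varphi(v), \varphi(v')$. Hence $[v+\varphi(v), v'+\varphi(v')] = \varphi([v,v'])$. This bracket must lie in $\g{f} \cap \g{so}_n = 0$, which forces $\g{v}$ to be abelian; so $\varphi$ is a homomorphism from an abelian algebra.
\item $[v+\varphi(v), w] = \varphi(v)w \in \g{w}$, and $\g{w}$ is abelian.
\end{itemize}
So $\g{f}$ is a (solvable) subalgebra. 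Next, $\g{f} \cap \g{so}_n = 0$ and $\pr_{\g{r}^n}(\g{f}) = \g{v} \oplus \g{w} = \g{r}^n$. Hence the orbit of $\s{F}$ through $0$ is open. By the cohomogeneity-zero argument of Subsection~\ref{subsection:prelim:isometric}, the action is transitive. The isotropy algebra is $\g{f} \cap \g{so}_n = 0$, so every isotropy group is discrete. To upgrade this to simple transitivity, I would invoke \cite[Lem.\ 2.4]{AW:homogeneous}, as was done at the end of the proof of Lemma~\ref{lem:solvable_simply_transitive_subgroup}. An alternative is to realize $\s{F}$ explicitly as $\{(\exp\varphi(v), v + w)\}$ and check directly that it is a closed subgroup mapping diffeomorphically onto $\R^n$ via $g \mapsto g \cdot 0$.

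\emph{Converse.} Let $\s{F}$ act simply transitively and set $\g{w} = \g{f} \cap \g{r}^n$. Simple transitivity gives $\g{f} \cap \g{so}_n = 0$, so the projection $\pr_{\g{r}^n} \colon \g{f} \to \g{r}^n$ is an isomorphism. Hence $\g{f}$ is the graph of a linear map $\psi \colon \g{r}^n \to \g{so}_n$, namely $\g{f} = \{x + \psi(x) : x \in \g{r}^n\}$, with $\ker\psi = \g{w}$. The core task is to show that $\psi$ vanishes on $\g{w}$ and is supported on $\g{v} = \g{r}^n \ominus \g{w}$, that its image lies in $\g{so}(\g{w})$ (in particular it kills $\g{v}$), and that $\g{v}$ is abelian. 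Some of this is cheap:
\begin{itemize}
\item $\g{w}$ is an ideal, being the intersection of $\g{f}$ with the ideal $\g{r}^n$.
\item Consequently $\psi(x)\g{w} \subseteq \g{w}$ for every $x$, and $\psi(x)$ then also preserves $\g{v}$ by skew-symmetry.
\item The bracket $[x+\psi(x), y+\psi(y)] = \psi(x)y - \psi(y)x + [\psi(x),\psi(y)]$ lies in $\g{f}$, which gives the relation $\psi(\psi(x)y - \psi(y)x) = [\psi(x),\psi(y)]$.
\end{itemize}

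\emph{Main obstacle.} The remaining step is to show $\restr{\psi(x)}{\g{v}} = 0$. Once that holds, $\psi(x)y - \psi(y)x \in \g{w}$ for $x, y \in \g{v}$, so $[\psi(x),\psi(y)] = 0$; together with injectivity of $\varphi = \restr{\psi}{\g{v}}$ this yields the required structure. For this step I would use the nilradical argument from the proof of Lemma~\ref{lem:solvable_simply_transitive_subgroup}, together with a compactness argument on the closure of $\pr_{\s{SO}_n}(\s{F})$. That closure is a connected compact solvable group, hence a torus $\s{T}$. The space $\g{r}^n$ splits into the fixed subspace $\g{r}^{\s{T}}$ and its orthogonal complement. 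I expect $\g{v}$ to sit inside $\g{r}^{\s{T}}$, so that $\s{T}$ acts trivially on $\g{v}$.

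To prove this, take $x \in \g{v}$ and observe that $\psi(x)$ generates rotations. If $\psi(x)$ did not act trivially on $\g{v}$, a flat-torus averaging argument should produce a nontrivial compact subgroup of isometries normalized by $\s{F}$. More concretely, I would consider the orbit geometry: the orbit of the one-parameter group $\exp t(x+\psi(x))$ is a helix with axis along $\ker\psi(x) \cap (\ldots)$. I would then check that transitivity combined with discrete isotropy forces the translation part $x$ to lie in the kernel of every rotation part. This is where I expect the real difficulty, and I would first try a direct linear-algebra argument using the bracket relation above together with skew-symmetry and trace/eigenvalue considerations (the eigenvalues of a skew operator are purely imaginary), before falling back on the geometric torus argument.
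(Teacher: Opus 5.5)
The forward direction is fine. The explicit description $\s{F}=\{(e^{\varphi(v)},\,v+w) : v\in\g{v},\ w\in\g{w}\}$ is the cleanest way to see closedness and simple transitivity. Your remark that the bracket ``must lie in $\g{f}\cap\g{so}_n$'' is circular but harmless: $\g{v}\subseteq\g{r}^n$ is abelian anyway, so $[\varphi(v),\varphi(v')]=\varphi([v,v'])=0$.

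The converse has a genuine gap. The identity $\restr{\psi(x)}{\g{v}}=0$ is the entire content of that direction, and you only state it as an expectation. The helix and torus-averaging sketches are not arguments. The torus $\s{T}$ you invoke presupposes that $\pr_{\s{SO}_n}(\s{F})$ is solvable, which you never prove. Without knowing that $\psi(\g{r}^n)$ is abelian, your relation $\psi(\psi(x)y-\psi(y)x)=[\psi(x),\psi(y)]$ gives no control over $\psi$ on $\g{v}$.

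Two ingredients close the gap.

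\emph{Solvability.} The kernel of $\restr{\pr_{\g{so}_n}}{\g{f}}$ is the abelian ideal $\g{w}$, so a Levi factor of $\g{f}$ embeds into $\g{so}_n$ and is compact. The corresponding connected subgroup of $\s{F}$ is then compact and fixes a point of $M_0$ by Cartan's fixed point theorem. It is therefore trivial, because $\s{F}$ acts freely; compare the proof of Lemma~\ref{lem:solvable_simply_transitive_subgroup}. Hence $\psi(\g{r}^n)=\pr_{\g{so}_n}(\g{f})$ is a solvable subalgebra of the compact algebra $\g{so}_n$, so it is abelian. Your relation then becomes $\psi(x)y-\psi(y)x\in\ker\psi=\g{w}$.

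\emph{Vanishing on $\g{v}$.} Each $\psi(x)$ preserves $\g{v}$, so for $x,y\in\g{v}$ the vector $\psi(x)y-\psi(y)x$ lies in $\g{v}\cap\g{w}=0$. Thus $\psi(x)y=\psi(y)x$. Consider the trilinear form $T(x,y,z)=\langle\psi(x)y,z\rangle$ on $\g{v}$. It is symmetric in $(x,y)$ and skew in $(y,z)$. Alternating these two symmetries, exactly as in the uniqueness proof for the Levi-Civita connection, gives
\[
T(x,y,z)=-T(x,z,y)=-T(z,x,y)=T(z,y,x)=T(y,z,x)=-T(y,x,z)=-T(x,y,z),
\]
whence $T=0$. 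Since $\psi(x)y\in\g{v}$, this means $\restr{\psi(x)}{\g{v}}=0$ for $x\in\g{v}$. It then holds for all $x\in\g{r}^n$ because $\psi(\g{w})=0$.

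Your closing paragraph now finishes the proof. The map $\varphi=\restr{\psi}{\g{v}}$ takes values in $\g{so}(\g{w})$, is injective since $\ker\psi=\g{w}$, and is a homomorphism because its image is abelian.
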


\begin{remark}
    Notice that the image of $\varphi$ is an abelian subalgebra of $\g{so(w)}$, hence we have $\dim(\g{v}) \le \operatorname{rank}(\g{so(w)}) = \lfloor \frac{n - \dim(\g{v})}{2} \rfloor$, or equivalently, $\dim(\g{v}) \le \frac{n}{3}$.
\end{remark}

We are now ready to give a complete description of all transitive actions on $M_0 \cong \R^n$. Let $\s{H}$ be a connected Lie subgroup of $I(M_0)$ whose action on $M_0$ is transitive. Thanks to Lemma \ref{lem:solvable_simply_transitive_subgroup} and Remark \ref{remark:isotropynormalizesscrewm}, we can write $\s{H} = (\s{H} \cap \s{SO}_n) \ltimes \s{F}$, where $\s{F}$ acts on $M_0$ simply transitively. By virtue of Lemma~\ref{lemma:simplytransitivern}, $\g{f} = \g{v}^{\varphi} \loplus \g{w}$, where $\g{w} = \g{f} \cap \g{r}^n$, $\g{v} = \g{r}^n \ominus \g{w}$, and $\varphi \colon \g{v} \to \g{so(w)}$ is an injective Lie algebra homomorphism. In the proof of Lemma \ref{lem:solvable_simply_transitive_subgroup}, we saw that $[\g{h},\g{f}] \subseteq \g{nil}(\g{h}) \subseteq \g{f}$. We must then have $\g{nil}(\g{h}) \subseteq \g{nil}(\g{f})$, and the latter nilradical is easily seen to coincide with $\g{w}$. In particular, $\g{h} \cap \g{so}_n$ preserves $\g{w}$. Since the inner product on $\g{so}_n \loplus \g{r}^n$ is $\g{so}_n$-invariant, we see that $\g{h} \cap \g{so}_n$ must then preserve $\g{v}^\varphi = \g{f} \ominus \g{w}$ as well. We deduce that $[\g{h} \cap \g{so}_n, \g{v}^\varphi] = \{0\}$. This is only possible when $\g{h} \cap \g{so}_n$ commutes with the projections of $\g{v}^\varphi$ in both $\g{so}(\g{w})$ and $\g{r}^n$, which are equal to $\varphi(\g{v})$ and $\g{v}$, respectively. But $\g{h} \cap \g{so}_n$ can only commute with $\g{v}$ when it is contained in $\g{so(w)}$. We conclude that $\g{h} \cap \g{so}_n$ is contained in $Z_{\g{so}(\g{w})}(\varphi(\mathfrak{v}))$. Finally, note that $\g{h} \cap \g{r}^n = \g{w}$ implies that $\g{h} \cap \g{so}_n$ intersects $\varphi(\g{v})$ trivially.  Rewriting $\g{h} = ((\g{h} \cap \g{so}_n) \oplus \g{v}^\varphi) \loplus \g{w}$, one can think of these three summands as the rotational, screw-motion, and translational parts of $\g{h}$, respectively.

Conversely, in order to construct a transitive group of isometries of $M_0$, start with an orthogonal splitting $\g{r}^n = \g{v} \oplus \g{w}$ and a subalgebra $\g{c} \subseteq \g{so(w)}$. Split $\g{c} = \g{z(c)} \oplus \g{c}_\mathrm{ss}$ into its center and semisimple part and split $\g{z(c)} = \g{a} \oplus \g{b}$ so that $\dim(\g{a}) = \dim(\g{v})$. Finally, pick any isomorphism $\varphi \colon \g{v} \isoto \g{a}$. Then the connected Lie subgroup of $\s{SO}_n \ltimes \R^n$ with Lie algebra $((\g{b} \oplus \g{c}_\mathrm{ss}) \oplus \g{v}^\varphi) \loplus \g{w}$ acts on $M_0$ transitively.
	
\subsection{The canonical extension procedure on $M_0\times M_-$}\label{subsection:singorb:canonical}

In \cite{BT:crelle}  Berndt, together with the third author, introduced two methods for constructing cohomogeneity-one actions on a given symmetric space $M_- =\s{G}_-/\s{K}_-$ of noncompact type: the \emph{canonical extension} and the \emph{nilpotent construction}. Moreover, they showed that if $\s{Q}$ is a proper parabolic subgroup of $\s{G}_-$ and $\s{H} \subseteq \s{Q}$ is a closed connected subgroup acting on $M_-$ with cohomogeneity one and a singular orbit, then the action of $\s{H}$ is always orbit-equivalent to an action constructed by one of these procedures.

For reasons that will become evident below (see Theorem \ref{thm:parab}), in this subsection we will focus on the canonical extension method. In its original formulation, this method allows one to extend any isometric action $\s{H}_\Phi \curvearrowright B_\Phi$ on a boundary component $B_\Phi$ of $M_-$ to an isometric action on $M_-$ of the same cohomogeneity. This procedure was later generalized in \cite{DV:IMRN} to a larger class of Riemannian manifolds endowed with a 
polar action of a specific type. We apply that generalized method in the special case when the ambient space is $M_0 \times M_-$. By adapting the arguments of \cite{BT:crelle}, we show that if $\s{Q}_j$ is a maximal proper parabolic subgroup of $\s{G}_-$ and $\s{H} \subseteq (\s{SO}_n \ltimes \R^n) \times \s{Q}_j$ acts on $M_0 \times M_-$ with cohomogeneity one and a singular orbit and its action does not decompose, then it is orbit-equivalent to some action obtained by canonical extension from $M_0 \times B_j$. This result will prove instrumental in the proof of Theorem \ref{th:singorbdecomp}. 

We begin by introducing the canonical extension method on $M_0 \times M_-$. Let $\g{q}_\Phi$ be the parabolic subalgebra of $\g{g}_-$ associated with a subset of simple roots $\Phi \subseteq \Lambda$, and consider the corresponding horospherical decomposition
\[
M_- \cong B_\Phi \times \s{A}_\Phi \times \s{N}_\Phi.
\]
Let $\s{H}$ be a connected Lie group acting on $M_0 \times B_\Phi$ properly and isometrically. Without loss of generality, we may assume that $\s{H}$ is a closed subgroup of $I^0(M_0 \times B_\Phi) = I^0(M_0) \times I^0(B_\Phi)$ and then lift it to $I^0(M_0) \times \s{G}_\Phi$ (recall from Subsection \ref{subsection:prelim:parabolic} that the subgroup $\s{G}_\Phi \subseteq \s{G}_-$ is a finite covering of $I^0(B_\Phi)$). Altogether, we assume that $\s{H}$ is a closed connected subgroup of $I^0(M_0) \times \s{G}_\Phi$.
Since $\g{g}_\Phi$ normalizes $\g{a}_\Phi \loplus \g{n}_\Phi$, we have that
\[
\g{h}^\Lambda = (\g{h} \oplus \g{a}_\Phi) \loplus \g{n}_\Phi
\]
is a subalgebra of $(\g{so}_n \loplus \g{r}^n) \oplus \g{g}_-$. We write $\s{H}^\Lambda$ for the corresponding connected Lie subgroup of $(\s{SO}_n \ltimes \R^n) \times \s{G}_-$. With respect to the Langlands decomposition, one has
\begin{align*}
\s{H}^\Lambda = (\s{H} \times \s{A}_\Phi) \ltimes \s{N}_\Phi &\subseteq (\s{SO}_n \ltimes \R^n) \times ((\s{G}_\Phi \times \s{A}_\Phi) \ltimes \s{N}_\Phi) \\
&\subseteq (\s{SO}_n \ltimes \R^n) \times ((\s{M}_\Phi \times \s{A}_\Phi) \ltimes \s{N}_\Phi) = I^0(M_0) \times \s{G}_-.
\end{align*}
In particular, this shows that $\s{H}^\Lambda$ is a closed subgroup and thus acts on $M_0 \times M_-$ properly. By construction, $\g{h}^\Lambda \cap (\g{so}_n \oplus \g{k}_-) = \g{h} \cap (\g{so}_n \oplus (\g{g}_\Phi \cap \g{k}_-))$. Moreover, if we identify $T_o(M_0 \times M_-) \cong \g{r}^n \oplus \g{b}_\Phi \oplus \g{a}_\Phi \oplus \g{n}_\Phi$ and $T_o(M_0 \times B_\Phi) \cong \g{r}^n \oplus \g{b}_\Phi$, then it is easy to see that the normal space to $\s{H}^\Lambda \cdot o$ at $o$ in $M_0 \times M_-$ coincides with that to $\s{H} \cdot o$ at $o$ in $M_0 \times B_\Phi$. Therefore, both actions have the same slice representation at $o$ (on the level of Lie algebras), and consequently, the same cohomogeneity. We refer to the action of $\s{H}^\Lambda$ on $M_0 \times M_-$ as the \emph{canonical extension} of $\s{H} \curvearrowright M_0 \times B_\Phi$.

\begin{remark}\label{rem:CE_proper}
    If $\s{H}$ is a connected Lie group acting on $M_0 \times B_\Phi$ isometrically but not properly, we can still define its canonical extension in the exact same way, the only difference being that the action of $\s{H}^\Lambda$ on $M_0 \times M_-$ may fail to be proper. Using the horospherical decomposition, one can readily see that the orbits of $\s{H}^\Lambda \curvearrowright M_0 \times M_-$ are properly embedded if and only if such are the orbits of $\s{H} \curvearrowright M_0 \times B_\Phi$. Consequently, if we know that a proper isometric action on $M_0 \times M_-$ is orbit-equivalent to the canonical extension of an isometric action $\s{H} \curvearrowright M_0 \times B_\Phi$, we may assume without loss of generality that the action of $\s{H}$ on $M_0 \times B_\Phi$ is also proper.
\end{remark}
		
\begin{lemma}\label{lemma:decompcanonical}
	Let $M_0$ and $M_-$ be as above, and let $B_\Phi$ be a boundary component of $M_-$. Let $\s{H}$ be a connected Lie group acting on $M_0 \times B_\Phi$ properly, isometrically, and with cohomogeneity one. Then, the canonical extension $\s{H}^\Lambda \curvearrowright M_0 \times M_-$ decomposes if and only if $\s{H} \curvearrowright M_0\times B_\Phi$ does.
\end{lemma}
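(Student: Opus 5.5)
The natural tool is Lemma~\ref{lemma:SplittingC1}: both actions have cohomogeneity one, since canonical extension preserves cohomogeneity, so each decomposes if and only if the cohomogeneities of its two projection actions sum to one. Since both $\s{H}$ and $\s{H}^\Lambda$ are closed connected subgroups of products of isometry groups, it suffices to show two things: the projection actions of $\s{H}^\Lambda$ and $\s{H}$ on $M_0$ have the same cohomogeneity, and the projection action of $\s{H}^\Lambda$ on $M_-$ has the same cohomogeneity as the projection action of $\s{H}$ on $B_\Phi$. The main theorem-level step is then just bookkeeping of projections.

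For the $M_0$ factor, the plan is to note that $\pr_{\g{so}_n \loplus \g{r}^n}(\g{h}^\Lambda) = \pr_{\g{so}_n\loplus\g{r}^n}(\g{h})$. This holds because the extra summands $\g{a}_\Phi \loplus \g{n}_\Phi$ lie in $\g{g}_-$. Hence the projected subgroups coincide, and so do their orbits on $M_0$.

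For the other factor, observe that $\pr_{\g{g}_-}(\g{h}^\Lambda) = (\pr_{\g{g}_\Phi}(\g{h}) \oplus \g{a}_\Phi)\loplus \g{n}_\Phi$. In other words, the projection action of $\s{H}^\Lambda$ on $M_-$ is exactly the canonical extension, in the original sense of \cite{BT:crelle}, of the projection action $\pr_{\s{G}_\Phi}(\s{H}) \curvearrowright B_\Phi$. Using the horospherical decomposition $M_- \cong B_\Phi \times \s{A}_\Phi \times \s{N}_\Phi$, the orbit of $(\pr(\s{H}) \times \s{A}_\Phi)\ltimes \s{N}_\Phi$ through a point $(b,a,n)$ is $(\pr(\s{H})\cdot b) \times \s{A}_\Phi \times \s{N}_\Phi$. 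This holds because $\s{A}_\Phi\s{N}_\Phi$ acts simply transitively on the $\s{A}_\Phi\times\s{N}_\Phi$ factor and $\s{G}_\Phi$ normalizes $\s{A}_\Phi \ltimes \s{N}_\Phi$ while acting on the $B_\Phi$ coordinate in the standard way. So the codimensions agree orbit by orbit, and therefore the cohomogeneities agree.

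Combining the two steps, the cohomogeneity sums for $\s{H}^\Lambda$ and $\s{H}$ are equal. Lemma~\ref{lemma:SplittingC1} then gives the equivalence. The one subtlety I expect is that the projection actions need not be proper, and $\pr_{\s{G}_\Phi}(\s{H})$ need not be closed. The orbit computation above is purely set-theoretic, though, and cohomogeneity is defined for non-proper actions too; closedness is only needed for $\s{H}$ and $\s{H}^\Lambda$ themselves, and that was established above. I would also check carefully that the group $\s{G}_\Phi$ acts on the horospherical coordinates as claimed, namely $g\cdot(b,a,n)=(g\cdot b, a, gng^{-1})$. This would follow from $\s{G}_\Phi \subseteq \s{M}_\Phi$ commuting with $\s{A}_\Phi$ and normalizing $\s{N}_\Phi$.
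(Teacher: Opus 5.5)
Your proof is correct. It differs from the paper's mainly in the ``if'' direction.

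The paper handles that direction without Lemma~\ref{lemma:SplittingC1}. It writes the decomposed action as having the same orbits as some $\s{H}_0\times\s{H}_\Phi$. It then cites the fact that canonical extension preserves orbit equivalence (\cite[Prop.\ 4.2]{BT:crelle}, \cite[Th.\ 2.1(vii)]{DV:IMRN}). Finally it observes that $(\s{H}_0\times\s{H}_\Phi)^\Lambda$ has Lie algebra $\g{h}_0\oplus\g{h}_\Phi^\Lambda$, which is a direct sum of ideals. Only in the ``only if'' direction does the paper use Lemma~\ref{lemma:SplittingC1}: it transfers transitivity of both projection actions from $\s{H}$ to $\s{H}^\Lambda$. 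That is precisely the special case (cohomogeneity sum $=0$) of your factor-by-factor comparison.

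You instead prove the sharper fact that the projection cohomogeneities of $\s{H}$ and $\s{H}^\Lambda$ agree on each factor. You then apply Lemma~\ref{lemma:SplittingC1} to both groups, so one symmetric argument yields both implications.

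What your route costs:
\begin{itemize}
\item You need the explicit orbit computation in horospherical coordinates. Your formula $g\cdot(b,a,n)=(g\cdot b,a,gng^{-1})$ for $g\in\s{G}_\Phi$ is right, since $\s{M}_\Phi$ centralizes $\s{A}_\Phi$ and normalizes $\s{N}_\Phi$.
\item You rely on the closedness of $\s{H}^\Lambda$ for the nontrivial implication of Lemma~\ref{lemma:SplittingC1}.
\end{itemize}

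What it gains:
\begin{itemize}
\item You avoid the external orbit-equivalence result.
\item You still recover the paper's explicit decomposition. Lemma~\ref{lemma:SplittingC1} shows $\s{H}^\Lambda$ then has the same orbits as the product of its projections, namely the projection of $\s{H}$ to $I^0(M_0)$ times the canonical extension, in the sense of \cite{BT:crelle}, of the projection of $\s{H}$ to $\s{G}_\Phi$.
\end{itemize}

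One small bookkeeping point: $\s{H}$ sits in $I^0(M_0)\times\s{G}_\Phi$, and $\s{G}_\Phi$ only finitely covers $I^0(B_\Phi)$. To apply Lemma~\ref{lemma:SplittingC1} to $\s{H}$ literally, pass to its image in $I^0(M_0)\times I^0(B_\Phi)$, which is still closed.
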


\begin{proof}
	Suppose that the action of $\s{H}$ on $M_0 \times B_\Phi$ decomposes. Then, it has the same orbits as some product action $\s{H}_0 \times \s{H}_\Phi \curvearrowright M_0 \times B_{\Phi}$, where one of the actions $\s{H}_0 \curvearrowright M_0, \s{H}_\Phi \curvearrowright B_\Phi$ is of cohomogeneity one and the other one is transitive. Therefore, $\s{H}^\Lambda$ and $(\s{H}_0 \times \s{H}_\Phi)^\Lambda$ also have the same orbits (see \cite[Prop.\ 4.2]{BT:crelle}, or \cite[Th.\ 2.1(vii)]{DV:IMRN}). Note that the Lie algebra of $(\s{H}_0 \times \s{H}_\Phi)^\Lambda$ can be written as
	\begin{equation}\label{eq:CE_vs_decomposability}
		\g{h}_0 \oplus \left((\g{h}_\Phi \oplus \g{a}_\Phi) \loplus \g{n}_\Phi\right)= \g{h}_0 \oplus \g{h}_\Phi^\Lambda,
	\end{equation}
	where $\g{h}_\Phi^\Lambda = (\g{h}_\Phi \oplus \g{a}_\Phi) \loplus \g{n}_\Phi$ is precisely the Lie algebra of the Lie subgroup $\s{H}_\Phi^\Lambda$ of $\s{G}_-$ obtained by canonical extension from $\s{H}_\Phi$ (in the original sense of \cite{BT:crelle}). Observe also that the right-hand side of \eqref{eq:CE_vs_decomposability} is a direct sum of Lie algebras. It follows that the action of $(\s{H}_0 \times \s{H}_\Phi)^\Lambda$ on $M_0 \times M_-$ has the same orbits as the product action $\s{H}_0 \times \s{H}_\Phi^\Lambda$, so it decomposes.
			
	In order to prove the converse, suppose that the action of $\s{H}$ on $M_0 \times B_\Phi$ does not decompose. By Lemma~\ref{lemma:SplittingC1}, $\s{H}$ must act transitively on both $M_0$ and $B_\Phi$ (by means of the projection actions). Under the identification $T_{o_{M_-}} B_\Phi \cong \g{b}_\Phi$, this means that $\pr_{\g{r}^n}(\g{h}) = \g{r}^n$ and $\pr_{\g{b}_{\Phi}}(\g{h}) = \g{b}_\Phi$. Since $\g{h} \subseteq \g{h}^\Lambda$, we know that the action of $\s{H}^\Lambda$ on $M_0$ must be transitive as well. Since $\g{a}_\Phi \oplus \g{n}_\Phi \subseteq \g{h}^\Lambda$, we also have
	\[
	\pr_{\g{b}_\Phi \oplus \g{a}_\Phi \oplus \g{n}_\Phi}(\g{h}^\Lambda) = \g{b}_\Phi \oplus \g{a}_\Phi \oplus \g{n}_\Phi.
	\]
	Since $T_{o_{M_-}}M_- \cong \g{b}_\Phi \oplus \g{a}_\Phi \oplus \g{n}_\Phi$, the action of $\s{H}^\Lambda$ on $M_-$ is also transitive. By invoking Lemma~\ref{lemma:SplittingC1} once again, we see that the action of $\s{H}^\Lambda$ on $M_0 \times M_-$ does not decompose.
\end{proof}

The following is the main result of this subsection:

\begin{theorem}\label{thm:parab}
	Let $\s{H} \subset (\s{SO}_n \ltimes \R^n) \times \s{G}_-$ be a closed connected subgroup acting on $M_0 \times M_-$ with cohomogeneity one and a singular orbit, and suppose that its action does not decompose. Let $\alpha_j \in \Lambda$ be a simple root of $M_-$ and suppose that $\g{h}$ is contained in $(\g{so}_n \loplus \g{r}^n) \oplus \g{q}_j$, where $\g{q}_j$ is the maximal proper parabolic subalgebra of $\g{g}_-$ associated to $\Lambda \setminus \{\alpha_j\}$. Then the action of $\s{H}$ has the same orbits as the canonical extension of some proper cohomogeneity-one action $\s{H}_j \curvearrowright M_0 \times B_j$.
\end{theorem}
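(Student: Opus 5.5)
We follow the strategy of \cite{BT:crelle} for parabolic subgroups, adapted to the extra Euclidean factor. Use the Langlands decomposition $\g{q}_j = (\g{m}_j \oplus \g{a}_j) \loplus \g{n}_j$, where $\dim \g{a}_j = 1$ since $\g{q}_j$ is maximal, and the horospherical decomposition $M_- \cong B_j \times \s{A}_j \times \s{N}_j$. Write $\pi_j \colon (\g{so}_n \loplus \g{r}^n) \oplus \g{q}_j \to (\g{so}_n \loplus \g{r}^n) \oplus \g{m}_j$ for the projection along $\g{a}_j \loplus \g{n}_j$; this is a Lie algebra homomorphism because $\g{a}_j \loplus \g{n}_j$ is an ideal of $\g{q}_j$. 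Set $\g{h}_j = \pi_j(\g{h})$, regarded inside $(\g{so}_n\loplus\g{r}^n)\oplus\g{g}_j$ modulo the compact part $\g{k}_j$. The aim is to show that $\g{h}$ and $\g{h}_j^\Lambda = (\g{h}_j \oplus \g{a}_j) \loplus \g{n}_j$ have the same orbits. The action of $\s{H}_j$ on $M_0 \times B_j$ is the one obtained by projecting along $\s{A}_j\s{N}_j$. By Remark~\ref{rem:CE_proper}, properness can be arranged at the end by passing to the closure.

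\textbf{Step 1: $\g{a}_j \loplus \g{n}_j$ should lie in $\g{h}$, up to orbit equivalence.} Let $F = \s{H}\cdot o$ be a singular orbit. Since the action is proper and of cohomogeneity one on a Hadamard manifold, the orbit space is $[0,1)$ and $F$ is the unique singular orbit. Because $\s{Q}_j$ fixes the point $x\in M_-(\infty)$ corresponding to $\g{q}_j$, the group $\s{H}$ acts on the totally geodesic flats $M_0\times(\text{geodesics toward } x)$ compatibly. Uniqueness of $F$ then forces the $\s{H}$-invariant normal directions at $o$ to sit in $\g{r}^n\oplus\g{b}_j$. Concretely, we argue as in \cite[Sect.~4]{BT:crelle}. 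The vector $H^j\in\g{a}_j$ induces a one-parameter group normalizing $\s{Q}_j$, and conjugating $\s{H}$ by $\exp(tH^j)$ and taking a limit as $t\to\infty$ gives a subgroup whose Lie algebra is the graded limit of $\g{h}$ with respect to the $\ad(H^j)$-grading $\g{n}_j=\bigoplus_\nu \g{n}_j^\nu$. One then uses the slice theorem at $o$: the normal space $\nu_o F$ carries a transitive action on the unit sphere by the slice representation of $\s{H}_o\subseteq \s{SO}_n\times\s{K}_j$. Using $\s{K}_j$-invariance of the grading pieces, this shows that $\nu_o F$ is orthogonal to $\g{a}_j\oplus\g{n}_j$. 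Hence $\pr_{\g{a}_j\oplus\g{n}_j}$ of $T_oF$ is everything. Together with solvability of $\g{a}_j\loplus\g{n}_j$ and a root-space bracket argument like the one in Lemma~\ref{lem:structure_of_h} (bracketing with an element whose $\g{a}_j$-component is nonzero), this gives $\g{n}_j\subseteq\g{h}$ and then $\g{a}_j\subseteq \g{h}+\g{k}$-part.

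\textbf{Step 2: conclude.} Once $\g{a}_j\loplus\g{n}_j\subseteq \g{h}$, or at least $\g{h}+(\g{a}_j\loplus\g{n}_j)$ is a subalgebra with the same orbit through $o$, we have $\g{h}\subseteq \g{h}_j^\Lambda$. Their tangent spaces at $o$ agree, namely $\pr(\g{h}_j)\oplus\g{a}_j\oplus\g{n}_j$. Connectedness together with a comparison of the orbits through points of the normal geodesic then gives equal orbits, as in the last theorem of Section~\ref{section:HFSS}. The cohomogeneity of $\s{H}_j\curvearrowright M_0\times B_j$ equals that of $\s{H}^\Lambda_j$, which is one, because the canonical extension preserves the slice representation.

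\textbf{Main obstacle.} The main obstacle is Step~1, showing that the normal space of the singular orbit avoids $\g{a}_j\oplus\g{n}_j$. Here the nondecomposability hypothesis matters: by Lemma~\ref{lemma:SplittingC1}, both projections are transitive, so the normal space cannot lie in $\g{r}^n$ alone. The first approach to try is the argument of \cite[Prop.~4.4]{BT:crelle}, which classifies the possible slice representations by showing that a normal vector with a nonzero $\g{n}_j$-component produces a nilpotent-construction-type orbit. We would then exclude this possibility by checking that, in the nilpotent construction, $\s{H}$ would act on $\g{n}^1_j$ nontransitively while the $\g{r}^n$ component gives no room for the required rotation. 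Failing that, we would fall back on the algebraic route: take $\s{H}\cap \s{SO}_n$-invariant complements and use the description of transitive groups on $\R^n$ from Subsection~\ref{subsection:singorb:transitivern} to control the $\g{r}^n$-part.
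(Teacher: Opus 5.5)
Your overall plan (show that $\g{a}_j\oplus\g{n}_j$ is tangent to the singular orbit at $o$, then compare with a canonical extension) matches the paper, and you locate the difficulty correctly. However, Step~1 as written does not go through.

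\textbf{The false inference in Step~1.} You claim that transitivity of the slice representation on the unit sphere of $\nu_o(\s{H}\cdot o)$, together with $\s{K}_j$-invariance of the pieces $\g{n}_j^k$, forces $\nu_o(\s{H}\cdot o)\perp\g{a}_j\oplus\g{n}_j$. This is false. It does give orthogonality to $\g{a}_j$: the isotropy lies in $\s{SO}_n\times\s{K}_j$, which centralizes $\g{a}_j$, so $v\mapsto\bilin{v}{H^j}$ is an invariant linear functional on a sphere-transitive module and must vanish. Nothing of the sort holds for $\g{n}_j$. The nilpotent construction of \cite{BT:crelle} produces singular orbits with $\nu_o\subseteq\g{n}_j^1$ and a slice representation transitive on its unit sphere, with exactly the invariance properties you invoke. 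Your other ingredients do not close this. $\s{Q}_j$ fixes the ideal point $x$ but does not preserve individual geodesics (let alone flats) toward $x$. A limit of $\Ad(\exp(tH^j))\g{h}$ is a contraction of $\g{h}$, whose orbits need not be those of $\s{H}$.

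\textbf{The missing mechanism.} This is the argument the paper imports from \cite{BT:crelle}. Once $\g{a}_j\subseteq\tau(\g{h})$, take the perfect lift $H^j+H^j_{\g{so}_n}+H^j_{\g{k}_j}\in\g{h}$ and set $\g{w}_0=(\g{r}^n\oplus\g{b}_j)\ominus(\tau(\g{h})\cap(\g{r}^n\oplus\g{b}_j))$ and $\g{w}_k=\g{n}_j^k\ominus(\tau(\g{h})\cap\g{n}_j^k)$. Two facts are then available:
\begin{itemize}
\item the projections $\nu_o\to\g{w}_k$ are equivariant isomorphisms whenever $\g{w}_k\neq0$ (Lemma~\ref{lemmadecompwk});
\item $f=\ad(H^j_{\g{so}_n}+H^j_{\g{k}_j})$ preserves each $\g{w}_k$ with $f^2=-c_k^2\Id$ there (Lemma~\ref{lem:f_normalizing_w's}).
\end{itemize}
The argument of \cite{BT:crelle} built on these yields a dichotomy: either $\g{w}_k=0$ for all $k\ge1$, so $\nu_o\subseteq\g{r}^n\oplus\g{b}_j$; or $\g{w}_k=0$ for all $k\ne1$, so $\nu_o\subseteq\g{w}_1\subseteq\g{n}_j^1$.

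\textbf{Where non-decomposability enters.} Only at this point is the hypothesis used, and it acts through the $M_-$ factor, not through $\g{r}^n$ as you say. In the second case $\nu_o\subseteq T_{o_{M_-}}M_-$, so the $M_-$-component of $\tau(\g{h})$ misses $\nu_o$. Hence the projection action on $M_-$ is not transitive, and Lemma~\ref{lemma:SplittingC1} contradicts the hypothesis. Your fallback ideas (no room for a rotation in $\g{r}^n$, the structure of transitive groups on $\R^n$) do not touch this case.

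\textbf{Smaller points.} Passing from $\g{n}_j\subseteq\tau(\g{h})$ to $\g{n}_j\subseteq\g{h}$ needs Lemma~\ref{lemma:BT4.6}(a). The bracket argument of Lemma~\ref{lem:structure_of_h} relied on freeness, which gave unique lifts $V+T_V$. At a singular orbit, lifts are only determined modulo $\g{h}\cap(\g{so}_n\oplus\g{k}_j)\neq0$, so that argument must be redone. Your Step~2 is fine once Step~1 holds. We always have $\g{h}\subseteq(\pi_j(\g{h})\oplus\g{a}_j)\loplus\g{n}_j$, and the two orbits through $o$ then coincide. The larger group preserves this singular orbit, so its orbits lie in the distance tubes around it, and those tubes are exactly the $\s{H}$-orbits.
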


This theorem is largely an adaptation of \cite[Sect.\ 5]{BT:crelle}, so much so that the original proof mostly applies verbatim with only minor and obvious modifications. For this reason, we are not going to give a full proof: rather, we will formulate the key lemmas and intermediate results and comment on how and where the proof from \cite{BT:crelle} needs to be modified.

To begin with, note that we could have asked $\g{h}$ to be contained in $(\g{so}_n \loplus \g{r}^n) \oplus \g{q}$ for any maximal proper parabolic subalgebra $\g{q}$ of $\g{g}_-$ because any such $\g{q}$ is inner-conjugate to $\g{q}_j$ for some $\alpha_j \in \Lambda$. We may also assume that the singular orbit of $\s{H}$ passes through the base point $o = (0, o_{M_-})$ because $\s{Q}_j$ acts on $M_-$ transitively.

As already mentioned, the horospherical decomposition allows us to identify $M_0 \times M_- \cong M_0 \times B_j \times \s{A}_j \times \s{N}_j$ and thus $T_o(M_0 \times M_-) \cong \g{r}^n \oplus \g{b}_j \oplus \g{a}_j \oplus \g{n}_j$. If we decompose $\g{q}_j = \g{k}_j \oplus \g{b}_j \oplus \g{a}_j \oplus \g{n}_j$ and write $\tau \colon (\g{so}_n \loplus \g{r}^n) \oplus \g{q}_j \twoheadrightarrow \g{r}^n \oplus \g{b}_j \oplus \g{a}_j \oplus \g{n}_j$ for the natural projection, we have $T_o(\s{H} \cdot o) \cong \tau(\g{h})$. The slice representation of $\s{H}$ at $o$ takes the form
\[
\s{H} \cap (\s{SO}_n \times \s{K}_-) \to \s{O}(\nu_o(\s{H} \cdot o)), \quad k \mapsto \restr{\Ad(k)}{\nu_o(\s{H} \cdot o)},
\]
where we think of the normal space as a subspace of $\g{r}^n \oplus \g{b}_j \oplus \g{a}_j \oplus \g{n}_j$. We know that $\s{H} \cap (\s{SO}_n \times \s{K}_-) \subseteq \s{SO}_n \times \s{K}_j$ and the latter subgroup centralizes $\g{a}_j$. Since $\dim \nu_o(\s{H} \cdot o) \ge 2$ and the slice representation acts transitively on the unit sphere in $\nu_o(\s{H} \cdot o)$, we must have $\nu_o(\s{H} \cdot o) \subseteq \g{r}^n \oplus \g{b}_j \oplus \g{n}_j$ and thus $\g{a}_j \subseteq \tau(\g{h})$. Recall that we have a grading $\g{n}_j = \bigoplus_{k=1}^m \g{n}_j^k$. We define
\begin{align*}
    \g{w}_{0} &= (\g{r}^n \oplus \g{b}_j) \ominus (\tau(\g{h}) \cap (\g{r}^n \oplus \g{b}_j)), \\
    \g{w}_{k} &= \g{n}_j^k \ominus (\tau(\g{h}) \cap \g{n}_j^k), \; \text{for } k = 1, \ldots, m.
\end{align*}
By design, we have
\begin{equation}\label{eq:decomp_proof_canonical}
	\nu_o(H \cdot o) \subseteq \g{w}_{0} \oplus \g{w}_{1} \oplus \cdots \oplus \g{w}_{m}.
\end{equation}
The main distinction from \cite{BT:crelle} here is that the authors of that paper had no $M_0$ factor, so their definition of $\g{w}_0$ did not involve $\g{r}^n$. The following lemma can be proven in exactly the same way as in \cite{BT:crelle} by throwing in $\g{so}_n$ and $\s{SO}_n$ wherever necessary.
\begin{lemma}\label{lemmadecompwk}
	Let $X \in \nu_o(\s{H} \cdot o)$ be nonzero and write $\pi_l \colon \bigoplus_{k=0}^m \g{w}_k \rightarrow \g{w}_l$ for the natural projection. Then,
	\begin{enumerate}[\normalfont (a)]
		\item $\nu_o(\s{H} \cdot o) = \R X \oplus [\g{h} \cap (\g{so}_n \oplus \g{k}_j),\hspace{1pt} X]$.
		\item $\restr{\pi_k}{\nu_o(\s{H} \cdot o)} \colon \nu_o(\s{H} \cdot o) \to \g{w}_k$ is surjective and $\s{H} \cap (\s{SO}_n \times \s{K}_j)$-equivariant for each $k = 0, 1, \ldots, m$.
		\item If $\g{w}_k \ne 0$, then $\restr{\pi_k}{\nu_o(\s{H} \cdot o)} \colon \nu_o(\s{H} \cdot o) \to \g{w}_k$ is an isomorphism. In particular, $\g{w}_k$ is an irreducible $\s{H} \cap (\s{SO}_n \times \s{K}_j)$-module.
	\end{enumerate}
\end{lemma}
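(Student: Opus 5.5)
The plan is to follow the argument of \cite[Sect.\ 5]{BT:crelle} (the analogous lemma for the case without a Euclidean factor), inserting $\g{so}_n$ and $\s{SO}_n$ where needed. Throughout, write $\s{H}_o = \s{H} \cap (\s{SO}_n \times \s{K}_-)$ for the isotropy group. As noted before the statement, $\s{H}_o \subseteq \s{SO}_n \times \s{K}_j$, and its Lie algebra is $\g{h}_o = \g{h} \cap (\g{so}_n \oplus \g{k}_j)$. The two basic inputs are as follows. First, since the singular orbit passes through $o$ and the action has cohomogeneity one, the slice representation of $\s{H}_o$ on $\nu_o(\s{H}\cdot o)$ acts transitively on the unit sphere, and $\dim \nu_o(\s{H}\cdot o) \ge 2$. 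Second, $\Ad(\s{SO}_n \times \s{K}_j)$ preserves each summand of $\g{r}^n \oplus \g{b}_j \oplus \g{a}_j \oplus \bigoplus_k \g{n}_j^k$, which is the grading by $\ad(H^{\Phi})$ with $H^\Phi\in\g{a}_j$, centralized by $\s{K}_j$.

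For (a), I would use that $\s{H}_o$ is compact and acts transitively on the unit sphere of $\nu := \nu_o(\s{H}\cdot o)$. The orbit of $X$ under $\s{H}_o$ is then the full sphere of radius $\|X\|$. Its tangent space at $X$ is $X^\perp \cap \nu$, and it equals $[\g{h}_o, X]$, where the bracket acts through the isotropy representation; this identification is legitimate because $\nu$ is $\Ad(\s{H}_o)$-invariant. Since $[\g{h}_o, X] \perp X$ by skew-symmetry, we get $\nu = \R X \oplus [\g{h}_o, X]$.

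For (b), equivariance is immediate. Each $\g{w}_k$ is defined as the orthogonal complement of $\tau(\g{h}) \cap V_k$ in an $\Ad(\s{H}_o)$-invariant summand $V_k$. Here $\tau(\g{h})$ is $\Ad(\s{H}_o)$-invariant, being the tangent space of the orbit. Hence $\g{w}_k$ is $\s{H}_o$-invariant, and $\pi_k$ (an orthogonal projection onto invariant subspaces) commutes with $\s{H}_o$. The real content of (b) is surjectivity. Suppose $Y \in \g{w}_k$ is orthogonal to $\pi_k(\nu)$. Then $Y \perp \nu$, so $Y \in \tau(\g{h})$; that is, $Y = \tau(Z)$ for some $Z \in \g{h}$, and we must show $Y \in \tau(\g{h}) \cap V_k$, which contradicts $Y \in \g{w}_k$ unless $Y = 0$.

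This is the main obstacle, and I would follow \cite{BT:crelle}. Write $Z = Z_{\g{k}} + Y + (\text{components in other graded pieces})$ and isolate the $V_k$-component using the grading. One option is to bracket with elements of $\g{h}$ whose $\tau$-projection lies in $\g{a}_j$; these exist since $\g{a}_j \subseteq \tau(\g{h})$. Then apply the eigenvalue argument for $\ad(H^\Phi)$ (Vandermonde-style), compensating the $\g{so}_n\oplus\g{k}_j$ parts. The $\g{so}_n$ components act only on $\g{r}^n$ and commute with $\g{a}_j$, so they do not disturb the grading. This should show that the graded components of elements of $\tau(\g{h})$ (degree-zero part $\g{r}^n\oplus\g{b}_j$, then each $\g{n}_j^k$) again lie in $\tau(\g{h})$, i.e.\ $\tau(\g{h}) = \bigoplus_k (\tau(\g{h})\cap V_k)$. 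Surjectivity follows, and with it the containment \eqref{eq:decomp_proof_canonical}.

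For (c), $\pi_k|_\nu$ is a nonzero equivariant map (it is surjective onto $\g{w}_k\neq0$). Its kernel is an $\s{H}_o$-invariant subspace of $\nu$. Since $\s{H}_o$ is transitive on the unit sphere of $\nu$, the module $\nu$ is irreducible, so the kernel is zero and $\pi_k|_\nu$ is an isomorphism. Irreducibility of $\g{w}_k$ then follows.
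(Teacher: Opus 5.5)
Your arguments for (a), for the equivariance in (b), and for (c) are correct. They follow the argument of \cite{BT:crelle} that the paper invokes, with $\g{so}_n$ and $\s{SO}_n$ inserted. The one problem is surjectivity in (b): the step you call the main obstacle is not an obstacle. Use your notation $V_0=\g{r}^n\oplus\g{b}_j$ and $V_k=\g{n}_j^k$. Once $Y\in\g{w}_k$ is orthogonal to $\pi_k(\nu_o(\s{H}\cdot o))$, you have $Y\perp\nu_o(\s{H}\cdot o)$, hence $Y\in\tau(\g{h})$. But $Y\in\g{w}_k\subseteq V_k$ already, so $Y\in\tau(\g{h})\cap V_k$. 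That space is orthogonal to $\g{w}_k$ by definition, so $Y=0$, and the proof ends there. Relatedly, \eqref{eq:decomp_proof_canonical} does not follow from surjectivity. It is an input that holds by construction, since the normal space is orthogonal to $\g{a}_j$ and to every $\tau(\g{h})\cap V_k$.

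You should delete the grading detour, because as written it is neither needed nor justified. The claim $\tau(\g{h})=\g{a}_j\oplus\bigoplus_k(\tau(\g{h})\cap V_k)$ is equivalent to $\nu_o(\s{H}\cdot o)=\bigoplus_k\g{w}_k$. Combined with (c), it forces exactly one $\g{w}_k$ to be nonzero. That is essentially part of the content of Lemma~\ref{lem:positioning_of_the_normal_space}, which the paper (following \cite{BT:crelle}) establishes only afterwards. Moreover, a Vandermonde argument in the eigenvalues $k$ of $\ad(H^j)$ does not apply directly. For a lift $H=H^j_{\g{so}_n}+H^j_{\g{k}_j}+H^j\in\g{h}$ one gets $\tau([H,Z])=D\,\tau(Z)$, where $D=f+\ad(H^j)$ and $f=\ad(H^j_{\g{so}_n}+H^j_{\g{k}_j})$. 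The map $f$ acts inside each $V_k$ by a skew map, in general nonzero, so $D$ has eigenvalues $k+ic$ on $V_k$ rather than $k$. Your sketch does not say how this rotational part is to be ``compensated''. It can be handled by taking spectral projections of $D$ according to the real parts of its eigenvalues, but none of this is needed for the lemma.
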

Recall that $H^j \in \g{a}_j \subseteq \tau(\g{h})$. This means that there exists $H \in \g{h}$ such that $\tau(H) = H^j$. We call such $H$ a \textit{lift} of $H^j$. We can always write $H = H_{\g{so}_n}^j + H_{\g{k}_j}^j + H^j$, where the first two summands are in $\g{so}_n$ and $\g{k}_j$, respectively. Notice that $\tau$ restricts to a linear isomorphism between $\g{h} \ominus [\g{h} \cap (\g{so}_n \oplus \g{k}_j)]$ and $\tau(\g{h})$. If a lift $H_{\g{so}_n}^j + H_{\g{k}_j}^j + H^j$ is contained in $\g{h} \ominus [\g{h} \cap (\g{so}_n \oplus \g{k}_j)]$, we call it a \textit{perfect lift}. Plainly, there exists a unique perfect lift of $H^j$. Again, due to the absence of the $M_0$ factor, lifts of $H^j$ looked simply like $H_{\g{k}_j}^j + H^j$ in \cite{BT:crelle}. It should be noted that only perfect lifts were considered in that paper. We will come back to this later (see the discussion of the proof of Lemma \ref{lemma:BT4.6}).
\begin{lemma}\label{lem:f_normalizing_w's}
    Let $H_{\g{so}_n}^j + H_{\g{k}_j}^j + H^j$ be the perfect lift of $H^j$, and denote $f = \ad(H_{\g{so}_n}^j + H_{\g{k}_j}^j)$. Then, for $k = 0, 1, \ldots, m$, we have:
    \begin{enumerate}[\normalfont (a)]
        \item $f$ normalizes $\g{w}_k$,
        \item $f^2 = -c_k^2 \Id$ on $\g{w}_k$ for some $c_k \in \R$.
	\end{enumerate}
\end{lemma}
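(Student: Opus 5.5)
The plan is to adapt the argument of \cite[Lemma 5.?]{BT:crelle} (the analogous statement without the Euclidean factor), exploiting the skew-symmetry of $\ad$ of compact elements and the grading of $\g{n}_j$ by $H^j$.

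Write $T = H_{\g{so}_n}^j + H_{\g{k}_j}^j \in \g{so}_n \oplus \g{k}_j$ and $H = T + H^j \in \g{h}$ for the perfect lift, so that $f = \ad(T)$. First note that $\g{so}_n \oplus \g{k}_j$ preserves each of $\g{r}^n$, $\g{b}_j$, $\g{a}_j$ and each graded piece $\g{n}_j^k$. Indeed, $\g{so}_n$ acts on $\g{r}^n$ and commutes with $\g{g}_-$. Also, $\g{k}_j \subseteq \g{l}_j$ centralizes $\g{a}_j \ni H^j$, so it preserves the $\ad(H^j)$-eigenspaces $\g{n}_j^k$ and normalizes $\g{m}_j$, hence $\g{b}_j = \g{m}_j \cap \g{p}$. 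Moreover, $f$ is skew-adjoint with respect to $\bilin{-}{-}$.

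For (a), I first show that $f$ preserves $\tau(\g{h}) \cap (\g{r}^n \oplus \g{b}_j)$ and $\tau(\g{h}) \cap \g{n}_j^k$; skew-adjointness then gives that $f$ preserves their orthogonal complements $\g{w}_0$ and $\g{w}_k$ inside the $f$-invariant spaces $\g{r}^n \oplus \g{b}_j$ and $\g{n}_j^k$. Take $V \in \tau(\g{h}) \cap \g{n}_j^k$ with lift $V + S \in \g{h}$, where $S \in \g{so}_n \oplus \g{k}_j$. Then
\[
[H, V + S] = [T,V] + [T,S] + k V + [H^j, S] \in \g{h},
\]
with $[H^j,S] = 0$ since $\g{k}_j$ centralizes $\g{a}_j$ and $\g{so}_n$ commutes with $\g{g}_-$. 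Applying $\tau$ gives $\tau([T,V]) + kV \in \tau(\g{h})$. Since $[T,V] \in \g{n}_j^k \subseteq \g{p}$-complement coordinates are preserved, $\tau([T,V]) = [T,V]$ when we use the identification $\g{n}_j \subseteq \tau$-image, so $[T,V] \in \tau(\g{h})$, hence in $\tau(\g{h}) \cap \g{n}_j^k$. The $\g{r}^n \oplus \g{b}_j$ case is the same with eigenvalue $0$. Care is needed because $\tau$ on $\g{b}_j$ involves the projection of $\g{m}_j$ onto $\g{p}$, which commutes with $\ad(T)$ since $T \in \g{k}$.

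For (b), I use the irreducibility in Lemma~\ref{lemmadecompwk}(c). If $\g{w}_k \neq 0$, the slice isotropy $\g{h} \cap (\g{so}_n \oplus \g{k}_j)$ acts irreducibly on $\g{w}_k$. The perfect lift should commute with this isotropy algebra up to elements of the isotropy algebra itself: for $S$ in the isotropy algebra, $[S,H] \in \g{h}$ and $\tau([S,H]) = [S,H^j] = 0$, so $[S,H]$ lies in the isotropy algebra. Perfectness (orthogonality to the isotropy algebra) combined with $\ad$-invariance of the inner product should then force $[S,T] = 0$. Indeed, $\bilin{[S,H]}{S'} = -\bilin{H}{[S,S']} = 0$, so $[S,H]$ is orthogonal to the isotropy algebra while also belonging to it, hence $[S,H] = 0$.

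Thus $f|_{\g{w}_k}$ is a skew-symmetric intertwiner of an irreducible real representation. By Schur, the commutant is $\R$, $\mathbb{C}$ or $\mathbb{H}$, and a skew element $J$ of such a division algebra satisfies $J^2 = -c_k^2 \Id$. This gives (b). The main obstacle I expect is the bookkeeping for $\tau$ on the $\g{b}_j$ component in (a), and checking that the perfect lift genuinely commutes with the isotropy algebra in the presence of the $\g{so}_n$ component.
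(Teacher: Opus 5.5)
Your proposal is correct and follows the route the paper indicates via \cite{BT:crelle}. In (a) you bracket a lift $V+S$ with $H=T+H^j$, apply $\tau$, and pass to orthogonal complements using skew-adjointness of $f$; in (b) you use perfectness to get $[T,\g{h}\cap(\g{so}_n\oplus\g{k}_j)]=0$ and finish with Schur's lemma, which is exactly where the paper says perfectness is needed.

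One small point to tidy: Lemma~\ref{lemmadecompwk}(c) gives irreducibility under the possibly disconnected group $\s{H}\cap(\s{SO}_n\times\s{K}_j)$, whereas you use irreducibility under its Lie algebra. You can either derive the Lie algebra irreducibility from Lemma~\ref{lemmadecompwk}(a), or note that your perfectness argument also gives $\Ad(k)T=T$ for every $k$ in that group. In either case $\restr{f^2}{\g{w}_k}$ is then a symmetric, negative semidefinite intertwiner, hence equal to $-c_k^2\Id$.
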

The proof of this lemma also goes essentially identically to the one in \cite{BT:crelle}. For example, in (a), one reduces the problem to showing that $f$ preserves $\tau(\g{h}) \cap (\g{r}^n \oplus \g{b}_j)$ and $\tau(\g{h}) \cap \g{n}_j^k$ for each $k = 1, \ldots, m$. One takes a vector $X$ in either of these subspaces and picks $X_{\g{so}_n} \in \g{so}_n$ and $X_{\g{k}_j} \in \g{k}_j$ such that $X_{\g{so}_n} + X_{\g{k}_j} + X \in \g{h}$. One then brackets $X_{\g{so}_n} + X_{\g{k}_j} + X$ with $H_{\g{so}_n}^j + H_{\g{k}_j}^j + H^j$ and deduces that $f(X)$ must lie in $\tau(\g{h}) \cap (\g{r}^n \oplus \g{b}_j)$ or $\tau(\g{h}) \cap \g{n}_j^k$, respectively. The proof of (b) carries over with similar adjustments. By tracing the proof, one sees that the condition on the lift $H_{\g{so}_n}^j + H_{\g{k}_j}^j + H^j$ to be perfect is necessary in (b) but is not actually required in (a).

The following lemma sets strong restrictions on the normal space to the singular orbit $\nu_o(\s{H} \cdot o)$ and is the chief place where our line of argument differs substantially from \cite{BT:crelle}:
        
\begin{lemma}\label{lem:positioning_of_the_normal_space}
	We have $\nu_o(\s{H} \cdot o) \subseteq \g{r}^n \oplus \g{b}_j$.
\end{lemma}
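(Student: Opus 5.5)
The plan is to argue by contradiction, combining Lemma~\ref{lemmadecompwk} with the two facts that distinguish our setting from \cite{BT:crelle}. First, the action does not decompose, so by Lemma~\ref{lemma:SplittingC1} both projection actions of $\s{H}$ (on $M_0$ and on $M_-$) are transitive. Second, $\ad(H^j)$ acts on $\g{r}^n \oplus \g{b}_j$ by $0$ and on $\g{n}_j^k$ by multiplication by $k \neq 0$. By \eqref{eq:decomp_proof_canonical}, the claim is equivalent to $\pi_k(\nu_o(\s{H}\cdot o)) = 0$ for all $k \ge 1$. By Lemma~\ref{lemmadecompwk}(b) this is the same as $\g{w}_k = 0$ for every $k \ge 1$. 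So assume $\g{w}_k \neq 0$ for some $k \ge 1$. We split into two cases according to whether $\g{w}_0$ vanishes.

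\emph{Case $\g{w}_0 = 0$.} Then $\nu_o(\s{H}\cdot o) \subseteq \g{n}_j \subseteq T_{o}M_-$. Consequently $\tau(\g{h}) = \nu_o(\s{H}\cdot o)^\perp$ contains all of $\g{r}^n \oplus \g{b}_j$, and it splits as $\tau(\g{h}) = \g{r}^n \oplus \bigl(T_oM_- \ominus \nu_o(\s{H}\cdot o)\bigr)$. The tangent space at $o$ of the orbit of the projection action of $\s{H}$ on $M_-$ is the projection of $\tau(\g{h})$ to $T_oM_-$, namely $T_oM_- \ominus \nu_o(\s{H}\cdot o)$. Since $o$ lies on a singular orbit, $\dim \nu_o(\s{H}\cdot o) \ge 2$. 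So this projected orbit has codimension at least $2$, which contradicts transitivity of the projection action on $M_-$.

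\emph{Case $\g{w}_0 \neq 0$ and $\g{w}_k \neq 0$ for some $k\ge 1$.} By Lemma~\ref{lemmadecompwk}(c), both $\restr{\pi_0}{\nu}$ and $\restr{\pi_k}{\nu}$ are equivariant isomorphisms. This gives an $\s{H}\cap(\s{SO}_n\times\s{K}_j)$-equivariant isomorphism $\psi = \pi_k \circ (\restr{\pi_0}{\nu})^{-1} \colon \g{w}_0 \isoto \g{w}_k$, so $\nu$ is the graph-like subspace $\{Y + \psi(Y) + \cdots\}$. Its orthogonal complement inside $\g{w}_0 \oplus \g{w}_k$ is a "twisted graph" $\{Y - \psi^{*-1}(Y)\}$ (up to the remaining components), and this complement lies in $\tau(\g{h})$. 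Pick such a vector, lift it to $\g{h}$, and bracket with the perfect lift $H^j_{\g{so}_n}+H^j_{\g{k}_j}+H^j$. By Lemma~\ref{lem:f_normalizing_w's}(a), $f$ preserves each $\g{w}_l$. The $\g{w}_0$-component of the bracket is $f(Y)$, while the $\g{w}_k$-component is $(f + k\,\Id)$ applied to the $\g{w}_k$-part. Requiring the bracket to stay in $\tau(\g{h})$, and projecting onto $\g{w}_0 \oplus \g{w}_k$ modulo the contributions from $\tau(\g{h}) \cap (\cdots)$, should force the intertwining relation $\psi^{*-1}\circ f = (f + k\,\Id)\circ \psi^{*-1}$ on $\g{w}_0$ (restricted to the relevant part). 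In other words, $\restr{f}{\g{w}_0}$ is conjugate to $\restr{(f+k\,\Id)}{\g{w}_k}$. By Lemma~\ref{lem:f_normalizing_w's}(b), $f$ is skew with $f^2 = -c_l^2\Id$ on each $\g{w}_l$, so all its eigenvalues are purely imaginary. But $f + k\,\Id$ has eigenvalues with real part $k \ne 0$. This is the contradiction.

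The main obstacle is the bookkeeping in the second case. We must check that the bracket computation really isolates the $\g{w}_0$- and $\g{w}_k$-components, in particular that contributions from the other $\g{w}_l$ and from $\ad$ of the $\g{so}_n \oplus \g{k}_j$ parts of the lifts land in $\tau(\g{h})$ or can be projected away. We must also exploit that the slice representation is transitive on the unit sphere of $\nu$ (Lemma~\ref{lemmadecompwk}(a)), so that the lift ambiguity lies in $\g{h}\cap(\g{so}_n\oplus\g{k}_j)$. I would first try to carry this out exactly as in the corresponding eigenvalue comparison in \cite[Sect.~5]{BT:crelle}, adding the $\g{r}^n$ and $\g{so}_n$ components throughout. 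I would use the perfect lift so that Lemma~\ref{lem:f_normalizing_w's}(b) is available.
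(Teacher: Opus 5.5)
Your proof is correct, but it organizes the argument differently from the paper.

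\textbf{The paper's route.} The paper imports from \cite{BT:crelle} the full dichotomy: either $\g{w}_1=\{0\}$, and then $\g{w}_k=\{0\}$ for all $k\ge 1$; or $\g{w}_1\neq\{0\}$, and then $\g{w}_k=\{0\}$ for all $k\neq 1$, including $k=0$. It invokes non-decomposability only at the end, to exclude $\nu_o(\s{H}\cdot o)\subseteq\g{w}_1$.

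\textbf{Your route.} You use non-decomposability first. This excludes in one stroke every configuration with $\g{w}_0=\{0\}$, i.e.\ $\nu_o(\s{H}\cdot o)\subseteq\g{n}_j$, whichever graded pieces occur. What remains is only $\g{w}_0$ coexisting with some $\g{w}_k$, $k\ge 1$. So you never need the part of the Berndt--Tamaru analysis that controls the higher pieces $\g{w}_k$, $k\ge 2$.

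\textbf{Your second case is sound.} The step you hedged on goes through with no correction terms. Take any lift $\widetilde{H}$ of $H^j$ and any $\widetilde{Z}\in\g{h}$. The $\g{so}_n\oplus\g{k}_j$-components of $\widetilde{Z}$ contribute only brackets that vanish or lie in $\g{so}_n\oplus\g{k}_j$, while those of $\widetilde{H}$ act on $\tau(\widetilde{Z})$ as $f$. Hence $\tau([\widetilde{H},\widetilde{Z}])=F(\tau(\widetilde{Z}))$ with $F=f+\ad(H^j)$. Since $F$ preserves $\g{w}_0\oplus\g{w}_k$, it preserves
\[
\tau(\g{h})\cap(\g{w}_0\oplus\g{w}_k)=\{Y-\psi^{*-1}(Y) : Y\in\g{w}_0\}.
\]
This gives exactly $\psi^{*-1}\circ f=(f+k\Id)\circ\psi^{*-1}$ on $\g{w}_0$. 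Comparing traces ($0$ versus $k\dim\g{w}_k$) or spectra then gives the contradiction. This uses only Lemma~\ref{lem:f_normalizing_w's}(a) and the skew-symmetry of $f$, so the perfect lift and part (b) are not actually needed.

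\textbf{What each route buys.} Yours is more self-contained and avoids the higher-graded analysis. The paper's reuses \cite{BT:crelle} verbatim and makes clear that the new hypothesis is used solely to kill the alternative that, in \cite{BT:crelle}, leads to the nilpotent construction.
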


In other words, this lemma asserts that $\g{n}_j \subseteq \tau(\g{h})$, or equivalently, $\g{w}_k = \{0\}$ for each $k = 1, \ldots, m$. The main difference with \cite{BT:crelle} is that there is a second possibility there: $\nu_o(\s{H} \cdot o) \subseteq \g{w}_1$, or equivalently, $\g{w}_k = \{0\}$ for $k = 0$ and $k > 1$. To see why this scenario does not arise in our case, let us first go through the proof. Surprisingly, most of the proof goes identically to the one in \cite{BT:crelle} with only minor modifications similar to those we did in the proof of Lemma \ref{lem:f_normalizing_w's}. One shows that either $\g{w}_1 = \{0\}$, in which case $\g{w}_k = \{0\}$ for each $k \ge 1$, or else $\g{w}_1 \ne \{0\}$, in which case $\g{w}_k = \{0\}$ for $k \ne 1$. To rule out the second option in our case, note that it is equivalent to $\nu_o(\s{H} \cdot o) \subseteq \g{w}_1$. If this was the case, we would have $T_o(\s{H} \cdot o) \cong \tau(\g{h}) = \g{r}^n \oplus \g{b}_j \oplus \g{a}_j \oplus (\g{n}_j \ominus \nu_o(\s{H} \cdot o))$. That would mean that the projection of $\tau(\g{h})$ to $T_{o_{M_-}}(M_-)$ is not surjective and thus the projection action of $\s{H}$ on $M_-$ is not transitive. By Lemma \ref{lemma:SplittingC1}, this would imply that the action $\s{H} \curvearrowright M_0 \times M_-$ decomposes, leading to a contradiction. This second possibility for $\nu_o(\s{H} \cdot o)$ can only be ruled out here because we assume that the action does not decompose. In the original proof in \cite{BT:crelle}, this assumption did not make sense (due to the lack of the $M_0$ factor). By tracing the proof in \cite{BT:crelle}, we see that the case $\nu_o(\s{H} \cdot o) \subseteq \g{b}_j$ led to the canonical extension, while $\nu_o(\s{H} \cdot o) \subseteq \g{w}_1$ led to the nilpotent construction. In our case, the second scenario does not arise. Instead, we only have $\nu_o(\s{H} \cdot o) \subseteq \g{r}^n \oplus \g{b}_j$, which is going to lead to (our version of) the canonical extension.

The next lemma sheds light on how the subalgebra $\g{h}$ interacts with the decomposition ${(\g{so}_n \loplus \g{r}^n)} \oplus ((\g{m}_j \oplus \g{a}_j) \loplus \g{n}_j)$: 
		
\begin{lemma}\label{lemma:BT4.6}
	Let $H^j + H_{\g{so}_n}^j + H_{\g{k}_j}^j$ be any lift of $H^j$. We have:
	\begin{enumerate}[\normalfont (a)]
		\item $\g{h} \cap \g{n}_j = \tau(\g{h}) \cap \g{n}_j$.
		\item $\g{h} = [\g{h} \cap ((\g{so}_n \loplus \g{r}^n) \oplus \g{m}_j)] \oplus \R(H^j + H_{\g{so}_n}^j + H_{\g{k}_j}^j) \oplus [\g{h} \cap \g{n}_j]$.
	\end{enumerate}
\end{lemma}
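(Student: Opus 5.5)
Throughout, fix a lift $L = H^j + H_{\g{so}_n}^j + H_{\g{k}_j}^j \in \g{h}$ of $H^j$, and write $\g{h}_{\g{m}} = \g{h} \cap ((\g{so}_n \loplus \g{r}^n) \oplus \g{m}_j)$. The strategy is to diagonalize $\ad(L)$ on $\g{h}$ and read off the grading of $\g{h}$ it induces. By Lemma~\ref{lem:positioning_of_the_normal_space}, $\g{n}_j \subseteq \tau(\g{h})$. In addition, $\g{a}_j = \R H^j$ lies in $\tau(\g{h})$.

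The first step is to analyse $\ad(L)$ on $(\g{so}_n \loplus \g{r}^n) \oplus \g{q}_j$. The summand $f = \ad(H_{\g{so}_n}^j + H_{\g{k}_j}^j)$ is skew-symmetric, hence semisimple with imaginary eigenvalues. The summand $\ad(H^j)$ is real diagonalizable: it vanishes on $(\g{so}_n \loplus \g{r}^n) \oplus \g{m}_j \oplus \g{a}_j$ and acts as $\nu$ on $\g{n}_j^\nu$. The two summands commute, since $\g{so}_n \oplus \g{k}_j$ centralizes $H^j$. Consequently the generalized eigenspaces of $\ad(L)$ whose eigenvalues have real part $\nu$ are exactly
\[
(\g{so}_n \loplus \g{r}^n) \oplus \g{m}_j \oplus \g{a}_j \quad (\nu = 0), \qquad \g{n}_j^\nu \quad (\nu = 1, \ldots, m).
\]
Since $\g{h}$ is $\ad(L)$-invariant, it decomposes as the direct sum of its intersections with these real-part eigenspaces. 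This gives
\[
\g{h} = \bigl[\g{h} \cap ((\g{so}_n \loplus \g{r}^n) \oplus \g{m}_j \oplus \g{a}_j)\bigr] \oplus \bigoplus_{\nu \ge 1} (\g{h} \cap \g{n}_j^\nu).
\]
This step does not require the lift to be perfect, and it is the reason part~(b) holds for an arbitrary lift.

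For part~(a), the inclusion $\g{h} \cap \g{n}_j \subseteq \tau(\g{h}) \cap \g{n}_j$ is clear. For the converse, take $X \in \g{n}_j^\nu \cap \tau(\g{h})$ and a vector $X + T \in \g{h}$ with $T \in \g{so}_n \oplus \g{k}_j$. By the splitting above, the component of $X + T$ in the $\nu$-part belongs to $\g{h}$. That component is $X$, because $T$ lies in the $0$-part. Hence $X \in \g{h}$, and since Lemma~\ref{lem:positioning_of_the_normal_space} gives $\tau(\g{h}) \supseteq \g{n}_j$, we obtain $\g{n}_j \subseteq \g{h}$, i.e.\ $\g{h} \cap \g{n}_j = \tau(\g{h}) \cap \g{n}_j = \g{n}_j$. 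The same argument works with a general element of $\tau(\g{h}) \cap \g{n}_j$, decomposed by grading.

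For part~(b), it remains to split the $0$-part $\g{h} \cap ((\g{so}_n \loplus \g{r}^n) \oplus \g{m}_j \oplus \g{a}_j)$ as $\g{h}_{\g{m}} \oplus \R L$. The vector $L$ lies in this $0$-part. Its $\g{a}_j$-component is $H^j$, and $\g{a}_j$ is one-dimensional. So for any $Y$ in the $0$-part, $Y - cL$ has zero $\g{a}_j$-component for a suitable $c \in \R$, and therefore lies in $\g{h}_{\g{m}}$. The sum is direct because $L \notin \g{h}_{\g{m}}$. Combining this with part~(a) gives (b).

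The main point to check carefully is the eigenvalue claim in the first step: that $f$ and $\ad(H^j)$ commute on the whole of $(\g{so}_n \loplus \g{r}^n) \oplus \g{q}_j$, and that $f$ is semisimple there. Commutation holds because $H_{\g{so}_n}^j$ commutes with $\g{g}_-$, and $H_{\g{k}_j}^j$ commutes with $H^j$ while preserving each $\g{n}_j^\nu$. Semisimplicity follows from the $\ad$-invariance of the inner product under $\g{so}_n \oplus \g{k}_-$, established in Section~\ref{section:structure}.
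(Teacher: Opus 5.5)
Your proof is correct, but it takes a different route from the paper.

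\textbf{The paper's route.} Following \cite{BT:crelle}, the paper first passes to a connected solvable subgroup $\s{S}\subseteq\s{H}$ with $\s{S}\cdot o=\s{H}\cdot o$. It then uses $\tau(\g{s})=\tau(\g{h})$ to find some lift of $H^j$ inside $\g{s}$. The perfect lift need not lie in $\g{s}$; this is the flaw in \cite{BT:crelle} that the authors repair. The paper then runs the original argument of \cite{BT:crelle} in that solvable setting. Finally, it notes separately that (b) for one lift gives (b) for every lift.

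\textbf{Your route.} You bypass $\s{S}$ entirely and work on $\g{h}$ with an arbitrary lift $L$. You write $\ad(L)=f+\ad(H^j)$, where $f$ is skew-symmetric and commutes with $\ad(H^j)$. Then the real parts of the eigenvalues of $\ad(L)$ recover the $\ad(H^j)$-grading of $(\g{so}_n\loplus\g{r}^n)\oplus\g{q}_j$ into $(\g{so}_n\loplus\g{r}^n)\oplus\g{m}_j\oplus\g{a}_j$ and $\g{n}_j^1,\ldots,\g{n}_j^m$. The corresponding spectral projections are real polynomials in $\ad(L)$, so $\ad(L)$-invariance of $\g{h}$ splits $\g{h}$ along this grading. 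Parts (a) and (b) then follow at once, for every lift simultaneously. The only remaining input is that $\g{a}_j=\R H^j$ is one-dimensional.

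\textbf{What each buys.} Your route is self-contained. It needs neither the existence result for solvable subgroups with the same orbit nor any care about where the lift lives. It also shows that (a) does not depend on Lemma~\ref{lem:positioning_of_the_normal_space} at all. Your appeal to that lemma only upgrades (a) to $\g{n}_j\subseteq\g{h}$, which is Corollary~\ref{lemma:BT4.6_beefed_up}. The paper's route buys economy of exposition, since it lets the authors cite the argument of \cite{BT:crelle} essentially verbatim.
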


In \cite{BT:crelle}, part (b) of this lemma reads as $\g{h} = [\g{h} \cap \g{m}_j] \oplus \R(H^j + H_{\g{k}_j}^j) \oplus [\g{h} \cap \g{n}_j]$. The proof here is essentially the same as in \cite{BT:crelle}---with the same minor modifications as before. There is a small issue in the proof given in \cite{BT:crelle} that we would like to address and correct here. The proof starts by finding a connected solvable Lie subgroup $\s{S}$ of $\s{H}$ such that $\s{S} \cdot o = \s{H} \cdot o$. Then, the authors claim that the perfect lift of $H^j$ lies in the Lie algebra $\g{s}$ of $\s{S}$, which may not actually be the case (especially due to the freedom in the choice of $\s{S}$). Fortunately, the property of the lift being perfect is not actually needed in the proof, and one can always find at least some lift of $H^j$ in $\g{s}$: indeed, the condition $\s{S} \cdot o = \s{H} \cdot o$ implies $T_o(\s{S} \cdot o) = T_o(\s{H} \cdot o)$, which translates to $\tau(\g{s}) = \tau(\g{h})$. Since the latter contains $H^j$, we can find a lift of $H^j$ that lies not only in $\g{h}$ but in $\g{s}$. The rest of the proof goes unchanged. Finally, note that if part (b) of the lemma is true for some  lift of $H^j$, then it is true for any lift.

We can immediately strengthen Lemma \ref{lemma:BT4.6} by combining it with Lemma \ref{lem:positioning_of_the_normal_space}:

\begin{corollary}\label{lemma:BT4.6_beefed_up}
    Let $H^j + H_{\g{so}_n}^j + H_{\g{k}_j}^j$ be any lift of $H^j$. We have:
    \[
    \g{h} = [\g{h} \cap ((\g{so}_n \loplus \g{r}^n) \oplus \g{m}_j)] \oplus \R(H^j + H_{\g{so}_n}^j + H_{\g{k}_j}^j) \oplus \g{n}_j.
	\]
\end{corollary}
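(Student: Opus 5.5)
The corollary should follow from combining Lemma~\ref{lemma:BT4.6}(b) with Lemma~\ref{lem:positioning_of_the_normal_space}. The only thing to show is that the last summand $\g{h} \cap \g{n}_j$ in Lemma~\ref{lemma:BT4.6}(b) is in fact all of $\g{n}_j$.

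\textbf{Step 1: $\g{n}_j \subseteq \tau(\g{h})$.} By Lemma~\ref{lem:positioning_of_the_normal_space}, $\nu_o(\s{H}\cdot o) \subseteq \g{r}^n \oplus \g{b}_j$. Since $\tau(\g{h}) \cong T_o(\s{H}\cdot o)$ is the orthogonal complement of $\nu_o(\s{H}\cdot o)$ in $\g{r}^n \oplus \g{b}_j \oplus \g{a}_j \oplus \g{n}_j$, the subspace $\g{a}_j \oplus \g{n}_j$ lies in $\tau(\g{h})$. Equivalently, $\g{w}_k = \{0\}$ for all $k \ge 1$, by the definition of the $\g{w}_k$ and \eqref{eq:decomp_proof_canonical}. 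In particular $\tau(\g{h}) \cap \g{n}_j = \g{n}_j$.

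\textbf{Step 2: $\g{h} \cap \g{n}_j = \g{n}_j$.} Apply Lemma~\ref{lemma:BT4.6}(a), which gives $\g{h} \cap \g{n}_j = \tau(\g{h}) \cap \g{n}_j$. By Step~1 this equals $\g{n}_j$, so $\g{n}_j \subseteq \g{h}$.

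\textbf{Step 3: conclude.} Substitute $\g{h} \cap \g{n}_j = \g{n}_j$ into the decomposition of Lemma~\ref{lemma:BT4.6}(b). Part (b) holds for any lift of $H^j$, as remarked after that lemma, so the corollary holds for an arbitrary lift $H^j + H_{\g{so}_n}^j + H_{\g{k}_j}^j$. The directness of the sum is inherited from Lemma~\ref{lemma:BT4.6}(b).

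There is no real obstacle here. The substantive input is Lemma~\ref{lem:positioning_of_the_normal_space}, which is taken as given and is the place where non-decomposability is used to exclude the nilpotent-construction case. The only care needed is in Step~1: one should check that $\tau(\g{h})$ really is the orthogonal complement of the normal space under the identification $T_o(M_0\times M_-) \cong \g{r}^n\oplus\g{b}_j\oplus\g{a}_j\oplus\g{n}_j$. This holds by the definition of $\tau$ as the projection onto the tangent space at $o$.
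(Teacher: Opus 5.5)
Your proposal is correct and is exactly the argument the paper has in mind. The paper's one-line justification ("combine Lemma~\ref{lemma:BT4.6} with Lemma~\ref{lem:positioning_of_the_normal_space}") is what your three steps spell out: $\nu_o(\s{H}\cdot o)\subseteq \g{r}^n\oplus\g{b}_j$ gives $\g{n}_j\subseteq\tau(\g{h})$, Lemma~\ref{lemma:BT4.6}(a) then gives $\g{h}\cap\g{n}_j=\g{n}_j$, and substituting this into Lemma~\ref{lemma:BT4.6}(b) yields the decomposition for any lift.
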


Note that this corollary is specific to our situation and its analog does note have to hold in general in \cite{BT:crelle} because they may not have $\g{n}_j \subseteq \tau(\g{h})$. The last lemma rounds up the argument and completes the proof of Theorem \ref{thm:parab}:
		
\begin{lemma}
	The action of $\s{H}$ has the same orbits as the canonical extension of the cohomo\-geneity-one action $\s{H}_j \curvearrowright M_0 \times B_j$, where $\s{H}_j$ is the connected Lie subgroup of $(\s{SO}_n \ltimes \R^n) \times \s{M}_j$ with Lie algebra  $\g{h}_j = \g{h} \cap ((\g{so}_n \loplus \g{r}^n) \oplus \g{m}_j)$.
\end{lemma}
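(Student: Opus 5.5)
We aim to show that $\s{H}$ and the canonical extension $\s{H}_j^\Lambda$ lie inside a common subgroup that acts with the same orbit through $o$. Since both act with cohomogeneity one, this forces them to share all orbits. We work at the Lie algebra level with the decomposition from Corollary~\ref{lemma:BT4.6_beefed_up}:
\[
\g{h} = \g{h}_j \oplus \R(H^j + H_{\g{so}_n}^j + H_{\g{k}_j}^j) \oplus \g{n}_j, \qquad \g{h}_j = \g{h} \cap ((\g{so}_n \loplus \g{r}^n) \oplus \g{m}_j).
\]

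First, $\g{h}_j$ is a subalgebra, as an intersection of subalgebras. Projecting $\g{h}_j$ along $\g{k}_j$ onto $\g{r}^n\oplus\g{b}_j$ gives $\tau(\g{h})\cap(\g{r}^n\oplus\g{b}_j)$, because $\tau$ maps $\g{h}$ onto $\tau(\g{h})$ and the other two summands are sent into $\g{a}_j\oplus\g{n}_j$. Lemma~\ref{lem:positioning_of_the_normal_space} says the normal space $\nu_o(\s{H}\cdot o)$ lies in $\g{r}^n\oplus\g{b}_j$. Hence the normal space of $\s{H}_j\cdot o$ in $M_0\times B_j$ equals $\nu_o(\s{H}\cdot o)$. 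The slice representation of $\s{H}_j$ at $o$ contains $\g{h}\cap(\g{so}_n\oplus\g{k}_j)$, because $\g{k}_j\subseteq\g{m}_j$. Lemma~\ref{lemmadecompwk}(a) then shows $\s{H}_j$ acts transitively on the unit sphere of that normal space. So $\s{H}_j\curvearrowright M_0\times B_j$ has cohomogeneity one, with a singular orbit through $o$.

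Next, we form the canonical extension $\g{h}_j^\Lambda = (\g{h}_j\oplus\g{a}_j)\loplus\g{n}_j$. The key is to enlarge it to
\[
\widehat{\g{h}} = \g{h} + \g{h}_j^\Lambda = \g{h}_j \oplus \g{a}_j \oplus \R(H_{\g{so}_n}^j + H_{\g{k}_j}^j) \oplus \g{n}_j ,
\]
which contains both $\g{h}$ and $\g{h}_j^\Lambda$. Here $\g{a}_j$ is central in $(\g{so}_n\loplus\g{r}^n)\oplus\g{l}_j$, and $\g{n}_j$ is an ideal of $(\g{so}_n\loplus\g{r}^n)\oplus\g{q}_j$. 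So the only thing to check is that $T := H_{\g{so}_n}^j + H_{\g{k}_j}^j$ normalizes $\g{h}_j$. Since $T = (H^j+T) - H^j$ and $H^j+T\in\g{h}$, for $Y\in\g{h}_j$ we get
\[
[T,Y] = [H^j+T,Y] - [H^j,Y] = [H^j+T,Y] \in \g{h}\cap((\g{so}_n\loplus\g{r}^n)\oplus\g{m}_j) = \g{h}_j ,
\]
using $[H^j,Y]=0$ and the fact that $(\g{so}_n\oplus\g{k}_j)\oplus\g{m}_j$-brackets stay in that sum. Thus $\widehat{\g{h}}$ is a subalgebra. Let $\widehat{\s{H}}$ be its connected subgroup.

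Then we compare tangent spaces at $o$. We have $\tau(\widehat{\g{h}}) = \tau(\g{h}_j)\oplus\g{a}_j\oplus\g{n}_j$, since $\tau(T)=0$. This equals $\tau(\g{h})$ and also $\tau(\g{h}_j^\Lambda)$. Hence $\s{H}\cdot o$, $\s{H}_j^\Lambda\cdot o$ and $\widehat{\s{H}}\cdot o$ all have the same tangent space at $o$. Each is contained in $\widehat{\s{H}}\cdot o$, and orbits are weakly embedded, so they coincide locally. By connectedness and homogeneity, $\s{H}\cdot o$ and $\s{H}_j^\Lambda\cdot o$ are open and closed in $\widehat{\s{H}}\cdot o$, hence equal to it.

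The main obstacle is passing from this equality of one orbit to equality of all orbits, since $\widehat{\s{H}}$ might not be closed. Its cohomogeneity is at least one, because it shares the singular orbit through $o$ with codimension at least two. Every $\s{H}$-orbit lies inside an $\widehat{\s{H}}$-orbit. To conclude, we argue as in the proof of Lemma~\ref{lemma:SplittingC1}. The isotropy of $\widehat{\s{H}}$ at $o$ preserves $\nu_o$, and $\s{H}_o$ already acts transitively on its unit sphere. Hence the $\widehat{\s{H}}$-orbits through normal geodesics from $o$ coincide with $\s{H}$-orbits: each such $\s{H}$-orbit is a regular hypersurface, which is open and closed in the corresponding $\widehat{\s{H}}$-orbit. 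This matches all orbits of $\s{H}$ and $\widehat{\s{H}}$, and in the same way those of $\s{H}_j^\Lambda$. Finally, Remark~\ref{rem:CE_proper} lets us replace $\s{H}_j$ by its closure, so that the action $\s{H}_j\curvearrowright M_0\times B_j$ is proper.
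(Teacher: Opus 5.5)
Your proof is correct. The paper gives no argument of its own for this lemma; it only says that Lemma~5.5 and Proposition~5.6 of \cite{BT:crelle} carry over with $\g{so}_n$ and $\g{r}^n$ inserted. What you wrote is a self-contained version built from exactly the ingredients the paper prepares: Corollary~\ref{lemma:BT4.6_beefed_up}, Lemma~\ref{lem:positioning_of_the_normal_space}, and Lemma~\ref{lemmadecompwk}(a). The one extra device, the common overgroup $\widehat{\g{h}}=\g{h}+\g{h}_j^\Lambda$, is the trick the paper uses in Section~\ref{section:HFSS}. Your check that $T=H_{\g{so}_n}^j+H_{\g{k}_j}^j$ normalizes $\g{h}_j$ is what makes it work.

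In fact $\widehat{\g{h}}=\g{h}+\R H^j=\g{h}_j^\Lambda+\R T$, and both $\g{h}$ and $\g{h}_j^\Lambda$ are ideals in it. Hence $\s{H}$ and $\s{H}_j^\Lambda$ are normal in $\widehat{\s{H}}$, and their orbits through $o$ are open and closed in $\widehat{\s{H}}\cdot o$ immediately. You do not need the completeness argument for this.

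Your last step can be tightened and shortened.
\begin{itemize}
\item Once $F=\s{H}\cdot o=\s{H}_j^\Lambda\cdot o$ is known, $F$ is closed, being an orbit of the proper action of $\s{H}$.
\item Both groups contain $\s{H}_o^0$, which acts transitively on the unit sphere of $\nu_oF$.
\item Therefore both groups have as orbits exactly $F$ and the tubes $\{\exp_p(\eta) : p\in F,\ \eta\in\nu_pF,\ |\eta|=t\}$, $t>0$. Every point lies on one of these tubes, by the nearest-point projection onto the closed set $F$.
\end{itemize}
This avoids the cohomogeneity discussion for the possibly nonclosed $\widehat{\s{H}}$. It is also what actually justifies your ``in the same way'' for $\s{H}_j^\Lambda$, whose cohomogeneity you do not know a priori.

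Similarly, $\s{H}_j$ need not be closed, so the cohomogeneity-one claim in your first step is most cleanly obtained a posteriori. In horospherical coordinates, the $\s{H}_j^\Lambda$-orbits are products of $\s{H}_j$-orbits in $M_0\times B_j$ with $\s{A}_j\times\s{N}_j$.

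Two minor points: on $\g{h}_j$, the map $\tau$ is the projection along $\g{so}_n\oplus\g{k}_j$, not just $\g{k}_j$; and the sum you use to define $\widehat{\g{h}}$ need not be direct.
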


The proof of this lemma carries over almost verbatim from the proofs of Lemma 5.5 and Proposition 5.6 in \cite{BT:crelle}---again, with the same standard modification as before.

\subsection{The proof of Theorem \ref{th:singorbdecomp}}\label{subsection:singorb:singorb}
We now prove that a cohomogeneity-one action $\s{H} \curvearrowright M_0 \times M_-$ that has a singular orbit necessarily decomposes. Our approach is going to be as follows. First, by employing Theorem \ref{thm:parab} and replacing $M_-$ with a suitable boundary component of $M_-$ if necessary, we argue that we may assume without loss of generality that $\s{H}$ projects surjectively onto $\s{G}_-$. After that, we construct a chain of connected Lie subgroups
\[
\cdots \subsetneq \s{E}_{i+1} \subsetneq \s{E}_i \subsetneq \cdots \subsetneq \s{E}_1 \subsetneq \s{SO}_n \ltimes \R^n
\]
such that $\s{E}_i\curvearrowright M_0$ is transitive and $\s{H} \subsetneq \s{E}_i \times \s{G}_-$ for every $i$. We then argue that this chain can be extended indefinitely, which leads to a contradiction for dimension reasons. We begin with the following technical lemma: 

\begin{lemma}\label{lemma:splitting1} 
    Let $\s{E}_i \subseteq \s{SO}_n \ltimes \R^n $ be any connected Lie subgroup acting on $M_0$ transitively, and let $\s{E}$ be a maximal proper connected Lie subgroup of $\s{E}_{i} \times \s{G}_-$. Suppose that the projection actions of $\s{E}$ on $M_0$ and $M_-$ are both transitive.
    Then, either $\s{E} = \s{E}_{i} \times \s{Q}^0$ for some maximal proper parabolic subgroup $\s{Q} \subsetneq \s{G}_-$, or else $\s{E} = \s{E}_{i+1} \times \s{G}_-$ for some maximal proper connected Lie subgroup $\s{E}_{i+1}$ of $\s{E}_{i}$ whose action on $M_0$ is transitive. In either case, $\s{E}$ acts on $M_0 \times M_-$ transitively.
\end{lemma}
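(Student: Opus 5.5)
The plan is to work at the level of Lie algebras with $\g{e} \subsetneq \g{e}_i \oplus \g{g}_-$, where $\g{e}$ is a maximal proper subalgebra among those generating connected Lie subgroups; note that $\g{e}_i \oplus \g{g}_-$ is a direct sum of ideals. Let $p_0$ and $p_-$ denote the two projections, which are Lie algebra homomorphisms. I would split into cases according to whether $p_-(\g{e}) = \g{g}_-$.

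\emph{Case $p_-(\g{e}) \neq \g{g}_-$.} Here $\g{e} \subseteq \g{e}_i \oplus p_-(\g{e})$, and the right-hand side is a proper subalgebra. Maximality forces $\g{e} = \g{e}_i \oplus \g{s}$, where $\g{s}$ is a maximal proper subalgebra of $\g{g}_-$. The projection action of $\s{E}$ on $M_-$ is the action of the connected subgroup $\s{S}$, and it is transitive by hypothesis. By the fact recalled at the start of this section, the maximal proper connected subgroups of $\s{G}_-$ acting transitively on $M_-$ are the identity components of maximal proper parabolic subgroups. Hence $\s{S} = \s{Q}^0$. Transitivity of $\s{E}_i \times \s{Q}^0$ on $M_0 \times M_-$ is then immediate.

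\emph{Case $p_-(\g{e}) = \g{g}_-$.} Set $\g{e}_{i+1} = p_0(\g{e})$ and $\g{j} = \g{e} \cap \g{e}_i$, which is an ideal in $\g{e}_{i+1}$. The subspace $\g{e} \cap \g{g}_-$ is an ideal of $p_-(\g{e}) = \g{g}_-$, so it is a sum of simple ideals. Goursat's lemma gives an isomorphism $\g{e}_{i+1}/\g{j} \cong \g{g}_-/(\g{e}\cap\g{g}_-)$, and the latter is semisimple of noncompact type on each simple factor. The key step is to show this quotient is zero, i.e., $\g{g}_- \subseteq \g{e}$. The quotient $\g{e}_{i+1}/\g{j}$ is a quotient of a subalgebra of $\g{so}_n \loplus \g{r}^n$. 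Its semisimple part is therefore a quotient of a Levi factor of $\g{e}_{i+1}$, and as in the proof of Lemma~\ref{lem:solvable_simply_transitive_subgroup}, any Levi factor of a subalgebra of $\g{so}_n \loplus \g{r}^n$ is compact. Noncompact simple ideals therefore cannot appear, so $\g{g}_- \subseteq \g{e}$ and $\g{e} = \g{e}_{i+1} \oplus \g{g}_-$.

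With this, maximality of $\g{e}$ gives that $\g{e}_{i+1}$ is a maximal proper subalgebra of $\g{e}_i$ among those generating connected subgroups, and $\g{e}_{i+1} \neq \g{e}_i$ since $\g{e}$ is proper. The projection action of $\s{E}$ on $M_0$ is that of $\s{E}_{i+1}$, so $\s{E}_{i+1}$ acts transitively. Transitivity of $\s{E}_{i+1} \times \s{G}_-$ on the product is then clear.

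The main obstacle is the Goursat step in the second case, specifically ruling out a ``diagonal'' subalgebra coupling $\g{e}_i$ with a noncompact simple factor of $\g{g}_-$. This is where the compactness of Levi factors of subalgebras of $\g{so}_n \loplus \g{r}^n$ enters. Some care is also needed because $\s{E}_i$ need not be closed, so maximality should be phrased purely in terms of Lie subalgebras.
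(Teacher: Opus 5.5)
Your proof is correct, but the key step takes a different route from the paper's.

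\textbf{How the paper argues.} The paper splits into three cases.
\begin{enumerate}
\item If $\pr_{\s{E}_i}(\s{E}) \subsetneq \s{E}_i$, maximality directly gives $\s{E} = \s{E}_{i+1}\times \s{G}_-$.
\item If $\pr_{\s{G}_-}(\s{E}) \subsetneq \s{G}_-$, it uses Mostow's reductive/parabolic dichotomy. This is exactly what your first case does.
\item If both projections are surjective, it derives a contradiction from the structure theory of Subsection~\ref{subsection:singorb:transitivern}. It writes $\g{e}_i = ((\g{e}_i\cap\g{so}_n)\oplus\g{v}^\varphi)\loplus\g{w}$, shows that the abelian ideal $\g{w}$ must lie in $\g{e}$, and concludes that the compact subalgebra $(\g{e}_i\cap\g{so}_n)\oplus\g{v}^\varphi$ would have to surject onto the noncompact semisimple complement of $\g{e}\cap\g{g}_-$.
\end{enumerate}

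\textbf{How you argue.} You merge the paper's first and third cases. Once $p_-(\g{e}) = \g{g}_-$, you combine Goursat's lemma with the compactness of Levi factors of subalgebras of $\g{so}_n\loplus\g{r}^n$, which was already shown in the proof of Lemma~\ref{lem:solvable_simply_transitive_subgroup}. This forces $\g{g}_-\subseteq\g{e}$. From that single fact you get both the splitting $\g{e} = \g{e}_{i+1}\oplus\g{g}_-$ and the impossibility of both projections being surjective, since otherwise $\g{e}$ would not be proper.

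\textbf{What each route buys.} Yours is shorter and more general. In the key step it uses neither the transitivity of $\s{E}_i$ nor Alekseevskii's description. It only needs that $\g{e}_i$ has no noncompact semisimple quotients, which holds for any subalgebra of $\g{so}_n\loplus\g{r}^n$. The paper's version instead reuses the explicit rotational/screw/translational decomposition it has just developed.
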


\begin{proof}
	Consider the natural projections $\pr_{\s{E}_i} \colon \s{E}_i \times \s{G}_- \to \s{E}_i$ and $\pr_{\s{G}_-} \colon \s{E}_i \times \s{G}_- \to \s{G}_-$. If $\pr_{\s{E}_i}(\s{E}) \subsetneq \s{E}_i$, by the maximality of $\s{E}$, we must have that $\s{E}_{i+1} := \pr_{\s{E}_i}(\s{E})$ is a maximal proper connected Lie subgroup of $\s{E}_i$ and $\s{E} = \s{E}_{i+1} \times \s{G}_-$. Since by hypothesis the projection action of $\s{E}$ on $M_0$ is transitive, so must be the action of $\s{E}_{i+1}$.
			
	Similarly, if $\pr_{\s{G}_-}(\s{E}) \subsetneq \s{G}_-$, then $\pr_{\s{G}_-}(\s{E})$ is a maximal proper connected Lie subgroup of $\s{G}_-$ and $\s{E} = \s{E}_i \times \pr_{\s{G}_-}(\s{E})$. Such a subgroup of $\s{G}_-$ must be either reductive\footnote{This notion of reductive subgroup is stronger than just being reductive as a Lie group (see the discussion on page 6 in \cite{kollross_duality}).} or the identity component of a parabolic subgroup (see \cite{Mostow:AnnalsMaximal}). If $\pr_{\s{G}_-}(\s{E})$ is reductive, it cannot act on $M_-$ transitively (see \cite[Prop.\ 3.1]{BT:crelle}), implying that the projection action of $\s{E}$ on $M_-$ is not transitive. Thus, $\pr_{\s{G}_-}(\s{E})$ must be the identity component of some maximal proper parabolic subgroup $\s{Q}$ of $\s{G}_-$.

	From now on, we assume that $\pr_{\s{E}_i}$ and $\pr_{\s{G}_-}$ restrict to surjective homomorphisms from $\s{E}$ onto $\s{E}_i$ and $\s{G}_-$, respectively. Let $\pr_{\g{e}_i} \colon \g{e} \to \g{e}_i$ and $\pr_{\g{g}_-} \colon \g{e} \to \g{g}_-$ be the corresponding surjective homomorphisms at the level of Lie algebras. Observe that $\g{e} \cap \g{g}_-$ is an ideal of $\g{g}_-$: for any $X_{\g{g}_-} \in \g{g}_-$ and $Y \in \g{e} \cap \g{g}_-$, we can find some $X \in \g{e}$ such that $\pr_{\g{g}_-}(X) = X_{\g{g}_-}$, and so
    \[
	[X_{\g{g}_-}, Y] = [X,Y] \in \g{e} \cap \g{g}_-.
	\]
	Since $\g{g}_-$ is semisimple, we can split it into a direct sum $\g{g}_- = \g{g}_1 \oplus \cdots \oplus \g{g}_k$ of simple noncompact ideals. Without loss of generality, we may assume that
	\[
	\g{e} \cap \g{g}_- = \g{g}_1 \oplus \cdots \oplus \g{g}_s
	\]
	for some $s \in \{0, \ldots, k-1 \}$ (where by $s = 0$ we mean that $\g{e} \cap \g{g}_- = \{0\}$). Note that it is not possible to have $s = k$ because that would imply $\g{g}_- \subseteq \g{e}$ and, since $\pr_{\g{e}_i}(\g{e}) = \g{e}_i$, also $\g{e_i} \subseteq \g{e}$. This would contradict the fact that $\g{s}$ is a proper subalgebra of $\g{e}_i \oplus \g{g}_-$.

	The projection $\g{e}_i \oplus \g{g}_- \to \g{e}_i \oplus \g{g}_1 \oplus \cdots \oplus \g{g}_s$ along $\g{g}_{s+1} \oplus \cdots \oplus \g{g}_k$ (which is a Lie algebra homomorphism) restricts to an isomorphism
	\[
	\g{e} \xrightarrow{\Psi} \g{e}_i \oplus \g{g}_1 \oplus \cdots \oplus \g{g}_s,
	\] 
	since $\g{e}$ projects surjectively onto $\g{e}_i$ and the kernel of $\Psi$ is $\g{e} \cap (\g{g}_{s+1} \oplus \cdots \oplus \g{g}_{k}) = \{0\}$.

    It follows from our discussion in Subsection~\ref{subsection:singorb:transitivern} that we can write $\g{e}_i = ((\g{e}_i \cap \g{so}_n) \oplus \g{v}^\varphi) \loplus \g{w}$, where $\g{w} = \g{e}_i \cap \g{r}^n$, $\g{e}_i \cap \g{so}_n \subseteq \g{so}(\g{w})$, $\g{v} = \g{r}^n \ominus \g{w}$, and $\g{v}^\varphi \subseteq \g{so}(\g{w}) \oplus \g{v}$. Note that $\Psi^{-1}(\g{w})$ is an abelian ideal of $\g{e}$. Since the restriction of the projection $\Phi \colon \g{e}_i \oplus \g{g}_- \to \g{g}_{s+1} \oplus \cdots \oplus \g{g}_k$ to $\g{e}$ is a surjective homomorphism, $\Phi(\Psi^{-1}(\g{w}))$ is an abelian ideal of $\g{g}_{s+1} \oplus \cdots \oplus \g{g}_k$, hence it must be trivial. It follows that $\Psi^{-1}(\g{w}) = \g{w}$, or in other words, $\g{w} \subseteq \g{e}$. Consequently, the restriction of $\Phi$ to $\Psi^{-1}((\g{e}_i \cap \g{so}_n) \oplus \g{v}^\varphi)$ must be surjective. But the latter is a compact Lie algebra, whereas $\g{g}_{s+1} \oplus \cdots \oplus \g{g}_k$ is noncompact semisimple, which leads to a contradiction.
\end{proof}

We can now proceed to prove Theorem~\ref{th:singorbdecomp}:

\begin{proof}[Proof of Theorem~\ref{th:singorbdecomp}]
    Let $\s{H}$ be a closed connected subgroup of $(\s{SO}_n \ltimes \R^n) \times \s{G}_-$ whose action on $M_0 \times M_-$ is of cohomogeneity one and has a singular orbit. We assume that the action does not decompose and are seeking a contradiction. By Lemma \ref{lemma:SplittingC1}, this is equivalent to saying that the projection actions of $\s{H}$ on both $M_0$ and $M_-$ are transitive.
            
    As mentioned in the proof of Lemma \ref{lemma:splitting1}, there are three possibilities for the projection $\pr_{\s{G}_-}(\s{H})$: it can be the whole $\s{G}_-$, or it can be contained in a maximal proper reductive subgroup of $\s{G}_-$, or else it can be contained in a maximal proper parabolic subgroup of $\s{G}_-$. Since the projection action $\s{H} \curvearrowright M_-$ is transitive, the second option can once again be ruled out. Suppose that $\pr_{\s{G}_-}(\s{H})$ is contained in some maximal proper parabolic subgroup $\s{Q}$ of $\s{G}_-$. After conjugating by an element of $\s{G}_-$, we may assume that $\s{Q} = \s{Q}_j$ is the parabolic subgroup associated with the subset of simple roots $\Phi_j = \Lambda \setminus \{ \alpha_j \}$ for some $\alpha_j \in \Lambda$. Thanks to Theorem \ref{thm:parab} and Remark \ref{rem:CE_proper}, $\s{H}$ then has the same orbits as the subgroup $\s{H}_j^\Lambda$ obtained by canonical extension of some cohomogeneity-one action $\s{H}_j \curvearrowright M_0 \times B_j$, where $\s{H}_j$ is a closed connected subgroup of $(\s{SO}_n \ltimes \R^n) \times I^0(B_j)$. According to Lemma~\ref{lemma:decompcanonical}, the action of $\s{H}_j$ on $M_0 \times B_j$ does not decompose---and it still has a singular orbit. In case the projection of $\s{H}_j$ in $I^0(B_j)$ is not the whole $I^0(B_j)$ and is thus contained in a maximal proper parabolic subgroup of $I^0(B_j)$, we can run the same argument again. After several iterations, we are bound to arrive at the following situation: the action $\s{H} \curvearrowright M_0 \times M_-$ is orbit-equivalent to the canonical extension of a cohomogeneity-one action $\s{H}_\Phi \curvearrowright M_0 \times B_\Phi$, where:
    
    \begin{enumerate}{\setlength\leftmargin{2em}}
		\item[(a)] $\Phi \subseteq \Lambda$ is some subset of simple roots,
		\item[(b)] $\s{H}_\Phi$ is a closed connected subgroup of $(\s{SO}_n \ltimes \R^n) \times I^0(B_\Phi)$,
		\item[(c)] $\s{H}_\Phi \curvearrowright M_0 \times B_\Phi$ does not decompose.
		\item[(d)] The projection of $\s{H}_\Phi$ to $I^0(B_\Phi)$ is surjective.
	\end{enumerate}      
	
    Note that $\Phi$ cannot be empty, for otherwise the canonical extension $\s{H}_\Phi^\Lambda \curvearrowright M_0 \times M_-$ would decompose. Since $\s{H}_{\Phi}$ is a proper connected Lie subgroup of $(\s{SO}_n \ltimes \R^n) \times I^0(B_\Phi)$, it must be contained in a maximal proper connected Lie subgroup $\s{E} \subsetneq (\s{SO}_n \ltimes \R^n) \times I^0(B_\Phi)$. The projection actions of $\s{H}_\Phi$ on $M_0$ and $B_\Phi$ are transitive, so the same is true for $\s{E}$. Moreover, the projection of $\s{E}$ to $I^0(B_\Phi)$ has to be surjective in view of item (d) above. According to Lemma \ref{lemma:splitting1}, we must then have $\s{E} = \s{E}_1 \times I^0(B_\Phi)$ for some maximal proper connected Lie subgroup $\s{E}_1$ of $\s{SO}_n \ltimes \R^n$ whose action on $M_0$ is transitive. Notice that the action of $\s{E}_1 \times I^0(B_\Phi)$ on $M_0 \times M_-$ is still transitive, so $\s{H}_\Phi$ must be contained in some maximal proper connected Lie subgroup $\s{E}'$ of $\s{E}_1 \times I^0(B_\Phi)$. Arguing in the same vein, we see that $\s{E}' = \s{E}_2 \times I^0(B_\Phi)$, where $\s{E}_2 \subsetneq \s{E}_1$ is a maximal proper connected Lie subgroup that acts on $M_0$ transitively. By iterating this process, we can construct a chain of maximal proper connected Lie subgroups
	\[
	\dots \subsetneq \s{E}_{i+1} \subsetneq \s{E}_{i} \subsetneq \dots \subsetneq \s{E}_1 \subsetneq \s{SO}_n \ltimes \R^n
	\]
	such that $\s{E}_i$ acts on $M_0$ transitively and $\s{H}_\Phi \subsetneq \s{E}_i \times I^0(B_\Phi)$ for all $i$. On the one hand, by dimension reasons, we can only have a finite number of such iterations. On the other hand, as explained above, we should always be able to make one more step. This yields a contradiction and completes the proof of the theorem.
\end{proof}

As explained in the introduction, when combined together, Theorems \ref{th:main:decompcompactxhadamard}, \ref{theorem:foliations}, and \ref{th:singorbdecomp} imply Theorem \ref{mainth}.

%% file: 6-The_non-simply_connected_case.tex
In this section, we look at cohomogeneity-one actions on non-simply connected symmetric spaces. If $M$ is such a space, its universal cover $\widetilde{M}$ is also symmetric. Theorem \ref{mainth} asserts that every cohomogeneity-one action on $\widetilde{M} = M_+ \times M_0 \times M_-$ either decomposes or else is orbit-equivalent to the action of $I^0(M_+) \times \s{H}_{E,X}$. In each case, we study how the action behaves with respect to the covering $\pi \colon \widetilde{M} \to M$, whether it descends to a proper cohomogeneity-one action on $M$, and what happens to orbit equivalence. The section culminates with Theorem~\ref{th:nonsimplyconnected}, which gives a description of all cohomogeneity-one actions on $M$ via those on $\widetilde{M}$ and can be regarded as the extension of Theorem \ref{mainth} to the non-simply connected case.

Let $M$ be a connected Riemannian manifold and $\pi \colon \widetilde{M} \rightarrow M$ its universal Riemannian covering. We would like to investigate how (orbit equivalence classes of) isometric actions on $M$ are related to those on $\widetilde{M}$. Write $\Aut(\pi)$ for the group of deck transformations of $\pi$. This is a discrete subgroup of $I(\widetilde{M})$ that acts on $\widetilde{M}$ freely and properly discontinuously and acts simply transitively on each fiber of $\pi$. Moreover, each of its elements is fully determined by its value at any single point of $\widetilde{M}$. We also have $\Aut(\pi) \cong \pi_1(M)$. We say that an isometry $\widetilde{f} \in I(\widetilde{M})$ \textit{descends} to $M$ if it permutes the fibers of $\pi$---and thus uniquely determines a \textit{descent} $f\in I(M)$ such that $\pi \circ \widetilde{f} = f \circ \pi$; we will also say that $\widetilde{f}$ is a \textit{lift} of $f$. We write $I(\widetilde{M},\pi)$ for the subgroup of $I(\widetilde{M})$ consisting of elements that descend to $M$. It is straightforward to check that $I(\widetilde{M},\pi)$ coincides with the normalizer of $\Aut(\pi)$ in $I(\widetilde{M})$ and is thus a closed subgroup of $I(\widetilde{M})$. By design, we have a homomorphism $\Pi \colon I(\widetilde{M},\pi) \to I(M)$ sending $\widetilde{f}$ to $f$, and its kernel is precisely $\Aut(\pi)$. By the lifting criterion, any isometry of $M$ admits lifts to $\widetilde{M}$, so $\Pi$ is a surjective local isomorphism.

Let $\s{H}$ be a connected Lie group acting on $M$ isometrically. By quotienting out the ineffectiveness kernel, we may assume that $\s{H}$ is a Lie subgroup of $I^0(M)$. We define the \textit{lift} of $\s{H}$ (resp., of $\s{H} \curvearrowright M$) to be the subgroup $\widetilde{\s{H}} := \Pi^{-1}(\s{H})^0 \subseteq I^0(\widetilde{M},\pi)$ (resp., the action $\widetilde{\s{H}} \curvearrowright \widetilde{M}$). Conversely, suppose $\widetilde{\s{H}}$ is a connected Lie group acting on $\widetilde{M}$ isometrically such that its image in $I(\widetilde{M)}$ is contained in $I(\widetilde{M},\pi)$ (without loss of generality, we assume $\widetilde{\s{H}} \subseteq I^0(\widetilde{M},\pi)$). Note that $\s{H} := \Pi(\widetilde{\s{H}})$ is a connected Lie subgroup of $I(M)$ of the same dimensions as $\widetilde{\s{H}}$. As $\widetilde{\s{H}}$ is a connected Lie subgroup of $\Pi^{-1}(\s{H})$, it must be its identity component, which is exactly the lift of $\s{H}$. We call $\s{H}$ (resp., $\s{H} \curvearrowright M$) the \textit{descent} of $\widetilde{\s{H}}$ (resp., of $\widetilde{\s{H}} \curvearrowright \widetilde{M}$). We see that a connected Lie subgroup of $I(\widetilde{M})$ is a lift if and only if it is contained in $I(\widetilde{M},\pi)$ (we will also say that its action on $\widetilde{M}$ \textit{descends} to $M$ in this case).

Let $\s{H}$ be a connected Lie subgroup of $I(M)$ and $\widetilde{\s{H}}$ its lift. Notice that $\Pi$ restricts to a surjective local isomorphism from $\widetilde{\s{H}}$ to $\s{H}$. For any $\widetilde{p} \in \widetilde{M}$, we have $\pi(\widetilde{\s{H}} \cdot \widetilde{p}) = \s{H} \cdot p$, where $p = \pi(\widetilde{p})$, so $\pi$ maps the orbits of $\widetilde{\s{H}}$ in $\widetilde{M}$ onto the orbits of $\s{H}$ in $M$. Moreover, one readily sees that the isotropy subgroup $\widetilde{\s{H}}_{\widetilde{p}}$ is an open subgroup of $\Pi^{-1}(\s{H}_p) \cap \widetilde{\s{H}}$, so
\[
\widetilde{\s{H}}/\widetilde{\s{H}}_{\widetilde{p}} \cong \widetilde{\s{H}} \cdot \widetilde{p} \xrightarrow{\hspace{0.7em} \pi \hspace{0.7em}} \s{H} \cdot p \cong \s{H}/\s{H}_p \cong \widetilde{\s{H}}/(\Pi^{-1}(\s{H}_p) \cap \widetilde{\s{H}})
\]
is a covering map. We see that the lift procedure preserves the cohomogeneity of an action and induces a surjective mapping between the corresponding orbit spaces: $\widetilde{M}/\widetilde{\s{H}} \to M/\s{H}$.

The following proposition outlines some further properties of the lift procedure and gives a handy necessary criterion for an action on $\widetilde{M}$ to be a lift. All actions in the proposition are assumed to be by connected Lie groups by default.

\begin{proposition}\label{prop:lifting_actions}
Let $M$ be a connected Riemannian manifold and $\pi \colon \widetilde{M} \rightarrow M$ its universal Riemannian covering.
\begin{enumerate}[\normalfont (a)]
    \item If two isometric actions on $M$ are orbit-equivalent, then so are their lifts. Conversely, if two isometric actions on $\widetilde{M}$ descend to $M$ and are orbit-equivalent via an element of $I(\widetilde{M},\pi)$, then their descents are orbit-equivalent as well.
    \item If an isometric action in $\widetilde{M}$ descends to $M$, then $\Aut(\pi) \cap I^0(\widetilde{M},\pi)$ permutes its orbits.
    \item The lift of a proper isometric action on $M$ is proper. Conversely, if $\widetilde{\s{H}}$ is a closed connected subgroup of $I(\widetilde{M},\pi)$, then its descent determines a proper action on $M$ if and only if there exists a neighborhood $U$ of the identity in $I(\widetilde{M},\pi)$ (or $I(\widetilde{M})$) such that for every $g \in \Aut(\pi)$, either $g \in \widetilde{\s{H}}$ or $\widetilde{\s{H}} \cap gU = \varnothing$.
\end{enumerate}
\end{proposition}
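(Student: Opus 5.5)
We treat the three parts separately, working throughout with $\Pi \colon I(\widetilde{M},\pi) \to I(M)$, a surjective local isomorphism with discrete kernel $\Aut(\pi)$.

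For (a), suppose $\s{H}_1, \s{H}_2 \subseteq I^0(M)$ are orbit-equivalent via $f \in I(M)$, so $f(\s{H}_1 \cdot p) = \s{H}_2 \cdot f(p)$ for all $p$. Pick a lift $\widetilde{f} \in I(\widetilde{M},\pi)$. Conjugation by $\widetilde{f}$ preserves $I(\widetilde{M},\pi)$ and descends to conjugation by $f$, so $\widetilde{f}\,\Pi^{-1}(\s{H}_1)\widetilde{f}^{-1} = \Pi^{-1}(f\s{H}_1f^{-1})$. Orbit equivalence alone does not give $f\s{H}_1 f^{-1} = \s{H}_2$, so we compare orbits directly. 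The orbit $\widetilde{\s{H}}_1 \cdot \widetilde{p}$ is the connected component through $\widetilde{p}$ of $\pi^{-1}(\s{H}_1\cdot p)$ within the "$\widetilde{\s{H}}_1$-saturation". More cleanly, $\widetilde{\s{H}}\cdot\widetilde{p}$ is the image of the lift of the orbit map $\s{H}\to M$ through $\widetilde{p}$, that is, the set of endpoints of lifts starting at $\widetilde{p}$ of curves $t\mapsto h_t\cdot p$ with $h_t$ a path in $\s{H}$ from $e$. Indeed, such a path lifts uniquely to a path $\widetilde{h}_t$ in $\widetilde{\s{H}}$, and $\widetilde{h}_t\cdot\widetilde{p}$ lifts $h_t\cdot p$. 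Applying $\widetilde{f}$ to such a lifted curve gives a lift of $f(h_t\cdot p)$, which is a curve in $\s{H}_2\cdot f(p)$ starting at $f(p)$. Using that orbits are weakly embedded and that $\s{H}_2\to\s{H}_2\cdot f(p)$ is a fibre bundle, this curve can be written (after homotopy fixing endpoints, via path lifting in the bundle) as $k_t\cdot f(p)$ with $k_t$ a path in $\s{H}_2$. Hence $\widetilde{f}(\widetilde{\s{H}}_1\cdot\widetilde{p}) \subseteq \widetilde{\s{H}}_2\cdot\widetilde{f}(\widetilde{p})$, and the reverse inclusion follows by symmetry. The converse in (a) is straightforward: if $\widetilde{f}\in I(\widetilde{M},\pi)$ maps $\widetilde{\s{H}}_1$-orbits to $\widetilde{\s{H}}_2$-orbits, apply $\pi$ and use $\pi(\widetilde{\s{H}}\cdot\widetilde{p})=\s{H}\cdot p$ to get that its descent $f$ maps $\s{H}_1$-orbits to $\s{H}_2$-orbits.

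For (b), take $g\in\Aut(\pi)\cap I^0(\widetilde{M},\pi)$. Since $\Aut(\pi)$ is discrete and normal in the connected group $I^0(\widetilde{M},\pi)$, it is central there. Hence $g\widetilde{\s{H}}g^{-1}=\widetilde{\s{H}}$, and $g(\widetilde{\s{H}}\cdot\widetilde{p})=\widetilde{\s{H}}\cdot g\widetilde{p}$.

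Part (c) is the real work. For the first claim, note that if $\s{H}$ is closed in $I(M)$, then $\Pi^{-1}(\s{H})$ is closed in $I(\widetilde{M},\pi)$, which is closed in $I(\widetilde{M})$. Its identity component is open and closed in it, so $\widetilde{\s{H}}$ is closed, and the action is proper. For the converse, $\s{H}=\Pi(\widetilde{\s{H}})$ is closed if and only if $\widetilde{\s{H}}\Aut(\pi)=\Pi^{-1}(\s{H})$ is closed in $I(\widetilde{M},\pi)$. Assume the condition holds with some $U$; shrink $U$ to a symmetric neighbourhood with $U^2$ inside the original $U$. Let $h_kg_k\to x$ with $h_k\in\widetilde{\s{H}}$ and $g_k\in\Aut(\pi)$. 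For large $k,l$ we have $(h_lg_l)^{-1}h_kg_k\in U$, and since $\Aut(\pi)$ is central in the identity component and normal overall, we can rewrite this as $h_l^{-1}h_k\in g_lUg_k^{-1}$, which is a coset of the form $g'U'$. The condition then forces $g_lg_k^{-1}\in\widetilde{\s{H}}$ eventually, so the sequence lies in a single coset $\widetilde{\s{H}}g$. This coset is closed, so the limit lies in $\widetilde{\s{H}}\Aut(\pi)$.

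Conversely, if $\widetilde{\s{H}}\Aut(\pi)$ is closed, it is a Lie group containing $\widetilde{\s{H}}$ as an open subgroup, because $\Aut(\pi)$ is countable and Baire category applies. Then $\widetilde{\s{H}}U\cap\widetilde{\s{H}}\Aut(\pi)\subseteq\widetilde{\s{H}}$ for small $U$, which gives the condition. The main obstacle is handling cosets carefully when $\Aut(\pi)$ is not central in all of $I(\widetilde{M},\pi)$. We would use its normality together with conjugation invariance of a small neighbourhood chosen via continuity of conjugation.
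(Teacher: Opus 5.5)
Your proposal is correct. Parts (a), (b), and the first claim of (c) follow the paper's argument. In (a) the paper also uses unique path lifting: it notes that $\pi$ restricts to coverings of the common orbit $f(\s{H}_1\cdot p)=\s{H}_2\cdot f(p)$ from both $\widetilde{f}(\widetilde{\s{H}}_1\cdot\widetilde{p})$ and $\widetilde{\s{H}}_2\cdot\widetilde{f}(\widetilde{p})$, and these share a point. Your version, through paths in $\s{H}_1,\s{H}_2$ and the bundle $\s{H}_2\to\s{H}_2\cdot f(p)$, makes this explicit. No homotopy is needed there: path lifting in the principal bundle gives $k_t$ directly.

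The converse in (c) is where you take a genuinely different route. The paper first uses a connected $U$ to discard every $g\in\Aut(\pi)\setminus I^0(\widetilde{M},\pi)$. It then proves a separate lemma for a covering homomorphism $\Pi\colon\widetilde{\s{G}}\to\s{G}$ of connected Lie groups. Sufficiency there comes from the criterion that a locally closed subgroup is closed, computing $\s{H}\cap V=\Pi(\widetilde{\s{H}}\cap U)$ on an evenly covered $V=\Pi(U)$. Necessity comes from choosing $V$ with $\s{H}\cap V$ connected and splitting it into the disjoint open pieces $\Pi(\widetilde{\s{H}}\cap gU)$.

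You stay upstairs instead. You use that $\s{H}$ is closed iff the saturation $\Pi^{-1}(\s{H})=\widetilde{\s{H}}\Aut(\pi)$ is closed, since $\Pi$ is an open surjection. Sufficiency then traps a convergent sequence $h_kg_k$ in a single closed coset $\widetilde{\s{H}}g_N$. Necessity follows because $\widetilde{\s{H}}$ is open in the Lie group $\widetilde{\s{H}}\Aut(\pi)$, either by Baire or simply because $\widetilde{\s{H}}$ is its identity component. Your argument is more elementary and handles all of $\Aut(\pi)$ at once. The paper's isolates a reusable Lie-theoretic lemma, which it applies again to $\Psi\colon\R^n\times(\s{A}\ltimes\s{N})\to\R^n/L'\times(\s{A}\ltimes\s{N})$ in Lemma~\ref{lem:properness_of_descent_H_xi}.

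The ``main obstacle'' you flag is not real, but the remedy you sketch, a neighbourhood made conjugation-invariant by continuity of conjugation, gives no uniformity over an infinite $\Aut(\pi)$. What actually does the job is the fact you already half-state: $I^0(\widetilde{M},\pi)$ centralizes all of $\Aut(\pi)$. For $d\in\Aut(\pi)$, the map $x\mapsto xdx^{-1}$ sends the connected group $I^0(\widetilde{M},\pi)$ continuously into the discrete normal subgroup $\Aut(\pi)$, so it is constant. If you shrink $U$ into $I^0(\widetilde{M},\pi)$, you get $g_lUg_k^{-1}=g_lg_k^{-1}U$ exactly, and your sequential argument goes through. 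This is also the precise form of what you use in (b), since $\Aut(\pi)$ need not lie inside $I^0(\widetilde{M},\pi)$.
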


\begin{remark}
It might happen that two isometric actions on $\widetilde{M}$ are orbit-equivalent and one of them descends to $M$ but the other does not.
\end{remark}

\begin{proof}
    We start with (a). Suppose two actions $\s{H}_1 \curvearrowright M$ and $\s{H}_2 \curvearrowright M$ are orbit-equivalent via $g \in I(M)$, i.e., $g(\s{H}_1 \cdot p) = \s{H}_2 \cdot g(p)$ for all $p \in M$. Take any lift $\widetilde{g}$ of $g$ and fix $\widetilde{p} \in \widetilde{M}$ lying over $p \in M$. We would like to show that $\widetilde{g}(\widetilde{\s{H}}_1 \cdot \widetilde{p}) = \widetilde{\s{H}}_2 \cdot \widetilde{g}(\widetilde{p})$. We observe:
    \begin{equation}\label{lifting_orbit_equivalence}
    \pi(\widetilde{g}(\widetilde{\s{H}}_1 \cdot \widetilde{p})) = g(\pi(\widetilde{\s{H}}_1 \cdot \widetilde{p})) = g(\s{H}_1 \cdot p) = \s{H}_2 \cdot g(p) = \pi(\widetilde{\s{H}}_2 \cdot \widetilde{g}(\widetilde{p})),
    \end{equation}
    which means that the restrictions of $\pi$ to both submanifolds in question are covering maps over $g(\s{H}_1 \cdot p) = \s{H}_2 \cdot g(p)$. Since the two submanifolds share a common point $\widetilde{g}(\widetilde{p})$, the uniqueness property of lifting paths ensures that they coincide. Conversely, if we have two connected Lie subgroups $\widetilde{\s{H}}_1, \widetilde{\s{H}}_2 \subseteq I^0(\widetilde{M},\pi)$ and their actions on $\widetilde{M}$ are orbit-equivalent via $\widetilde{g} \in I(\widetilde{M},\pi)$, a calculation similar to \eqref{lifting_orbit_equivalence} shows that their descents are orbit-equivalent via $\Pi(\widetilde{g})$.

    For (b), first note that $\Aut(\pi) \cap I^0(\widetilde{M},\pi)$ is the kernel of $\restr{\Pi}{I^0(\widetilde{M},\pi)}$, so it is a discrete normal subgroup of $I^0(\widetilde{M},\pi)$. Since the latter is connected, $\Aut(\pi) \cap I^0(\widetilde{M},\pi)$ is actually a central subgroup of it. If $\widetilde{\s{H}} \subseteq I^0(\widetilde{M},\pi)$ is a connected Lie subgroup and $\widetilde{g} \in \Aut(\pi) \cap I^0(\widetilde{M},\pi)$, we have $\widetilde{g}(\widetilde{\s{H}} \cdot \widetilde{p}) = \widetilde{\s{H}} \cdot \widetilde{g}(\widetilde{p})$ for any $\widetilde{p} \in \widetilde{M}$, which was to be shown.
    
    Finally, we prove (c). If $\s{H} \curvearrowright M$ is proper, then (the image of) $\s{H}$ is closed in $I(M)$ and hence so is its lift $\widetilde{\s{H}}$ in $I(\widetilde{M})$. This means that the action of $\widetilde{\s{H}}$ on $\widetilde{M}$ is proper. Let now $\widetilde{\s{H}}$ be a closed connected subgroup of $I(\widetilde{M},\pi)$. First of all, whether the neighborhood $U$ in (c) is chosen in $I(\widetilde{M},\pi)$ or $I(\widetilde{M})$ is irrelevant because $I(\widetilde{M},\pi)$ is closed in $I(\widetilde{M})$. Second, notice that as soon as $U$ is a connected neighborhood of the identity in $I(\widetilde{M},\pi)$, the intersection $\widetilde{\s{H}} \cap gU$ is empty for any $g \in \Aut(\pi) \setminus I^0(\widetilde{M},\pi)$. The statement then follows from a more general Lie-theoretical fact that we prove separately in Lemma \ref{lem:covering_closed_subgroup} below.
\end{proof}

\begin{lemma}\label{lem:covering_closed_subgroup}
    Let $\Pi \colon \widetilde{\s{G}} \to \s{G}$ be a local isomorphism between connected Lie groups with kernel $\widetilde{\s{Z}}$, and let $\widetilde{\s{H}} \subseteq \widetilde{\s{G}}$ be a closed subgroup. Then the image $\s{H} = \Pi(\widetilde{\s{H}})$ is a closed subgroup of $\s{G}$ if and only if the following condition is satisfied: there exists a neighborhood $U$ of the identity in $\widetilde{\s{G}}$ such that for every $g \in \widetilde{\s{Z}}$, either $g \in \widetilde{\s{H}}$ or $\widetilde{\s{H}} \cap gU = \varnothing$.
\end{lemma}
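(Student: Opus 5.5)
We reduce the claim to the standard fact that a subgroup of a Lie group is closed if and only if it is locally closed near the identity. Concretely, a subgroup $\s{S}$ of a Lie group is closed if and only if there is a neighborhood $V$ of $e$ such that $\s{S} \cap V$ is closed in $V$. The key observation is that $\Pi$ restricts to a diffeomorphism from a small neighborhood $U_0$ of $e$ in $\widetilde{\s{G}}$ onto a neighborhood $V_0$ of $e$ in $\s{G}$. Moreover, $\Pi^{-1}(\s{H}) = \widetilde{\s{H}}\widetilde{\s{Z}}$. Hence $\s{H} \cap V_0$ corresponds under $\Pi|_{U_0}$ to $(\widetilde{\s{H}}\widetilde{\s{Z}}) \cap U_0$, which is the union over $g \in \widetilde{\s{Z}}$ of the sets $(\widetilde{\s{H}} g) \cap U_0$. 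Since $\widetilde{\s{Z}}$ is central (discrete normal in a connected group), $\widetilde{\s{H}} g = g\widetilde{\s{H}}$. So $(\widetilde{\s{H}} g)\cap U_0 \neq \varnothing$ if and only if $\widetilde{\s{H}} \cap g^{-1}U_0 \neq \varnothing$. Both directions will come from comparing $\widetilde{\s{H}} \cap U_0$ with this union.

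For sufficiency, assume the condition holds with some $U$, and shrink $U_0$ so that $U_0 \subseteq U$. Every $g$ with $\widetilde{\s{H}} \cap g^{-1}U_0 \neq \varnothing$ then has $g^{-1} \in \widetilde{\s{H}}$, so $g\widetilde{\s{H}} = \widetilde{\s{H}}$. Therefore $(\widetilde{\s{H}}\widetilde{\s{Z}}) \cap U_0 = \widetilde{\s{H}} \cap U_0$, which is closed in $U_0$ because $\widetilde{\s{H}}$ is closed. It follows that $\s{H} \cap V_0$ is closed in $V_0$, so $\s{H}$ is closed. Equivalently, one can check directly that if $h_k \in \s{H}$ converges to some $x$, then $h_k^{-1}h_{k+1} \to e$ lies in $V_0$ eventually, lift, and use closedness of $\widetilde{\s{H}}$.

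For necessity, assume $\s{H}$ is closed. Then $\Pi^{-1}(\s{H}) = \widetilde{\s{H}}\widetilde{\s{Z}}$ is a closed subgroup of $\widetilde{\s{G}}$ with the same identity component as $\widetilde{\s{H}}$, since $\widetilde{\s{Z}}$ is discrete and $\Pi$ is a local isomorphism. The subgroup $\widetilde{\s{H}}$ is closed in $\widetilde{\s{H}}\widetilde{\s{Z}}$ and contains its identity component, so it is open there. Hence we may choose a neighborhood $U$ of $e$ in $\widetilde{\s{G}}$ with $(\widetilde{\s{H}}\widetilde{\s{Z}}) \cap U^{-1}U \subseteq \widetilde{\s{H}}$. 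Now suppose $g \in \widetilde{\s{Z}}$ and $h \in \widetilde{\s{H}} \cap gU$. Then $g^{-1}h \in \widetilde{\s{Z}}\widetilde{\s{H}} \cap U \subseteq \widetilde{\s{H}}$, so $g \in \widetilde{\s{H}}$, which is the required dichotomy.

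The main obstacle is necessity. It depends on the point that $\widetilde{\s{H}}$ is open in $\widetilde{\s{H}}\widetilde{\s{Z}}$, and this in turn requires that the closed subgroup $\widetilde{\s{H}}\widetilde{\s{Z}}$ is an embedded Lie subgroup whose identity component lies in $\widetilde{\s{H}}$. This holds because $\Pi$ is a local diffeomorphism carrying a neighborhood of $e$ in $\widetilde{\s{H}}\widetilde{\s{Z}}$ onto one in $\s{H}$. The rest is routine.
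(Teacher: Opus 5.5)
Your sufficiency argument is essentially the paper's. In both, $\s{H}\cap V$ is shown to be closed in a small neighbourhood $V$ of $e$ by identifying it with $\widetilde{\s{H}}\cap U$, and then one invokes the fact that locally closed subgroups are closed. Writing $\Pi^{-1}(\s{H})\cap U_0=\widetilde{\s{H}}\widetilde{\s{Z}}\cap U_0$ is just a tidier form of the paper's union $\bigcup_g\Pi(\widetilde{\s{H}}\cap gU)$.

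For necessity you take a genuinely different route. The paper uses that the closed subgroup $\s{H}$ is embedded to choose an evenly covered $V$ with $\s{H}\cap V$ connected. It then splits $\s{H}\cap V$ into the disjoint open sets $\Pi(\widetilde{\s{H}}\cap U)$ and $\bigcup_{g\in\widetilde{\s{Z}}\setminus\widetilde{\s{H}}}\Pi(\widetilde{\s{H}}\cap gU)$, and connectedness forces the second to be empty. You instead observe that the condition in the lemma says exactly that $\widetilde{\s{H}}$ is a neighbourhood of $e$ in $\Pi^{-1}(\s{H})=\widetilde{\s{H}}\widetilde{\s{Z}}$. In effect the lemma becomes ``$\widetilde{\s{H}}\widetilde{\s{Z}}$ is closed if and only if $\widetilde{\s{H}}$ is open in it''. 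This is more transparent and avoids the connectedness argument entirely.

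Both routes, however, hinge on the same nontrivial fact: when $\widetilde{\s{H}}\widetilde{\s{Z}}$ is closed, $\widetilde{\s{H}}$ is open in it (equivalently, $\Pi|_{\widetilde{\s{H}}}\colon\widetilde{\s{H}}\to\s{H}$ is an open map). The paper uses this tacitly when it asserts that $\Pi(\widetilde{\s{H}}\cap U)$ is open in $\s{H}\cap V$. You rightly single it out as the crux, but your justification does not establish it.

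That $\Pi$ maps a neighbourhood of $e$ in $\widetilde{\s{H}}\widetilde{\s{Z}}$ onto one in $\s{H}$ only shows that $\widetilde{\s{H}}\widetilde{\s{Z}}$ and $\s{H}$ are locally isomorphic. It says nothing about whether $\widetilde{\s{H}}$ fills a neighbourhood of $e$ in $\widetilde{\s{H}}\widetilde{\s{Z}}$, i.e.\ whether $\dim\widetilde{\s{H}}=\dim\widetilde{\s{H}}\widetilde{\s{Z}}$. Discreteness of $\widetilde{\s{Z}}$ and the local-isomorphism property cannot suffice on their own. Both hold for a line of irrational slope in $\R^2$ over $\R^2/\Z^2$, where $\widetilde{\s{H}}$ is not open in $\widetilde{\s{H}}\widetilde{\s{Z}}$. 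So closedness of $\widetilde{\s{H}}\widetilde{\s{Z}}$ has to enter the argument in an essential way.

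What is needed is countability plus Baire:
\begin{enumerate}
\item $\widetilde{\s{Z}}$ is discrete in the second countable group $\widetilde{\s{G}}$, hence countable.
\item So the closed, hence locally compact, set $\widetilde{\s{H}}\widetilde{\s{Z}}$ is a countable union of the closed translates $g\widetilde{\s{H}}$, $g\in\widetilde{\s{Z}}$.
\item By the Baire category theorem one of them has nonempty interior in $\widetilde{\s{H}}\widetilde{\s{Z}}$, hence so does the subgroup $\widetilde{\s{H}}$, which is therefore open.
\end{enumerate}
With this inserted, your proof of necessity is complete. It also makes the detour through identity components and embedded Lie subgroups unnecessary.
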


\begin{proof}
    Notice that $\Pi$ is a covering map and $\widetilde{\s{Z}}$ is a discrete cental subgroup of $\widetilde{\s{G}}$ that can be identified with the group of deck transformations. First, assume that the aforementioned condition is satisfied. We would like to prove that $\s{H}$ is closed in $\s{G}$. It suffices to show that it is locally closed (see \cite[Ch.\ III, \S 2, Prop.\ 4]{bourbaki_general_topology_1-4}), i.e., that there exists a neighborhood $V$ of the identity in $\s{G}$ such that $\s{H} \cap V$ is closed in $V$. Let $U$ be a neighborhood of the identity in $\widetilde{\s{G}}$ as in the statement of the lemma. By shrinking it if necessary, we may assume that $\Pi$ restricts to a diffeomorphism between $U$ and $V = \Pi(U)$ and that the latter is evenly covered. The connected components of $\Pi^{-1}(V)$ are given by $gU$ as $g$ runs through $\widetilde{\s{Z}}$. If $g \in \widetilde{\s{H}}$, then the restriction $L_g \colon U \isoto gU$ sends $\widetilde{\s{H}} \cap U$ diffeomorphically onto $\widetilde{\s{H}} \cap gU$. Since $g$ is a deck transformation, we see that $\Pi(\widetilde{\s{H}} \cap U) = \Pi(\widetilde{\s{H}} \cap gU)$ inside $V$. We then compute:
    \[
    \s{H} \cap V = \; \smashoperator{\bigcup_{g \hspace{1pt} \in \hspace{1pt} \widetilde{\s{Z}}}} \, \Pi(\widetilde{\s{H}} \cap gU) \, = \; \Big( \hspace{0.6em} \smashoperator{\bigcup_{g \hspace{1pt} \in \hspace{1pt} \widetilde{\s{Z}} \hspace{1pt} \cap \hspace{1pt} \widetilde{\s{H}}}} \, \Pi(\widetilde{\s{H}} \cap gU) \Big) \; \cup \; \Big( \hspace{0.6em} \smashoperator{\bigcup_{g \hspace{1pt} \in \hspace{1pt} \widetilde{\s{Z}} \setminus \widetilde{\s{H}}}} \, \Pi(\widetilde{\s{H}} \cap gU) \Big) \, = \Pi(\widetilde{\s{H}} \cap U).
    \]
    Here the sets in the first union all coincide, whereas the second union is empty by assumption. Since $\widetilde{\s{H}} \cap U$ is closed in $U$, its image under $\Pi$ is closed in $V$, which was to be shown.

    Conversely, suppose that $\s{H}$ is closed and thus embedded. In that case, we can find an evenly covered neighborhood $V$ of the identity in $\s{G}$ whose intersection with $\s{H}$ is connected. Let $U$ be the connected component of $\Pi^{-1}(V)$ containing the identity; the other connected components are then given by $gU$ as $g$ runs through $\widetilde{\s{Z}}$. As we argued above, if $g \in \widetilde{\s{Z}} \cap \widetilde{\s{H}}$, then the image of $\widetilde{\s{H}} \cap gU$ under $\Pi$ coincides with that of $\widetilde{\s{H}} \cap U$. That image is an open subset of $\s{H} \cap V$. Suppose now that $g \in \widetilde{\s{Z}} \setminus \widetilde{\s{H}}$. Observe that $g(\widetilde{\s{H}} \cap U)$ has to be disjoint from $\widetilde{\s{H}} \cap gU$. Since $g$ is a deck transformation, this is equivalent to saying that $\Pi(\widetilde{\s{H}} \cap U)$ has to be disjoint from $\Pi(\widetilde{\s{H}} \cap gU)$. Note that the latter is also an open subset of $\s{H} \cap V$. We can thus form an open subset $\bigcup_{g \in \widetilde{\s{Z}} \setminus \widetilde{\s{H}}} \Pi(\widetilde{\s{H}} \cap gU)$ of $\s{H} \cap V$. It is still disjoint from $\Pi(\widetilde{\s{H}} \cap U) = \bigcup_{g \in \widetilde{\s{Z}} \cap \widetilde{\s{H}}} \Pi(\widetilde{\s{H}} \cap gU)$, yet its union with the latter has to be the whole $\s{H} \cap V$. By connectedness of $\s{H} \cap V$, we deduce that $\widetilde{\s{H}}$ intersects $gU$ trivially whenever $g \in \widetilde{\s{Z}} \setminus \widetilde{\s{H}}$. This completes the proof.
\end{proof}

The lifting procedure allows one to shift the study of isometric actions on a Riemannian manifold to that on its universal cover. In order to properly understand which actions on $\widetilde{M}$ descend to $M$, one has to understand what the group of deck transformations looks like and how it is embedded into $I(\widetilde{M})$. In general, this can be complicated, but for symmetric spaces the picture is rather simple.

Suppose that $M$ is a symmetric space, and let $\widetilde{M} = M_+ \times M_0 \times M_-$ be the decomposition of its universal cover into its compact, Euclidean, and noncompact factors. Since $M$ is symmetric, $\Aut(\pi) \cong \pi_1(M)$ is abelian. In fact, \cite[Th.\ 8.3.12]{wolf_constant_curvature} provides a complete description of $\Aut(\pi)$ as a subgroup of $I(\widetilde{M})$. Let us write $\Delta$ for the centralizer of $I^0(M_+)$ in $I(M_+)$, which is a finite abelian subgroup. As before, we identify $M_0 \cong\R^n$ and thus have $I(M_0) = \s{O}_n \ltimes \R^n$.
Then, $\Aut(\pi)$ is a discrete subgroup\footnote{Conversely, for every discrete subgroup $\Gamma \subseteq \Delta \times \R^n$, $\widetilde{M}/\Gamma$ is symmetric.} of $\Delta \times \R^n$. This implies that the projection of $\Aut(\pi)$ in $\R^n$ is a lattice (possibly not of full rank), which we denote by $L$.
We write $V = \mathrm{span}(L) \simeq \R^k$ and $V^\perp = \R^n \ominus V \simeq \R^{n-k}$. We also denote by $L^* \subseteq V$ the dual lattice of $L$, i.e., $L^* = \{ v \in V \mid \bilin{v}{w} \in \Z \;\, \text{for every} \; w \in L \}$. Since $\Delta$ is finite, the intersection $L' = \Aut(\pi) \cap L$ is a sub-lattice of finite index in $L$. Moreover, $\Aut(\pi) \subseteq \Delta \times L \subseteq \Delta \times V$, and we have
\begin{equation}\label{non-sc_decomposition_space}
M \cong \widetilde{M}/\Aut(\pi) \cong M_{\mathrm{comp}} \times V^\perp \times M_-,
\end{equation}
where $M_{\mathrm{comp}} = (M_+ \times V)/\Aut(\pi)$ is a compact symmetric space that is finitely covered by $M_+ \times V/L' \simeq M_+ \times \mathbb{T}^k$. We can also easily obtain an explicit description of $I^0(\widetilde{M},\pi)$. Indeed, since $\Aut(\pi) \subseteq I(M_+) \times I(M_0)$, we have $I(M_-) \subseteq I(\widetilde{M},\pi)$. By definition, every element of $I^0(M_+)$ centralizes $\Delta$. Similarly, every translation in $\R^n$ centralizes $L$. This implies that $I^0(\widetilde{M},\pi) = I^0(M_+) \times (\s{N}^0_{\s{SO}_n}(\Aut(\pi)) \ltimes \R^n) \times I^0(M_-)$. Now, every element $g$ of $\s{N}^0_{\s{SO}_n}(\Aut(\pi))$ must normalize $L$. Since $\s{N}^0_{\s{SO}_n}(\Aut(\pi))$ is connected, $g$ must centralize $L$ and is thus the identity on $V$. We deduce that
\begin{equation}\label{non-sc_decomposition_group}
I^0(\widetilde{M},\pi) = I^0(M_+) \times (\s{SO}(V^\perp) \ltimes \R^n) \times I^0(M_-).
\end{equation}
With respect to decompositions \eqref{non-sc_decomposition_space} and \eqref{non-sc_decomposition_group}, it is not difficult to see that $\Pi$ sends $I^0(M_+) \times V$ onto $I^0(M_{\mathrm{comp}})$, $\s{SO}(V^\perp) \ltimes V^\perp = I^0(V^\perp)$ onto itself, and $I^0(M_-)$ onto itself. This shows a posteriori that $I^0(M) = I^0(M_{\mathrm{comp}}) \times I^0(V^\perp) \times I^0(M_-)$.

We are now ready to fully reduce the classification problem for cohomogeneity-one actions on non-simply connected symmetric spaces to that on simply connected ones. It will prove useful to introduce the following bit of notation: given $\xi \in \g{r}^n \oplus \g{a}$, we will write $\s{H}_\xi$ for the connected Lie subgroup of $I^0(M_0) \times I^0(M_-)$ with Lie algebra $((\g{r}^n \oplus \g{a}) \ominus \xi) \loplus \g{n}$.  This corresponds to taking $\g{v} = \R\xi$ in Remark~\ref{remark:hyperpolar} and thus generalizes the subgroups $\s{H}_{E,X}$ (with $\xi = ||E||^2 X - ||X||^2 E$), \, $\R^{n-1} \times (\s{A} \ltimes \s{N})$ (for $\xi \in \g{r}^n$), and $\R^n \times \s{H}_X$ (for $\xi \in \g{a}$). Here and below, it will be convenient for us to identify $\g{r}^n \oplus \g{a}$ with $\R^n \times \s{A}$ by means of the exponential map. In particular, we can think of the normal vector $\xi$ as an element of $\R^n \times \s{A}$, which allows us to decompose it as $\xi = E_V + E_{V^\perp} + X$, where $E_V \in V$, $E_{V^\perp} \in V^\perp$, and $X \in \s{A} \simeq \g{a}$.

\begin{theorem}\label{th:nonsimplyconnected}
 	Let $M$ be a symmetric space and $\widetilde{M} = M_+ \times M_0 \times M_-$ its universal cover. Write $M = M_{\mathrm{comp}} \times V^\perp \times M_-$ as above. Let $\s{H}$ be a connected Lie group acting on $M$ properly, isometrically, and with cohomogeneity one. Then the action of $\s{H}$ is orbit-equivalent to one of the following:
 	\begin{enumerate}[\normalfont (a)]
 		\item The action of $\Pi(\s{H}_+ \times \R^n \times I^0(M_-))$, where $\s{H}_+$ is a closed connected subgroup of $I^0(M_+)$ acting on $M_+$ with cohomogeneity one.

        \item The action of $I^0(M_{\mathrm{comp}}) \times \s{H}_{V^\perp} \times I^0(M_-)$, where $\s{H}_{V^\perp}$ is a closed connected subgroup of $I^0(V^\perp)$ acting on $V^\perp$ with cohomogeneity one.
 		
 		\item The action of $I^0(M_{\mathrm{comp}}) \times V^\perp \times \s{H}_-$, where $\s{H}_-$ is a closed connected subgroup of $I^0(M_-)$ acting on $M_-$ with cohomogeneity one.

        \item The action of $\Pi(I^0(M_+) \times \s{H}_\xi)$ with $\xi = E_V + E_{V^\perp} + X \in \g{r}^n \oplus\g{a}$, where either $E_V\neq 0$ or $E\neq 0\neq X$, and there exists a nonzero $c \in \R$ such that $cE_V \in L^*$.
 	\end{enumerate}
 	Conversely, each of the above actions is proper and isometric and has cohomogeneity one.
\end{theorem}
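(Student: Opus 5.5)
We reduce everything to the universal cover using the lifting machinery. Given a proper cohomogeneity-one action $\s{H} \curvearrowright M$, its lift $\widetilde{\s{H}} = \Pi^{-1}(\s{H})^0 \subseteq I^0(\widetilde{M},\pi)$ is proper by Proposition~\ref{prop:lifting_actions}(c) and has cohomogeneity one. By Theorem~\ref{mainth}, $\widetilde{\s{H}}$ either decomposes as $\s{H}_+ \times \s{H}_0 \times \s{H}_-$ with exactly one nontransitive factor, or is orbit-equivalent to $I(M_+) \times \s{H}_{E,X}$. The constraint \eqref{non-sc_decomposition_group} says $\widetilde{\s{H}} \subseteq I^0(M_+) \times (\s{SO}(V^\perp) \ltimes \R^n) \times I^0(M_-)$, so the rotational part of the Euclidean component fixes $V$ pointwise. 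Throughout, we must make sure that the orbit equivalences on $\widetilde{M}$ are realized by elements of $I(\widetilde{M},\pi)$, so that Proposition~\ref{prop:lifting_actions}(a) transfers them to $M$. Where Theorem~\ref{mainth} only yields equivalence via an arbitrary isometry, we will replace it by an explicit model subgroup containing (or contained in) $\widetilde{\s{H}}$ with the same orbit through a point. This works because orbits of a larger connected group containing $\widetilde{\s{H}}$, of the same dimension, coincide with those of $\widetilde{\s{H}}$.

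\emph{Case analysis.} If the nontransitive factor lies in $M_+$, then the projection to the Euclidean factor is transitive on $M_0$ and lies in $\s{SO}(V^\perp)\ltimes\R^n$, so we can enlarge it to $\R^n$ and enlarge the $M_-$ part to $I^0(M_-)$ without changing orbits. This gives (a). If it lies in $M_-$, then $\widetilde{\s{H}}$ has the orbits of $I^0(M_+) \times \R^n \times \s{H}_-$, and its image under $\Pi$ is $I^0(M_{\mathrm{comp}}) \times V^\perp \times \s{H}_-$, which is (c). If it lies in $M_0$, we use the description of cohomogeneity-one actions on $\R^n$ by subgroups of $\s{SO}(V^\perp)\ltimes\R^n$. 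Either $\s{H}_0$ contains the translations along $V$ and acts on $V^\perp$ with cohomogeneity one, giving (b). Or its orbits are parallel hyperplanes with normal $E$ having $E_V \neq 0$, i.e.\ the case $\xi = E$, $X=0$ of (d). For the non-decomposing case, the proof of Theorem~\ref{theorem:foliations} shows that $\widetilde{\s{H}}$ and $\s{H}_{E,X}$ lie in the common group $\widehat{\s{H}}$, so we obtain (d) with $E \neq 0 \neq X$. The orbit equivalence used there is again by elements of $\s{N}$ and $\R^n$, which commute with $\Aut(\pi)$.

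\emph{Properness and the lattice condition.} This is the main obstacle. We apply Proposition~\ref{prop:lifting_actions}(c) to $\widetilde{\s{H}} = I^0(M_+) \times \s{H}_\xi$, using the criterion that for every $g \in \Aut(\pi)$, either $g \in \widetilde{\s{H}}$ or $g$ stays a uniform distance away from it. Since $I^0(M_+) \supseteq$ nothing of $\Delta$ beyond what is compact, the problem reduces to the projection $L$ of $\Aut(\pi)$ to $\R^n$. Here $\s{H}_\xi$ meets $\R^n \times \s{A}$ in the hyperplane $\xi^\perp$ (viewed in $\R^n\times\s{A}$ via $\exp$), and $L \subseteq V \subseteq \R^n$, so we need to analyze the set $\{\bilin{\ell}{\xi} = \bilin{\ell}{E_V} : \ell \in L\}$. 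The group $\widetilde{\s{H}}$ avoids accumulating near elements of $\Aut(\pi)\setminus\widetilde{\s{H}}$ iff this subset of $\R$ is discrete. That holds iff the values $\bilin{\ell}{E_V}$ are commensurable, i.e.\ $cE_V \in L^*$ for some $c\neq0$, or $E_V = 0$. We will prove this by a Kronecker-type argument: if $E_V$ is not a multiple of a dual-lattice vector, the values are dense. The finite group $\Delta$ and the finite index of $L'$ only shift things by a bounded amount, and we must handle this carefully. When $E_V=0$ and $X=0$, the action is already covered by (b), which explains the stated restriction in (d).

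\emph{Converse.} Conversely, each listed group lies in $I^0(\widetilde{M},\pi)$ and is closed. The criterion just described gives properness of the descent, and since the lift preserves cohomogeneity, each descent has cohomogeneity one. Careful bookkeeping of the $\Delta$-component in the properness criterion is where we expect the most effort.
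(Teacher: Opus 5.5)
Your overall architecture matches the paper's: lift, apply Theorem~\ref{mainth}, realize the orbit equivalences by elements of $I(\widetilde{M},\pi)$ so that Proposition~\ref{prop:lifting_actions}(a) transfers them to $M$, and use Proposition~\ref{prop:lifting_actions}(c) for properness. Your analysis of when the descent of $I^0(M_+)\times\s{H}_\xi$ is proper is the equivalence (i)$\Leftrightarrow$(ii) of Lemma~\ref{lem:properness_of_descent_H_xi}.

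The gap is in the forward direction of (d): you never actually derive the lattice condition for the $\xi$ you produce from $\s{H}$. You write ``apply Proposition~\ref{prop:lifting_actions}(c) to $\widetilde{\s{H}} = I^0(M_+)\times\s{H}_\xi$'', but the lift of the given $\s{H}$ is not this group. It only has the same orbits. For instance, it may be solvable and different from $\s{H}_\xi$, have a proper transitive subgroup of $I^0(M_+)$ as its $M_+$-part, or carry rotational parts in $\s{SO}(V^\perp)$. The criterion in Proposition~\ref{prop:lifting_actions}(c) is a condition on the group, not on its orbits, and properness is not invariant under orbit equivalence. For example, translations of $\R^4$ extended by a dense line in a torus of rotations form a non-closed group with the same (single) orbit as a closed one. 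So properness of $\s{H}$ does not give properness of $\Pi(I^0(M_+)\times\s{H}_\xi)$, and your equivalence yields no constraint on $E_V$. This affects both the hyperplane case $\xi=E$ with $E_V\neq 0$ and the diagonal case $\s{H}_{E,X}$.

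What you need is an orbit-level statement. Orbit equivalence with $\s{H}$ only tells you that $\Pi(I^0(M_+)\times\s{H}_\xi)$ has closed orbits, so you must show that if $\bilin{L}{E_V}$ is not discrete in $\R$, then these orbits are not closed. This is the implication (iii)$\Rightarrow$(ii) of Lemma~\ref{lem:properness_of_descent_H_xi}, which the paper proves in three steps:
\begin{enumerate}
\item Factor $\pi$ through $\widetilde{M}/L'$.
\item In the solvable model $\R^n\times(\s{A}\ltimes\s{N})$ the orbits of $\s{H}_\xi$ are cosets. Hence non-closedness of their images follows from the image of $\s{H}_\xi$ in $\R^n/L'\times(\s{A}\ltimes\s{N})$ being a non-closed subgroup, which is Lemma~\ref{lem:covering_closed_subgroup}.
\item Pass to $M$ through the finite group $\Aut(\pi)/L'$ by comparing dimensions of closures.
\end{enumerate}
Your Kronecker-density observation supplies the input for this, but the step itself is missing.

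Two smaller points:
\begin{itemize}
\item The claim that $V\subseteq W$ when the Euclidean orbits are tubes needs an argument. The paper uses Proposition~\ref{prop:lifting_actions}(b): translations in $L'$ must permute the tubes. Alternatively, isotropy elements at the axis have linear part in $\s{SO}(V^\perp)$, so they fix the nonzero projection of $V$ to $W^\perp$, contradicting transitivity of the slice representation.
\item In the diagonal case the orbit equivalence is by an element of $I^0(M_-)$ that moves $\g{a}$ and $\Sigma^+$ into position, not by elements of $\s{N}$ and $\R^n$. This is harmless, since all of $I(M_-)$ lies in $I(\widetilde{M},\pi)$.
\end{itemize}
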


\begin{remark}
    A few comments are in order. First of all,  implicit in the theorem is the fact that the actions in (a) and (d) are well-defined, i.e., that the corresponding descents actually exist. Notice that, by design, the actions in (b) and (c) decompose with respect to the decomposition $M = M_{\mathrm{comp}} \times V^\perp \times M_-$. In fact, even though it is written as a descent, the action in (a) also decomposes. To see this, note that $\s{H}_+ \times \R^n \times I^0(M_-)$ can be written as $(\s{H}_+ \times  V) \times V^\perp \times I^0(M_-)$. The descent of this subgroup is $\Pi(\s{H}_+ \times  V) \times V^\perp \times I^0(M_-)$, where $\Pi(\s{H}_+ \times  V) \subseteq I^0(M_{\mathrm{comp}})$. On the contrary, as can be traced in the proof below, the actions in (d) do not decompose. The assumption that either $E_V\neq 0$ or $E\neq 0\neq X$ is only added in (d) to avoid intersections with (b) and (c): if $E_V=X=0$, then $\s{H}_\xi = V \times (V^\perp \ominus E_{V^\perp}) \times (A \ltimes N)$, leading to (b). Similarly, for $E=0\neq X$, $\s{H}_\xi = \R^n \times ((A \ominus X) \ltimes N)$, which leads to (c). Finally, the other assumption on $E_V$ in (d), despite looking elaborate, is only there to ensure that the descended action is proper. It means that the line spanned by $E_V$ passes through (nonzero elements of) the dual lattice $L^*$ (unless $E_V = 0$, in which case the assumption is automatically satisfied). Loosely speaking, it says that $E_V$ is only allowed to run in directions that are ``rational'' with respect to $L$.
\end{remark}

We begin by dealing with arguably the most technical part of the proof: checking when the descent of $I^0(M_+) \times \s{H}_\xi$ is proper. Note that this is a closed connected subgroup of $I^0(M_+) \times \R^n \times (\s{A}\ltimes\s{N})$ acting on $\widetilde{M}$ with cohomogeneity one. In view of \eqref{non-sc_decomposition_group}, it is contained in $I^0(\widetilde{M},\pi)$ and thus descends to $M$. In Proposition \ref{prop:lifting_actions}(c), we worked out a general criterion for the descent of a proper action to be proper. We now refine that criterion in the special case of $I^0(M_+) \times \s{H}_\xi$, making it more user-friendly.

\begin{lemma}\label{lem:properness_of_descent_H_xi}
Given any $\xi = E_V + E_{V^\perp} + X \in \R^n \times \s{A}$, the following conditions are equivalent:

\begin{enumerate}[\normalfont (i)]
    \item There exists a nonzero $c \in \R$ such that $cE_V \in L^*$.
    \item The descent of $I^0(M_+) \times \s{H}_\xi$ acts on $M$ properly.
    \item The descent of $I^0(M_+) \times \s{H}_\xi$ acts on $M$ with closed orbits.
\end{enumerate}
\end{lemma}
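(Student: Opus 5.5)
The plan is to prove (i)$\Leftrightarrow$(ii) by making the criterion of Proposition~\ref{prop:lifting_actions}(c) explicit, to note that (ii)$\Rightarrow$(iii) is automatic because proper actions have closed orbits, and to prove (iii)$\Rightarrow$(i) by contraposition with an explicit level-function description of the orbits. Throughout write $\widetilde{\s{H}} = I^0(M_+) \times \s{H}_\xi$ and $\Gamma = \Aut(\pi) \subseteq \Delta \times L$, and assume $\xi \neq 0$ as usual.

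\emph{A level function.} Consider the homomorphism
\[
\mu \colon \R^n \times (\s{A} \ltimes \s{N}) \to \R, \qquad (v, a n) \mapsto \bilin{v + \log a}{\xi}.
\]
It is a Lie group homomorphism because $\s{N}$ is normal with abelian quotient $\s{A}$. Its differential at the identity has kernel $((\g{r}^n \oplus \g{a}) \ominus \xi) \loplus \g{n}$, so $\s{H}_\xi = (\ker \mu)^0$, and $\ker\mu$ is connected since $\R^n \times \s{A} \times \s{N}$ is simply connected. Using the simply transitive action of $\R^n \times (\s{A} \ltimes \s{N})$ on $M_0 \times M_-$, we get a function $f$ on $\widetilde{M} = M_+ \times M_0 \times M_-$. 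Its level sets are exactly the orbits of $\widetilde{\s{H}}$, because left translation by $\ker\mu$ preserves $\mu$ and $\s{H}_\xi$ is normal. Moreover, $f$ is surjective onto $\R$ as $\xi \ne 0$.

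A deck transformation $g = (\delta, \ell)$ with $\delta \in \Delta$ and $\ell \in L$ satisfies $f \circ g = f + \bilin{\ell}{E_V}$. This holds because $\ell \in V$ is orthogonal to $E_{V^\perp}$ and $X$, and $\delta$ only moves the $M_+$ factor, on which $f$ is constant. Hence $\Gamma$ permutes the orbits of $\widetilde{\s{H}}$ by shifting levels by elements of the subgroup $S = \{\bilin{\ell}{E_V}\}$ of $\R$. Here $\ell$ runs through the projection of $\Gamma$ to $L$, which has finite index in $L$ since it contains $L'$.

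\emph{(i)$\Leftrightarrow$(ii).} I apply Proposition~\ref{prop:lifting_actions}(c). Because $I^0(M_+)$ is open in the compact group $I(M_+)$, for a small neighborhood $U$ of the identity the condition $\widetilde{\s{H}} \cap gU \neq \varnothing$ with $g = (\delta,\ell)$ forces $\delta \in I^0(M_+)$. In that case the question reduces to whether the translation $\ell$ is close to $\s{H}_\xi = \ker\mu$, i.e.\ whether $|\mu(\ell)| = |\bilin{\ell}{E_V}|$ is small. Likewise $g \in \widetilde{\s{H}}$ iff $\delta \in I^0(M_+)$ and $\bilin{\ell}{E_V} = 0$. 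So properness is equivalent to the subgroup $S' = \{\bilin{\ell}{E_V} : (\delta,\ell) \in \Gamma,\ \delta \in I^0(M_+)\}$ of $\R$ being discrete. Its underlying lattice $L''$ still has finite index in $L$, since it contains $L'$. A finitely generated subgroup of $\R$ is discrete iff it is cyclic. Thus $S'$ is discrete iff the values of $\bilin{\cdot}{E_V}$ on a basis of $L''$ are pairwise rationally dependent, i.e.\ iff $cE_V \in (L'')^*$ for some $c \neq 0$. Since $L^* \subseteq (L'')^*$ with finite index, this is equivalent to $c'E_V \in L^*$ for some $c' \neq 0$, which is (i). The case $E_V = 0$ is trivial.

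\emph{(iii)$\Rightarrow$(i).} Suppose (i) fails. By the same rank argument applied to the full projection of $\Gamma$, the group $S$ is not cyclic, hence dense in $\R$. For an orbit $\pi(\widetilde{\s{H}}\cdot \widetilde{p})$, its preimage is $\Gamma(\widetilde{\s{H}}\cdot\widetilde{p}) = f^{-1}(f(\widetilde{p}) + S)$. Since $f$ is a surjective submersion, this preimage is dense in $\widetilde{M}$ and not closed. As $\pi^{-1}(\pi(Y)) = \Gamma Y$ and $\pi$ is a quotient map, the orbit $\pi(\widetilde{\s{H}}\cdot\widetilde{p})$ is not closed in $M$, contradicting (iii).

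The main obstacle I expect is the bookkeeping in (i)$\Leftrightarrow$(ii). One must check that $\Delta$-components outside $I^0(M_+)$ are harmless, and that passing between $L$, $L''$, $L'$ and their duals does not change the ``rational direction'' condition. The remaining steps are routine once the homomorphism $\mu$ is in place.
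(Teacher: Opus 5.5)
Your proof is correct. The (i)$\Leftrightarrow$(ii) part is essentially the paper's argument. The paper also applies Proposition~\ref{prop:lifting_actions}(c) and discards deck transformations whose $M_+$-component lies outside $I^0(M_+)$. It then reduces the remaining condition to the discreteness of $\bilin{L''}{E_V}$, equivalently of $\bilin{L}{E_V}$. Your homomorphism $\mu$ and the dual-lattice bookkeeping are just a cleaner phrasing of its ``$\bilin{v}{\xi}$ is sufficiently small'' step.

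You genuinely differ in how you close the cycle. The paper proves (iii)$\Rightarrow$(ii) by contradiction in three steps:
\begin{enumerate}
\item It factors $\pi$ through $\widetilde{M}/L'$.
\item It uses Lemma~\ref{lem:covering_closed_subgroup} to show that $\Psi(\s{H}_\xi)$ is a nonclosed subgroup of $\R^n/L'\times(\s{A}\ltimes\s{N})$.
\item It passes to the finite quotient by $\Aut(\pi)/L'$, noting that the closures of the nonclosed cosets are submanifolds of strictly larger dimension.
\end{enumerate}
You instead exploit that $\s{H}_\xi=\ker\mu$ is the kernel of a homomorphism to $\R$. Then the orbits of $I^0(M_+)\times\s{H}_\xi$ are the level sets of a single surjective submersion $f$. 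The $\Aut(\pi)$-saturation of an orbit is $f^{-1}(f(\widetilde{p})+S)$ with $S=\bilin{L}{E_V}$, and non-closedness reduces to $S$ being dense.

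One detail should be stated explicitly. Density alone does not rule out $f^{-1}(f(\widetilde{p})+S)=\widetilde{M}$. You need that $S$ is countable, so $f(\widetilde{p})+S\neq\R$, and hence its preimage is a proper dense subset and therefore not closed.

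As for what each route buys: yours is shorter and more transparent, and it avoids both the intermediate cover and the closed-subgroup lemma. The paper's route uses only general covering-space and Lie-subgroup facts and needs no global level function. That makes it easier to transplant to subgroups that are not kernels of homomorphisms to $\R$.
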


\begin{proof}
According to Proposition \ref{prop:lifting_actions}(c), the descent of $I^0(M_+)\times\s{H}_\xi$ acts properly if and only if there exists a neighborhood $U$ of the identity in $I(\widetilde{M})$ such that for every $g \in \Aut(\pi)$, either $g \in I^0(M_+) \times \s{H}_\xi$ or $(I^0(M_+) \times \s{H}_\xi) \cap gU = \varnothing$. We now show that this condition is equivalent to (i). First of all, recall that the multiplication map
\begin{equation}\label{big_diffeomorphism_product}
    I^0(M_+) \times (\R^n \times \s{SO}_n) \times (\s{K}_- \times \s{A} \times \s{N}) \rightarrow I^0(M_+) \times I^0(M_0) \times I^0(M_-) \simeq I^0(\widetilde{M})
\end{equation}
is a diffeomorphism. In particular, for every element $g = (g_+, g_0, g_-) \in I^0(\widetilde{M})$, we have unique decompositions $g_0 = v \circ T$ with $v \in \R^n$ and $T \in \s{SO}_n$ and $g_- = g_{\s{K}_-} g_{\s{A}} g_{\s{N}}$ with $g_{\s{K}_-} \in \s{K}_-, \, g_{\s{A}} \in \s{A},$ and $g_{\s{N}} \in \s{N}$. This decomposition naturally depends on the order in which we multiply the factors. From now on, we fix the order as in \eqref{big_diffeomorphism_product}, since this will simplify our formulas.

Let $U$ be a neighborhood of the identity in $I(\widetilde{M})$. By shrinking it if necessary, we may assume without loss of generality that $U$ is connected and splits with respect to \eqref{big_diffeomorphism_product} as
\[
U = U_+ \times U_{\R^n} \times U_{\s{SO}_n} \times U_{\s{K}_-} \times U_{\s{A}} \times U_{\s{N}}.
\]
Given any $g = (g_+, v, e) \in \Aut(\pi) \subseteq \Delta \times L\subseteq I^0(M_+) \times I^0(M_0) \times I^0(M_-)$, we have
\begin{equation}\label{neighborhood_product_decomposition}
    g U = g_+ U_+ \times (v + U_{\R^n}) \times U_{\s{SO}_n} \times U_{\s{K}_-} \times U_{\s{A}} \times U_{\s{N}}.
\end{equation}
Note that an element $h = (h_+, h_0, h_-)$ of $I(\widetilde{M})$ lies in $I^0(M_+) \times \s{H}_\xi$ if and only if $h_+ \in I^0(M_+), \, h_0 \in \R^n, \, h_- = h_{\s{A}} h_{\s{N}} \in \s{A} \ltimes \s{N}$, and $(h_0, h_{\s{A}}) \perp \xi$, where the inner product on $\R^n \times A$ is simply carried over from $\g{r}^n \oplus \g{a}$. Therefore, the set $g U$ intersects $I^0(M_+) \times \s{H}_\xi$ nontrivially if and only if $g_+ \in I^0(M_+)$ and $(v + U_{\R^n}) \times U_{\s{A}}$ contains elements orthogonal to $\xi$. For small $U$, the latter just means that $\bilin{v}{\xi}$ is sufficiently small. Therefore, the condition in Proposition \ref{prop:lifting_actions}(c) reads here as follows: there exists $\delta > 0$ such that for every $(g_+, v) \in \Aut(\pi)$ with $g_+ \in I^0(M_+)$, either $\bilin{v}{\xi} = 0$ (that is, $v \in \s{H}_\xi$) or $|\bilin{v}{\xi}| \ge \delta$. The set of such vectors $v$ forms a sub-lattice of $L$, which we temporarily denote by $L''$. Note that $L''$ clearly contains $L' = \Aut(\pi) \cap L$ and is thus a subgroup of $L$ of finite index.
Since $\bilin{v}{\xi} = \bilin{v}{E_V}$, the above condition simply becomes the question of how $E_V$ is situated in $V$ with respect to the lattice $L''$. The condition that $\bilin{v}{E_V}$ is either zero or separated from zero by at least $\delta$ for every $v \in L''$ is equivalent to asking that the image of $L''$ under the functional $\bilin{-}{E_V}$ is a discrete subgroup of $\R$. Clearly, the latter is equivalent to asking that the image of the whole $L$ under $\bilin{-}{E_V}$ is discrete in $\R$.
If there exists $c \ne 0$ such that $cE_V \in L^*$ (that is, $\bilin{-}{cE_V}$ only takes integral values on $L$), then this is of course the case. Conversely, if $\bilin{L}{E_V}$ is a discrete subgroup of $\R$, then there exists a $\Z$-basis $e_1, \ldots, e_k$ of $L$ such that $\bilin{e_i}{E_V} = 0$ for $i = 1, \ldots, k-1$. If $\bilin{e_k}{E_V} = 0$, any $c$ works; otherwise, we take $c = \frac{1}{\bilin{e_k}{E_V}}$. Altogether, we conclude that (i) is equivalent to (ii).

Since (ii) readily implies (iii), to finish the proof of the lemma, we are left to show that (iii) implies (ii). Arguing by contradiction, suppose that the descent of $I^0(M_+) \times \s{H}_\xi \curvearrowright \widetilde{M}$ is not proper. Since the orbits of the descent are the images of the orbits of $I^0(M_+) \times \s{H}_\xi$ in $\widetilde{M}$ under $\pi$, we need to show that those images are nonclosed. We can regard the covering $\pi$ as a composition 
\[
\widetilde{M} \xrightarrow{\hspace{5pt} \pi' \,} \widetilde{M}/L' \xrightarrow{\hspace{5pt} \pi'' \,} (\widetilde{M}/L')/(\Aut(\pi)/L') = \widetilde{M}/\Aut(\pi) = M.
\]
We first prove that the images of the orbits of $I^0(M_+) \times \s{H}_\xi$ under $\pi'$ are nonclosed in $\widetilde{M}/L'$. Notice that $\pi'$ only affects $M_0$:
\[
\pi' \colon M_+ \times M_0 \times M_- \to M_+ \times M_0/L' \times M_-.
\]
Every orbit of $I^0(M_+) \times \s{H}_\xi$ in $\widetilde{M}$ is the product of $M_+$ with an orbit of $\s{H}_\xi$ in $M_0 \times M_-$. Therefore, it suffices to show that the images of the orbits of $\s{H}_\xi$ under $M_0 \times M_- \to M_0/L' \times M_-$ are nonclosed. If we replace $M_0$ with $\R^n$ and $M_-$ with its solvable model $\s{A} \ltimes \s{N}$, the latter map becomes a local isomorphism of Lie groups:
\[
\Psi \colon \R^n \times (\s{A} \ltimes \s{N}) \to \R^n/L' \times (\s{A}\ltimes \s{N}).
\]
The orbits of $\s{H}_\xi$ in $\R^n \times (\s{A} \ltimes \s{N})$ are simply its right cosets, and the images of those orbits under $\Psi$ are the right cosets of $\Psi(\s{H}_\xi)$. So we need to show that $\Psi(\s{H}_\xi)$ is a nonclosed subgroup of  $\R^n/L' \times (\s{A} \ltimes \s{N})$. By assumption, condition (ii) (and thus condition (i)) fails to hold. As we showed earlier, this is equivalent to saying that $\bilin{L}{\xi} = \bilin{L}{E_V}$ is a nondiscrete subgroup of $\R$. Since $L'$ has finite index in $L$, this is in turn equivalent to $\bilin{L'}{\xi}$ being nondiscrete in $\R$. But that means that for every neighborhood $U$ of the identity in $\R^n \times (\s{A} \ltimes \s{N})$, there exists $g \in L' \setminus \s{H}_\xi$ such that $g \hspace{1pt} U$ intersects $\s{H}_\xi$ nontrivially. Lemma \ref{lem:covering_closed_subgroup} then implies that the image $\Psi(\s{H}_\xi)$ is a nonclosed Lie subgroup of $\R^n/L' \times (\s{A} \ltimes \s{N})$, which was to be shown.

Now, let $S \subseteq \widetilde{M}$ by any orbit of $I^0(M_+) \times \s{H}_\xi$. We can write $S = M_+ \times C$, where $C \subseteq M_0 \times M_- \cong \R^n \times (\s{A} \ltimes \s{N})$ is a right coset of $\s{H}_\xi$. We know that $\pi'(S) = M_+ \times \Phi(C)$ is nonclosed in $M_+ \times M_0/L' \times M_-$, and we are left to show that its image under $\pi''$ is nonclosed in $M$. Since $\Phi(C)$ is nonclosed in $M_0/L' \times M_-$, its closure is a submanifold of strictly larger dimension, and the same is thus true about the closure of $\pi'(S)$. Note that $\pi''(\pi'(S))$ is closed if and only if $\pi''^{-1}(\pi''(\pi'(S))) = \bigcup_{g \in \Aut(\pi)/L'} g \pi'(S)$ is closed. As the group $\Aut(\pi)/L'$ is finite, we have
\[
\overline{\pi''^{-1}(\pi''(\pi'(S)))} \; = \; \overline{\bigcup_{g \in \Aut(\pi)/L'} g \pi'(S)} \; = \bigcup_{g \in \Aut(\pi)/L'} g \hspace{1pt} \overline{\pi'(S)},
\]
and the latter set is the union of submanifolds of strictly larger dimension. This implies that $\pi''(\pi'(S)) = \pi(S)$ is nonclosed, which completes the proof.
\end{proof}

With the lemma out of the way, we can now proceed to the proof of the theorem.

\begin{proof}[Proof of Theorem \ref{th:nonsimplyconnected}]
    We begin by checking that the actions listed in (a)-(d) are indeed proper, isometric, and of cohomogeneity one. For (b) and (c), this is trivial. It follows from our discussion earlier in the section that if $\widetilde{\s{H}} \subseteq I^0(\widetilde{M},\pi)$ is a closed connected subgroup acting on $\widetilde{M}$ with cohomogeneity one and satisfying the condition in Proposition \ref{prop:lifting_actions}(c), then it descends to a proper isometric cohomogeneity-one action $\Pi(\widetilde{\s{H}}) \curvearrowright M$. We use this approach with the actions in (a) and (d), as those are expressed as descents. For every subgroup $\s{H}_+ \subseteq I^0(M_+)$ as in (a), $\s{H}_+ \times \R^n \times I^0(M_-)$ is a closed connected subgroup of $I^0(\widetilde{M},\pi)$ thanks to \eqref{non-sc_decomposition_group}, and it clearly acts on $\widetilde{M}$ with cohomogeneity one. Thus, we need only verify that it satisfies the criterion from Proposition \ref{prop:lifting_actions}(c). Take a neighborhood $U_+$ of the identity in $I(M_+)$ such that $gU_+ \cap \s{H}_+ = \varnothing$ for every $g \in \Delta \setminus \s{H}_+$ (which exists because $\Delta$ is finite and $\s{H}_+$ is closed). Then $U = U_+ \times I(M_0) \times I(M_-)$ is a neighborhood of the identity in $I(\widetilde{M})$ that satisfies the condition. Similarly, for every $\xi$ as in (d), $I^0(M_+) \times \s{H}_\xi$ is a closed connected subgroup of $I^0(\widetilde{M},\pi)$ acting on $\widetilde{M}$ with cohomogeneity one. It then descends to $M$, and its descent acts properly thanks to Lemma \ref{lem:properness_of_descent_H_xi}.

    Now we prove that any proper isometric cohomogeneity-one action $\s{H} \curvearrowright M$ with $\s{H}$ connected is orbit-equivalent to one of the actions in (a)-(d). Without loss of generality, we may assume that $\s{H} \subseteq I^0(M)$ and write $\s{\widetilde{H}}$ for its lift. It follows from our discussion earlier in the section that $\s{\widetilde{H}}$ is a closed connected subgroup of $I^0(\widetilde{M}, \pi)$ acting on $\widetilde{M}$ with cohomogeneity one, and its descent is $\s{H}$. According to Theorem \ref{mainth}, the action of $\s{\widetilde{H}}$ on $\widetilde{M} = M_+ \times M_0 \times M_-$ either decomposes or is orbit-equivalent to the action of $I^0(M_+) \times \s{H}_{E,X}$ for some nonzero $E \in \g{r}^n$ and $X \in \g{a}$. We consider these cases separately.

    To begin with, suppose that the projection action $\s{\widetilde{H}} \curvearrowright M_+$ is of cohomogeneity one. In this case, the action of $\s{\widetilde{H}}$ on $\widetilde{M}$ has the same orbits as $\s{H}_+ \times \R^n \times I^0(M_-) \subseteq I(\widetilde{M}, \pi)$, where $\s{H}_+$ is some closed connected subgroup of $I^0(M_+)$ acting on $M_+$ with cohomogeneity one. In view of Proposition~\ref{prop:lifting_actions}(a), the descent of $\s{H}_+ \times \R^n \times I^0(M_-) \curvearrowright \widetilde{M}$ has the same orbits as the action of $\s{H}$ on $M$. This corresponds to item (a) on the list.

    Similarly, if the projection action $\s{\widetilde{H}} \curvearrowright M_-$ has cohomogeneity one, then the action of $\s{\widetilde{H}}$ on $\widetilde{M}$ has the same orbits as $I^0(M_+) \times \R^n \times \s{H}_- \subseteq I(\widetilde{M}, \pi)$, where $\s{H}_-$ is some closed connected subgroup of $I^0(M_-)$ acting on $M_-$ with cohomogeneity one. In the same vein, the descent of $I^0(M_+) \times \R^n \times \s{H}_-$ has the same orbits in $M$ as $\s{H}$. Note that, with respect to the decomposition $M = M_{\mathrm{comp}} \times V^\perp \times M_-$, this descent also has the same orbits as $I^0(M_{\mathrm{comp}}) \times V^\perp \times \s{H}_-$. This corresponds to item (c).

    Going further, suppose that the projection action $\s{\widetilde{H}} \curvearrowright M_0$ is of cohomogeneity one. In this case, the action of $\s{\widetilde{H}}$ on $\widetilde{M}$ has the same orbits as $I^0(M_+) \times \s{H}_0 \times I^0(M_-)$, where $\s{H}_0$ is a closed connected subgroup of $I^0(M_0)$ acting on $M_0$ with cohomogeneity one. Owing to the classification of cohomogeneity-one actions on Euclidean spaces, we know that the action of $\s{H}_0$ is orbit-equivalent via an element of $\R^n$ to the action of the group $\s{SO}(W^\perp)\times W$, where $W\subsetneq \R^n$ is some proper subspace. If $\dim(W) = n-1$, the orbits are affine hyperplanes parallel to $W$. Otherwise, they are generalized cylinders around $W$ (which includes spheres when $W = \{0\}$). First, assume that $V\subseteq W$. In this case, we see that the action of $\s{\widetilde{H}}$ is orbit-equivalent to that of
    \begin{equation}\label{subgroup_V_in_W}
    I^0(M_+) \times V \times \big( \s{SO}(W^\perp) \times (W \ominus V) \big) \times I^0(M_-).
    \end{equation}
    Here $\s{SO}(W^\perp) \times (W \ominus V)$ is a closed connected subgroup of $I^0(V^\perp)$ acting on $V^\perp$ with cohomogeneity one. In view of \eqref{non-sc_decomposition_group} and the subsequent paragraph, the subgroup \eqref{subgroup_V_in_W} descends to $I^0(M_{\mathrm{comp}}) \times \left(\s{SO}(W^\perp) \times (W \ominus V)\right) \times I^0(M_-)$, whose action on $M$ is orbit-equivalent to that of $\s{H}$ thanks to Proposition \ref{prop:lifting_actions}(a). This corresponds to item (b) on the list. We claim that the assumption $V \subseteq W$ automatically holds true whenever $\dim(W) < n-1$. Indeed, according to Proposition \ref{prop:lifting_actions}(b), the orbits of $\s{\widetilde{H}}$ (and thus of $I^0(M_+) \times (\s{SO}(W^\perp) \times W) \times I^0(M_-)$) must get permuted by the action of $L' \subseteq \Aut(\pi) \cap I^0(\widetilde{M},\pi)$. Since $L'$ spans $V$, this easily implies that $V$ must be contained in $W$. 
    
     We are thus only left to consider the case when $\dim(W) = n-1$ and $V \not\subseteq W$. Under these assumptions, we can write $W = \R^n \ominus E$ for some $E$ with $E_V \neq 0$. We see that $\s{\widetilde{H}}$ has the same orbits in $\widetilde{M}$ as $I^0(M_+) \times \s{H}_E$. We already know that the latter subgroup descends to $M$ and thus has the same orbits as $\s{H}$ thanks to Proposition \ref{prop:lifting_actions}(a). Since those orbits are closed, Lemma \ref{lem:properness_of_descent_H_xi} implies that there exists a nonzero $c \in \R$ such that $c E_V \in L^*$. This corresponds to item (d) with $\xi=E$.
    
    Finally, we come to the case when the action of $\s{\widetilde{H}}$ on $\widetilde{M}$ is orbit-equivalent to that of $I^0(M_+) \times \s{H}_{E,X}$ for some nonzero $E$ and $X$. Note that the latter is a subgroup of $I^0(\widetilde{M},\pi)$ and thus it descends to $M$. As follows from our discussion in Section 4 and in particular in Lemma \ref{lemma:solvablesubgroupfoliations}, an orbit equivalence between the actions of $\s{\widetilde{H}}$ and $I^0(M_+) \times \s{H}_{E,X}$ can actually be achieved by an element of $I^0(M_-) \subseteq I^0(\widetilde{M},\pi)$. Proposition \ref{prop:lifting_actions}(a) then implies that the action of $\Pi(I^0(M_+) \times \s{H}_{E,X})$ on $M$ is orbit-equivalent to that of $\s{H}$. Now, as we mentioned earlier, $\s{H}_{E,X} = \s{H}_\xi$, where $\xi = E' + X'$ with $E' = - ||X||^2 E$ and $X' = ||E||^2 X$. As the orbits of the descent are closed, Lemma \ref{lem:properness_of_descent_H_xi} implies that there exists a nonzero $c \in \R$ such that $cE'_{V} \in L^*$. This also corresponds to item (d).
\end {proof}

%% file: Main.bbl
\begin{thebibliography}{MSSV25}
	\bibitem[AB15]{AleBet} \textsc{M. M. Alexandrino} and \textsc{R. G. Bettiol}, \emph{Lie groups and geometric aspects of isometric actions},
	Springer, Cham, 2015.  \href{https://doi.org/10.1007/978-3-319-16613-1}{doi:10.1007/978-3-319-16613-1}.
	
	\bibitem[Ale75]{Alekseevski} \textsc{D. V. Alekseevskii}, Homogeneous Riemannian spaces of negative curvature, \emph{Mat. Sb. (N.S.)},
	{\bf 96} (1975), no. 138, 93--117.
	\href{https://doi.org/10.1070/SM1975v025n01ABEH002200}{doi:10.1070/SM1975v025n01ABEH002200}
	
	%\bibitem[Ale11]{Alexandrino:RIMA} \textsc{M. M. Alexandrino}, On polar foliations and the fundamental group, \emph{Results Math.}, {\bf 60} (2011), no.1--4, 213--223. \href{https://doi.org/10.1007/s00025-011-0171-4}{doi:10.1007/s00025-011-0171-4}.
	
	\bibitem[AR04]{AR:Acta} \textsc{U. Abresch} and \textsc{H. Rosenberg}, A hopf differential for constant mean curvature surfaces in $\mathbb{S}^2\times\R$ and $\mathbb{H}^2\times \R$, \emph{Acta Math.}, {\bf 193} (2004), no. 2, 141--174.
	\href{https://doi.org/10.1007/BF02392562}{doi:10.1007/BF02392562}. 
	
	\bibitem[AW76]{AW:homogeneous} \textsc{R. Azencott} and \textsc{E. N. Wilson}, Homogeneous manifolds with negative curvature I, \emph{Trans. Amer.
	Math. Soc.}, {\bf 215} (1976), 323--362. 
	\href{https://doi.org/10.2307/1999731}{doi:10.2307/1999731}.
	
	\bibitem[BB01]{BB:crelle} \textsc{J. Berndt} and \textsc{M. Br\"uck}, Cohomogeneity one actions on hyperbolic spaces, \emph{J. Reine Angew. Math.}, {\bf 541} (2001), 209--235. 
	\href{https://doi.org/10.1515/crll.2001.093}{doi:10.1515/crll.2001.093}.
	
	\bibitem[BCO16]{BCO:book} \textsc{J. Berndt}, \textsc{S. Console}, and \textsc{C. E. Olmos}. \emph{Submanifolds and holonomy}, second edition, Monographs and Research
	Notes in Mathematics, CRC Press, Boca Raton, 2016. 
	\href{https://doi.org/10.1201/b19615}{doi:10.1201/b19615}.
	
	\bibitem[BD15]{BD:tg} \textsc{J. Berndt} and \textsc{M. Dom\'inguez-V\'azquez}. Cohomogeneity one actions on some noncompact symmetric
	spaces of rank two, \emph{Transform. Groups}, {\bf 20} (2015), no. 4, 921--938.
	\href{https://doi.org/10.1007/s00031-015-9299-8}{doi:10.1007/s00031-015-9299-8}.
	
	\bibitem[BDT10]{BDT:jdg} \textsc{J. Berndt}, \textsc{J. C. D\'iaz-Ramos}, and \textsc{H. Tamaru}, Hyperpolar homogeneous foliations on symmetric spaces of noncompact type, \emph{J. Differential Geom.}, {\bf 86} (2010), no. 2, 191--235.
	\href{https://doi.org/10.4310/jdg/1299766787}{doi:10.4310/jdg/1299766787}.
	
	\bibitem[B\"oh98]{Boehm} \textsc{C. B\"ohm}, Inhomogeneous Einstein metrics on low-dimensional spheres and other low-dimensional
	spaces, \emph{Invent. Math.}, {\bf 134} (1998),  no. 1, 145--176.
	\href{https://doi.org/10.1007/s002220050261}{doi:10.1007/s002220050261}.
	
	\bibitem[Bou98]{bourbaki_general_topology_1-4} \textsc{N. Bourbaki}, \emph{General topology. Chapters 1--4}, Elements of Mathematics, Springer-Verlag,	Berlin, 1995.
	\href{https://doi.org/10.1007/978-3-642-61701-0}{doi:10.1007/978-3-642-61701-0}.
	
	\bibitem[BS89]{BrySal} \textsc{R.~L.~Bryant} and {S.~M.~Salamon}, On the construction of some complete metrics with exceptional holonomy, \emph{Duke Math. J.}, {\bf 58} (1989), no.~3, 829--850.
	\href{https://doi.org/10.1215/S0012-7094-89-05839-0}{doi:10.1215/S0012-7094-89-05839-0}.
	
	\bibitem[BT03]{BT:jdg} \textsc{J.~Berndt} and \textsc{H.~Tamaru}, Homogeneous codimension one foliations on noncompact symmetric spaces, \emph{J. Differential Geom.}, {\bf 63} (2003), no.~1, 1--40.
	\href{https://doi.org/10.4310/jdg/1080835656}{doi:10.4310/jdg/1080835656}.
	
	\bibitem[BT04]{BT:tohoku} \bysame, Cohomogeneity one actions on noncompact symmetric spaces with a totally geodesic singular orbit, \emph{Tohoku Math. J. (2)}, {\bf 56} (2004), no.~2, 163--177. \href{https://doi.org/10.2748/tmj/1113246549}{doi:10.2748/tmj/1113246549}.
	
	\bibitem[BT07]{BT:tams} \bysame, Cohomogeneity one actions on noncompact symmetric spaces of rank one, \emph{Trans. Amer. Math. Soc.}, {\bf 359} (2007), no.~7, 3425--3438.
	\href{https://doi.org/10.1090/S0002-9947-07-04305-X}{doi:10.1090/S0002-9947-07-04305-X}.
	
	\bibitem[BT13]{BT:crelle} \bysame, Cohomogeneity one actions on symmetric spaces of noncompact type, \emph{J. Reine Angew. Math.}, {\bf 683} (2013), 129--159. \href{https://doi.org/10.1515/crelle-2012-0002}{doi:10.1515/crelle-2012-0002}.
	
	\bibitem[Car38]{Cartan} \textsc{\'E. Cartan}, Familles de surfaces isoparam\`etriques dans les espaces \`a courbure constante, \emph{Ann. Mat. Pura Appl.}, {\bf 17} (1938), no.~1, 177--191.
	\href{https://doi.org/10.1007/BF02410700}{doi:10.1007/BF02410700}.
	
	\bibitem[CS19]{ChavesSantos} \textsc{R. M. B. Chaves} and \textsc{E. Santos}, Hypersurfaces with constant principal curvatures in $\mathbb{S}^n\times \R$ and $\mathbb{H}^n\times \R $, \emph{Illinois J. Math.}, {\bf 63} (2019), no.~4, 551--574.
	\href{https://doi.org/10.1215/00192082-8018599}{doi:10.1215/00192082-8018599}.
	
	\bibitem[DDO23a]{DDO:adv} \textsc{J. C. D\'iaz-Ramos}, \textsc{M. Dom\'inguez-V\'azquez}, and \textsc{T. Otero}, Cohomogeneity one actions on symmetric spaces of noncompact type and higher rank, \emph{Adv. Math.}, {\bf 428} (2023), Paper no.~109165.
	\href{https://doi.org/10.1016/j.aim.2023.109165}{doi:10.1016/j.aim.2023.109165}.
	
	\bibitem[DDO23b]{DDO:reag} \bysame, Homogeneous hypersurfaces in symmetric spaces, in: \emph{New trends in geometric analysis. Spanish Network of Geometric Analysis 2007–2021}, RSME Springer Ser., vol 10., Springer, Cham, 2023, pp. 141--190.
	\href{https://doi.org/10.1007/978-3-031-39916-9_5}{doi:10.1007/978-3-031-39916-9$\_$5}.
	
	\bibitem[DDR21]{DDR:crelle} \textsc{J. C. D\'iaz-Ramos}, \textsc{M. Dom\'inguez-V\'azquez}, and \textsc{A. Rodr\'iguez-V\'azquez}, Homogeneous and inhomogeneous isoparametric hypersurfaces in rank one symmetric spaces, \emph{J. Reine Angew. Math.}, {\bf 779} (2021), 189--222.
	\href{https://doi.org/10.1515/crelle-2021-0043}{doi:10.1515/crelle-2021-0043}.
	
	\bibitem[DM21]{DVManzano} \textsc{M. Dom\'inguez-V\'azquez} and \textsc{J. M. Manzano}, Isoparametric surfaces in $\mathbb{E}(\kappa,\tau)$-spaces, \emph{Ann. Sc. Norm. Super. Pisa Cl. Sci. (5)}, {\bf 22} (2021), no.~1, 269--285.
	\href{https://doi.org/10.2422/2036-2145.201805_006}{doi:10.2422/2036-2145.201805$\_$006}.
	
	\bibitem[Dom15]{DV:IMRN} \textsc{M. Dom\'inguez-V\'azquez}, Canonical extension of submanifolds and foliations in noncompact symmetric spaces, \emph{Int. Math. Res. Not. IMRN}, {\bf 2015} (2015), no.~22, 12114--12125.
	\href{https://doi.org/10.1093/imrn/rnv072}{doi:10.1093/imrn/rnv072}.
	
	\bibitem[DST21]{DVSLT} \textsc{M. Dom\'inguez-V\'azquez}, \textsc{V. Sanmart\'in-L\'opez}, and \textsc{H. Tamaru}, Codimension one Ricci soliton subgroups of solvable Iwasawa groups, \emph{J. Math. Pures Appl. (9)}, {\bf 152} (2021), 69--93.
	\href{https://doi.org/10.1016/j.matpur.2021.05.008}{doi:10.1016/j.matpur.2021.05.008}.
	
	\bibitem[FH17]{FH} \textsc{L. Foscolo} and \textsc{M. Haskins}, New $G_2$-holonomy cones and exotic nearly K\"ahler structures on $\mathbb{S}^6$ and $\mathbb{S}^3\times \mathbb{S}^3$, \emph{Ann. of Math. (2)}, {\bf 185} (2017), no.~1, 59--130.
	\href{https://doi.org/10.4007/annals.2017.185.1.2}{doi:10.4007/annals.2017.185.1.2}.
	
	\bibitem[FKM81]{FKM} \textsc{D. Ferus}, \textsc{H. Karcher}, and \textsc{H. F. M\"unzner}, Cliffordalgebren und neue isoparametrische Hyperfl\"achen, \emph{Math. Z.}, {\bf 177} (1981), no~4, 479--502.
	\href{https://doi.org/10.1007/BF01219082}{doi:10.1007/BF01219082}.
	
	\bibitem[GW88]{GW:isomsolv} \textsc{C. S. Gordon} and \textsc{E. N. Wilson}, Isometry groups of Riemannian solvmanifolds, \emph{Trans. Amer. Math. Soc.}, {\bf 307} (1988), no.~1, 245--269.
	\href{https://doi.org/10.2307/2000761}{doi:10.2307/2000761}.
	
	\bibitem[GWZ08]{GWZ} \textsc{K. Grove}, \textsc{B. Wilking}, and \textsc{W. Ziller}, Positively curved cohomogeneity one manifolds and 3-Sasakian geometry, \emph{J. Differential Geom.}, {\bf 78} (2008), no.~1, 33--111. 
	\href{https://doi.org/10.4310/jdg/1197320603}{doi:10.4310/jdg/1197320603}.
	
	\bibitem[Hel78]{Helgason} \textsc{S. Helgason}, \emph{Differential geometry, Lie groups, and symmetric spaces}, Pure and Applied
	Mathematics, vol.~80, Academic Press, Inc., New York-London, 1978.
	
	\bibitem[HL71]{Hsiang-Lawson} \textsc{W.-Y. Hsiang} and \textsc{H. B. Lawson, Jr.}, Minimal submanifolds of low cohomogeneity, \emph{J. Differential Geometry}, {\bf 5} (1971), 1--38. \href{https://doi.org/10.4310/jdg/1214429775}{doi:10.4310/jdg/1214429775}.
		
	\bibitem[Kna02]{Knapp} \textsc{A. W. Knapp}, \emph{Lie groups beyond an introduction}, second edition, Progress in Mathematics, vol.~40, Birkh\"auser Boston, Inc., Boston, 2002.
	
	\bibitem[Kol02]{Kollross:tams} \textsc{A. Kollross}, A classification of hyperpolar and cohomogeneity one actions, \emph{Trans. Amer. Math. Soc.}, {\bf  354} (2002), no.~2, 571--612.
	\href{https://doi.org/10.1090/S0002-9947-01-02803-3}{doi:10.1090/S0002-9947-01-02803-3}.
	
	\bibitem[Kol11]{kollross_duality} \bysame, Duality of symmetric spaces and polar actions, \emph{J. Lie Theory}, {\bf 21} (2011), no.~4, 961--986.
	\href{https://doi.org/10.5802/jolt.657}{doi:10.5802/jolt.657}.
	
	\bibitem[Kol17]{Kollross:reducible} \bysame, Hyperpolar actions on reducible symmetric spaces, \emph{Transform. Groups}, {\bf 22} (2017), no.~1, 207--228.
	\href{https://doi.org/10.1007/s00031-016-9384-7}{doi:10.1007/s00031-016-9384-7}.
	
	\bibitem[KT13]{KT:GeomDed} \textsc{A. Kubo} and \textsc{H. Tamaru}, A sufficient condition for congruency of orbits of Lie groups and some applications, \emph{Geom. Dedicata}, {\bf 167} (2013), 233--238.
	\href{https://doi.org/10.1007/s10711-012-9811-4}{doi:10.1007/s10711-012-9811-4}.
	
	\bibitem[Lor25]{LN:thesis} \textsc{J. M. Lorenzo-Naveiro}, Submanifolds and actions on homogeneous manifolds, Ph.D. thesis, Universidade de Santiago de Compostela, 2025.
	\href{https://minerva.usc.gal/entities/publication/d6d6eab8-fb84-4b6b-8ce1-d2d3ed9d6cb4}{Available at: https://minerva.usc.gal/entities/publication/d6d6eab8-fb84-4b6b-8ce1-d2d3ed9d6cb4}.
	
	\bibitem[LP24]{LimaPipoli} \textsc{R. F. de Lima} and \textsc{G. Pipoli}, Isoparametric hypersurfaces of $\mathbb{H}^n\times\R$ and $\mathbb{S}^n\times\R$, 2024.
	\href{https://arxiv.org/abs/2411.11506}{arXiv:2411.11506}.
	
	\bibitem[LP25]{LimaPipoli2} \bysame, Isoparametric hypersurfaces in products of simply connected space forms, 2025.
	\href{https://arxiv.org/abs/2511.12527}{arXiv:2511.12527}.
	
	%\bibitem[Lyt10]{Lyt:gded} \textsc{A. Lytchak}, Geometric resolution of singular Riemannian foliations, \emph{Geom. Dedicata}, {\bf 149} (2010), 379--395.
	%\href{https://doi.org/10.1007/s10711-010-9488-5}{doi:10.1007/s10711-010-9488-5}.

    \bibitem[Mic08]{Michor} \textsc{P. W. Michor}, \emph{Topics in differential geometry}, Graduate Studies in Mathematics, American Mathematical Society, Providence, RI, 2008. 
	
	\bibitem[Mon50]{Montgomery} \textsc{D. Montgomery}, Simply connected homogeneous spaces, {\emph Proc. Amer. Math. Soc.}, {\bf 1} (1950), 467--469.
	\href{https://doi.org/10.2307/2032314}{doi:10.2307/2032314}.
	
	\bibitem[Mos61]{Mostow:AnnalsMaximal} \textsc{G. D. Mostow}, On maximal subgroups of real Lie groups, \emph{Ann. of Math. (2)}, {\bf 74} (1961), 503--517.
	\href{https://doi.org/10.2307/1970295}{doi:10.2307/1970295}.
	
	\bibitem[MSSV25]{MSSV} \textsc{F. Manfio}, \textsc{J. B. M. dos Santos}, \textsc{J. P. dos Santos}, and \textsc{J. Van der Veken}, Hypersurfaces of $\mathbb{S}^3\times\R$ and $\mathbb{H}^3\times \R$ with constant principal curvatures, \emph{J. Geom. Phys.}, {\bf 213} (2025), Paper No. 105495.
	\href{https://doi.org/doi:10.1016/j.geomphys.2025.105495}{doi:10.1016/j.geomphys.2025.105495}.
	
	\bibitem[OV94]{OnishchikVinberg} \textsc{A. L. Onishchik} and \textsc{\`E.\ B. Vinberg} (eds.), \emph{Lie groups and Lie algebras, III}, Encyclopaedia of Mathematical Sciences, vol. 41, Springer-Verlag, Berlin, 1994.
	
	\bibitem[Seg38]{Segre} \textsc{B. Segre}, Famiglie di ipersuperficie isoparametriche negli spazi euclidei ad un qualunque numero di dimensioni, \emph{Atti Accad. Naz. Lincei. Rend. Cl. Sci. Fis. Mat. Natur.}, {\bf 27} (1938), 203--207.
	
	\bibitem[Sol21]{Solonenko:reducible} \textsc{I. Solonenko}, Homogeneous codimension-one foliations on reducible symmetric spaces of noncompact type, 2021.
	\href{https://arxiv.org/abs/2112.02189}{arXiv:2112.02189}.
	
	\bibitem[Sol23]{Solonenko} \bysame, Classification of homogeneous hypersurfaces in some noncompact symmetric spaces of rank two, \emph{Ann. Mat. Pura Appl. (4)}, {\bf 202} (2023), no. 6, 2915--2946.
	\href{https://doi.org/10.1007/s10231-023-01345-8}{doi:10.1007/s10231-023-01345-8}.
	
	\bibitem[Sol24]{solonenko_thesis} \bysame, Homogeneous hypersurfaces in Riemannian symmetric spaces, Ph.D. thesis, King’s College London, 2024.
	\href{https://kclpure.kcl.ac.uk/portal/en/studentTheses/homogeneous-hypersurfaces-in-riemannian-symmetric-spaces}{Available at: https://kclpure.kcl.ac.uk/portal/en/studentTheses/homogeneous-hypersurfaces-in-riemannian-symmetric-spaces}.
	
	\bibitem[SS23]{SantosSantos} \textsc{J. B. M. dos Santos} and \textsc{J. P. dos Santos}, Isoparametric hypersurfaces in product spaces, \emph{Differential Geom. Appl.}, {\bf 88} (2023), Paper No. 102005.
	\href{https://doi.org/10.1016/j.difgeo.2023.102005}{doi:10.1016/j.difgeo.2023.102005}.
	
	\bibitem[SS25]{SLS:arXiv} \textsc{I. Solonenko} and \textsc{V. Sanmart\'in-L\'opez}, Classification of cohomogeneity-one actions on symmetric spaces of noncompact type, 2025.
	\href{https://arxiv.org/abs/2501.05553}{arXiv:2501.05553}.
	
	\bibitem[TXY25]{TXY:arxiv} \textsc{H. Tan}, \textsc{Y. Xie}, and \textsc{W. Yan}, Isoparametric hypersurfaces in $\mathbb{S}^n\times\R^m$ and $\mathbb{H}^n\times\R^m$, 2025.
	\href{https://arxiv.org/abs/2511.07782}{arXiv:2511.07782}.
	
	\bibitem[Wol11]{wolf_constant_curvature} \textsc{J. A. Wolf}, \emph{Spaces of constant curvature}, sixth edition, AMS Chelsea Publishing, Providence, RI, 2011.
	\href{https://doi.org/10.1090/chel/372}{doi:10.1090/chel/372}.
\end{thebibliography}
